\pgfplotsset{compat=1.15}
\definecolor{rd}{rgb}{1,0.3,0.35}
\newcommand{\field}[1]{\mathbb{#1}} \newcommand{\rz}{\field{R}}
\newcommand{\cz}{\field{C}} \newcommand{\nz}{\field{N}}
\newcommand{\zz}{\field{Z}}
\newcommand{\ez}{\field{E}}
\newcommand{\Imag}{{\mathrm{Im\,}}}
\DeclareMathOperator\Real{\mathrm{Re}\,}
\DeclareMathOperator*{\wlim}{w-lim}
\newcommand{\Ker}{{\mathrm{Ker\,}}}
\newcommand{\Ran}{{\mathrm{Ran\,}}}
\newcommand{\ccup}{\mathop{\cup}}
\newcommand{\Weyl}{\mathrm{Weyl}}
\newcommand{\Wick}{\mathrm{Wick}}
\newcommand{\AWick}{\mathrm{A-Wick}}
\newcommand{\rmalg}{\mathrm{alg}}
\newcommand{\rmloc}{\mathrm{loc}}
\newcommand{\rmsym}{\mathrm{sym}}
\newcommand{\rmin}{\mathrm{in}}
\newtheorem{theorem}{Theorem}[section]
\newtheorem{lemma}[theorem]{Lemma}
\newtheorem{proposition}[theorem]{Proposition}
\newtheorem{definition}[theorem]{Definition}
\newtheorem{remark}[theorem]{Remark}
\title{Waves in a random medium: Endpoint Strichartz estimates
and number estimates}
\author{
S.~Breteaux
\thanks{Universit{\'e} de Lorraine, CNRS, IECL, F-57000 Metz, France. sebastien.breteaux@univ-lorraine.fr}\\
  F.~Nier
\thanks{LAGA, Universit{\'e} de Paris XIII, 99 avenue
J.B.~Cl{\'e}ment, F-93430~Villetaneuse, France. nier@math.univ-paris13.fr}\\
}
\begin{document}
\maketitle
\begin{abstract}
  In this article we reconsider the problem of the propagation of waves in a random medium in a kinetic regime. The final aim of this program would be the understanding of the conditions which allow to derive a kinetic or radiative transfer equation. Although it is not reached for the moment, accurate and somehow surprising number estimates in the Fock space setting, which happen to be propagated by the dynamics on macroscopic time scales, are obtained. Keel and Tao endpoint Strichartz estimates play a crucial role after being combined with a Cauchy-Kowalevski type argument. Although the whole article is focussed on the simplest case of Schr{\"o}dinger waves in a gaussian random potential of which the translation into a QFT problem is straightforward, several intermediate results are written in a general setting in order to be applied to other similar problems.
\end{abstract}
\textbf{Keywords:} Random media, waves and Schr{\"o}dinger equations, Strichartz estimates, Cauchy-Kowalevski, Fock space, number estimates.
\\
\textbf{MSC2020} 35A10, 35Q20, 35Q40, 35Q60,35R60,60H15,81V73
\\
\section{Introduction}
\label{sec:intro}

The asymptotic analysis or random homogenization of wave propagation
in a random medium, in a kinetic or diffusive regime has motivated
several works in the recent decades. It is not our purpose here to
give an exhaustive list but we think essentially of two different approaches: the
one initiated by G.~Papanicolaou and coauthors (see e.g. \cite{FGPS,Pap,RPK}) with a rather complete
review by J.~Garnier in \cite{Gar} and the one proposed by L.~Erd{\"o}s, H.T.~Yau and later
with  M.~Salmhofer in
\cite{ErYa}\cite{EYS1}\cite{EYS2}\,. 
Those two approaches formulate their results in terms of a kinetic (or diffusive)
evolution equation for some weak limit of scaled Wigner functions.
The main difference between the two approaches can be summarized as
follows
: The first approach presented in \cite{Gar} modeled 
on the
problem of randomly layered media (see \cite{FGPS}) focusses on
 space-time wave
functions, by solving a space-time PDE (it can be a Schr{\"o}dinger or a wave
equation) with random coefficients but with a smooth and essentially
deterministic right-hand side. With very strong assumptions on the
right-hand side of the equation, essentially deterministic and smooth,
a kinetic equation is written for the distributional weak limit of the
Wigner function  associated with the space-time wave function.
The work of \cite{ErYa}\cite{EYS1}\cite{EYS2} is concerned
with Cauchy problems, at the quantum level for the Schr{\"o}dinger
equation and semiclassically at a classical level for a linear
Boltzmann equation in \cite{ErYa} or a heat equation in
\cite{EYS1}\cite{EYS2}.
The strategy of this second approach consists after writing a Dyson
expansion (the iteration of Duhamel's formula), in making an accurate
combinatorial analysis of Feynman diagrams which  label  all the
random interaction terms of the expanded Dyson series. This Dyson
expansion technique was actually already used for a similar problem by
H.~Spohn in \cite{Spo}.
The final step which gives the asymptotic
behaviour of the Wigner transform, essentially relies on the accurate
control and expression of the remaining terms of the series by using
stationary phase asymptotic expressions for the many oscillating
integrals. The results of this second approach always require strong
assumptions on the initial data at the initial time $t=0$ and prove
weak convergence results at the macroscopic time $t\neq 0$\,. 

The main difficulty in this problem is concerned with the control of
recollisions and especially the proof that the asymptotic evolution is
Markovian, or given by some semigroup associated to a kinetic of heat
equation,
 although the multiple scattering process of waves could
destroy this markovian aspect. Depending on the asymptotic regime, the effective
asymptotic evolution could be affected  by some memory or non local in
time effect. In the considered asymptotic problems, it must be checked
that those memory effects vanish asymptotically.
In the approach reviewed in \cite{Gar} which  is concerned with rather
general random fields, this is proved by estimating higher moments.
In the approach of \cite{ErYa} the combinatorial accurate analysis of
Feynman diagrams, is reminiscent of the accurate control of
recollision terms by G.~Gallavotti in \cite{Gal} for the classical
Lorentz gas problem (Wind tree model). Both approaches bring accurate
information about a difficult problem in slightly different frameworks
and with various range of applications.\\

However those results remain unsatifactory from the mathematical point
of view and for the following reason: The dynamics of (quantum) waves
is given by a semigroup (actually a unitary group when there is no
dissipation) and the asymptotic kinetic or diffusive limits are also
given by well defined (semi)-groups. In the Cauchy problem approach,
one does not yet understand the dynamically stable class of initial
data which makes the derivation of a classical kinetic or heat
equation possible. Actually the results of
\cite{ErYa}\cite{EYS1}\cite{EYS2} are themselves puzzling because with
very specific initial data at time $t=0$\,, they prove the asymptotic
expected behaviour at the macroscopic time $t\neq 0$\,. But this means
that the time evolved quantum state at the macroscopic time $t/2\neq
0$\,, enters in the class of admissible initial data for which the
asymptotic evolution can be proved for a nonzero time interval (at the
macroscopic scale). Such  initial data do not enter in the very
specfic class considered at time $t=0$\,. In the space time approach
reviewed in \cite{Gar} the strong assumptions on the right-hand side
compared with the weak convergence results of the wave function, have
been considered in a negative way. Actually what is called ``statistical
stability'' is shown to fail with rough data (see \cite{Bal}). But no
positive answer for a general class of random right-hand side seems to
emerge. Although the two approaches are about slightly different
problems, they seem related at least for some basic random processes
on which  we will focus in this article.\\
Our hope is that  such an analysis about the
propagation of random waves in a random medium should lead to results
relying on dynamically stable hypotheses. We are led in this direction
by the strategy
followed by the second author with Z.~Ammari in
\cite{AmNi1}\cite{AmNi2} where they managed to give a general and
robust class of initial data, dynamically stable, such that the
quantum mean field dynamics can be followed.\\
About this very technical question a first attempt was tried by the
first author in \cite{Bre}. The idea was to exploit the link between
gaussian random fields (and possibly other fields like the poissonian
random fields) with quantum field theory. It rapidly appears that the
asymptotic problem, of waves in a random medium in a gaussian random
field in the kinetic regime,  cannot be thought as an infinite semiclassical
problem like the bosonic mean field problem. It has some similarities
but the strength of the free wave propagator and the translation
invariance lead to non quadratic and non ``semiclassical'' Wick
quantized operators. For this reason the coherent state method
presented in \cite{Bre} led to an accurate Ansatz, only for $O(h^{1/2})$
macroscopic times, where $h>0$ is the chosen small parameter, and the
derivation of a linear Boltzmann equation was possible only by forcing
the markovian nature of the asymptotic evolution by reinitializing on
some intermediate time scale the random potential. It was not at all
satisfactory. Actually the number estimates that we prove in this
article confirm that a coherent state approach cannot work for those
problems.\\

Another issue of this problem is the good understanding of the
dispersive properties of the free wave propagator with the asymptotic
behaviour of waves in a random medium. The different behaviours
expected in small dimension, $d\leq 2$ for the Schr{\"o}dinger equation in
the kinetic regime compared to $d\geq 3$\,, are closely linked with the time integrability
of the dispersion relation ($L^{1}-L^{\infty}$ estimates). In the
community of nonlinear PDE's, Strichartz estimates are known to be
more robust and effective than the pointwise in time
$L^{1}-L^{\infty}$ estimate. With the endpoint Strichartz estimates
proved by Keel and Tao in \cite{KeTa}, those inequalities are now
well adapted for linear critical problems. This article shows that
they actually lead to very accurate and somehow surprising ``number
estimates'' with some non trivial consequences.\\

Before giving the outline of this text, let us point out some
limitations and features of the present analysis: 
\begin{itemize}
\item We are not yet able to derive a full kinetic equation, except if
  one makes some connection with the existing results of
  \cite{ErYa}. The class of good initial data for which an asymptotic
  equation can be written is not yet identified.
\item We work essentially with the Schr{\"o}dinger equation in the
  presence of a gaussian random potential in the kinetic regime, as
 what we think to be the simplest, and richest model problem from the point of view
 of available structures.
\item Once the two previous points are made clear, the interested
  reader will realize that several argument, especially the one making
  use of Strichartz estimates, have been written in a sufficiently
  general framework in order to be transposed in another framework.
\item Some results like the possibility to define Wigner
  measures for all times, the localization in energy of the
  propagation phenomena, the class of potential corresponding to the
  scale invariant potential for Strichartz estimates, definitely bring
  a partial but accurate information. 
\end{itemize}

Our main results are about accurate number estimates, stated in
Proposition~\ref{pr:contracStri} in a rather general abstract setting
and in Theorem~\ref{th:applStri} for the case of our model problem of
the Schr{\"o}dinger equation with a gaussian translation invariant
potential in the kinetic regime and dimension $d\geq 3$\,.\\
\noindent 
\textbf{Outline of the article;}
\begin{description}
\item[a)] In Section~\ref{sec:randomFock} the link between gaussian Hilbert
  spaces and the bosonic Fock space is recalled and the equations in
  which we are interested are explicitely written.
\item[b)] In Section~\ref{sec:centermass} the translation invariance is used in
  order to make appear in a crucial way the center of mass variable,
  with respect to the \underline{position} of the field variable. The
  expression of the creation and annihiliation operators are given
  explicitely  in
  the center of mass and relative variables and finally
  $L^{p}$-estimates are carefully checked for those creation and
  annihilation operators under the suitable assumptions on the
  potential.
\item[c)] Section~\ref{sec:StriCM} reviews the known results about endpoint
  Strichartz estimates, and gives consequences in connection with the
  $L^{p}$-estimate in the center of mass given in
  Section~\ref{sec:centermass}. Then a rather general fixed point is proved
  which combines endpoint Strichartz estimates with an adaptation of 
Cauchy-Kowalevski techniques.
\item[d)] In Section~\ref{sec:conseqStri}, the general assumptions of Section~\ref{sec:StriCM} are checked
  in the framework of the Schr{\"o}dinger equation with a gaussian random
  field in the kinetic regime and ambient dimension $d\geq 3$\,.
\item[e)] Consequences and a priori information,  for the asymptotic evolution of Wigner
  functions are given in Section~\ref{sec:evolsemi}, withouth computing
  them.
\item[f)] Finally various approximation or stability results are
  deduced as consequences of the general estimates proved in Sections~\ref{sec:StriCM},~\ref{sec:conseqStri} and~\ref{sec:evolsemi}.
\end{description}

Before starting, be aware of the following assumed framework and
conventions: \\
 
All our Hilbert spaces, real or complex, are separable. 
All measures are assumed sigma-finite. On a set $\mathcal{X}$ endowed with a
sigma-set, a generic sigma-finite measure will be denoted
$\mathbf{dx}$\,, while the normal calligraphy $dx$ will be reserved
for the Lebesgue measure on $\mathcal{X}=\rz^{d}$\,. When $(\mathcal{X},\mathbf{dx})$ and
$(\mathcal{Y},\mathbf{dy})$ are two sigma-finite measured spaces, the
notation $L^{p}_{x}L^{q}_{y}$\,, $1\leq p,q\leq +\infty$\,, is used
for $L^{p}(\mathcal{X},\mathbf{dx};L^{q}(\mathcal{Y},\mathbf{dy}))$\,. However a more
general version of $L^{p}_{x}L^{q}_{y}$ will be introduced in Subsection~\ref{sec:genLpxLqy}.
\section{Random fields and Fock space}
\label{sec:randomFock}

\subsection{Gaussian Hilbert space and random fields}
\label{sec:gaussHilb}

Let $\mathcal{G}$ be the stochastic gaussian measure (see
e.g. \cite{Jan}) on the Lebesgue measured space 
$(\rz^{d},\mathcal{L}, dy)$\,. This defines a real Hilbert gaussian space
indexed by $L^{2}(\rz^{d},dy;\rz)$ which is generated, as a Hilbert space, by the centered
real gaussian variables $X_{A}\sim N(0,|A|)$\,, with
$A$ measurable set of $\rz^{d}$ and $|A|=\int_A dy$\,. By Minl{\"o}s
theorem (see \cite{Sim}) the space $L^{2}(\Omega,\mathcal{G};\rz)$
which contains powers of those gaussian processes can be realized with
$\Omega=\mathcal{S}'(\rz^{d},dy;\rz)$\,.\\
Complex valued elements $F\in L^{2}(\Omega,\mathcal{G};\cz)$ are
written $F=\Real F+i\Imag F$\,, $\Real F, \Imag F\in
L^{2}(\Omega,\mathcal{G};\rz)$ handled by the $\rz$-linearity of the
decomposition.\\
Once the complexification is fixed in this order (see \cite{Jan} for
an accurate description of various complex structures of gaussian measures),
the chaos decomposition of elements in
$F\in L^{2}(\Omega, \mathcal{G};\cz)$
can be written
\begin{equation}
  \label{eq:chaos}
F(\omega)=\displaystyle{\bigoplus_{n=0}^{\infty}}\int_{\rz^{dn}}F_{n}(y_{1},\ldots,y_{n}):X_{y_{1}}\cdots
X_{y_{n}}:~dy_{1}\cdots dy_{n}\,,
\end{equation}
where
\begin{itemize}
\item $F_{n}(y_{\sigma(1)},\ldots,y_{\sigma(n)})=F_{n}(y_{1},\ldots,y_{n})$ for
all $\sigma\in \mathfrak{S}_{n}$ and complex valued functions are
treated by the $\rz$-linearity of  the decomposition
$F_{n}=\Real(F_{n})+i\Imag{F_{n}}$\,;
\item the above symmetry can be written $F_{n}=S_{n}F_{n}$ where
  $S_{n}$ is the symmetrizing orthogonal projection on $
  L^{2}(\rz^{dn}, dy_{1}\cdots dy_{n};\cz)$ given by
  \begin{equation}
    \label{eq:symmproj}
    (S_{n}F_{n})(y_{1},\ldots,y_{n})=\frac{1}{n!}\sum_{\sigma\in \mathfrak{S}_{n}}F_{n}(y_{\sigma(1)},\ldots,y_{\sigma(n)})\,;
  \end{equation}
\item the family $(X_{y})_{y\in\rz^{d}}$ is made of
jointly gaussian real centered random fields such that
$\ez (X_{y}X_{y'})=\delta(y-y')$\,, which actually means
$$
\ez[(\int_{\rz^{d}}f(y)X_{y}~dy)(\int_{\rz^{d}}g(y')X_{y'}dy')]=\int_{\rz^{d}}f(y)g(y)~dy
$$
for all $f,g\in \mathcal{S}(\rz^{d};\cz)$\,;\footnote{We follow the general
probabilistic convention which omits the $\omega$ argument with
$X_{y}=X_{y}(\omega)$ e.g. in formula \eqref{eq:chaos}.}
\item products or
Wick products of singular random  variables $X_{y_{j}}$, $j=1\ldots J$\,, must be considered in their
weak formulation as well;
\item $:Y_{1}\cdots Y_{n}:$ stands for the Wick product of the random
  variables $Y_{1},\ldots,Y_{n}$\,;
\item with the assumed symmetry of the $F_{n}$ components\,,
\begin{equation}
    \label{eq:chaosnorm}
\ez(|F|^{2})=\int_{\Omega}|F(\omega)|^{2}~d\mathcal{G}(\omega)=\sum_{n=0}^{\infty}n!\int_{\rz^{dn}}|F_{n}(y_{1},\ldots,y_{n})|^{2}~dy_{1}\cdots dy_{n}=\sum_{n=0}^{\infty}n!\|F_{n}\|_{L^{2}}^{2}\,.
\end{equation}
\end{itemize}
A field is a random function of $x\in \rz^{d}$ and we shall consider
$F:\rz^{d}\times \Omega\to \cz$\,. A real gaussian centered translation invariant
field can be written
$$
\mathcal{V}(x,\omega)=\int_{\rz^{d}}V(y-x) \, X_{y}~dy\,.
$$
An element $F\in L^{2}(\rz^{d}_{x}\times \Omega,dx\otimes \mathcal{G};\cz)$ has the
chaos decomposition
\begin{align}
\label{eq:chaosx1}
  F(x,\omega)&=\displaystyle{\bigoplus_{n=0}^{\infty}}\int_{\rz^{dn}}\tilde{F}_{n}(x,y_{1},\ldots,y_{n}):X_{y_{1}}\cdots
X_{y_{n}}:~dy_{1}\cdots dy_{n}\\
\label{eq:chaosx2}
&=\displaystyle{\bigoplus_{n=0}^{\infty}}\int_{\rz^{dn}}F_{n}(x,y_{1}-x,\ldots,y_{n}-x):X_{y_{1}}\cdots
X_{y_{n}}:~dy_{1}\cdots dy_{n}
\end{align}
where
$F_{n}(x,y_{1},\ldots,y_{n})=\tilde{F}_{n}(x,y_{1}+x,\ldots,y_{n}+x)$
shares the same symmetry  in $(y_{1},\ldots,y_{n})$ as $\tilde{F}_{n}$
and
\begin{equation}
  \label{eq:chaosxnorm}
\|F\|_{L^{2}(\rz^{d}_{x}\times
  \Omega)}^{2}=\int_{\rz^{d}}\ez(|F(x,\cdot)|^{2})~dx
=\sum_{n=0}^{\infty}n!\|\tilde{F}_{n}\|_{L^{2}(\rz^{d}\times \rz^{dn})}^{2}=\sum_{n=0}^{\infty}n!\|F_{n}\|_{L^{2}(\rz^{d}\times \rz^{dn})}^{2}
\end{equation}
Assumptions on the real potential function $V$ will be specified later but we can already compute
the product $\mathcal{V}(x,\omega) \, F(x,\omega)$ by making use of Wick
formula (see e.g. \cite{Jan}-Theorem~3.15)
$$
X_{y}\, :X_{y_{1}}\cdots X_{y_{n}}{:} \, =\: :X_{y}X_{y_{1}}\cdots X_{y_{n}}{:}
+\sum_{j=1}^{n}\delta(y-y_{j})\, :X_{y_{1}}\cdots X_{y_{j-1}}
\underbrace{X_{y_{j}}}_{\mathrm{removed}} X_{y_{j+1}} \cdots X_{y_{n}}{:}
$$
which leads to the chaos decomposition of
$\mathcal{V}(x,\omega)F(x,\omega)$ as
\begin{multline}
\int_{\rz^{d(n+1)}}\frac{1}{(n+1)!}\sum_{\sigma\in \mathfrak{S}_{n+1}}V(y_{\sigma(n+1)}-x)\, F_{n}(x,y_{\sigma(1)}-x,\ldots,
y_{\sigma(n)}-x)~:X_{y_{1}}\cdots X_{y_{n+1}}{:}~dy_{1}\cdots
dy_{n+1}\,\\
\label{eq:WickV}
+\int_{\rz^{d(n-1)}}n\left[\int_{\rz^{d}}V(y)\, F_{n}(x,y,y_{1}-x,\ldots,y_{n-1}-x)~dy\right]~:X_{y_{1}}\cdots
X_{y_{n-1}}{:}~\,dy_{1}\cdots dy_{n-1}\,.
\end{multline}

\subsection{The Fock space presentation}
\label{sec:Focksp}
The chaos decomposition \eqref{eq:chaos} provides the isomorphism
between $L^{2}(\Omega, \mathcal{G};\cz)$ and the bosonic Fock space
$$
\Gamma(L^{2}(\rz^{d},dy;\cz))=\displaystyle{\bigoplus_{n=0}^{\infty}}(L^{2}(\rz^{d},dy;\cz))^{\odot
n}
$$ 
where for a (real or complex) Hilbert space $\mathfrak{h}$\,, $\mathfrak{h}^{\odot n}$ is the symmetric Hilbert completed
tensor product, equal to $\cz$ (or $\mathbb{R}$) for $n=0$\,,  endowed with the norm
such that
\begin{equation}
  \label{eq:normFock}
\|\varphi^{\otimes n}\|_{\mathfrak{h}^{\odot
    n}}=\|\varphi\|_{\mathfrak{h}}^{n}\quad,\quad
\|f_{n}\|_{L^{2}(\rz^{d},dy;\cz)^{\odot
    n}}=\|f_{n}\|_{L^{2}(\rz^{dn},dy_{1}\cdots dy_{n};\cz)}\,.
\end{equation}
The above direct sum is also the Hilbert completed direct sum. Note
that the Fock space norm \eqref{eq:normFock} differs from the
$\mathfrak{h}^{\odot n}$-norm chosen in \cite{Jan} in adequation with
Wick products by a factor $\sqrt{n!}$\,.
The unitary operator from $L^{2}(\Omega,\mathcal{G};\cz)$ to
$\Gamma(L^{2}(\rz^{d}_{y},dy;\cz))$ is thus given by
$$
F\mapsto \displaystyle{\bigoplus_{n=0}^{\infty}}f_{n}
\quad,\quad f_{n}=\sqrt{n!} \, F_{n}\,,
$$
since
$$
\|F\|_{L^{2}(\Omega,\mathcal{G};\cz)}^{2}=\sum_{n=0}^{\infty}n!
\|F_{n}\|_{L^{2}(\rz^{dn},
  dy_{1}\cdots dy_{n};\cz)}^{2}=\sum_{n=0}^{\infty}\|f_{n}\|^{2}_{L^{2}(\rz^{d},dy;\cz)^{\odot
  n}}\,.
$$
The Fock space $\Gamma(\mathfrak{h})$ is endowed with densely defined
Wick-quantized operators. For a monomial symbol
 $b(z)=\langle z^{\otimes q}\,,
\tilde{b}z^{\otimes p}\rangle$ with $\tilde{b}\in
\mathcal{L}(\mathfrak{h}^{\otimes p}; \mathfrak{h}^{\otimes q})$\,,
the Wick quantization $b^{\Wick}$ is defined on $\displaystyle{\bigoplus_{n\in
  \nz}^{\rmalg}} \, \mathfrak{h}^{\odot n}$ by
$$
b^{\Wick}f_{n+p}=\frac{\sqrt{(n+p)!(n+q)!}}{n!}S_{n+q}(\tilde{b}\otimes
\mathrm{Id}^{\otimes n})f_{n+p}
$$
where $S_{m}:\mathfrak{h}^{\otimes m}\to \mathfrak{h}^{\odot
m}$ is the symmetrizing orthogonal projection given by
\begin{equation}
  \label{eq:symmproj2}
S_{m}(g_{1}\otimes\cdots\otimes g_{m})=\frac{1}{m!}\sum_{\sigma\in
   \mathfrak{S}_{m}}g_{\sigma(1)}\otimes \cdots\otimes g_{\sigma(m)}
\end{equation}
already introduced in \eqref{eq:symmproj}.\\
Basic examples in our case $\mathfrak{h}=L^{2}(\rz^{d},dy;\cz)$ are given by
\begin{align*}
   a(g)&=(\langle g\,, z\rangle)^{\Wick}\,,\quad 
a(g)f_{n}(y_1,\dots,y_{n-1})=\sqrt{n}\int_{\rz^{d}}\overline{g(y)}f_n(y_{1},\ldots,
     y_{n-1},y)~dy\,,\\
  a^{*}(f) &=(\langle z\,,\, f\rangle)^{\Wick}\,,\quad 
a^{*}(f)f_{n}(y_1,\dots,y_{n+1})=\frac{\sqrt{n+1}}{(n+1)!}\sum_{\sigma\in \mathfrak{S}_{n+1}}f(y_{\sigma(1)})f_{n}(y_{\sigma(2)},\ldots,y_{\sigma(n+1)})\,,\\
 \phi(V)&=(\sqrt{2}\Real\langle V\,,\,z\rangle)^{\Wick}\,, \quad 
\phi(V)=\frac{1}{\sqrt{2}}[a(V)+a^{*}(V)]\,,\\
 d\Gamma(A)&=(\langle z,Az\rangle)^{\Wick}\,, \quad 
d\Gamma(A)=\sum_{k=0}^{n-1}\mathrm{Id}^{\otimes k}\otimes
   A\otimes \mathrm{Id}^{\otimes n-1-k}\,.
\end{align*}
with
\[[a(g),a^{*}(f)]=a(g)a^{*}(f)-a^{*}(f)a(g)=\langle g\,,\,  f\rangle\mathrm{Id}\,,\]
Remember also that  more generally, if $(A,D(A))$ generates a strongly continuous
semigroup of contractions $e^{tA}$, $t\geq 0$, then
$\Gamma(e^{tA})f_{n}=[e^{tA}]^{\otimes n}f_{n}$ defines a strongly
continuous semigroup of
contractions $\Gamma(e^{tA})$ on $\Gamma(\mathfrak{h})$ with generator
denoted by $(d\Gamma(A),D(d\Gamma(A)))$\,, which extends the above
definition of $d\Gamma(A)$\,. In particular this makes sense for
$A=-iB$ with $(B,D(B))$ self-adjoint on $\mathfrak{h}$ and $(d\Gamma(B),D(d\Gamma(B)))$
is a self-adjoint operator on $\Gamma(\mathfrak{h})$ when $(B,D(B))$
is self-adjoint on $\mathfrak{h}$\,.\\
According to \eqref{eq:chaosx2}\eqref{eq:chaosxnorm}, random
$L^{2}(\rz^{d},dx;\cz)$ functions $F(x,\omega)$
can be written as elements $f$ of $L^{2}(\rz^{d},dx;\cz)\otimes
\Gamma(L^{2}(\rz^{d},dy;\cz))$\,,
\begin{eqnarray*}
  &&
F(x,\omega)\mapsto
     f(x,\cdot -x)=\displaystyle{\bigoplus_{n\in\nz}}f_{n}(x,y_{1}-x,\ldots,y_{n}-x)\\
\text{with}&&f_{n}\in
L^{2}_{\mathrm{sym}}(\rz^{d}_{x}\times \rz^{dn},dxdy_{1}\cdots dy_{n};\cz)\,,\\
&&
\|F\|^{2}_{L^{2}(\rz^{d}\times \Omega,dx\otimes \mathcal{G})}
=\sum_{n=0}^{\infty}\|f_{n}\|^{2}_{L^{2}(\rz^{d}\times
              \rz^{dn},dxdy_{1}\cdots dy_{n})}\,,
\end{eqnarray*}
and where $L^{2}_{\mathrm{sym}}$ refers to the exchange symmetry in the $y$-variables.\\
When $V\in L^{2}(\rz^{d},dy;\rz)$ and
$\mathcal{V}(x,\omega)=\int_{\rz^{d}}V(y-x)\, X_{y}~dy$\,, the Wick product
formula \eqref{eq:WickV} for $\mathcal{V}(x,\omega)F(x,\omega)$ is
transformed into
\begin{equation}
  \label{eq:prodFock}
\mathcal{V}(x,\omega)F(x,\omega)\mapsto [a(V)+a^{*}(V)]f(x,\cdot -x)=[\sqrt{2}\phi(V)f](x,\cdot -x)\,.
\end{equation}
With the notation $D_{y}=\frac{1}{i}\partial_{y}=
\begin{pmatrix}
\frac{1}{i}\partial_{y^{1}}\\
\vdots\\
\frac{1}{i}\partial_{y^{d}}  
\end{pmatrix}
$ the operator $(x\cdot D_{y},D(x\cdot D_{y}))$\,, with
$x\cdot D_{y}=\sum_{k=1}^{d}x^{k}D_{y^{k}}$\,, is
essentially self-adjoint on $\mathcal{S}(\rz^{d},dy;\cz)$ for all
$x\in\rz^{d}$\,. This defines a strongly continuous unitary
representation of the additive group $(\rz^{d}_{x},+)$ on $L^{2}(\rz^{d}_{x})\otimes
\Gamma(L^{2}(\rz^{d}_{y}))$ given by
$$
e^{-ix\cdot d\Gamma(D_{y})}(\displaystyle{\bigoplus_{n=0}^{\infty}}f_{n}(x,y_{1},\ldots,y_{n}))=\displaystyle{\bigoplus_{n=0}^{\infty}}f_{n}(x,y_{1}-x,\ldots,y_{n}-x)\,.
$$
Therefore the above unitary correspondence $F(x,\omega)\mapsto
f(x,\cdot -x)$  gives a unitary correspondence
\begin{equation}
  \label{eq:unitFock}
F\in L^{2}(\rz^{d}\times \Omega, dx\otimes \mathcal{G};\cz)\mapsto f\in
L^{2}(\rz^{d},dx;\cz)\otimes \Gamma(L^{2}(\rz^{d},dy;\cz))\,,
\end{equation}
while \eqref{eq:prodFock} becomes for $V\in L^{2}(\rz^{d},dy;\rz)$
\begin{equation}
  \label{eq:prodFock2}
\mathcal{V}F\mapsto \left[\sqrt{2}\Phi(V)f\right]\,.
\end{equation}
We now translate a general pseudo-differential operator in the $x$-variable\,,
$a^{\Weyl}(x,D_{x})\otimes
\mathrm{Id}_{L^{2}(\Omega,\mathcal{G};\cz)}$ under the above
transformation \eqref{eq:unitFock}.\\
When $\mathfrak{h}$ is a complex Hilbert space, we recall that
$L^{2}(\rz^{d},dx;\cz)\otimes \mathfrak{h}$ equals
$L^{2}(\rz^{d},dx;\mathfrak{h})$ and
\begin{itemize}
\item
the
  Fourier transform, with the normalization
$$
Fu(\xi)=\int_{\rz^{d}}e^{-i\xi \cdot x}u(x)~dx\,,\quad F^{-1}v(x)=\int_{\rz^{d}}e^{ix \cdot \xi}v(\xi)~\frac{d\xi}{(2\pi)^{d}}\,,
$$
is
unitary from $L^{2}(\rz^{d},dx;\mathfrak{h})$ to
$L^{2}(\rz^{d},\frac{d\xi}{(2\pi)^{d}};\mathfrak{h})$\,;
\item $\mathcal{S}(\rz^{d};\mathfrak{h})$\,,
$\mathcal{S}'(\rz^{d};\mathfrak{h})$ and the Fourier transform have
the same properties as in the scalar case $\mathfrak{h}=\cz$\,.
\end{itemize}
Be aware that the behavior of the Fourier transform when
$\mathfrak{h}$ is a general Banach space is more tricky
according to \cite{Pee}. So when $\mathfrak{h}$ is a Hilbert space, we consider
pseudo-differential operators in the $x$-variable of the form
$a^{\Weyl}(x,D_{x})=a^{\Weyl}(x,D_{x})\otimes
\mathrm{Id}_{\mathfrak{h}}$ for a symbol $a\in
\mathcal{S}'(\rz_{x,\xi}^{2d};\cz)$ given by its Schwartz' kernel
$$
[a^{\Weyl}(x,D_{x})](x,y)=\int_{\rz^{d}}e^{i(x-y)\cdot \xi}a\left(\frac{x+y}{2},\xi\right)~\frac{d\xi}{(2\pi)^{d}}\,.
$$
When $\mathfrak{h}=\cz$\,, $a^{\mathrm{Weyl}}(x,D_{x})$ is a
continuous endomorphism of $\mathcal{S}(\rz^{d}_{x};\cz)$ and
$\mathcal{S}'(\rz^{d}; \cz)$ with the formal adjoint
$\overline{a}^{\mathrm{Weyl}}(x,D_{x})$ and the alternative
representations:
\begin{itemize}
\item When $v,u\in \mathcal{S}(\rz^{d};\cz)$\,,
$$
\langle v\,,\,
a^{\Weyl}(x,D_{x})u\rangle=\int_{\rz^{2d}}a(x,\xi)\, W[v,u](x,\xi)~\frac{dx\, d\xi}{(2\pi)^{d}}
$$
where $W[v,u]$ is the Wigner function  of the pair $[v,u]$ (or the
Weyl symbol of $|u\rangle\langle v|$), given by
$$
W[v,u](x,\xi)=\int_{\rz^{d}}e^{i\xi\cdot s}\,u(x+\frac{s}{2})\,\,\overline{v}(x-\frac{s}{2})~ds\,,
$$
and which belongs to $\mathcal{S}(\rz^{2d};\cz)$\,.
\item By setting
$\llbracket P,X\rrbracket=p_{\xi}\cdot x-p_{x}\cdot \xi$ for
  $P=(p_{x},p_{\xi})$\,, $X=(x,\xi)$ in $\rz^{2d}=T^{*}\rz^{d}$\,, and 
$$
\mathcal{F}a(P)=\int_{\rz^{2d}}e^{i\llbracket P,X\rrbracket}a(X)~\frac{dX}{(2\pi)^{d}}
$$
we have $a=\mathcal{F}(\mathcal{F}a)$ in
$\mathcal{S}'(\rz^{2d})$\,. When $\mathcal{F}a\in
L^{1}(\rz^{2d};\cz)$\,,
$$
a^{\Weyl}(x,D_{x})=\int_{\rz^{2d}}\mathcal{F}a(P)\,
\,\tau_{P}~\frac{dP}{(2\pi)^{d}}\,,
$$
where
$\tau_{P}=e^{i(p_{\xi}\cdot x-p_{x}\cdot D_{x})}=[e^{i(p_{\xi}\cdot x-p_{x}\cdot \xi)}]^{\Weyl}(x,D_{x})$
is  the unitary phase translation 
$$
\tau_{P}\,u\,(x)=e^{ip_{\xi}\cdot(x-p_{x}/2)}u(x-p_{x})\,.
$$
In particular, the above integral is a
$\mathcal{L}(L^{2}(\rz^{d},dx;\cz))$-integral when $\mathcal{F}a\in
L^{1}(\rz^{2d},dP;\cz)$ and a fortiori when $a\in \mathcal{S}(\rz^{2d};\cz)$\,.
\end{itemize}
With those two remarks, for  a general 
$a\in \mathcal{S}'(\rz^{2d};\cz)$ the integral
$$
a^{\Weyl}(x,D_{x})
=\int_{\rz^{2d}}\mathcal{F}a(P) \,\underbrace{e^{i(p_{\xi}\cdot x-p_{x}\cdot D_{x})}}_{=\tau_{P}}~\frac{dP}{(2\pi)^{d}}
$$
can be interpreted as the weak limit
$$
a^{\Weyl}(x,D_{x})=\wlim_{n\to\infty} \int_{\rz^{2d}}\mathcal{F}a_{n}(P)\, e^{i(p_{\xi}\cdot x-p_{x}\cdot D_{x})}~\frac{dP}{(2\pi)^{d}}\,,
$$
where $a_{n}\in \mathcal{S}(\rz^{2d};\cz)$ is any approximation of
$a\in \mathcal{S}'(\rz^{2d};\cz)$\,.\\
While considering the $a^{\Weyl}(x,D_{x})\otimes
\mathrm{Id}_{\mathfrak{h}}$\,, the same construction makes sense after
noticing that for $u,v\in \mathcal{S}(\rz^{d};\mathfrak{h})$\,, the
Wigner transform $W[v,u]$ belongs to
$\mathcal{S}(\rz^{2d}_{x,\xi};\mathcal{L}^{1}(\mathfrak{h}))$\footnote{$\mathcal{L}^{p}(\mathfrak{h})$
denotes the Schatten space of compact operators for $1\leq p\leq+\infty$\,.}
and 
$$
\langle v\,,\,
a^{\Weyl}(x,D_{x})u\rangle =\mathrm{Tr}\left[[a^{\Weyl}(x,D_{x})\otimes
\mathrm{Id}_{\mathfrak{h}}] \, |u\rangle\langle v|\right]
=\int_{\rz^{2d}}a(x,\xi)\, \mathrm{Tr}[W[v,u]](x,\xi)~\frac{dxd\xi}{(2\pi)^{d}}\,.
$$ 
We apply this with $\mathfrak{h}=L^{2}(\Omega,\mathcal{G};\cz)$ and
$\mathfrak{h}=\Gamma(L^{2}(\rz^{d},dy;\cz))$:
We start from
$$
a^{\Weyl}(x,D_{x})=a^{\Weyl}(x,D_{x})\otimes
\mathrm{Id}_{L^{2}(\Omega,\mathcal{G};\cz)}=\wlim_{n\to\infty}\int_{\rz^{2d}}\mathcal{F}a_{n}(P)e^{i(p_{\xi}\cdot x-p_{x}\cdot D_{x})}~\frac{dP}{(2\pi)^{d}}\,,
$$ 
the correspondance
$$
a^{\Weyl}(x,D_{x})F\mapsto e^{ix\cdot d\Gamma(D_{y})}a^{\Weyl}(x,D_{x})e^{-ix\cdot d\Gamma(D_{y})}f\,,
$$
and
$$
e^{ix\cdot \lambda}e^{i(p_{\xi}\cdot x-p_{x}\cdot D_{x})}(e^{-ix\cdot \lambda}\times)=e^{i(p_{x}\cdot x-p_{x}\cdot (D_{x}-\lambda))}\quad\text{for~all}~\lambda\in
\rz^{d}
$$
which gives by the functional calculus, the equality of unitary operators
$$
e^{ix\cdot d\Gamma(D_{y})}e^{i(p_{\xi}\cdot x-p_{x}\cdot D_{x})}(e^{-ix\cdot d\Gamma(D_{y})})=e^{i(p_{x}\cdot x-p_{x}\cdot (D_{x}-d\Gamma(D_{y})))}\,.
$$
We deduce that for $a\in \mathcal{S}'(\rz^{2d};\cz)$\,,
$a^{\Weyl}(x,D_{x})F\in
\mathcal{S}'(\rz^{d}_{x};L^{2}(\Omega,\mathcal{G};\cz))$ is
transformed into
\begin{equation}
\label{eq:WeyldG}
  a^{\Weyl}(x,D_{x}) \, F\mapsto
  a^{\Weyl}(x,D_{x}-d\Gamma(D_{y})) \, f\in \mathcal{S}'(\rz^{d};\Gamma(L^{2}(\rz^{d},dy;\cz)))\,.
\end{equation}
with
$$
a^{\Weyl}(x,D_{x}-d\Gamma(D_{y}))=
\wlim_{n\to\infty} \int_{\rz^{2d}}\mathcal{F}a_{n}(P)e^{i(p_{\xi}\cdot x-p_{x}\cdot (D_{x}-d\Gamma(D_{y})))}~\frac{dP}{(2\pi)^{d}}\,.
$$
Let us continue by applying the Fourier transform in the $x$-variable
with
$$
F_{x}u(\xi)=\int_{\rz^{d}}e^{-i\xi\cdot x}u(x)~dx\quad,\quad
F_{x}^{-1}u(x)=\int_{\rz^{d}}e^{ix\cdot \xi}u(\xi)~\frac{d\xi}{(2\pi)^{d}}\,
$$
and set for $f\in \mathcal{S}'(\rz^{d}_{x};\Gamma(L^{2}(\rz^{d},dy;\cz)))$
$$
\hat{f}=F_{x}f\in \mathcal{S}'(\rz^{d};\Gamma(L^{2}(\rz^{d},dy;\cz)))\,.
$$
With 
$$
F_{x} \, a^{\Weyl}(x,D_{x}) \, F_{x}^{-1}=
a^{\mathrm{Weyl}}(-D_{\xi},\xi)
$$
where the functional calculus leads to $F_{x} \, a^{\Weyl}(x,D_{x}-d\Gamma(D_{y})) \, F_{x}^{-1}=
a^{\Weyl}(-D_{\xi},\xi-d\Gamma(D_{y}))$\,,
we obtain the unitary correspondence
\begin{eqnarray}
    \label{eq:unitFockFou1}
  &&F\in L^{2}(\rz^{d}\times\Omega,dx\otimes\mathcal{G};\cz)\mapsto
  \hat{f}=F_{x}f\in
  L^{2}(\rz^{d},\frac{d\xi}{(2\pi)^{d}};\Gamma(L^{2}(\rz^{d},dy;\cz)))\,,\\
\label{eq:unitFockFou2}
\text{with}&&
F(x,\omega)=\sum_{n=0}^{\infty}\int_{\rz^{dn}}\frac{1}{\sqrt{n!}}f_{n}(x,y_{1}-x,\ldots,y_{n}-x):X_{y_{1}}\cdots
              X_{y_{n}}:~dy_{1}\cdots dy_{n}\,,
\end{eqnarray}
and where \eqref{eq:prodFock2} and \eqref{eq:WeyldG} become 
\begin{align}
  \label{eq:prodFockFou}
\mathcal{V} \, F&\mapsto \sqrt{2}\phi(V) \, \hat{f}\,,\\
\label{eq:WeyldGFou}
\underbrace{a^{\Weyl}(x,D_{x}) \, F}_{\in
             \mathcal{S}'(\rz^{d}_{\xi};L^{2}(\Omega,\mathcal{G};\cz))}&\mapsto
             \underbrace{a^{\Weyl}(-D_{\xi},\xi-d\Gamma(D_{y})) \, \hat{f}}_{\in \mathcal{S}'(\rz^{d}_{\xi};\Gamma(L^{2}(\rz^{d},dy;\cz)))}\,.
\end{align}
From this point of view, the Fock space and functional analysis
presentation is simpler than sticking with the usual chaos
decomposition \eqref{eq:chaosx1} where Fourier transforms and
pseudo-differential operators do not seem to have simple probabilistic
interpretation.\\
\begin{remark}
\label{re:finFock}
  As a final remark, all the above constructions can be tensorized with
  an additional separable Hilbert space
  $\mathfrak{h}'=L^{2}(Z,\mathbf{dz};\cz)$\,. 
\end{remark}

\subsection{Our problem}
\label{sec:ourproblem}
We aim at studying the stochastic partial differential equation
\begin{equation}
  \label{eq:dynalea}
    \begin{cases}
      i\partial_{t}F=-\Delta_{x}F+\sqrt{h}\mathcal{V}F\,,\\
F(t=0)=F_{0}\,,
    \end{cases}
\end{equation}
where 
\begin{itemize}
\item $\mathcal{V}$ is the translation invariant gaussian random field
$$
\mathcal{V}(x,\omega)=\int_{\rz^{d}}V(y-x) \, X_{y}~dy\,,
$$
with $V\in L^{2}(\rz^{d};\rz)$\,;
\item the solution $F(t,x,\omega,z)$ is seeked in
  $\mathcal{C}^{0}(\rz; L^{2}(\rz^{d}\times \Omega\times Z,dx\otimes
  \mathcal{G}\otimes \mathbf{dz};\cz))$\,;
\item $h>0$ is a small parameter which will tend to $0$\,.
\end{itemize}
In particular we will consider the asymptotic behavior of
 quantities
 \begin{equation}
   \label{eq:quantity}
\langle F(\frac{t}{h})\,,\, a^{\Weyl}(hx,D_{x})
F(\frac{t}{h})\rangle_{L^{2}(\rz^{d}\times \Omega\times Z)}=
\int_{Z}\ez\left[\langle F(\frac{t}{h},z)\,,\, a^{\Weyl}(hx,D_{x})F(\frac{t}{h},z)\rangle_{L^{2}(\rz^{d},dx)}\right]~\mathbf{dz}(z)
\end{equation}
for $a\in S(1,dx^{2}+d\xi^{2})$ and $t\in
[0,T]$\,. Remember that the symbol class $S(1,dx^{2}+d\xi^{2})$ is the
set of $\mathcal{C}^{\infty}$-functions on $\rz^{2d}$ with all
derivatives bounded on $\rz^{2d}$\,.\\
Note that the variable $z\in Z$ does not appear in the equation. The
dynamics is thus well defined when it is defined for
$Z=\left\{z_{0}\right\}$  and $\mathbf{dz}=\delta_{z_{0}}$\,.
A sufficient condition was provided in \cite{Bre} by making use of
Nelson commutator method.
\begin{lemma}
\label{le:dyn}\textbf{Proposition~4.4 in \cite{Bre}:}
Assume $V\in H^{2}(\rz^{d};\rz)$ then the operator
$-\Delta_{x}+\sqrt{h}\mathcal{V}$ is essentially self-adjoint on
$\displaystyle{\bigoplus_{n\in\nz}^{\rmalg}}\mathcal{S}(\rz^{d}_{x};(L^{2}(\rz^{d},dy;\cz))^{\odot
  n})$ which is a dense subset of $
L^{2}(\rz^{d}_{x},dx;L^{2}(\Omega,\mathcal{G};\cz))=L^{2}(\rz^{d}\times\Omega,dx\otimes
\mathcal{G};\cz)$ by \eqref{eq:chaosx1}.
\end{lemma}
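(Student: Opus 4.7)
The plan is to apply Nelson's commutator theorem after transferring the problem to the Fock space picture of Subsection~\ref{sec:Focksp}. Using the unitary correspondence \eqref{eq:unitFock} together with \eqref{eq:prodFock2} and \eqref{eq:WeyldG} applied to the Weyl symbol $a(x,\xi)=|\xi|^{2}$, the operator $-\Delta_{x}+\sqrt{h}\mathcal{V}$ is unitarily equivalent to
\[
\tilde H = (D_{x}-d\Gamma(D_{y}))^{2} + \sqrt{2h}\,\phi(V)
\]
on $L^{2}(\rz^{d}_{x})\otimes\Gamma(L^{2}(\rz^{d}_{y}))$, and the algebraic core in the statement is sent to $\mathcal{D} = \bigoplus_{n}^{\rmalg}\mathcal{S}(\rz^{d}_{x})\otimes L^{2}(\rz^{d})^{\odot n}$, which is dense. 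It thus suffices to prove essential self-adjointness of $\tilde H$ on $\mathcal{D}$.

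As comparison operator I would take
\[
N = (D_{x}-d\Gamma(D_{y}))^{2} + d\Gamma(1-\Delta_{y}) + 1,
\]
a sum of commuting non-negative self-adjoint operators in the momenta $D_{x}$ and $D_{y}$, hence self-adjoint with $N\geq 1$ and admitting $\mathcal{D}$ as a core. The relative bound $\|\tilde H u\|\leq C\|Nu\|$ on $\mathcal{D}$ is immediate for the kinetic part and reduces, for the interaction, to the standard Segal number estimate $\|\phi(V)u\|\leq \|V\|_{L^{2}}\,\|(d\Gamma(1)+1)^{1/2}u\|$, valid already for $V\in L^{2}$. Since $N$ and the kinetic part of $\tilde H$ are joint functions of the commuting self-adjoint operators $D_{x}$ and $d\Gamma(D_{y})$, only $[\phi(V),N]$ contributes to the commutator bound required by Nelson's theorem. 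Using the identity $[d\Gamma(A),\phi(f)] = -i\pi(Af)$, where $\pi(f)=\tfrac{i}{\sqrt{2}}(a^{*}(f)-a(f))$, one gets
\[
[\phi(V),d\Gamma(1-\Delta_{y})]=-i\pi((1-\Delta_{y})V),
\]
while $[\phi(V),(D_{x}-d\Gamma(D_{y}))^{2}]$ equals (up to sign) the anticommutator of $D_{x}-d\Gamma(D_{y})$ with $\pi(D_{y}V)$. These Segal fields are controlled by the number estimate $\|\pi(f)u\|\leq \|f\|_{L^{2}}\,\|(d\Gamma(1)+1)^{1/2}u\|$, with $f\in\{D_{y}V,\,(1-\Delta_{y})V\}$, whose $L^{2}$-norms are finite precisely under the assumption $V\in H^{2}(\rz^{d};\rz)$; this is where the hypothesis is used.

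The main obstacle is the anticommutator coming from the kinetic commutator: it formally carries a full power of $D_{x}-d\Gamma(D_{y})$, whereas $N^{1/2}$ only absorbs one square root of its square. The remedy, in Nelson's sesquilinear form $\langle \tilde H u,Nu\rangle-\langle Nu,\tilde H u\rangle$, is to distribute one factor of $D_{x}-d\Gamma(D_{y})$ to each side of the inner product via Cauchy--Schwarz, leaving only the Segal operator in the middle; the number estimate for $\pi(D_{y}V)$ then absorbs the remainder into $\langle u,Nu\rangle$. With the commutator bound so established, Nelson's commutator theorem yields essential self-adjointness of $\tilde H$ on $\mathcal{D}$, hence of $-\Delta_{x}+\sqrt{h}\mathcal{V}$ on the original core.
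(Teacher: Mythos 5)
Your strategy matches the method the paper attributes to \cite{Bre}: transfer to the Fock picture and apply Nelson's commutator theorem with a comparison operator built from the free kinetic part plus a one-particle Sobolev number operator. The commutator identity $[d\Gamma(A),\phi(f)]=-i\pi(Af)$, the Segal-field number estimates, and the Cauchy--Schwarz splitting of the anticommutator of $D_{x}-d\Gamma(D_{y})$ with $\pi(D_{y}V)$ into two $N^{1/2}$-controlled factors are all correct, and the hypothesis $V\in H^{2}$ enters exactly where you locate it.

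There is, however, a genuine gap in how you identify the core after the unitary transfer. The correspondence \eqref{eq:unitFock} involves the operator $e^{-ix\cdot d\Gamma(D_{y})}$, a shift that \emph{couples} the $x$- and $y$-variables; the image of $\displaystyle{\bigoplus_{n}^{\rmalg}}\mathcal{S}(\rz^{d}_{x};L^{2}(\rz^{d},dy)^{\odot n})$ is therefore \emph{not} $\mathcal{D}=\displaystyle{\bigoplus_{n}^{\rmalg}}\mathcal{S}(\rz^{d}_{x})\otimes L^{2}(\rz^{d})^{\odot n}$. For instance $\phi(x)\psi(y)\mapsto\phi(x)\psi(y+x)$, which is Schwartz in $x$ with values in $L^{2}_{y}$ only if $\psi$ itself is smooth. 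Worse, $\mathcal{D}$ is not even contained in the domain of your comparison operator $N=(D_{x}-d\Gamma(D_{y}))^{2}+d\Gamma(1-\Delta_{y})+1$: both the cross term $D_{x}\cdot d\Gamma(D_{y})$ in the expansion of the square and $d\Gamma(-\Delta_{y})$ demand $H^{2}$-regularity in $y$ that a generic element of $L^{2}(\rz^{d})^{\odot n}$ does not possess. So $\mathcal{D}$ cannot serve as the core in Nelson's theorem, and the argument as written does not close.

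The repair is straightforward: run the Nelson argument on the smaller core $\mathcal{D}'=\displaystyle{\bigoplus_{n}^{\rmalg}}\mathcal{S}_{\rmsym}(\rz^{d}_{x}\times\rz^{dn}_{Y})$ of jointly Schwartz, $Y$-symmetric functions. This \emph{is} a core of $N$, and it \emph{is} contained in the image under \eqref{eq:unitFock} of the stated core, because the shear $(x,Y)\mapsto(x,Y-x)$ preserves the joint Schwartz class. Your commutator computation goes through verbatim on $\mathcal{D}'$ and gives essential self-adjointness there. Since the transferred operator is manifestly symmetric on the (larger) image of the stated core, which contains $\mathcal{D}'$, the chain of inclusions $\tilde H|_{\mathcal{D}'}\subset\tilde H|_{\text{image}}\subset(\tilde H|_{\text{image}})^{*}\subset(\tilde H|_{\mathcal{D}'})^{*}=\overline{\tilde H|_{\mathcal{D}'}}$ forces all closures to coincide, yielding essential self-adjointness on the image of the stated core and hence, by unitarity, on the stated core itself.
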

\begin{remark}
   A side corollary of our analysis says that the dynamics is well
   defined under the assumption $V\in
   L^{r^{\prime}_{\sigma}}(\rz^{d};\rz)$ with
   $r^{\prime}_{\sigma}=\frac{2d}{d+2}$ in dimension $d\geq 3$\,, See
   Subsection~\ref{sec:quantlow} at the end of the article.
 \end{remark}
Lemma~\ref{le:dyn} provides a  natural self-adjoint realization of
$-\Delta_{x}+\sqrt{h}\mathcal{V}$ in $\mathfrak{h}=L^{2}(\rz^{d}\times\Omega\times
Z,dx\otimes \mathcal{G}\otimes \mathbf{dz};\cz)$ and any initial datum
$F_{0}\in \mathfrak{h}$
defines a unique solution $F\in
\mathcal{C}^{0}(\rz;\mathfrak{h})$\,.\\
There are various reasons for introducing an additional variable $z\in
Z$\,, and this trick will be used repeatedly. One of them is the
following: Starting with $Z=\left\{z_{0}\right\}$ and
$\mathbf{dz}=\delta_{z_{0}}$\,, one may consider instead of
$F(\frac{t}{h})=U_{\mathcal{V}}(\frac{t}{h})F_{0}$ with
$U_{\mathcal{V}}(t)=e^{-it(-\Delta_{x}+\sqrt{h}\mathcal{V})}$\,, the evolution of a
state 
$$
\varrho(\frac{t}{h})=U_{\mathcal{V}}(\frac{t}{h})\varrho_{0}U_{\mathcal{V}}^{*}(\frac{t}{h})
$$
with $\varrho_{0}\in \mathcal{L}^{1}(L^{2}(\rz^{d}\times
\Omega;\cz))$\,, $\varrho_{0}\geq 0$\,, $\mathrm{Tr}[\varrho_{0}]=1$
possibly replacing $\|F_{0}\|_{L^{2}}=1$\,. By writing
$\varrho_{0}=\varrho_{0}^{1/2}\varrho_{0}^{1/2}$ one gets
$$
\varrho(\frac{t}{h})=[U_{\mathcal{V}}(\frac{t}{h})\varrho_{0}^{1/2}][U_{\mathcal{V}}(\frac{t}{h})\varrho_{0}^{1/2}]^{*}
$$
where $F(t)=U_{\mathcal{V}}(t)\varrho^{1/2}_{0}$ is the solution to
\eqref{eq:dynalea} in 
\begin{align*}
&\mathcal{L}^{2}(L^{2}(\rz^{d}\times \Omega,dx\times
\mathcal{G};\cz))\simeq  L^{2}(\rz^{d}\times \Omega\times Z, dx\otimes
\mathcal{G}\otimes \mathbf{dz})
\\
\quad\text{with}\quad&
Z=\rz^{d}\otimes \Omega\,,\quad \mathbf{dz}=dx\otimes \mathcal{G}\,,
\end{align*}
while the trace to be computed at time $\frac{t}{h}$ equals
$$
\mathrm{Tr}\left[a^{\Weyl}(hx,D_{x})\varrho(\frac{t}{h})\right]=\int_{Z}\ez\left[\langle
F(\frac{t}{h},z)\,,\, a^{\Weyl}(hx,D_{x})F(\frac{t}{h},z)\rangle_{L^{2}(\rz^{d},dx)}\right]~\mathbf{dz}(z)\,.
$$
Thus considering  the evolution of non negative trace class operators
instead of projectors on wave functions, becomes the same problem by
introducing the suitable additional parameter $z\in Z$\,.\\
The unitary correspondence
\eqref{eq:unitFockFou1}\eqref{eq:unitFockFou2}, with
\eqref{eq:prodFockFou}\eqref{eq:WeyldGFou} and
Remark~\ref{re:finFock}, transforms the dynamics \eqref{eq:dynalea}
into
\begin{equation}
  \label{eq:dynaleaFou}
    \begin{cases}
      i\partial_{t}\hat{f}=(\xi-d\Gamma(D_{y}))^{2}\hat{f}+\sqrt{2h}\phi(V)\hat{f}\,,\\
\hat{f}(t=0)=\hat{f}_{0}\,,
    \end{cases}
\end{equation}
and the quantity \eqref{eq:quantity} into
\begin{equation}
   \label{eq:quantityFou1}
\langle \hat{f}(\frac{t}{h})\,,\, a^{\Weyl}(-hD_{\xi},\xi-d\Gamma(D_{y}))
\, \hat{f}(\frac{t}{h})\rangle_{L^{2}(\rz^{d}\times \Omega\times
              Z,\frac{d\xi}{(2\pi)^{d}}\otimes \mathcal{G}\otimes\mathbf{dz})}\,.
\end{equation}
We will see that the variable $\xi\in\rz^{d}$ and even some part $Y'$ of
the variable $Y=(y_{1},\ldots,y_{n})$, when the total number is fixed
to $n$\,,
can be taken as another parameter
like $z\in Z$ for some points of the analysis. This leads
to a parameter  $z'$-dependent, $z'=(\xi,Y',z)\in \rz^{d}\times
\rz^{dn'}\times Z$\,, analysis in $L^{2}(\rz^{d(n-n')},dY'')$\,. Those
parameters appear in 
Section~\ref{sec:centermass} by introducing the center of mass
$Y''=y_{G}=\frac{y_{1}+\cdots+y_{n}}{n}$ and the relative coordinates
$y_{j}'=y_{j}-y_{G}$\,,  a general functional framework for parameter
dependent Strichartz estimates and their consequences are presented in
Section~\ref{sec:StriCM} and finally those are detailled in
Section~\ref{sec:conseqStri} for \eqref{eq:dynaleaFou}.

\section{The Fock space and the center of mass}
\label{sec:centermass}
According to \eqref{eq:dynaleaFou} our stochastic dynamics has been
translated in a parameter dependent dynamics in the Fock space. 
We shall consider an additional unitary transform using the center of
mass and the relative variables
$$
y^{n}_{G}=\frac{y_{1}+\cdots+y_{n}}{n}\,,\quad
y_{j}'=y_{j}-y^{n}_{G}
$$
in the $n$-particles sector, $n\geq 1$\,. It trivializes the free
dynamics when $\mathcal{V}\equiv 0$ or $V\equiv 0$\,. The expression
of the interaction term $\sqrt{2h}\phi(V)$ becomes more tricky but
various general estimates are given here.

\subsection{The unitary transform associated with the center of mass}
\label{sec:unitcenter}
We shall use the following notations for $n\geq 1$:
\begin{itemize}
\item A generic element of $\rz^{dn}$ will be written
  \begin{equation}
    \label{eq:vectorRdn}
    Y_{n}=(y_{1},\ldots,y_{n})\quad \text{with}\quad 
\left|Y_{n}\right|^{2}=\sum_{j=1}^{n}|y_{j}|^{2}\,.
  \end{equation}
\item The center of mass of $Y_{n}\in \rz^{dn}$ will be written
  \begin{equation}
    \label{eq:center}
y_{G}=y_{G}^{n}=\frac{y_{1}+\cdots+y_{n}}{n}
\end{equation}
and the relative coordinates $y_{j}'=y_{j}-y_{G}^{n}$ will be gathered
into
\begin{equation}
  \label{eq:relaY}
Y_{n}'=(y_{1}',\ldots,y_{n}')=(y_{1}-y_{G}^{n},\ldots,y_{n}-y_{G}^{n})\,.
\end{equation}
The vector $Y_{n}'$ actually belongs to the subspace
$\mathcal{R}^{n}=\left\{Y_{n}\in \rz^{dn}\,, \sum_{j=1}^{n}y_{j}=0\right\}$ and
we recall
\begin{equation}
  \label{eq:quadcent}
\left|Y_{n}\right|^{2}=n|y_{G}^{n}|^{2}+\left|Y_{n}'\right|^{2}
=n|y_{G}^{n}|^{2}+\sum_{j=1}^{n}|y_{j}'|^{2}\,.
\end{equation}
\end{itemize} 
With those notations the map $\rz^{dn}\ni Y_{n}\mapsto (y_{G}^{n},
Y_{n}')\in \rz^{d}\times \mathcal{R}^{n}\subset \rz^{d}\times \rz^{dn}$ is a
measurable map and the image measure of the Lebesgue measure
$|dY_{n}|=\prod_{j=1}^{n}|dy_{j}|$ is nothing but
\begin{equation}
  \label{eq:defmun}
dy_{G}\otimes d\mu_{n}(Y'_{n})=dy_{G}\otimes [n^{d}dy_{1}\cdots dy_{n}\delta_{0}(y_{1}+\cdots+y_{n})]\,.
\end{equation}
For $n\geq 2$ we can write $d\mu_{n}(Y'_{n})=n^d \prod_{j\neq
  j_{0}}dy'_{j}$ for any fixed $j_{0}\in \left\{1,\ldots,n\right\}$ by
taking the linear coordinates $(y'_{j})_{j\neq j_{0}}$ on $\mathcal{R}^n$ where
$y'_{j_{0}}=-\sum_{j\neq j_{0}}y'_{j}$\,. For $n=1$\,,
$\mathcal{R}^{1}=\left\{0\right\}$ and integrating with respect to
$Y_{1}'=y'_{1}\in \mathcal{R}^{1}$ is nothing but
the evaluation at $y'_{1}=0$\,. 
\begin{definition}
\label{de:mesmu} On $\sqcup_{n=1}^\infty \rz^{dn}$ the measure $\mu$
carried by $\mathcal{R}=\sqcup_{n=1}^{\infty}\mathcal{R}^{n}$ is defined by
\begin{align*}
\forall g_{n}\in \mathcal{C}^{0}_{c}(\rz^{dn})\,,\quad
\int_{\mathcal{R}^n}g_{n}(Y')~d\mu_{n}(Y')
&=\int_{\rz^{dn}}g_{n}(y_{1},\ldots,y_{n})\delta_{0}(y_{1}+\cdots+y_{n})\,n^{d}~dy_{1}\cdots
dy_{n}
\\
&\stackrel{n\geq 2}{=}
\int_{\rz^{d(n-1)}}g_{n}(y'_{1},\ldots,y_{n-1}',-\sum_{j=1}^{n-1}y_{j}')\,n^{d}~dy'_{1}\cdots dy'_{n-1}\,.
\end{align*}
For  $1\leq p<+\infty$\,, the space $L^{p}(\mathcal{R},d\mu)$ is the direct sum
$\displaystyle{\bigoplus_{n=1}^{\infty}}L^{p}(\mathcal{R}^{n},d\mu_{n})$ completed with respect to the norm
$\|\displaystyle{\bigoplus_{n=1}^{\infty}}g_{n}\|_{L^{p}}=\left(\sum_{n=1}^{\infty}\|g_{n}\|_{L^{p}(\mathcal{R}^{n},d\mu_{n})}^{p}\right)^{1/p}$\,.
The closed subspace of symmetric functions,
$g_{n}(y_{\sigma(1)}',\ldots,y'_{\sigma(n)})=g_{n}(y'_{1},\ldots,y'_{n})$
for all $\sigma\in \mathfrak{S}_{n}$ and for all $n\geq 1$\,,
is then denoted by $L^{p}_{\mathrm{sym}}(\mathcal{R},d\mu(Y'))$\,.
\end{definition}
For $g_{n}\in L^{2}(\rz^{dn}\times Z, dY^{n}\otimes \mathbf{dz};\cz)$\,,
$n\geq1$\,, the function
\begin{equation}
  \label{eq:defU_G}
g_{G,n}(y_{G},Y'_{n},z)
=U_{G}g_{n}(y_{G},Y'_{n},z)=g_{n}(y_{G}+Y_{n}',z)
\end{equation}
belongs to $L^{2}(\rz^{d}\times \mathcal{R}^{n}\times Z, dy_{G}\otimes
d\mu_{n}\otimes \mathbf{dz};\cz)$ with 
\begin{eqnarray*}
  && \|U_{G}g_{n}\|_{L^{2}(\rz^{d}\times \mathcal{R}^{n}\times Z, dy_{G}\otimes
     d\mu_{n}\otimes \mathbf{dz})}=\|g_{n}\|_{L^{2}(\rz^{dn}\times
     Z,dY_{n}\otimes \mathbf{dz})}\\
\text{and}&&
g_{n}(Y_{n},z)=(U_{G}^{-1}g_{G,n})(Y_{n},z)=g_{G,n}(y^{n}_{G}, Y_{n}-y^{n}_{G},z)\,.
\end{eqnarray*}
Additionally  $U_{G}:L^{2}(\rz^{d},dy)^{\odot n}\mapsto
L^{2}(\rz^{d},dy_{G};L^{2}_{\mathrm{sym}}(\mathcal{R}^{n},d\mu_{n}))=L^{2}_{\mathrm{sym}}(\mathcal{R}^{n},d\mu_{n};L^{2}(\rz^{d},dy_{G}))$
is unitary and the same result holds for the parameter $z\in Z$ version.
\begin{proposition}
\label{pr:unit} The map $U_{G}$ extended by $U_{G}g_{0}(z)=g_{0}(z)$
for $n=0$\,, defines a unitary map
\begin{equation}
  \label{eq:unitUG}
U_{G}:L^{2}(Z,dz;\Gamma(L^{2}(\rz^{d},dy;\cz)))\to L^{2}(Z;\cz)\oplus
L_{\mathrm{sym}}^{2}(Z\times \mathcal{R}, dz\otimes d\mu; L^{2}(\rz^{d},dy_{G};\cz))\,.
\end{equation}
When $d\Gamma_{G}(A)=U_{G}[d\Gamma(A)\otimes \mathrm{Id}_{L^{2}(Z,dz)}]U_{G}^{-1}$ for a self-adjoint operator $(A,D(A))$ in
$L^{2}(\rz^{d}_{y},dy)$\,, the case $A=D_{y}$ gives
\begin{equation}
  \label{eq:dGammaDy}
d\Gamma_{G}(D_{y})=U_{G}\,d\Gamma(D_{y})\, U_{G}^{-1}=D_{y_{G}}\,.
\end{equation}
For any bounded measurable function $\phi$ on $\mathcal{R}\times Z$ the
multiplication by $\phi(Y',z)\big|_{\mathcal{R}^{n}}=\phi_{n}(Y'_{n},z)$ for
$n\geq 1$\,, while $\phi_{0}:Z\to \cz$\,, commutes with $d\Gamma_{G}(D_{y})=D_{y_{G}}$
according to 
$$
\forall t\in \rz^{d}\,, \forall u\in L^{2}(Z,\mathbf{dz};\cz)\oplus L^{2}_{\mathrm{sym}}(Z\times \mathcal{R},\mathbf{dz}\otimes d\mu;L^{2}(\rz^{d},dy_{G},\cz))\quad
e^{it\cdot D_{y_{G}}}(\phi u)=\phi(e^{it\cdot D_{y_{G}}}u)\,.
$$
A particular case is $\phi_{n}(Y'_{n},z)=\varphi(n)$ for a bounded function
$\varphi:\nz\to \cz$\,.
\end{proposition}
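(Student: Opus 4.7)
The plan is to verify the three claims of the proposition in turn; each reduces, sector by sector, to an affine change of variables in $\rz^{dn}$ combined with the direct-sum structure of $\Gamma(L^{2}(\rz^{d},dy;\cz))$.

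\textbf{Unitarity of $U_G$.} For each $n\geq 1$, I first check that the $n$-sector map $g_{n}\mapsto g_{G,n}$ of \eqref{eq:defU_G} is unitary from $L^{2}(\rz^{dn}\times Z,dY_{n}\otimes\mathbf{dz})$ onto $L^{2}(\rz^{d}\times\mathcal{R}^{n}\times Z,dy_{G}\otimes d\mu_{n}\otimes\mathbf{dz})$: this was already noted right after \eqref{eq:defU_G}, using that the affine bijection $Y_{n}\leftrightarrow(y_{G},Y_{n}')$ pushes $dY_{n}$ forward to $dy_{G}\otimes d\mu_{n}$ by Definition~\ref{de:mesmu}. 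Symmetry is preserved because any permutation of the $y_{j}$'s fixes $y_{G}=y_{G}^{n}$ and permutes the relative coordinates $y_{j}'=y_{j}-y_{G}$ in exactly the same way; hence $L^{2}(\rz^{d},dy;\cz)^{\odot n}$ (tensored with the parameter $z$) is mapped onto $L^{2}_{\rmsym}(\mathcal{R}^{n},d\mu_{n};L^{2}(\rz^{d},dy_{G};\cz))$. The $n=0$ sector is handled by the identity convention. Assembling these isometries across the orthogonal Hilbert direct sum defining $\Gamma(L^{2}(\rz^{d},dy;\cz))$ yields the claimed unitarity \eqref{eq:unitUG}.

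\textbf{Identification of $d\Gamma_{G}(D_{y})$.} I argue via Stone's theorem applied to the strongly continuous unitary group $\Gamma(e^{-it\cdot D_{y}})$ generated by $d\Gamma(D_{y})$. In the $n$-sector this group equals $(e^{-it\cdot D_{y}})^{\otimes n}$ and acts on a symmetric $g_{n}$ by simultaneous translation of all the $y_{j}$'s: $\Gamma(e^{-it\cdot D_{y}})g_{n}(Y_{n})=g_{n}(y_{1}-t,\ldots,y_{n}-t)$. Since $y_{G}(Y_{n}-t\mathbf{1})=y_{G}(Y_{n})-t$ while $y_{j}'(Y_{n}-t\mathbf{1})=y_{j}'(Y_{n})$ for the diagonal translation $t\mathbf{1}=(t,\dots,t)\in\rz^{dn}$, conjugation by $U_{G}$ sends this to $(y_{G},Y_{n}')\mapsto g_{G,n}(y_{G}-t,Y_{n}')$, which is exactly the action of $e^{-it\cdot D_{y_{G}}}$. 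This holds in every sector (trivially for $n=0$ and, in the case $n=1$, via the convention $\mathcal{R}^{1}=\{0\}$, $d\mu_{1}=\delta_{0}$). Consequently $U_{G}\Gamma(e^{-it\cdot D_{y}})U_{G}^{-1}=e^{-it\cdot D_{y_{G}}}$ as strongly continuous unitary groups, and by Stone's theorem their self-adjoint generators coincide, giving \eqref{eq:dGammaDy}.

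\textbf{Commutation with multiplication by $\phi$.} On the target Hilbert space, multiplication by $\phi(Y',z)$ involves only the relative-coordinate and parameter variables, whereas $e^{it\cdot D_{y_{G}}}$ is spatial translation in $y_{G}$ alone. Viewing the target summand as the tensor product $L^{2}(\rz^{d},dy_{G};\cz)\otimes L^{2}_{\rmsym}(\mathcal{R}\times Z, d\mu\otimes\mathbf{dz};\cz)$ (with the $n=0$ piece treated separately), the multiplication operator factors as $\mathrm{Id}_{y_{G}}\otimes\phi$ and the translation as $e^{it\cdot D_{y_{G}}}\otimes\mathrm{Id}$; these commute on a dense subset and, by the boundedness of $\phi$, on the whole space. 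The special case $\phi_{n}(Y_{n}',z)=\varphi(n)$ is just multiplication by a bounded function of the grading and commutes trivially with the sector-wise action of $e^{it\cdot D_{y_{G}}}$. The only bookkeeping concern is keeping symmetry, the degenerate small-$n$ cases, and the parameter $z$ correctly in sight throughout; no analytic difficulty arises, as the whole statement amounts to an $L^{2}$ change of variables followed by the uniqueness part of Stone's theorem.
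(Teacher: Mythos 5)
Your proof is correct. The first and third claims are handled essentially as the paper does: unitarity is assembled from the componentwise unitarity already established after \eqref{eq:defU_G}, preservation of the symmetric subspace follows because permutations of the $y_j$'s fix $y_G$ and permute the relative coordinates, and the commutation with multiplication by $\phi(Y',z)$ is just separation of the variables $y_G$ and $(Y',z)$. For the middle claim $d\Gamma_G(D_y)=D_{y_G}$, however, you take a genuinely different route. The paper simply applies the chain rule directly,
\begin{equation*}
\partial_{y_G} g_{G,n}(y_G, Y_n')=\sum_{j=1}^{n}(\partial_{y_j}g_n)(y_G+y_1',\ldots,y_G+y_n')\,,
\end{equation*}
identifying the two generators on a core. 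You instead conjugate the unitary group $\Gamma(e^{-it\cdot D_y})$ by $U_G$, observe that the diagonal translation $Y_n\mapsto Y_n-t\mathbf{1}$ shifts $y_G$ by $-t$ while fixing $Y_n'$, conclude $U_G\,\Gamma(e^{-it\cdot D_y})\,U_G^{-1}=e^{-it\cdot D_{y_G}}$, and then invoke the uniqueness part of Stone's theorem. Your version buys a cleaner treatment of domain questions --- one only needs to match strongly continuous groups of bounded unitaries, with no need to track the domain of $d\Gamma(D_y)$ --- at the price of a slightly longer detour. Both are valid; the paper's is the more economical, yours the more self-evidently rigorous on the unbounded-operator side.
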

\begin{proof}
  The unitarity of $U_{G}$ comes at once from \eqref{eq:defU_G} and the
  componentwise unitarity already checked. 
For $d\Gamma_{G}(D_{y})=D_{y_{G}}$\,, simply write
$$
\partial_{y_{G}}g_{G,n}(y_{G},Y_{n}')=\partial_{y_{G}}g_{n}(y_{G}+y_{1}',\ldots,y_{G}+y_{n}')=
\sum_{j=1}^{n}(\partial_{y_{j}}g_{n})(y_{G}+y_{1}',\ldots,y_{G}+y_{n}')\,.
$$
The commutation statement comes from the separation of variables,
$y_{G}$ and $(Y',z)$\,.
\end{proof}
Introducing the center of mass thus simplifies the free transport part
of \eqref{eq:dynaleaFou}. It is not so for the interaction term
$\sqrt{2h}\phi(V)=\sqrt{h}[a(V)+a^{*}(V)]$\,. An explicit and useful
expression is nevertheless possible for
\begin{equation}
  \label{eq:defaGV}
  a_{G}(V)=U_{G} \, a(V) \, U_{G}^{-1}\quad\text{and}\quad
  a_{G}^{*}(V)=U_{G} \, a^{*}(V) \, U_{G}^{-1}\,.
\end{equation}
\begin{proposition}
  The operator $a_{G}(V)$ and $a_{G}^{*}(V)$ for $V\in
  L^{2}(\rz^{d},dy;\cz)$ have the following action on
  $f_{G,n}\in L^{2}_{\mathrm{sym}}(\mathcal{R}^{n}\times Z,d\mu_{n}\otimes
  \mathbf{dz};L^{2}(\rz^{d},dy_{G};\cz))$ for $n\geq 1$ and $f_{G,0}\in
  L^{2}(Z,\mathbf{dz};\cz)$ where we omit the transparent variable $z\in Z$:
  \begin{eqnarray}
    \label{eq:aG01}&&
    a_{G}(V)f_{G,0}=0\,,\quad
    [a_{G}(V)f_{G,1}]=\int_{\rz^{d}}\overline{V(y_{1})}f_{G,1}(y_{1})~dy_{1}\,,\\
\nonumber
\forall n>1\,,&&\hspace{-0.5cm}
      [a_{G}(V)f_{G,n}](y_{G},Y'_{n-1})=\sqrt{n}\int_{\rz^{d}}\overline{V(y_{G}+y_{n})}f_{G,n}(y_{G}+\frac{y_{n}}{n},
      Y_{n}-\frac{y_{n}}{n})~dy_{n}\,,\\
\label{eq:aGVn}
\text{with}&& Y_{n}=(y_{1}',\ldots,y_{n-1}',y_{n})\in
              \rz^{dn}\,,\quad Y'_{n-1}\in \mathcal{R}^{n-1}\,,\quad
              Y_{n}-\frac{y_{n}}{n}\in \mathcal{R}^{n}\,,
\\
&&
a_{G}^{*}(V)f_{G,0}(y_{G})=V(y_{G})f_{G,0}\,,\\
\nonumber
\forall n>0\,,&&
a_{G}^{*}(V)f_{G,n}(y_{G},Y'_{n+1})=\sqrt{n+1}S_{n+1}[V(y_{G}+y'_{n+1})f_{G,n}(y_{G}-\frac{y_{n+1}'}{n},Y_{n}+\frac{y_{n+1}'}{n})]\,,\\
\nonumber
\text{with}&&
Y_{n}=(y_{1}',\ldots,y_{n}')\in\rz^{dn}\,,\quad Y'_{n+1}\in
                   \mathcal{R}^{n+1}\,,\quad
Y_{n}+\frac{y_{n+1}'}{n}\in \mathcal{R}^{n}\,,
\\
\label{eq:aG*n}
\text{and}
&&
S_{n+1}[v(y'_{n+1})u(y_{1}',\ldots,y_{n+1}')]=
\frac{1}{(n+1)!}\sum_{\sigma\in \mathfrak{S}_{n+1}}v(y'_{\sigma(n+1)})u(y'_{\sigma(1)},\ldots,y'_{\sigma(n+1)})\,.
  \end{eqnarray}
\end{proposition}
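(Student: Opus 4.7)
The strategy is direct: compute $a_G(V)f_{G,n} = U_G \, a(V) \, U_G^{-1} f_{G,n}$ and $a_G^*(V)f_{G,n} = U_G \, a^*(V) \, U_G^{-1} f_{G,n}$ by carrying out the three transformations one after the other, using the explicit formulas for $a(V)$, $a^*(V)$ recalled in Subsection~\ref{sec:Focksp} and the formula $U_G^{-1}g_{G,n}(Y_n) = g_{G,n}(y_G^n(Y_n), Y_n - y_G^n(Y_n))$ coming from Definition~\ref{de:mesmu} and \eqref{eq:defU_G}. Throughout, the variable $z\in Z$ is transparent. The edge cases $n=0,1$ for $a_G(V)$ and $n=0$ for $a_G^*(V)$ fall out by inspection because $\mathcal{R}^0$ and $\mathcal{R}^1$ both reduce to a single point, so the new center of mass coincides with the unique remaining coordinate.

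For $a_G(V)$ with $n\geq 2$, I first write $f_n := U_G^{-1}f_{G,n}$ as $f_n(y_1,\ldots,y_n)=f_{G,n}\bigl(y_G^n, y_1-y_G^n,\ldots,y_n-y_G^n\bigr)$, then apply $a(V)$, which kills one variable by integration against $\overline{V}$, and finally rewrite the remaining $n-1$ coordinates in center-of-mass form through $U_G$, meaning $y_j = y_G + y_j'$ for $j=1,\ldots,n-1$ with $Y'_{n-1}\in\mathcal{R}^{n-1}$. The core algebraic step is recomputing the $n$-body center of mass of the enlarged configuration after the substitution $y_n \to y_G + y_n$ suggested by the statement: using $\sum_{j=1}^{n-1}y_j'=0$ one finds $y_G^n = y_G + \tfrac{y_n}{n}$, and the relative coordinates become $y_j' - \tfrac{y_n}{n}$ for $j<n$ and $\tfrac{(n-1)y_n}{n}$ for $j=n$, which is exactly the vector $Y_n - \tfrac{y_n}{n}\in\mathcal{R}^n$ of the statement. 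This yields \eqref{eq:aGVn}.

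For $a_G^*(V)$ with $n\geq 1$, the same three-step scheme applies, the only refinement being the symmetrization. After $U_G^{-1}$ and $a^*(V)$ I obtain $\sqrt{n+1}\,S_{n+1}\bigl[V(y_1)\,f_n(y_2,\ldots,y_{n+1})\bigr]$. Applying $U_G$ imposes $y_j = y_G + y_j'$ with $Y'_{n+1}\in\mathcal{R}^{n+1}$, i.e.\ $\sum_{j=1}^{n+1}y_j'=0$. Because $S_{n+1}$ is invariant under permutation of its arguments, I can equivalently single out the last coordinate and write the summand as $V(y_G+y_{n+1}')\,f_n(y_G+y_1',\ldots,y_G+y_n')$. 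Using $\sum_{j=1}^n y_j' = -y_{n+1}'$, the center of mass of $(y_G+y_1',\ldots,y_G+y_n')$ is $y_G - \tfrac{y_{n+1}'}{n}$, and the relative coordinates are $y_j' + \tfrac{y_{n+1}'}{n}$ for $j=1,\ldots,n$, which sum to zero so they do live in $\mathcal{R}^n$. This produces the formula \eqref{eq:aG*n}.

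The only place where there is anything delicate is the reorganization within the symmetrizer for $a_G^*(V)$: one must make sure that singling out a different argument of $S_{n+1}$ is legitimate. This is justified by the usual reindexation of $\sigma\in\mathfrak{S}_{n+1}$ in the explicit sum defining $S_{n+1}$, so the computation is really just linear algebra on the affine subspace $\mathcal{R}^n\subset\rz^{dn}$. No measure-theoretic difficulty arises because $U_G$ is already known to be unitary from Proposition~\ref{pr:unit}, so the formulas extend from a dense subspace of smooth compactly supported data to the full domains of $a(V)$ and $a^*(V)$ by continuity.
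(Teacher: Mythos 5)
Your proposal is correct and reproduces the paper's treatment of $a_G(V)$ almost verbatim (direct computation of $U_G\,a(V)\,U_G^{-1}$, substitution $\tilde y_n = y_G + y_n$, and the identity $y_G^n = y_G^{n-1} + \tfrac{y_n}{n}$), but it takes a genuinely different route for $a_G^*(V)$. The paper computes $a_G^*(V)$ by duality, writing $\langle a_G^*(V)f_{G,n-1},\,g_{G,n}\rangle = \langle f_{G,n-1},\,a_G(V)g_{G,n}\rangle$ and then performing the inverse change of variables explicitly, which requires checking the Jacobian identity $dy_n\,dy_G^{n-1}\,d\mu_{n-1}(Y'_{n-1}) = dy_G^n\,d\mu_n(\tilde Y'_n)$ (with the $n^d$ and $(n-1)^d$ factors in the definition of $\mu_n$ cancelling just right). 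You instead push $U_G$ directly through $a^*(V)$, treat the symmetrizer $S_{n+1}$ as a permutation sum, and relabel to single out $y_{n+1}'$; the affine geometry of $\mathcal{R}^{n+1}$ then gives the center-of-mass shift $y_G - \tfrac{y_{n+1}'}{n}$ and relative coordinates $y_j' + \tfrac{y_{n+1}'}{n}$ without any explicit Jacobian bookkeeping, since unitarity of $U_G$ (Proposition~\ref{pr:unit}) is already in hand. Both arguments are sound; yours avoids the measure normalization constants and treats the two operators symmetrically, while the paper's duality route doubles as a sanity check that $a_G^*(V)$ really is the formal adjoint of $a_G(V)$ with respect to the $\mu_n$ measures. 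One small point worth stressing, which you touch on only briefly: the relabeling that "singles out the last coordinate" of $S_{n+1}$ must be done before fixing the affine coordinates $(y_2',\ldots,y_{n+1}')$ on $\mathcal{R}^{n+1}$, i.e.\ the reindexation of $\sigma\in\mathfrak{S}_{n+1}$ is legitimate because the expression is first written as a fully symmetric sum over all $n+1$ slots and only then restricted to $\mathcal{R}^{n+1}$; stated in this order, no circularity arises.
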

\begin{proof}
Write for $n>1$\,,
\begin{align*}
[a_{G}(V)f_{G,n}](y^{n-1}_{G},Y'_{n-1})&=[a(V)U_{G}^{-1}f_{G,n}](Y'_{n-1}+y^{n-1}_{G})
\\
&=
\sqrt{n}\int_{\rz^{d}}\overline{V(\tilde{y}_{n})}[U_{G}^{-1}f_{G,n}](Y'_{n-1}+y^{n-1}_{G},\tilde{y}_{n})~d\tilde{y}_{n}\,.
\end{align*}
By setting $\tilde{y}_{n}=y_{G}^{n-1}+y_{n}$ the formula
$(U_{G}^{-1}g_{G,n})(\cdot )=g_{G,n}(y_{G}^{n},\cdot -y_{G}^{n})$ with
$$
y_{G}^{n}=\frac{y_{1}+\cdots+y_{n-1}+\tilde{y}_{n}}{n}=\frac{n-1}{n}y_{G}^{n-1}+\frac{\tilde{y}_{n}}{n}=y_{G}^{n-1}+\frac{y_{n}}{n}
$$
leads to 
\begin{align*}
  [a_{G}(V)f_{G,n}](y^{n-1}_{G},Y'_{n-1})&=
\sqrt{n}\int_{\rz^{d}}\overline{V(y_{G}^{n-1}+y_{n})}f_{G,n}(y_{G}^{n-1}+\frac{y_{n}}{n},Y'_{n-1}-\frac{y_{n}}{n},y_{n}-\frac{y_{n}}{n})~dy_{n}\\
&= \sqrt{n}\int_{\rz^{d}}\overline{V(y_{G}^{n-1}+y_{n})}f_{G,n}(y_{G}^{n-1}+\frac{y_{n}}{n},Y_{n}-\frac{y_{n}}{n})~dy_{n}
\end{align*}
with $Y_{n}=(y_{1}',\ldots,y'_{n-1},y_{n})$\,.
\\
The computation of $a_{G}^{*}(V)f_{G,n}$ is done by duality:
\begin{align*}
  \langle a_{G}^{*}(V)f_{G,n-1}\,,\, g_{G,n}\rangle &=
\langle f_{G,n-1}\,,\, a_{G}(V)g_{G,n}\rangle
\\
&=
\int_{\rz^{d}\times
    \mathcal{R}^{n-1}}\overline{f_{G,n-1}(y_{G}^{n-1},Y'_{n-1})}\times\\
&\left[\sqrt{n}
\int_{\rz^{d}}\overline{V(y_{G}^{n-1}+y_{n})}g_{G,n}(y^{n-1}_{G}+\frac{y_{n}}{n},Y_{n}-\frac{y_{n}}{n})~dy_{n}\right]~dy_{G}^{n-1}d\mu_{n-1}(Y'_{n-1})\,.
\end{align*}
Remember $Y_{n}=(y_{1}',\ldots,y'_{n-1},y_{n})$ and
$\tilde{Y}_{n}'=Y_{n}-\frac{y_{n}}{n}\in \mathcal{R}^{n}$\,. The change of variables
$$
\tilde{Y}_{n}'=Y_{n}-\frac{y_{n}}{n}\,,\quad
y_{G}^{n}=y_{G}^{n-1}+\frac{y_{n}}{n}
\,,\quad y_{G}^{n-1}=y_{G}^{n}-\frac{\tilde{y}_{n}}{n-1}
\,,\quad Y'_{n-1}=\tilde{Y}_{n-1}+\frac{y_{n}}{n}=\tilde{Y}_{n-1}+\frac{\tilde{y}_{n}}{n-1}\,,
$$
with 
\begin{align*}
dy_{n}dy_{G}^{n-1}d\mu^{n-1}(Y'_{n-1})&=
dy_{G}^{n-1}\delta_{0}(y_{1}'+\cdots+y_{n-1}')(n-1)^{d}dy_{1}'\cdots
dy_{n-1}'
\\
&=
dy_{G}^{n}\frac{n^{d}}{(n-1)^{d}}(n-1)^{d}\delta_{0}(\tilde{y}_{1}'+\cdots+\tilde{y}'_{n})d\tilde{y}_{1}'\cdots
    d\tilde{y}_{n}'
\\
&=
dy_{G}^{n}d\mu_{n}(\tilde{Y}'_{n})\,,
\end{align*}
gives
\begin{multline*}
\langle a_{G}^{*}(V) f_{G,n-1}\,,\, g_{G,n}\rangle 
= \sqrt{n}\int_{\rz^{d}\times
  \mathcal{R}^{n}}\overline{V(y_{G}^{n}+\tilde{y}_{n}')f_{G,n-1}(y_{G}^{n}-\frac{\tilde{y}_{n}}{n-1},\tilde{Y}_{n-1}'+\frac{\tilde{y}_{n}'}{n-1})}\times\\
g_{G,n}(y_{G}^{n},\tilde{Y}_{n}')~dy^{n}_{G}d\mu_{n}(\tilde{Y}'_{n})\,.
\end{multline*}
Replacing $n$ by $n+1$\,, while remembering that $a_{G}^{*}(V)f_{G,n}$
is symmetric in the variables $(y_{1}',\ldots,y_{n+1}')$ yields
$$
[a_{G}^{*}(V)f_{G,n}](y_{G},Y_{n+1}')
=\sqrt{n+1}S_{n+1}[V(y_{G}+y'_{n+1})f_{G,n}(y_{G}-\frac{y'_{n+1}}{n},Y_{n}+\frac{y'_{n+1}}{n})]
$$
with $Y_{n}=(y_{1}',\ldots,y_{n}')$\,.
\end{proof}

\subsection{General $L^{p}_{x}L^{q}_{y}$ spaces}
\label{sec:genLpxLqy}
When $(\mathcal{X},\mathbf{dx})$ and $(\mathcal{Y},\mathbf{dy})$ are sigma-finite
measured spaces $L^{p}_{x}L^{q}_{y}$\,, $1\leq p,q\leq +\infty$\,,
denotes the space
$L^{p}_{x}L^{q}_{y}=L^{p}(\mathcal{X},\mathbf{dx};L^{q}(\mathcal{Y},\mathbf{dy}))$\,.
This shortened notation is especially useful when estimates are
written in those spaces, like in Strichartz estimates (see
Section~\ref{sec:StriCM}). However the final space of the unitary map
$U_{G}$ in \eqref{eq:unitUG} shows already that the product space
$\mathcal{X}\times \mathcal{Y}$ is too restrictive. Below is a convenient generalization.
\begin{definition}
\label{de:LpxLqy}
Let $(\mathcal{X}_{n},\mathbf{dx_{n}})_{n\in\mathcal{N}}$ and $(\mathcal{Y}_{n},\mathbf{dy_{n}})_{n\in
\mathcal{N}}$ be at most countable families ($\mathcal{N}\subset \nz$)
of sigma-finite measured spaces. Let $\mathcal{X}=\sqcup_{n\in
  \mathcal{N}}\mathcal{X}_{n}$ and $\mathcal{Y}=\sqcup_{n\in \mathcal{N}}\mathcal{Y}_{n}$ be endowed with
the measures $\mathbf{dx}=\Sigma_{n\in \mathcal{N}}\mathbf{dx_{n}}$
and $\mathbf{dy}=\Sigma_{n\in \mathcal{N}}\mathbf{dy_{n}}$\,. In this
framework, the
space $L^{p}_{x}L^{q}_{y}$\,, $1\leq p,q\leq +\infty$\,, will denote
the closed subspace of $L^{p}(\mathcal{X},\mathbf{dx}; L^{q}(\mathcal{Y},\mathbf{dy}))$
given by
$$
L^{p}_{x}L^{q}_{y}=\left\{f\in
  L^{p}(\mathcal{X},\mathbf{dx};L^{q}(\mathcal{Y},\mathbf{dy}))\,,\quad f(x,y)=\sum_{n\in
  \mathcal{N}} 1_{\mathcal{X}_n}(x)1_{\mathcal{Y}_n}(y) \, f(x,y)\quad \mathrm{a.e.} \right\}\,.
$$ 
\end{definition}
The above definition is coherent with the specific product case, which
is the particular case $\mathcal{N}=\left\{0\right\}$\,. The
differences will be clear from the different frameworks when
$(\mathcal{X}_{n},\mathbf{dx_{n}})_{n\in \mathcal{N}}$ and
$(\mathcal{Y}_{n},\mathbf{dy_{n}})_{n\in \mathcal{N}}$ will be specified.\\
The two following properties of the product case are still valid in
this extended framework:
\begin{itemize}
\item 
The dual of $L^{p}_{x}L^{q}_{y}$\,, $1\leq q,p<+\infty$
is $L^{p'}_{x}L^{q'}_{y}$ with $\frac{1}{q'}+\frac{1}{q}=1$ and
$\frac{1}{p'}+\frac{1}{p}=1$\,.
\item  Minkowski's inequality says
\begin{equation}
  \label{eq:Minkowski}
  \|f\|_{L^{p}_{x}L^{q}_{y}}\leq
  \|f\|_{L^{q}_{y}L^{p}_{x}}\quad\text{for}\quad 1\leq q\leq p\leq +\infty\,.
\end{equation}
\end{itemize}
Below are examples, associated with the decomposition associated with
the introduction of the center of mass \eqref{eq:center} and the
relative coordinates \eqref{eq:relaY}, where those notations will be used
\begin{itemize}
\item $\mathcal{N}=\left\{n\right\}$\,, $n\geq 1$\,,
  $\mathcal{X}_{n}=\mathcal{R}_{n}\times Z'$\,, $\mathbf{dx_{n}}=d\mu_{n}\otimes
  \mathbf{dz'}$\,, $\mathcal{Y}_{n}=\rz^{d}$\,, $\mathbf{dy_{n}}=dy_{G}$ and 
$$
L^{p}_{(Y'_{n},z')}L^{q}_{y_{G}}=L^{p}_{x_{n}}L^{q}_{y_{G}}=L^{p}(\mathcal{R}_{n}\times Z',d\mu_{n}\otimes
\mathbf{dz'}; L^{q}(\rz^{d},dy_{G}))\,.
$$
The notation $L^{p}_{(Y'_{n},z'),\text{sym}}L^{q}_{y_{G}}$ will stand for
the closed subspace of functions which are symmetric with respect to
the variables $Y'_{n}\in \mathcal{R}_{n}$\,.
\item $\mathcal{N}=\left\{0,1\right\}$ with 
  \begin{eqnarray*}
    && \mathcal{X}_{0}=Z'\,,\quad \mathcal{X}_{1}=\mathcal{R}\times
       Z'=(\sqcup_{n=1}^{\infty}\mathcal{R}_{n})\times Z'\,,\quad
       \mathbf{dx_{0}}=\mathbf{dz'}\,,\quad
       \mathbf{dx_{1}}=d\mu\otimes \mathbf{dz'}\,,\\
&& \mathcal{Y}_{0}=\left\{0\right\}\,,\quad
   \mathcal{Y}_{1}=\rz^{d}\,,\quad
   \mathbf{dy_{0}}=\delta_{0}\,,\quad \mathbf{dy_{1}}=dy_{G}\,,
  \end{eqnarray*}
where
$$
L^{p}_{(Y',z')}L^{q}_{y_{G}}=L^{p}(Z',\mathbf{dz'})\oplus
L^{p}(\mathcal{R}\times Z',d\mu\otimes \mathbf{dz'};L^{q}(\rz^{d},dy_{G}))\,.
$$
With the same convention as above for
$L^{p}_{(Y',z'),\text{sym}}L^{q}_{y_{G}}$\,, which refers to the symmetry
 for the $Y'\in \mathcal{R}$ variable, the formula \eqref{eq:unitUG}
 becomes
$$
U_{G}: L^{2}(Z',\mathbf{dz'};\Gamma(L^{2}(\rz^{d},dy;\cz)))
\to L^{2}_{(Y',z'),\text{sym}}L^{2}_{y_{G}}\,.
$$
The general  spaces $L^{2}_{(Y',z'),\text{sym}}L^{p}_{y_{G}}$\,, $1\leq
p\leq +\infty$\,, will be especially
useful after Section~\ref{sec:StriCM}.
\item The previous example can be written with $\mathcal{N}=\nz$ and
  \begin{eqnarray*}
&&    \mathcal{X}_{0}=Z'\,,\quad
   \mathcal{X}_{n>0}=\mathcal{R}_{n}\times Z'\,,\quad
   \mathbf{dx_{0}}=\mathbf{dz'}\,,\quad
   \mathbf{dx_{n>0}}=d\mu_{n}\otimes\mathbf{dz'}\,,\\
&& \mathcal{Y}_{0}=\left\{0\right\}\,,\quad
   \mathcal{Y}_{n>0}=\rz^{d}\,,\quad
   \mathbf{dy_{0}}=\delta_{0}\,,\quad \mathbf{dy_{n>0}}=dy_{G}\,.
  \end{eqnarray*}
\end{itemize}
\subsection{$L^{p}_{y_{G}}$-Estimates for $a_{G}(V)$ and
  $a_{G}^{*}(V)$}
\label{sec:gausFp}
General $L^{p}$-esptimates, or more precisely
$L^{2}_{(Y',z),\text{sym}}L^{p}_{y_{G}}$-estimates,  are proved in
this paragraph for the operators $a_{G}(V)$ and 
$a_{G}^{*}(V)$\,. 
The use of the center of mass and the $L^{p}_{y_{G}}$ spaces,
will be extremely useful for the application of Strichartz estimates
in Section~\ref{sec:StriCM}.\\
Let us start with a simple application of Young's inequality.

\begin{lemma}
\label{le:borne-L2x-Lqprimey-1}
For any $q',p'\in [1,2]$ such that $q'\leq p'$\,,  let $r'\in [1,2]$
be defined by $\frac{1}{r'}=\frac{1}{2}+\frac{1}{q'}-\frac{1}{p'}$\,.
The inequality
\[
\|V(y_{G}+y^{\prime})\varphi(y_{G})\|_{L_{y'}^{2}L_{y_{G}}^{q'}}\leq \|V\|_{L^{r'}}\|\varphi\|_{L^{p'}}\,,
\]
holds for all $V\in L^{r'}(\rz^{d},dy;\cz)$ and all $\varphi\in L^{p'}(\mathbb{R}^{d},dy;\mathbb{C})$\,.
\end{lemma}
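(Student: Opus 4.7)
The plan is a direct reduction to Young's convolution inequality after extracting the appropriate convolution structure. As a first step, setting $g(y'):=\|V(y_G+y')\varphi(y_G)\|_{L^{q'}_{y_G}}$ and raising the desired bound to the power $q'$, it suffices to prove
\[
\|F\|_{L^{2/q'}_{y'}}\leq \|V\|_{L^{r'}}^{q'}\|\varphi\|_{L^{p'}}^{q'},
\qquad F(y'):=\int_{\rz^d}|V(y_G+y')|^{q'}\,|\varphi(y_G)|^{q'}\,dy_G,
\]
using $\|g\|_{L^2_{y'}}^{q'}=\|g^{q'}\|_{L^{2/q'}_{y'}}$.

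Next, I would recognise $F$ as a convolution. The change of variable $s=y_G+y'$ gives
\[
F(y')=\int_{\rz^d}|V(s)|^{q'}\,|\varphi(s-y')|^{q'}\,ds=(h_1*h_2)(y'),
\]
where $h_1(s):=|V(s)|^{q'}$ and $h_2(s):=|\varphi(-s)|^{q'}$. Applying Young's inequality with exponents $a:=2/q'$, $b:=r'/q'$, $c:=p'/q'$ and observing $\|h_1\|_{L^b}=\|V\|_{L^{r'}}^{q'}$ and $\|h_2\|_{L^c}=\|\varphi\|_{L^{p'}}^{q'}$, the conclusion follows.

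Finally I would verify the exponent constraints. Young's compatibility condition reads
\[
\tfrac{1}{b}+\tfrac{1}{c}=q'\bigl(\tfrac{1}{r'}+\tfrac{1}{p'}\bigr)=q'\bigl(\tfrac{1}{2}+\tfrac{1}{q'}\bigr)=\tfrac{q'}{2}+1=\tfrac{1}{a}+1,
\]
which is exactly the defining relation for $r'$. The admissibility $c\geq 1$ is the hypothesis $p'\geq q'$, while $b\geq 1$ (equivalently $r'\geq q'$) follows from $p'\leq 2$, since $\tfrac{1}{r'}=\tfrac{1}{2}+\tfrac{1}{q'}-\tfrac{1}{p'}\leq \tfrac{1}{q'}$. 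The claim $r'\in[1,2]$ is likewise an immediate byproduct of the same two bounds. There is no real obstacle: the only mildly subtle move is recognising, after raising to the power $q'$, that the integrand in $F$ rearranges into a convolution; once that is in hand, the remainder is a routine bookkeeping of Lebesgue exponents.
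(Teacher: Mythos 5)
Your proof is correct and follows essentially the same route as the paper's: raise the inner $L^{q'}_{y_G}$-norm to the power $q'$ to expose a convolution, apply Young's inequality in $L^{2/q'}$, and unwind the exponents. The only cosmetic difference is that you place the reflection on $\varphi$ while the paper places it on $V$ (both are immaterial since $L^p$-norms are reflection-invariant), and your explicit verification of the admissibility constraints $b,c\geq 1$ is a welcome bit of extra care.
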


\begin{proof}
  The conditions
  $\frac{1}{r'}+\frac{1}{p'}=\frac{1}{2}+\frac{1}{q'}$\,, $1\leq q'\leq
  p'\leq 2$\,, ensure
$$
\frac{1}{r'}=\frac{1}{2}+\frac{1}{q'}-\frac{1}{p'}\in
[\frac{1}{2},1]\quad\text{and}\quad r'\in[1,2]\,.
$$
Young's inequality with $\frac{1}{\tilde{r}}+\frac{1}{\tilde{p}}=\frac{1}{\frac{2}{q'}}+1$
and $\tilde{r},\tilde{p},\frac{2}{q'}\geq1$
yields 
\begin{multline*}
\|V(y_{G}+y^{\prime})\varphi(y_{G})\|_{L_{y'}^{2}L_{y_{G}}^{q'}}\leq\||V|(y_{G}-y^{\prime})|\varphi|(y_{G})\|_{L_{y'}^{2}L_{y_{G}}^{q'}}=\||V(-\cdot)|^{q'}*|\varphi|^{q'}\|_{L^{2/q'}}^{1/q'} \\
\leq\||V|^{q'}\|_{L^{\tilde{r}}}^{1/q'}\||\varphi|^{q'}\|_{L^{\tilde{p}}}^{1/q'}\,.
\end{multline*}
By taking $\tilde{p}=\frac{p'}{q'}\in[1,2]$ and $r'=\tilde{r}q'$ we
obtain
$$
\|V(y_{G}+y^{\prime})\varphi(y_{G})\|_{L_{y'}^{2}L_{y_{G}}^{q'}}\leq\|V\|_{L^{r'}}\|\varphi\|_{L^{p'}}
$$
\end{proof}
The first result concerns the action of $a_{G}(V)$ and $a_{G}^{*}(V)$
on a fixed finite particles sector.
\begin{proposition}
\label{pr:Lpagstag}
For any $p',q'\in [1,2]$ such that $q'\leq p'$\,,  $2\leq p\leq q\leq
+\infty$\,, let $r'\in[1,2]$ be defined by
$\frac{1}{r'}=\frac{1}{2}+\frac{1}{q'}-\frac{1}{p'}$ like in
Lemma~\ref{le:borne-L2x-Lqprimey-1}. For any $V\in
L^{q'}(\rz^{d},dy;\cz)\cap L^{r'}(\rz^{d},dy;\cz)$\,, the creation and annihilation operators
satisfy the following estimates:
\begin{eqnarray}
  \label{eq:borne-a-star0}
\forall f_{G,0}\in L^{2}_{z}\,,&&
\hspace{-0.5cm}
  \|a_{G}^{*}(V)f_{G,0}\|_{L^{2}_{z}L_{y_{G}}^{q'}}\leq
  \|V\|_{L^{q'}}\|f_{G,0}\|_{L^{2}_{z}}\,,\\
\label{eq:borne-a-star(V)}
\forall n>0,\forall f_{G,n}\in
 L^{2}_{(Y_{n}^{\prime},z),\rmsym}L_{y_{G}}^{p'}\,,&&\hspace{-0.5cm}
\|a_{G}^{*}(V)\,f_{G,n}\|_{L_{(Y^{\prime,z)}_{n+1}}^{2}L_{y_{G}}^{q'}}\leq
  \|V\|_{L^{r'}}\sqrt{n+1}\|f_{G,n}\|_{L_{(Y_{n}^{\prime},z)}^{2}L_{y_{G}}^{p'}}\,,\\
\label{eq:borne-a-1}
\forall f_{G,1}\in L^{2}_{z}L^{q}_{y_{G}}\,,&&\hspace{-0.5cm}
\|a_{G}(V)f_{G,1}\|_{L^{2}_{z}}\leq\|V\|_{L^{q'}}\|f_{G,1}\|_{L^{2}_{z}L_{y_{G}}^{q}}\,,\\
\label{eq:borne-a(V)}
\forall n>1, \forall f_{G,n}\in L^{2}_{(Y^{\prime}_{n},z),\rmsym}L^{q}_{y_{G}}\,,&&\hspace{-0.5cm}
\|a_{G}(V)\,f_{G,n}\|_{L_{(Y_{n-1}^{\prime},n)}^{2}L_{y_{G}}^{p}}\leq \|V\|_{L^{r'}}\sqrt{n}\|f_{G,n}\|_{L_{(Y_{n}^{\prime},z)}^{2}L_{y_{G}}^{q}}\,.
\end{eqnarray}
A notable case is when $q'=r'$ and $p'=p=2$\,.
\end{proposition}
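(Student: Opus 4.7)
The plan is to verify the four estimates in turn by combining the explicit formulas \eqref{eq:aG01}, \eqref{eq:aGVn}, \eqref{eq:aG*n} with Lemma~\ref{le:borne-L2x-Lqprimey-1}, after reducing them to the form covered by that lemma via a suitable change of variables in the center-of-mass/relative coordinates, and then obtaining the annihilation bounds from the creation ones by duality.

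The two vacuum-sector estimates \eqref{eq:borne-a-star0} and \eqref{eq:borne-a-1} are immediate from \eqref{eq:aG01}: the identity $a_{G}^{*}(V)f_{G,0}(y_{G})=V(y_{G})f_{G,0}$ yields \eqref{eq:borne-a-star0} by a product bound, while $a_{G}(V)f_{G,1}=\int_{\rz^{d}}\overline{V(y_{1})}f_{G,1}(y_{1})\,dy_{1}$ yields \eqref{eq:borne-a-1} by H\"older in $y_{1}$ (with exponents $q',q$) followed by taking the $L^{2}$-norm in the transparent variable $z$.

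The substantive step is the creation bound \eqref{eq:borne-a-star(V)}. One first observes that $S_{n+1}$ is a contraction on $L^{2}_{Y'_{n+1}}L^{q'}_{y_{G}}$ for $1\leq q'\leq 2$: since $S_{n+1}$ acts only on the $Y'_{n+1}$ variables, Minkowski combined with the permutation invariance of $d\mu_{n+1}$ gives $\|S_{n+1}u\|_{L^{2}_{Y'}L^{q'}_{y_{G}}}\leq \|u\|_{L^{2}_{Y'}L^{q'}_{y_{G}}}$. It therefore suffices to estimate the unsymmetrized integrand $V(y_{G}+y'_{n+1})f_{G,n}(y_{G}-\tfrac{y'_{n+1}}{n},Y_{n}+\tfrac{y'_{n+1}}{n})$. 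Parametrize $Y'_{n+1}\in\mathcal{R}^{n+1}$ by $(\tilde Y'_{n},y'_{n+1})\in\mathcal{R}^{n}\times\rz^{d}$ via $\tilde Y'_{n}=Y_{n}+\tfrac{y'_{n+1}}{n}$, and translate $y_{G}$ to $\tilde y_{G}=y_{G}-\tfrac{y'_{n+1}}{n}$; a direct Jacobian computation gives $d\mu_{n+1}(Y'_{n+1})=\tfrac{(n+1)^{d}}{n^{d}}\,d\mu_{n}(\tilde Y'_{n})\,dy'_{n+1}$. The further rescaling $\hat y'_{n+1}=\tfrac{n+1}{n}y'_{n+1}$ converts the argument of $V$ into exactly $\tilde y_{G}+\hat y'_{n+1}$ and produces the compensating factor $\tfrac{n^{d}}{(n+1)^{d}}$, so that the combined measure on $(\tilde Y'_{n},\hat y'_{n+1})$ reduces to $d\mu_{n}(\tilde Y'_{n})\,d\hat y'_{n+1}$. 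Applying Lemma~\ref{le:borne-L2x-Lqprimey-1} fiberwise in $(\tilde Y'_{n},z)$ with $\varphi(\cdot)=f_{G,n}(\cdot,\tilde Y'_{n},z)$ then yields the desired bound, with the $\sqrt{n+1}$ prefactor carried along from \eqref{eq:aG*n}.

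The annihilation estimate \eqref{eq:borne-a(V)} then follows by duality. The formal adjoint of $a_{G}(V):L^{2}_{(Y'_{n},z)}L^{q}_{y_{G}}\to L^{2}_{(Y'_{n-1},z)}L^{p}_{y_{G}}$ is $a_{G}^{*}(V):L^{2}_{(Y'_{n-1},z)}L^{p'}_{y_{G}}\to L^{2}_{(Y'_{n},z)}L^{q'}_{y_{G}}$, and the creation bound just proved, applied at the transition $n-1\to n$ with the same $r'$, reads $\|a_{G}^{*}(V)g\|_{L^{2}L^{q'}}\leq\sqrt{n}\,\|V\|_{L^{r'}}\|g\|_{L^{2}L^{p'}}$. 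Writing $|\langle g,a_{G}(V)f_{G,n}\rangle|=|\langle a_{G}^{*}(V)g,f_{G,n}\rangle|\leq \|a_{G}^{*}(V)g\|_{L^{2}L^{q'}}\|f_{G,n}\|_{L^{2}L^{q}}$ and taking the supremum over unit $g\in L^{2}_{(Y'_{n-1},z)}L^{p'}_{y_{G}}$ yields \eqref{eq:borne-a(V)}. The main technical obstacle throughout is the careful bookkeeping of the $n^{d}$ and $(n+1)^{d}$ Jacobian factors arising from the parametrization of $\mathcal{R}^{n+1}$ and from the rescaling of $y'_{n+1}$: miscounting them would spoil the clean prefactor $\sqrt{n+1}\,\|V\|_{L^{r'}}$, whereas the remaining analytic content reduces entirely to Lemma~\ref{le:borne-L2x-Lqprimey-1}.
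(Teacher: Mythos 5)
Your proposal is correct and follows essentially the same route as the paper: drop the symmetrizer $S_{n+1}$ by permutation invariance of $d\mu_{n+1}$, reduce the creation bound \eqref{eq:borne-a-star(V)} to Lemma~\ref{le:borne-L2x-Lqprimey-1} via the change of variables $\tilde Y'_n = Y_n + y'_{n+1}/n$, $\tilde y_G = y_G - y'_{n+1}/n$, $\hat y'_{n+1} = \tfrac{n+1}{n}y'_{n+1}$ (which the paper carries out in two stages, tracking the $(n+1)^{d/2}$ and $n^{-d/2}$ factors separately before they cancel against the dilation Jacobian, whereas you compress it into one clean identity $d\mu_{n+1} = d\mu_n\,d\hat y'_{n+1}$), and obtain the annihilation bounds by duality. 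The only cosmetic deviation is that you prove \eqref{eq:borne-a-1} directly by H\"older rather than as a dual of \eqref{eq:borne-a-star0}, which is equally valid.
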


\begin{proof}
The variable $z\in Z$ is actually a parameter which can be forgotten
because our estimates are uniform w.r.t.~$z\in Z$\,.\\
For \eqref{eq:borne-a-star0} it suffices to notice
$[a^{*}_{G}(V)f_{G,0}](y_{G})=f_{G,0}\times V(y_{G})$\,.\\
The estimate of $a_{G}^{*}(V)f_{G,n}$ for  $n>0$ relies on
Lemma~\ref{le:borne-L2x-Lqprimey-1}. We start from the expression~\eqref{eq:aG*n}
$$
(a_{G}(V)^{*}f_{G,n})(y_{G},Y_{n+1}^{\prime})
=\sqrt{n+1}\,S_{n+1}\,V(y_{G}+y_{n+1}^{\prime})\,f_{G,n}(y_{G}-\frac{y_{n+1}^{\prime}}{n},Y_{n}+\frac{y_{n+1}^{\prime}}{n})
$$
with $Y'_{n+1}=(y'_{1},\ldots,y'_{n},y'_{n+1})\in \mathcal{R}^{n+1}$\,, $Y_{n}=(y'_{1},\ldots,y'_{n})\in \rz^{dn}$\,,
$Y_{n}+\frac{y'_{n+1}}{n}\in \mathcal{R}^{n}$\,.
The symmetrization $S_{n+1}$ simply takes the
average of $n+1$-terms which have all the same form as
$$
\sqrt{n+1}V(y_{G}+y_{n+1}^{\prime})\,f_{G,n}(y_{G}-\frac{y_{n+1}^{\prime}}{n},Y_{n}+\frac{y_{n+1}^{\prime}}{n})\,,
$$
after circular permutation of the variables $y'_{j}$ which does not
change the $L^{2}_{Y'_{n+1}}L^{q'}_{y_{G}}$-norm. We can therefore forget the
symmetrization $S_{n+1}$ for proving the upper bound
\eqref{eq:borne-a-star(V)}.
When $n>1$ integrations must be performed with respect to the
independent variables $(y_{2}',\ldots,y_{n}')\in
\rz^{d(n-1)}$\,. Remember that $(y_{2}',\ldots,y'_{n},y'_{n+1})$ are
coordinates on $\mathcal{R}^{n+1}$ such that $y'_{1}=-y'_{2}\cdots
-y'_{n}-y'_{n+1}$\,,
$d\mu_{n+1}(Y'_{n+1})=(n+1)^{d}dy_{2}'\cdots dy_{n+1}'$ and that the quantity
$$
\left\|V(y_{G}+y'_{n+1})f_{G,n}(y_{G}-\frac{y_{n+1}}{n},Y_{n}+\frac{y'_{n+1}}{n})\right\|_{L^{2}_{Y'_{n+1}}L^{q'}_{y_{G}}}
$$
equals
$$
(n+1)^{d/2}\|V(y_{G}+y'_{n+1})f_{G,n}(y_{G}-\frac{y'_{n+1}}{n},Y_{n}+\frac{y'_{n+1}}{n}))\|_{L^{2}(\rz^{d},dy'_{n+1};L^{2}(\rz^{d(n-1)},
  dy'_{2}\cdots dy'_{n};L^{q'}_{y_{G}}))}\,.
$$
When $y'_{n+1}\in\rz^{d}$ is fixed, setting
$y'_{1}=-\sum_{j=2}^{n}(y'_{h}+\frac{y'_{n+1}}{n})$ and
$Y'_{n}=Y_{n}+\frac{y'_{n+1}}{n}$\,, provides the  coordinates
$(y_{2}'+\frac{y'_{n+1}}{n},\ldots,
y'_{n}+\frac{y'_{n+1}}{n})$ on $\mathcal{R}^{n}$ with
$d\mu_{n}(Y'_{n})=n^{d/2}dy'_{2}\cdots dy'_{n}$\,,   and then
$\|V(y_{G}+y'_{n+1}) \, f_{G,n}(y_{G}-\frac{y'_{n+1}}{n},
Y_{n}+\frac{y'_{n+1}}{n})\|_{L^{2}(\rz^{d(n-1)},dy'_{2}\cdots
dy'_{n};L^{q'}_{y_{G}})}$ equals
$$
n^{-d/2}\|
V(y_{G}+y'_{n+1})f_{G,n}(y_{G}-\frac{y'_{n+1}}{n},Y'_{n})\|_{L^{2}_{Y'_{n}}L^{q'}_{y_{G}}}=
n^{-d/2}\|V(\tilde{y}_{G}+\frac{n+1}{n}y'_{n+1})f_{G,n}(\tilde{y}_{G},Y'_{n})\|_{L^{2}(Y'_{n}L^{q'}_{\tilde{y}_{G}})}\,.
$$
We deduce
\begin{align*}
  &\hspace{-1cm}\left\|V(y_{G}+y'_{n+1})f_{G,n}(y_{G}-\frac{y'_{n+1}}{n},Y_{n}+\frac{y'_{n+1}}{n})\right\|_{L^{2}_{Y'_{n+1}}L^{q'}_{y_{G}}}
\\
&\leq
\frac{(n+1)^{d/2}}{n^{d/2}}\left\|\|V(\tilde{y}_{G}+\frac{n+1}{n}y'_{n+1})f_{G,n}(\tilde{y}_{G},Y'_{n})\|_{L^{2}_{Y'_{n}}L^{q'}_{\tilde{y}_{G}}}\right\|_{L^{2}(\rz^{d},dy'_{n+1})}\\
&\leq
\left\|\|V(\tilde{y}_{G}+y')f_{G,n}(\tilde{y}_{G},Y'_{n})\|_{L^{2}_{Y'_{n}} L^{q'}_{\tilde{y}_{G}}}\right\|_{L^{2}(\rz^{d},dy')}
\\
&\leq 
\left\|\|V(\tilde{y}_{G}+y')f_{G,n}(\tilde{y}_{G},Y'_{n})\|_{L^{2}(\rz^{d},dy';L^{q'}_{\tilde{y}_{G}})}\right\|_{L^{2}_{Y'_{n}}}\,,
\end{align*}
after using the change of variable $y'=\frac{n+1}{n}y'_{n+1}$ in
$\rz^{d}$ for the third line and
$L^{2}_{y'}L^{2}_{Y'_{n}}=L^{2}_{Y'_{n}}L^{2}_{y'}$ for the last one. We now use 
Lemma~\ref{le:borne-L2x-Lqprimey-1} with
$$
\|V(\tilde{y}_{G}+y')f_{G,n}(\tilde{y}_{G},Y'_{n})\|_{L^{2}(\rz^{d},dy';L^{q'}_{\tilde{y}_{G}})}\leq\|V\|_{L^{r'}}\|f_{G,n}(\tilde{y}_{G},Y'_{n})\|_{L^{p'}_{\tilde{y}_{G}}}
$$ 
for almost all $Y'_{n}\in \mathcal{R}^{n}$ and $1\leq q'\leq p'\leq
2$\,, $\frac{1}{r'}=\frac{1}{2}+\frac{1}{q'}-\frac{1}{p'}$\,. Integrating w.r.t.~$Y'_{n}\in
\mathcal{R}^{n}$ gives
$$
  \left\|V(y_{G}+y'_{n+1})f_{G,n}(y_{G}-\frac{y'_{n+1}}{n},Y_{n}+\frac{y'_{n+1}}{n})\right\|_{L^{2}_{Y'_{n+1}}L^{q'}_{y_{G}}}
\leq
\|V\|_{L^{r'}}\|f_{G,n}\|_{L^{2}_{Y'_{n}}L^{p'}_{y_{G}}}\,.
$$
By multiplying  by $\sqrt{n+1}$ and with the symmetrization
$S_{n+1}$\,, we have proved
\eqref{eq:borne-a-star(V)}.\\
 The estimates with~$a_{G}(V)$
follow by duality using
\[
\|a_{G}(V)f_{G,n+1}\|_{L_{Y_{n+1}^{\prime}}^{2}L_{y_{G}}^{p}}=\sup_{\|f_{G,n}\|_{L_{Y_{n}^{\prime}}^{2}L_{y_{G}}^{p'}}=1}|\langle a_{G}(V)f_{G,n+1},f_{G,n}\rangle|\,.
\]
Indeed, if~$\|f_{G,n}\|_{L_{Y_{n}^{\prime}}^{2}L_{y_{G}}^{p'}}=1$,
then
\begin{align*}
|\langle a_{G}(V)f_{G,n+1},f_{G,n}\rangle|&=|\langle f_{G,n+1},a_{G}^{*}(V)f_{G,n}\rangle|\\
&\leq\|f_{G,n+1}\|_{L_{Y_{n+1}^{\prime}}^{2}L_{y_{G}}^{q}}\|a_{G}^{*}(V)f_{G,n}\|_{L_{Y_{n}^{\prime}}^{2}L_{y_{G}}^{q'}}\\
&\leq
\left\{
  \begin{array}[c]{ll}
       \|V\|_{L^{q'}}\|f_{G,1}\|_{L_{y_{G}}^{q}}&\text{when}~n=0\,,\\ 
\|V\|_{L^{r'}}\sqrt{n+1}\|f_{G,n+1}\|_{L_{Y_{n+1}^{\prime}}^{2}L_{y_{G}}^{q}}\,,&\text{when}~n>0\,,
  \end{array}
 \right.
 \end{align*}
which implies the bounds \eqref{eq:borne-a-1} and \eqref{eq:borne-a(V)}.
\end{proof}
\begin{remark}
\label{re:Brasc} Instead of Young's inequality one could  use the more general
Brascamp-Lieb inequality (see\cite{BrLi}\cite{Lie}). This would not
change the result (up to multiplicative constants). One may wonder whether it is possible to improve
Lebesgue's exponent, in particular the integrability by reaching
exponents $p<2$ in \eqref{eq:borne-a(V)} by strengthening the
assumptions on $V$\,. Actually it is not. Take $V\in
\mathcal{S}(\rz^{d})$ and $\varphi\in L^{2}(\rz^{d},dy;\cz)$\,, then
$a(V)\varphi^{\otimes n}=\sqrt{n}\langle V,\varphi\rangle
\varphi^{\otimes n-1}$ and $a_{G}(V)U_{G}^{-1}(\varphi^{\otimes n})$ cannot be
put in $L^{2}_{z,Y'_{n-1}}L^{p}_{y_{G}}$ with $p<2$ in general.
\end{remark}

\begin{proposition}
\label{pr:expaLp}
Take $\alpha,\alpha'\in\mathbb{R}$,
$\alpha<\alpha'$  and for $1\leq q'\leq p'\leq2$\,, $2\leq p\leq q\leq
+\infty$\,, and let $r'\in [1,2]$  be defined by
$\frac{1}{r'}=\frac{1}{2}+\frac{1}{q'}-\frac{1}{p'}$\,.
For any $V\in L^{r'}(\rz^{d})\cap L^{q'}(\rz^{d})$\,, the following
estimates hold
\begin{align}
  \label{eq:expalphast}
\forall f\in e^{-\alpha' N}L^{2}_{z,Y',\rmsym}L^{p'}_{y_{G}}\,,&\;
\|e^{\alpha N}a_{G}^*(V)f\|_{L^{2}_{z,Y'}L^{q'}_{y_{G}}}\leq
  \frac{\max(\|V\|_{L^{r'}},\|V\|_{L^{q'}})e^{\alpha'}}{2\sqrt{\alpha'-\alpha}}\|e^{\alpha' N}f\|_{L^{2}_{z,Y'}L^{p'}_{y_{G}}}\,,
\\
\label{eq:expalpha}
\forall f\in e^{-\alpha' N}L^{2}_{z,Y',\rmsym}L^{q}_{y_{G}}\,,&\;
 \|e^{\alpha N}a_{G}(V)f\|_{L^{2}_{z,Y'}L^{p}_{y_{G}}}\leq
\frac{\max(\|V\|_{L^{r'}},\|V\|_{L^{q'}})e^{-\alpha}}{2\sqrt{\alpha'-\alpha}}
\|e^{\alpha' N}f\|_{L^{2}_{z,Y'}L^{q}_{y_{G}}}\,.
\end{align}
Again, a notable case is when $q'=r'$ and $p=p'=2$\,.
\end{proposition}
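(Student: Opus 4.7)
The plan is to reduce both \eqref{eq:expalphast} and \eqref{eq:expalpha} to the sector bounds of Proposition~\ref{pr:Lpagstag} combined with a single scalar maximization. The number operator $N$ is diagonal on the sector decomposition $f=\bigoplus_{n\geq 0}f_{G,n}$ with $e^{\alpha N}f_{G,n}=e^{\alpha n}f_{G,n}$, so after expanding the left-hand side of \eqref{eq:expalphast} as a direct sum over sectors and using that $a_G^{*}(V)$ shifts the particle number by $+1$, the only analytic content is the one-variable inequality
\[
\sqrt{n+1}\,e^{-\beta n}\leq \frac{e^{\beta}}{\sqrt{2e\beta}}\leq \frac{e^{\beta}}{2\sqrt{\beta}},\qquad \beta:=\alpha'-\alpha>0,\ n\in\nz,
\]
obtained by maximizing $\sqrt{x}\,e^{-\beta x}$ on $\rz_{+}$ at $x=1/(2\beta)$ and then using $\sqrt{2e}\geq 2$.

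Setting $M=\max(\|V\|_{L^{r'}},\|V\|_{L^{q'}})$, the estimates \eqref{eq:borne-a-star0} and \eqref{eq:borne-a-star(V)} unify to
\[
\|a_{G}^{*}(V)f_{G,n}\|_{L^{2}_{z,Y'_{n+1}}L^{q'}_{y_{G}}}\leq M\sqrt{n+1}\,\|f_{G,n}\|_{L^{2}_{z,Y'_{n}}L^{p'}_{y_{G}}}\qquad(n\geq 0),
\]
where for $n=0$ the norm on the right reduces to $\|f_{G,0}\|_{L^{2}_{z}}$ according to Definition~\ref{de:LpxLqy}. Hence
\[
\|e^{\alpha N}a_{G}^{*}(V)f\|_{L^{2}_{z,Y'}L^{q'}_{y_{G}}}^{2}=\sum_{n\geq 0}e^{2\alpha(n+1)}\|a_{G}^{*}(V)f_{G,n}\|^{2}\leq M^{2}\sum_{n\geq 0}\bigl(e^{\alpha}\sqrt{n+1}\,e^{-\beta n}\bigr)^{2}e^{2\alpha' n}\|f_{G,n}\|^{2},
\]
and pulling the $n$-independent factor $\bigl(e^{\alpha'}/(2\sqrt{\alpha'-\alpha})\bigr)^{2}$ out of the sum yields \eqref{eq:expalphast}.

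For \eqref{eq:expalpha} the same strategy works verbatim: \eqref{eq:borne-a-1}--\eqref{eq:borne-a(V)} give the sector bound $\|a_{G}(V)f_{G,n+1}\|_{L^{2}_{z,Y'_{n}}L^{p}_{y_{G}}}\leq M\sqrt{n+1}\,\|f_{G,n+1}\|_{L^{2}_{z,Y'_{n+1}}L^{q}_{y_{G}}}$ for $n\geq 0$, and since $a_{G}(V)$ shifts the particle number by $-1$, the reindexing $m=n+1$ reduces the estimate to the same scalar inequality and produces the constant $\frac{Me^{-\alpha}}{2\sqrt{\alpha'-\alpha}}$, the factor $e^{-\alpha}$ arising from $e^{2\alpha(m-1)}$. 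Equivalently, \eqref{eq:expalpha} can be deduced from \eqref{eq:expalphast} by Banach-space duality, pairing $L^{p}\leftrightarrow L^{p'}$ and $L^{q}\leftrightarrow L^{q'}$ while keeping $L^{2}_{z,Y'}$ self-dual, using $a_{G}(V)^{*}=a_{G}^{*}(V)$ and $(e^{\alpha N})^{*}=e^{\alpha N}$, and then applying \eqref{eq:expalphast} with the pair $(-\alpha',-\alpha)$ in place of $(\alpha,\alpha')$. No genuine analytic obstacle arises; the only bookkeeping point is that the norm of $V$ used on the $n=0$ sector is $L^{q'}$ while the one used on $n\geq 1$ is $L^{r'}$, a discrepancy that is absorbed into the single constant $M$.
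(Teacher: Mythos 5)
Your proof is correct and coincides with the paper's argument: decompose along the particle-number sectors, observe that $e^{\alpha N}a_G^{\sharp}(V)e^{-\alpha'N}$ acts on the $n$-sector with the scalar prefactor $e^{\alpha(n\pm1)-\alpha'n}$, invoke the sector bounds of Proposition~\ref{pr:Lpagstag}, and maximize $\sqrt{x}\,e^{-(\alpha'-\alpha)x}$ at $x=1/(2(\alpha'-\alpha))$ to absorb $\sqrt{n+1}$ (resp. $\sqrt{n}$) and produce the factor $1/(\sqrt{2e}\sqrt{\alpha'-\alpha})\leq 1/(2\sqrt{\alpha'-\alpha})$. The duality remark for \eqref{eq:expalpha} is a valid alternative but the direct sector estimate is what the paper uses, so the routes are essentially the same.
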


\begin{proof}
By writing 
\begin{align*}
 &&  e^{\alpha N}a_{G}^{*}(V)e^{-\alpha' N}(\displaystyle{\bigoplus_{n=0}^{\infty}}f_{G,n})
&=\displaystyle{\bigoplus_{n=0}^{\infty}} e^{\alpha(n+1)-\alpha'n}a_{G}^{*}(V)f_{G,n}\,,\\
\text{and} && e^{\alpha N}a_{G}(V)e^{-\alpha'N}(\displaystyle{\bigoplus_{n=0}^{\infty}}f_{G,n})&=
\displaystyle{\bigoplus_{n=1}^{\infty}}e^{\alpha(n-1)-\alpha'n}a_{G}(V)f_{G,n}\,,
\end{align*}
Proposition~\ref{pr:Lpagstag} tells us that it suffices to bound
\begin{align*}
  &&
     \sup_{n\in\nz}\sqrt{n+1}e^{-(\alpha'-\alpha)(n+1)}e^{\alpha'} &\leq
     \frac{e^{\alpha'}}{\sqrt{2e}\sqrt{\alpha'-\alpha}}
\leq \frac{e^{\alpha'}}{2\sqrt{\alpha'-\alpha}}\,,
\\
\text{and}&&
\sup_{n\in\nz}\sqrt{n}e^{-(\alpha'-\alpha)n}e^{-\alpha} &\leq
             \frac{e^{-\alpha}}{\sqrt{2e}\sqrt{\alpha'-\alpha}}
\leq\frac{e^{-\alpha}}{2\sqrt{\alpha'-\alpha}}\,.
\end{align*}
\end{proof}

\section{Strichartz estimates in the center of mass variable}
\label{sec:StriCM}
Here we review the celebrated results of Keel and Tao in \cite{KeTa}
and adapt them to our framework.
We shall use like those authors the  short notations 
\begin{itemize}
\item $a(z)\lesssim b(z)$ for the uniform inequality 
$$
\forall z\in Z\,, \quad a(z)\leq C b(z)\,,
$$
where $C$ is a constant which depends only on the following data: the
dimension $d$ or the free one particle evolution on $\rz^{d}$\,;
\item for $1\leq p,q\leq+\infty$\,, various uses of
the general notation $L^{p}_{x}L^{q}_{y}$ introduced in
Definition~\ref{de:LpxLqy} will be specified;
\item except in specified cases, $L^{p}_{x}$ is used for $2\leq p\leq +\infty$ while $L^{p'}_{x}$
  is used for $1\leq p'\leq 2$\,.
\end{itemize}

\subsection{Endpoint Strichartz estimates}
\label{sec:endpoint}
Keel and Tao's results about endpoint Strichartz
estimates (see \cite{KeTa}) written with uniform inequalities,
obviously 
induce a parameter dependent version which will be needed.
They start with a time-dependent operator $U(t):\mathfrak{h}_{\rmin}\to
L^{2}_{x}=L^{2}(X,\mathbf{dx};\cz)$ where $t\in \rz$ and~$\mathfrak{h}_{\rmin}$ is a (separable)
Hilbert space of initial data.
We rather consider
a parameter dependent operator $U(t,z_{1}): \mathfrak{h}_{\rmin}\to L^{2}_{x}$
defined for $(t,z_{1})\in \rz\times Z_{1}$ such that
\begin{eqnarray}
\label{eq:hypStri1}
&&  \|U(t,z_{1})f\|_{L^{2}_{x}}\lesssim \|f\|_{\mathfrak{h}_{\rmin}}\,,\\
\label{eq:hypStri2}
&&\|U(t,z_{1})U^{*}(s,z)g\|_{L^{\infty}_{x}}\lesssim
  \frac{\|g\|_{L^{1}_{x}}}{|t-s|^{\sigma}}\quad \text{for~all}~t\neq s\,,
\end{eqnarray}
while $U^{*}(t,z_{1})$ may be defined only on a dense set of
$L^{1}_{x}$\,.\\ 
On the measured space $(Z_{1},\mathbf{dz_{1}})$ the map $(t,z_{1})\mapsto U(t,z_{1})f\in L^{2}_{x}$ is
assumed measurable for all $f\in \mathfrak{h}_{\rmin}$ and
$U(t):L^{w}(Z_{1},\mathbf{dz_{1}};\mathfrak{h}_{\rmin})\to L^{w}_{z_{1}}L^{2}_{x}$\,, where
$L^{w}_{z_{1}}L^{2}_{x}=L^{w}(Z_{1},\mathbf{dz_{1}};L^{2}(X,\mathbf{dx}))$ here, is defined by
pointwise multiplication $(U(t)f)(z_{1})=U(t,z_{1})f(z_{1})$\,.\\
The set of sharp $\sigma$-admissible space-time exponents is given by 
$$
q,r\geq 2\quad \frac{1}{q}+\frac{\sigma}{r}= \frac{\sigma}{2}\,,
$$
and the dual exponents are denoted by
 $q',r'$\,, $\frac{1}{q}+\frac{1}{q'}=1$\,,
 $\frac{1}{r}+\frac{1}{r'}=1$ with $1\leq q',r'\leq 2$\,,
$\frac{1}{q'}+\frac{\sigma}{r'}=\frac{\sigma+2}{2}$\,.\\
We will consider cases where $\sigma>1$ and the endpoint Strichartz
estimates for 
$P=(2,\frac{2\sigma}{\sigma-1})$ holds true.
The results for sharp $\sigma$-admissible pairs $(q,r)$ and
$(\tilde{q},\tilde{r})$ are:
\begin{itemize}
\item the homogeneous estimate
  \begin{equation}
    \label{eq:homStri}
\|U(t)f\|_{L^{w}_{z_{1}}L^{q}_{t}L^{r}_{x}}\lesssim \|f\|_{L^{w}(Z_{1},\mathbf{dz_{1}};\mathfrak{h}_{\rmin})}\,;
  \end{equation}
\item the inhomogeneous estimate
  \begin{equation}
    \label{eq:inhomStri}
    \|\int U(s)^{*}F(s)~ds\|_{L^{w}(Z_{1},dz_{1};\mathfrak{h}_{\rmin})}\lesssim
    \|F\|_{L^{w}_{z_{1}}L^{\tilde{q}'}_{t}L^{\tilde{r}'}_{x}}\,;
  \end{equation}
\item the retarded estimate
  \begin{equation}
    \label{eq:retardStri}
    \|\int_{s<t} U(t)U(s)^{*}F(s)~ds\|_{L^{w}_{z_{1}}L^{q}_{t}L^{r}_{x}}\lesssim \|F\|_{L^{w}_{z_{1}}L^{\tilde{q}'}_{t}L^{\tilde{r}'}_{x}}\,,
  \end{equation}
where $s<t$ can be replaced by $s>t$\,.
\end{itemize}
Keel and Tao's results are written in \cite{KeTa} with
$Z_{1}=\left\{z_{0}\right\}$ and $\mathbf{dz_{1}}=\delta_{z_{0}}$\,, but the uniform
inequalities with respect to $z_{1}\in Z_{1}$ can be integrated afterwards for
data in $L^{w}_{z_{1}}$\,.\\
By requiring $\sigma>1$\,,  the endpoint estimate
allows to  take $q=\tilde{q}'=2$ with the endpoint
exponents $r_{\sigma}=\frac{2\sigma}{\sigma-1}$ and
$r_{\sigma}'=\frac{2\sigma}{\sigma+1}$\,. This  is a very convenient
framework for 
fixed point and bootstrap method in our linear setting.

Below are the typical 
inequalities which will be used. 
In our applications like in Subsection~\ref{sec:gausFp}, the vaccuum
sector plays a separate role and it is convenient to use the general
Definition~\ref{de:LpxLqy} for $L^{w}_{z}L^{q}_{x}$
\begin{eqnarray*}
  && \mathcal{N}=\left\{0,1\right\}\quad,\quad Z=Z_{0}\sqcup
     Z_{1}\\
\text{and}&& \mathcal{X}_{0}=\left\{0\right\}\quad,\quad 
\mathcal{X}_{1}=X\quad,\quad \mathbf{dx_{0}}=\delta_{0}\quad,\quad \mathbf{dx_{1}}=\mathbf{dx}\,. 
\end{eqnarray*}
In particular the spaces $L^{2}_{z}L^{q}_{x}$ for $1\leq q\leq
\infty$ equal
\begin{equation}
  \label{eq:L2tzL2z}
L^{2}_{z}L^{q}_{x}=L^{2}(Z_{0},\mathbf{dz_{0}})\oplus
L^{2}(Z_{1},\mathbf{dz_{1}};L^{2}(X,\mathbf{dx}))=\underbrace{L^{2}_{z_{0}}}_{\text{vacuum}}\oplus L^{2}_{z_{1}}L^{2}_{x}\,.
\end{equation}
At this level the action of the dynamics $U(t)U(s)^{*}$ is considered
only on the $L^{w}_{z_{1}}L^{q}_{x}$ component\,.
\begin{proposition}
\label{pr:StriDT} 
Consider $L^{2}_{z}L^{q}_{x}=L^{2}_{z_{0}}\oplus
L^{2}_{z_{1}}L^{2}_{x}$ like in \eqref{eq:L2tzL2z} and according to
Definition~\ref{de:LpxLqy}.\\
Assume that there is a dense Banach space $D$ in $L^{2}_{z_{1},x}=L^{2}_{z_{1}}L^{2}_{x}$ such that
$D\subset L^{2}_{z_{1}}L^{r_{\sigma}}_{x}$ and
$U(t)U(s)^{*}u\in L^{2}_{z_{1}}L^{r_{\sigma}}_{x}$ is measurable
with respect to $t,s\in \rz$ for all $u\in D$ with the uniform estimate
$\|U(t)U(s)^{*}u\|_{L^{2}_{z_{1}}L^{r_{\sigma}}_{x}}\lesssim \|u\|_{D}$ for almost
all $t,s\in\rz$\,.\\
Assume that the bounded operator $B^{*}_{t,s}:L^{2}_{z}L^{2}_{x}\to
L^{2}_{z_{1}}L^{r_{\sigma}'}_{x}$ and its adjoint
$B_{t,s}:L^{2}_{z_{1}}L^{r_{\sigma}}_{x}\to L^{2}_{z}L^{2}_{x}$
are strongly  
measurable with respect to $(t,s)\in [0,T]\times[0,T]$ with the assumption
\begin{align}
\label{eq:Bts*hyp}
&&
\sup_{t\in[0,T]}\int_{0}^{T}\|B^{*}_{t,s}\|^{2}~ds&<+\infty
\:,&
\|B^{*}_{t,s}\|&=\|B_{t,s}^{*}\|_{L^{2}_{z_{1}}L^{r'_{\sigma}}_{x}\leftarrow
L^{2}_{z}L^{2}_{x}}\:,
\\
\label{eq:Btshyp}
\text{resp.}
&&
\sup_{s\in [0,T]}\int_{0}^{T}\|B_{t,s}\|^{2}dt&<+\infty\:,& \|B_{t,s}\|&=\|B_{t,s}\|_{L^{2}_{z}L^{2}_{x}\leftarrow L^{2}_{z_{1}}L^{r_{\sigma}}_{x}}\:.
\end{align}
 The operator $A_{T}^{*}$  (resp. $A_{T}$) defined by
\begin{align}
 \label{eq:defA*T}
  &&
[A_{T}^{*}f](t)&=1_{Z_{1}}(z)\int_{0}^{T}U(t)U(s)^{*}B^{*}_{t,s}f(s)~ds\,,\\\
 \label{eq:defAT}
\text{resp.}
&& [A_{T}f](t)&=\int_{0}^{T}B_{t,s}U(t)U(s)^{*}1_{Z_{1}}(z)f(s)~ds\,,
\end{align}
acts continuously on $L^{\infty}([0,T];L^{2}_{z}L^{2}_{x})$  (resp. extends as
a continuous operator on $L^{1}([0,T];L^{2}_{z}L^{2}_{x})$) with
\begin{align}
&&
\Ran A_{T}^{*}\subset
   L^{\infty}([0,T];L^{2}_{z_{1},x})\,,\quad
\Ker(A_{T})\supset L^{1}([0,T];L^{2}_{z_{0}})\,,
 \end{align}
 \begin{align}
  &&
\|(A_{T}^{*})^{n}\|_{\mathcal{L}(L^{\infty}([0,T];L^{2}_{z}L^{2}_{x}))}&\lesssim
\left(\sup_{t_{n+1}\in[0,T]}\int_{[0,T]^{n}}\|B^{*}_{t_{n+1},t_{n}}\|^{2}\ldots
\|B^{*}_{t_{2},t_{1}}\|^{2}~dt_{1}\cdots dt_{n}\right)^{1/2} 
, \label{eq:A*stT} \\
 \label{eq:AstT}
\text{resp.}&&
\|(A_{T})^{n}\|_{\mathcal{L}(L^{1}([0,T];L^{2}_{z}L^{2}_{x}))}
&\lesssim
\left(\sup_{t_{0}\in[0,T]}\int_{[0,T]^{n}}\|B_{t_{n},t_{n-1}}\|^{2}\ldots
\|B_{t_{1},t_{0}}\|^{2}~dt_{1}\cdots dt_{n}\right)^{1/2},
\end{align}
for all non zero $n\in \nz$\,.\\
When
$B^{\sharp}_{t,s}=B^{\sharp}_{t,s}1_{s<t}$ or
$B^{\sharp}_{t,s}=B^{\sharp}_{t,s}1_{s>t}$ ($B^{\sharp}=B^{*}$ resp. 
$B^{\sharp}=B$), the domain of integration  $[0,T]^{n}$ can be
replaced by the corresponding n-dimensional simplex $0< t_{1}<\ldots<t_{n}<T$ or $T>t_{1}\ldots>t_{n}>0$\,.
\end{proposition}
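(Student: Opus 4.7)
The heart of the argument is the inhomogeneous endpoint Strichartz estimate applied to suitably dualized quantities. Since \eqref{eq:hypStri1}--\eqref{eq:hypStri2} are the Keel--Tao hypotheses uniformly in the parameter $z_1\in Z_1$, their $L^w_{z_1}$-integrated versions hold; in particular, with $w=2$ and the endpoint dual pair $(\tilde q',\tilde r')=(2,r'_\sigma)$, one has, for every $G\in L^2([0,T];L^2_{z_1}L^{r'_\sigma}_x)$,
\[
  \Bigl\|\int_0^T U(s)^* G(s)\,ds\Bigr\|_{L^2_{z_1}\fkh_{\rmin}}\lesssim \|G\|_{L^2_s L^2_{z_1}L^{r'_\sigma}_x}.
\]
The density assumption on $D$ is used only to ensure that $U(t)U(s)^*B^\sharp_{t,s}$ is a priori well-defined on a dense subspace so that the Bochner integrals defining $A_T^*$ and $A_T$ make sense; the bounds derived below then let us extend them by continuity.

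For the single-step bound on $A_T^*$, fix $t\in[0,T]$, set $G_t(s)=B^*_{t,s}f(s)$, and use \eqref{eq:Bts*hyp} with Cauchy--Schwarz:
\[
\|G_t\|^2_{L^2_s L^2_{z_1}L^{r'_\sigma}_x}\leq\int_0^T\|B^*_{t,s}\|^2\|f(s)\|^2_{L^2_z L^2_x}\,ds.
\]
Applying the inhomogeneous Strichartz estimate and then the norm bound $\|U(t)\|\lesssim 1$ from \eqref{eq:hypStri1} yields the pointwise-in-$t$ estimate
\[
  \|A_T^*f(t)\|^2_{L^2_{z_1,x}}\lesssim \int_0^T \|B^*_{t,s}\|^2\,\|f(s)\|^2_{L^2_z L^2_x}\,ds,
\]
which produces the $n=1$ case of \eqref{eq:A*stT} after taking $\sup_t$ and using $\|f(s)\|\leq\|f\|_{L^\infty_t L^2}$. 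The inclusion $\Ran A_T^*\subset L^\infty([0,T];L^2_{z_1,x})$ is immediate from the prefactor $1_{Z_1}(z)$. The case of general $n$ follows by induction: the pointwise-in-$t$ inequality applied to the vector-valued function $(A_T^*)^{n-1}f$ (which takes values in $L^2_{z_1,x}\subset L^2_z L^2_x$, so no loss occurs at the composition) gives, after combining with the inductive hypothesis,
\[
  \|(A_T^*)^nf(t_{n+1})\|^2_{L^2_{z_1,x}}\lesssim \int_{[0,T]^n}\|B^*_{t_{n+1},t_n}\|^2\cdots\|B^*_{t_2,t_1}\|^2\,\|f(t_1)\|^2_{L^2_z L^2_x}\,dt_1\cdots dt_n,
\]
and \eqref{eq:A*stT} follows upon bounding $\|f(t_1)\|$ by $\|f\|_{L^\infty_t L^2}$ and taking $\sup_{t_{n+1}}$.

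The bound for $A_T$ is obtained by duality. For $g\in L^\infty([0,T];L^2_z L^2_x)$,
\[
  \langle A_T f,g\rangle=\int_0^T\Bigl\langle f(s),\,U(s)\int_0^T U(t)^*B^*_{t,s}g(t)\,dt\Bigr\rangle\,ds,
\]
and the same single-step argument in the variable $t$ (with $s$ a parameter), together with $\|B^*_{t,s}\|=\|B_{t,s}\|$ and hypothesis \eqref{eq:Btshyp}, yields
\[
\Bigl\|U(s)\int_0^T U(t)^*B^*_{t,s}g(t)\,dt\Bigr\|_{L^2_{z_1,x}}\lesssim\Bigl(\int_0^T\|B_{t,s}\|^2\,dt\Bigr)^{1/2}\|g\|_{L^\infty_t L^2},
\]
whence the $n=1$ case of \eqref{eq:AstT}; iteration on the dual side gives general $n$. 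The containment $\Ker A_T\supset L^1([0,T];L^2_{z_0})$ is immediate from the factor $1_{Z_1}(z)$ in \eqref{eq:defAT}. When $B^\sharp_{t,s}=B^\sharp_{t,s}1_{s<t}$ (resp.\ $1_{s>t}$), every iterated integral above is automatically restricted to the corresponding simplex, yielding the last statement. The principal technical obstacle is the clean bookkeeping of the iteration in Step~2 --- specifically, the interplay between the pointwise-in-$t$ estimate that the Strichartz machinery genuinely produces and the $L^\infty_t$ output structure that the statement demands, which must be chained $n$ times without accumulating constants beyond a power of the Strichartz constant at each step.
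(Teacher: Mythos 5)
Your proof follows essentially the same path as the paper's: the inhomogeneous endpoint Strichartz estimate \eqref{eq:inhomStri} combined with \eqref{eq:hypStri1} gives the pointwise-in-$t$ bound for $A_T^*$, iterating that kernel inequality yields \eqref{eq:A*stT} for all $n$, and duality (using $\|B^*_{t,s}\|=\|B_{t,s}\|$ and swapping the roles of $t$ and $s$) transfers the estimates to $A_T$ acting on $L^1$. The one imprecision is the remark that the density of $D$ is needed ``so that the Bochner integrals defining $A_T^*$ and $A_T$ make sense'': as the paper notes in Remark~\ref{re:DL1}, $D$ is used only to give $A_T$ a dense domain in $L^1([0,T];L^2_{z_1,x})$ from which it extends by duality (extension in the opposite direction is not available), whereas $A_T^*$ on $L^\infty$ is furnished directly by the inhomogeneous Strichartz estimate without any such density argument.
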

\begin{remark}
\label{re:DL1}
  The dense subspace $D$ is introduced in order to get a dense domain
  of $L^{1}([0,T];L^{2}_{z_{1},x})$ where $A_{T}$ is well defined by its
  integral formula. The
  extension to the whole space $L^{1}([0,T];L^{2}_{z_{1},x})$ is proved by
  using the fact that $L^{\infty}([0,T];L^{2}_{z_{1},x})$ is the dual of
  $L^{1}([0,T];L^{2}_{z_{1},x})$ and it cannot be done in the other way.\\
Examples where the dense subset $D$ is easy to construct are when
$L^{2}(X,\mathbf{dx};\cz)=L^{2}(\rz^{d},dx;\cz)$  and
$U(t)U(s)^{*}: H^{\mu}(\rz^{d};\cz)\to H^{\mu}(\rz^{d};\cz)$
are measurable and uniformly bounded
 w.r.t.~$t,s\in\rz$ for some
$\mu>d/2$\,. In this simple case, the set $D$  can be $L^{2}(Z_{1},\mathbf{dz_{1}};H^{\mu}(\rz^{d};\cz))$
with $\mu>\frac{d}{2}$\,.
\end{remark}
\begin{proof}
Let us start with $A_{T}^{*}$\,. When $f\in
L^{\infty}([0,T],dt;L^{2}_{z}L^{2}_{x})$ the function $1_{[0,T]}f$ belongs to
$L^{2}_{z}L^{2}_{t,x}$ and, for almost
all $t_{0}\in [0,T]$\,, the function
$(z,s,x)\mapsto B^{*}_{t_{0},s}1_{[0,T]}(s)f(s)$ belongs to 
$L^{2}_{z_{1}}L^{2}_{s}L^{r'_{\sigma}}_{x}$\,.
The inhomogeneous
endpoint Strichartz estimate implies for
almost all $t_{0}\in [0,T]$
\begin{equation}
  \label{eq:ineqLinf}
\|A_{T}^{*}f(t_{0})\|^{2}_{L^{2}_{z}L^{2}_{x}}\lesssim
\int_{0}^{T}
\|B^{*}_{t_{0},s}f(s)\|^{2}_{L^2_{z_1}L^{r'_\sigma}_x}~ds
\lesssim \left(\int_{0}^{T}\|B^{*}_{t_{0},s}\|^{2}~ds\right)\|f\|^{2}_{L^{\infty}([0,T];L^{2}_{z}L^{2}_{x})}\,.
\end{equation}
This proves firstly that $A_{T}^{*}$ acts continuously on
$L^{\infty}([0,T];L^{2}_{z}L^{2}_{x})$\,. The property
$\Ran A_{T}^*\subset L^{\infty}([0,T];L^{2}_{z_{1},x})$ comes from
the assumption $B^{*}_{t,s}:L^{2}_{z}L^{2}_{x}\to L^{2}_{z_{1}}L^{r_{\sigma}'}_{x}$ and the redundant
multiplication by $1_{Z_{1}}(z)$ in \eqref{eq:defA*T}\,. 
Secondly iterating
\eqref{eq:ineqLinf} with $(t_{0},s)=(t_{n+1},t_{n})$ leads to
\eqref{eq:A*stT}\,.\\
Consider now $A_{T}f$ when $f=1_{Z_{1}}(z)f\in L^{1}([0,T];L^{2}_{z_{1},x})$\,.
For $f$ in the dense subspace $L^{1}([0,T];D)$ of $L^{1}([0,T];L^{2}_{z_{1},x})$\,,  our
assumptions ensure that $A_{T}f$ belongs to
$L^{\infty}([0,T];L^{2}_{z}L^{2}_{x})\subset L^{1}([0,T];L^{2}_{z}L^{2}_{x})$ with
$$
\|A_{T}f\|_{L^{1}([0,T];L^{2}_{z}L^{2}_{x})}\lesssim C_{T}\|f\|_{L^{1}([0,T];D)}\,.
$$
With 
$$
\int_{0}^{T}\langle v(t)\,,\, A_{T}f(t)\rangle~dt
=\int_{0}^{T}\langle 1_{Z_{1}}(z) \int_{0}^{T}U(s)U^{*}(t)B_{t,s}^{*}v(t)~dt\,,
f(s)\rangle~ds
=\int_{0}^{T}\langle (\tilde{A}_{T}^{*}v)(s)\,,\, f(s)\rangle~ds\,,
$$
where  $B_{t,s}^{*}$ has simply been replaced by $B_{s,t}^{*}$
in $\tilde{A}_{T}^{*}v(t)=1_{Z_{1}}(z)\int_{0}^{T}U(t)U(s)^{*}B_{s,t}^{*}v(s)~ds$\,,
we obtain
$$
\forall v\in
L^{\infty}([0,T];L^{2}_{z}L^{2}_{x})\,,\quad
|\langle v\,,\, A_{T}f\rangle|\lesssim \left(\int_{0}^{T}\|B_{s,t}\|^{2}~ds\right)^{1/2}\|v\|_{L^{\infty}([0,T];L^{2}_{z}L^{2}_{x})}\|f\|_{L^{1}([0,T];L^{2}_{z_{1},x})}\,,
$$
while $L^{\infty}([0,T];L^{2}_{z}L^{2}_{x})=(L^{1}([0,T];L^{2}_{z}L^{2}_{x}))'$\,.\\
This proves that $A_{T}$ extends as a continuous operator 
$L^{1}([0,T];L^{2}_{z_{1},x})\to L^{1}([0,T];L^{2}_{z}L^{2}_{x})$ and the formula contains the extension by
$0$ on $L^{1}([0,T];L^{2}_{z_{0}})$\,, with
$L^{1}([0,T];L^{2}_{z}L^{2}_{x})=L^{1}([0,T];L^{2}_{z_{0}})\oplus L^{1}([0,T];L^{2}_{z_{1},x})$\,.
 Its adjoint is
$\tilde{A}_{T}^{*}:L^{\infty}([0,T];L^{2}_{z}L^{2}_{x})\to
L^{\infty}([0,T];L^{2}_{z}L^{2}_{x})$\,. The estimate \eqref{eq:A*stT} for
$\tilde{A}_{T}^{*}$ with $(\|B^{*}_{t,s}\|,t_{k})$ replaced by $(\|B^{*}_{s,t}\|=\|B_{s,t}\|,t_{n+1-k})$
yields \eqref{eq:AstT}\,.
\end{proof}
Note that when $B^{\sharp}_{t,s}=B^{\sharp}_{t,s}1_{t>s}$ or
$B^{\sharp}_{t,s}=B^{\sharp}_{t,s}1_{t<s}$ with
$\|B^{\sharp}_{t,s}\|\leq \beta$\,, the upper bounds of
\eqref{eq:A*stT} and \eqref{eq:AstT} are below
$$
\left(\frac{(\beta^{2}T)^{n}}{n!}\right)^{1/2}\lesssim
\left(\frac{e\beta^{2}T}{n}\right)^{n/2}\,.
$$
This gives a hint of times scales with respect to $\beta$\,,
e.g. when  $\beta^{2}T\leq C$ here, where iterative methods lead to
convergent series or the associated fixed point methods can be used. We
will use some refined versions of the scaling rule $\beta^{2}T\leq
C$\,. Although the $L^{p}_{t}$ spaces estimates are written with $p=+\infty$
and $p=1$\,, this scaling really relies on the endpoint Strichartz
estimate with $p=2$\,.\\
We complete our general corollaries of endpoint Strichartz estimates
with a result which combines the action of  operators like $B_{t,s}$
and $B^{*}_{t,s}$ in Proposition~\ref{pr:StriDT}.
\begin{proposition}
\label{pr:L2L1Linf}
Let $\mathcal{I},\mathcal{J}$ be at most countable families of
disjoint finite intervals, and set $UI=\sqcup_{I\in \mathcal{I}}I$ and
$UJ=\sqcup_{I\in \mathcal{J}}J$\,. For a given $\varphi_{\infty}\in
L^{\infty}(UJ;L^{2}_{z}L^{2}_{x})$ consider
\begin{eqnarray*}
  &&
\varphi_{1,I}(t)=1_{I}(t)\sum_{J\in \mathcal{J}}\int_{0}^{t}B_{1,IJ}U(t)U(s)^{*}B_{2,IJ}^{*}(s)\varphi_{\infty,J}(s)~ds
\\
\text{with}&&
              \varphi_{\infty,J}(s)=\varphi_{\infty}(s)1_{J}(s)\,,\\
\text{and}&&
\|B_{1,IJ}\|_{L^{2}_{z}L^{2}_{x}\leftarrow
             L^{2}_{z_{1}}L^{r_{\sigma}}_{x}}\leq
             \beta_{1,IJ}\quad,\quad
             \sup_{s\in
             J}\|B_{2,IJ}^{*}(s)\|_{L^{2}_{z_{1}}L^{r'_{\sigma}}_{x}\leftarrow
             L^{2}_{z}L^{2}_{x}}\leq \beta_{2,IJ}\,,
\end{eqnarray*}
where $B_{1,IJ}:L^{2}_{z_{1}}L^{r_{\sigma}}_{x}\to L^{2}_{z}L^{2}_{x}$
does not depend on $(t,s)\in I\times J$ while 
$B_{2,IJ}^{*}(s):L^{2}_{z}L^{2}_{x}\to L^{2}_{z_{1}}L^{r'_{\sigma}}_{x}$
does 
not depend on the time variable $t\in I$ and is strongly measurable
with respect to $s\in J$\,.
Then the function $\varphi_{1}=\sum_{I\in \mathcal{I}}\varphi_{1,I}$ belongs to $L^{1}(UI,dt;L^{2}_{z}L^{2}_{x})$ with
$$
\|\varphi_{1}\|_{L^{1}(UI,dt;L^{2}_{z}L^{2}_{x})}\lesssim
\left[\sum_{\tiny
    \begin{array}[c]{c}
I\in
  \mathcal{I},J\in \mathcal{J}\\
\inf J<\sup I
    \end{array}
  }|I|^{1/2}\beta_{1,IJ}\beta_{2,IJ}|J|^{1/2}\right]\|\varphi_{\infty}\|_{L^{\infty}(UJ,dt;L^{2}_{z}L^{2}_{x})}\,,
$$
as soon as $\left[\sum_{I\in \mathcal{I},J\in \mathcal{J}}1_{]0,+\infty[}(\sup I-\inf J)
|I|^{1/2}\beta_{1,IJ}\beta_{2,IJ}|J|^{1/2}\right]<+\infty$\,.
\end{proposition}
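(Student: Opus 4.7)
The plan is to handle each pair \((I,J)\in\mathcal I\times\mathcal J\) separately and to factor the corresponding operator through the retarded endpoint Strichartz inequality \eqref{eq:retardStri}. First I would use that the intervals in \(\mathcal I\) are pairwise disjoint and that \(\varphi_{1,I}\) is supported in \(I\), so
\[
\|\varphi_1\|_{L^1(UI;L^2_zL^2_x)}=\sum_{I\in\mathcal I}\|\varphi_{1,I}\|_{L^1(I;L^2_zL^2_x)},
\]
reducing the task to bounding, for each pair \((I,J)\), the operator
\[
T_{IJ}\varphi(t):=1_I(t)\,B_{1,IJ}\int_0^t 1_J(s)\,U(t)U(s)^*\,B_{2,IJ}^*(s)\,\varphi(s)\,ds
\]
from \(L^\infty(J;L^2_zL^2_x)\) to \(L^1(I;L^2_zL^2_x)\) with norm \(\lesssim |I|^{1/2}\beta_{1,IJ}\beta_{2,IJ}|J|^{1/2}\), noting that the integrand is identically zero unless \(\inf J<\sup I\).

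The second step is the factorization \(T_{IJ}=B_{1,IJ}\circ R_{IJ}\circ B_{2,IJ}^*\), with \(R_{IJ}\psi(t)=1_I(t)\int_{s<t}1_J(s)U(t)U(s)^*\psi(s)\,ds\). The pointwise bounds on \(B_{2,IJ}^*(s)\) and the time-independence of \(B_{1,IJ}\) integrate trivially to
\[
\|B_{2,IJ}^*\|_{L^2(J;L^2_{z_1}L^{r'_\sigma}_x)\leftarrow L^2(J;L^2_zL^2_x)}\leq \beta_{2,IJ},\quad
\|B_{1,IJ}\|_{L^2(I;L^2_zL^2_x)\leftarrow L^2(I;L^2_{z_1}L^{r_\sigma}_x)}\leq \beta_{1,IJ}.
\]
For the middle factor I would invoke the retarded endpoint Strichartz estimate \eqref{eq:retardStri} with \((q,r)=(\tilde q,\tilde r)=(2,r_\sigma)\) and weight \(w=2\), and then restrict the time integrations to the subsets \(J\) and \(I\) (which only decreases the norm), obtaining \(\|R_{IJ}\|_{L^2(I;L^2_{z_1}L^{r_\sigma}_x)\leftarrow L^2(J;L^2_{z_1}L^{r'_\sigma}_x)}\lesssim 1\). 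Composing the three bounds gives \(\|T_{IJ}\|_{L^2(I;L^2_zL^2_x)\leftarrow L^2(J;L^2_zL^2_x)}\lesssim \beta_{1,IJ}\beta_{2,IJ}\).

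The third step upgrades the Lebesgue exponents in time by two Cauchy--Schwarz inclusions: \(L^\infty(J)\hookrightarrow L^2(J)\) with cost \(|J|^{1/2}\) on the input side and \(L^2(I)\hookrightarrow L^1(I)\) with cost \(|I|^{1/2}\) on the output side. This produces
\[
\|T_{IJ}\varphi_{\infty,J}\|_{L^1(I;L^2_zL^2_x)}\lesssim |I|^{1/2}\beta_{1,IJ}\beta_{2,IJ}|J|^{1/2}\|\varphi_\infty\|_{L^\infty(UJ;L^2_zL^2_x)}.
\]
Summing over all pairs \((I,J)\) with \(\inf J<\sup I\) and plugging back into the disjoint decomposition of Step~1 yields the announced inequality.

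I do not anticipate any deep difficulty here: the proof reduces to a clean factorization followed by two Cauchy--Schwarz steps, and the scale \(\beta^2 T\) alluded to after Proposition~\ref{pr:StriDT} emerges automatically from the Strichartz middle factor. The one point requiring attention is the bookkeeping between the full space \(L^2_zL^2_x=L^2_{z_0}\oplus L^2_{z_1}L^2_x\) used on the endpoints and the Strichartz companion spaces \(L^2_{z_1}L^{r_\sigma}_x\), \(L^2_{z_1}L^{r'_\sigma}_x\) used in the middle; the hypotheses on \(B_{1,IJ}\) and \(B_{2,IJ}^*(s)\) are precisely tailored so that these transitions match, effectively absorbing the projection onto the non-vacuum sector \(L^2_{z_1}L^2_x\) where \(U(t)U(s)^*\) acts.
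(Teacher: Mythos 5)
Your proof is correct and follows essentially the same route as the paper: factor $\varphi_{1,I}$ pairwise through the retarded endpoint Strichartz estimate with the endpoint pair $(2,r_\sigma)$, bound the multiplication operators $B_{1,IJ}$ and $B_{2,IJ}^*(\cdot)$ by their $\sup$-in-time norms, convert $L^\infty(J)\to L^2(J)$ and $L^2(I)\to L^1(I)$ at costs $|J|^{1/2}$ and $|I|^{1/2}$ by Cauchy--Schwarz, and sum over pairs using disjointness of the intervals in $\mathcal{I}$. The paper phrases the middle step as a direct estimate on $\psi_{IJ}$ rather than as a composition of operator norms, but this is a cosmetic difference.
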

\begin{proof}
 Every term of $\varphi_{1,I}$ can be written
$$
\psi_{IJ}(t)=B_{1,IJ}\int_{0}^{t}U(t)U(s)^{*}\phi_{2,IJ}(s)~ds
$$
where $\phi_{2,IJ}=B_{2,IJ}^{*}(\cdot)\varphi_{\infty,J}(\cdot)\in L^{2}(\rz;L^{2}_{z_{1}}L^{r'_{\sigma}}_{x})$ 
satisfies
\begin{eqnarray*}
  && \phi_{2,IJ}=0\quad\text{if}~\inf J\geq \sup I\,,\\
\text{and}&&
\|\phi_{2,IJ}\|_{L^{2}(\rz,dt;L^{2}_{z_{1}}L^{r'_{\sigma}}_{x})}\leq
|J|^{1/2}\beta_{2,IJ}\|\varphi_{\infty,J}\|_{L^{\infty}(J,dt;L^{2}_{z}L^{2}_{x})}
\leq |J|^{1/2}\beta_{2,IJ}\|\varphi_{\infty}\|_{L^{\infty}(UJ,dt;L^{2}_{z}L^{2}_{x})}\,.
\end{eqnarray*}
The retarded endpoint Strichartz estimate with
$\|B_{1,IJ}\|_{L^{2}_{z}L^{2}_{x}\leftarrow
  L^{2}_{z}L^{r_{\sigma}}_{x}}\leq \beta_{1,IJ}$ implies
$$
\|\psi_{IJ}\|_{L^{2}(I,dt;L^{2}_{z}L^{2}_{x})}\lesssim
1_{]0,+\infty[}(\sup I-\inf J)\beta_{1,IJ}\beta_{2,IJ}|J|^{1/2}\|\varphi_{\infty}\|_{L^{\infty}(UJ,dt;L^{2}_{z}L^{2}_{x})}
$$
and
therefore 
$$
\|\psi_{IJ}\|_{L^{1}(I,dt;L^{2}_{z}L^{2}_{x})}\lesssim \left[1_{]0,+\infty[}(\sup I-\inf J)|I|^{1/2}\beta_{1,IJ}\beta_{2,IJ}|J|^{1/2}\right]\|\varphi_{\infty}\|_{L^{\infty}(UJ,dt;L^{2}_{z}L^{2}_{x})}\,.
$$
The finiteness of $\sum_{I\in \mathcal{I},J\in
 \mathcal{J}}\left[ 1_{]0,+\infty[}(\sup I-\inf J)|I|^{1/2}\beta_{1,IJ}\beta_{2,IJ}|J|^{1/2}\right]$
ensures that $\varphi_{1,I}=\sum_{J\in \mathcal{J}}\psi_{IJ}$ belongs to
$L^{1}(I,dt;L^{2}_{z}L^{2}_{x})$ and finally
\begin{align*}
\|\varphi_{1}\|_{L^{1}(UI,dt;L^{2}_{z}L^{2}_{x})}&=\sum_{I\in
  \mathcal{I}}\|\varphi_{1,I}\|_{L^{1}(I,dt;L^{2}_{z}L^{2}_{x})}
\\
&\lesssim
\sum_{I\in \mathcal{I},J\in
  \mathcal{J}}\left[ 1_{]0,+\infty[}(\sup I-\inf J) 
|I|^{1/2} \beta_{1,IJ}\beta_{2,IJ}|J|^{1/2}\right]
\|\varphi_{\infty}\|_{L^{\infty}(UJ,dt;L^{2}_{z}L^{2}_{x})}\,.
\end{align*}
\end{proof}
\subsection{Fixed point in weighted spaces}
\label{sec:fixweight}
In this section, we apply the
general framework of Strichartz estimates for evolution equations in
the spaces 
$$
F^{2}=L^{2}(Z',\mathbf{dz'};\Gamma
(L^{2}(\rz^{d},dy;\cz)))=L^{2}(Z',\mathbf{dz'};\cz)\oplus L^{2}_{\mathrm{sym}}(\mathcal{R}\times
Z',d\mu\otimes \mathbf{dz'};L^{2}(\rz^{d},dy_{G};\cz))\,.
$$
The measured space of parameters $(Z',\mathbf{dz'})$ will be specified
later and by following the notations of Definition~\ref{de:LpxLqy} and
\eqref{eq:L2tzL2z} for the application of Strichartz estimates, we
write
\begin{eqnarray}
  && Z_{0}=Z'\quad,\quad Z_{1}=\mathcal{R}\times Z'\quad,\quad
     \mathbf{dz_{0}}=\mathbf{dz'}\quad,\quad
     \mathbf{dz_{1}}=\mu\otimes \mathbf{dz'}
\nonumber
\\
&& X_{0}=\left\{0\right\}\quad,\quad X_{1}=\rz^{d}\quad,\quad
   \mathbf{dx_{0}}=\delta_{0}\quad,\quad \mathbf{dx}=dy_{G}\,,
\nonumber\\
&&
F_{2}=L^{2}_{z,\text{sym}}L^{2}_{y_{G}}=L^{2}_{z_{0}}\oplus
   L^{2}_{z_{1},\text{sym}}L^{2}_{y_{G}}=L^{2}_{z_{0}}\oplus L^{2}_{(Y',z'),\text{sym}}L^{2}_{y_{G}}\,,
\label{eq:F2L2L2}
\end{eqnarray}
where the second variable $x\in \mathcal{X}=X_{0}\sqcup X_{1}$ has
been replaced by $y_{G}$ in order to recall its link with the center
of mass on the non vacuum sector.\\
We will use the $L^{p}_{y_{G}}$\,,$1\leq p\leq +\infty$\,, version
$$
L^{2}_{z,\text{sym}}L^{p}_{y_{G}}=L^{2}_{z_{0}}\oplus
L^{2}_{(Y',z'),\text{sym}}L^{p}_{y_{G}}\qquad\text{with}~ z_{1}=(Y',z')\,.
$$
In all the above identities the subscript $_{\text{sym}}$ refers to
the symmetry for the relative variable $Y'\in \mathcal{R}$\,. Because
the symmetry is preserved by all our defined operators, this subscript
will be forgotten when we write estimates.\\
 Only the useful conditions on the ``free
dynamics'' $U(t)$\,, or more precisely $U(t)U(s)^{*}:F^{2}\to F^{2}$
will be specified. Those will be checked for our model later in
Section~\ref{sec:conseqStri}.
The free dynamics or more precisely $U(t)U(s)^{*}: F^{2}\to F^{2}$ is assumed to preserve the number of
particles 
$$[U(t)U(s)^{*},N]=0
$$
 with the following decomposition:
\begin{eqnarray}
\label{eq:UtUs}
  &&U(t)U(s)^{*}=(K_{0}(t,z')\overline{K_{0}(s,z')}\times_{z'})\oplus
     (U_{1}(t,Y',z')U_{1}^{*}(s,Y',z')\times_{(Y',z')})\\
\label{eq:F2UtUs}
\text{in}&&
F^{2}=\underbrace{L^{2}(Z',\mathbf{dz'};\cz)}_{=L^{2}_{z_{0}}\quad \text{(vacuum)}}\oplus 
\underbrace{L^{2}_{\mathrm{sym}}(\mathcal{R}\times
Z',d\mu\otimes \mathbf{dz'};L^{2}(\rz^{d},dy_{G};\cz))}_{=L^{2}_{(Y',z'),\text{sym}}L^{2}_{y_{G}}}\,,
\end{eqnarray}
where $\times_{z'}$ or $\times_{(Y',z')}$ stands for the pointwise
multiplication. So $U_{1}(t,Y',z')U_{1}^{*}(s,Y',z')$ is a one
particle operator acting in the $y_{G}$-variable, parametrized by
$z_{1}=(Y',z')$ and we add the
following conditions which make 
 the results of
Subsection~\ref{sec:endpoint} relevant: 
\begin{itemize}
\item The measured space $(X_{1},\mathbf{dx_{1}})$ is nothing but
  $(\rz^{d},dy_{G})$ in the center of mass variable and the
  $z_{1}=(Y',z')$-dependent one particle operators
  $U_{1}(t,z_{1}):\mathfrak{h}_{\rmin}\to L^{2}(\rz^{d},dy_{G};\cz)$ and its
  adjoint are assumed to satisfy the estimate
  \eqref{eq:hypStri1}\eqref{eq:hypStri2} with $\sigma>1$\,. Remember
  $r'_{\sigma}=\frac{2\sigma}{\sigma+1}$ and $r_{\sigma}=\frac{2\sigma}{\sigma-1}$\,.
\item The additional assumption
of Proposition~\ref{pr:StriDT} concerned with the dense subset $D$ is
also assumed for $U_{1}(t,z_{1})$\,.
\item The vacuum component $K_{0}$ belongs to $L^{\infty}(\rz\times
  Z', dt\otimes \mathbf{dz'};\cz)$\,. 
\end{itemize}

The interaction terms will be 
$$B^{*}_{t,s}=c_{1}(t,s)e^{\alpha(t,s)
  N}\sqrt{h}a_{G}^{*}(V_{1})e^{-\alpha'(t,s)
  N}\quad\text{and}\quad B_{t,s}=c_{2}(t,s)\sqrt{h}e^{\alpha(t,s)N}a_{G}(V_{2})e^{-\alpha'(t,s)N}
$$ with $V_{1},V_{2}\in
L^{r'_{\sigma}}(\rz^{d},dy;\cz)$ (complex valued
$V$ are allowed here) and where
$c_{1}$\,, $c_{2}$\,, $\alpha$ and $\alpha'$ are real  measurable
functions of $(t,s)\in [0,T]^{2}$ with $\alpha-\alpha'<0$\,. Those
will be specified further and we shall check the estimates
\eqref{eq:Bts*hyp}\eqref{eq:Btshyp}.
Because $Z_{0}=Z'$ corresponds to the vacuum sector, $N=0$\,, on which
$a_{G}(V)$ vanishes while the range of $a_{G}(V)^{*}$ lies in the non
vacuum sector $N\geq 1$\,, the range $B^{*}_{t,s}$ lies naturally in
$L^{2}_{z_{1}}L^{r'_{\sigma}}_{y_{G}}$, $z_{1}=(Y',z')$\,, once the proper estimates are checked while
it adjoints $B_{t,s}$ sends $L^{2}_{z_{1}}L^{r_{\sigma}}_{y_{G}}$ into
$L^{2}_{z}L^{2}_{y_{G}}$ and is naturally extended by $0$ on
the vacuum sector $L^{2}_{z_{0}}$\,.\\
We will consider the following system
\begin{alignat}{2}
  \label{eq:dynuinfty}
  u^{h}_\infty(t)&=
-i\int_{0}^{t}U(t)U^{*}(s) \left(\sqrt{h}a_{G}^{*}(V_{1})u^{h}_\infty(s)+\sqrt{h} u^{h}_2(s)+u^h_1(s)\right)\,ds &+f^{h}_\infty(t) \\
u_2^{h}(t)&= 
-i\int_{0}^{t} a_{G}(V_{2})U(t)U(s)^{*} \sqrt{h} u_2^{h}(s)~ds 
&+f_2^{h}(t) 
\label{eq:dynu2} \,,\\
u_1^{h}(t)&=
 -i\int_{0}^{t} a_{G}(V_{2})U(t)U(s)^{*} \left( h a_{G}^{*}(V_{1})
 u_\infty^{h}(s) + \sqrt{h} u_1^{h}(s) \right) ~ds 
 &+f_1^{h}(t)\,. 
\label{eq:dynu1}
\end{alignat}
written shortly as
\begin{equation}
\forall q\in\{\infty,2,1\},\quad u_q^h = \sum_{p\in\{\infty,2,1\} }  L_{qp}(u^h_p) \, + \, f_q^h
\label{eq:dynuqp}
\end{equation}
or
\begin{equation}
            \begin{pmatrix}
              u_\infty^{h}\\u_2^{h}\\u_1^h
            \end{pmatrix}
=L
            \begin{pmatrix}
              u_\infty^{h}\\u_2^{h}\\u_1^h
            \end{pmatrix}
+
            \begin{pmatrix}
              f_\infty^{h}\\f_2^{h}\\f_1^h
            \end{pmatrix}
\,,\quad L=
  \begin{pmatrix}
    L_{\infty\,\infty}&L_{\infty2}&L_{\infty1}\\
    0&L_{22}&0\\
    L_{1\infty}&0&L_{11}
  \end{pmatrix}
\,.\label{eq:defL}
\end{equation}
This system will be studied in spaces with the number weight $e^{\alpha N}$ and
we will use the following functional spaces.
\begin{definition}
\label{de:E0Talpha} For $T>0$\,, $h\in
]0,h_{0}[$\,, $I_{T}^{h}$ denotes the interval
$I_{T}^{h}=]-T/h,T/h[$\,.\\
Fix $\alpha_{0},\alpha_{1}\in\rz$\,, $\alpha_{0}<\alpha_{1}$ and set
$M_{\alpha01}=\frac{\max(e^{\alpha_{1}},e^{-\alpha_{0}})}{2}\geq 1/2$\,.\\
Assume $V_{1},V_{2}\in L^{r'_{\sigma}}(\rz^{d},dy;\cz)$ with
$\max(\|V_{1}\|_{L^{r'_{\sigma}}},\|V_{2}\|_{L^{r'_{\sigma}}})< C_{V}$\,.\\
For a parameter $\gamma>0$ and $\alpha\in [\alpha_{0},\alpha_{1}[$ set
$$
T_{\alpha}=\gamma(\alpha_{1}-\alpha)\,.
$$
The space $\mathcal{E}^{h}_{\alpha_{0},\alpha_{1},\gamma}$ is the set of
$(e^{-\alpha_{0}N}L^{2}_{z,\rmsym}L^{2}_{y_{G}})^{3}$-valued
measurable 
functions $I_{T_{\alpha_{0}}}^{h}\ni t\mapsto
\begin{pmatrix}
  u_\infty(t)\\u_2(t)\\u_1(t)
\end{pmatrix}
$ such that for all $\alpha$ in $[\alpha_{0},\alpha_{1}[$\,,
\begin{align*}
 |t|^{-1/2}u_\infty & \in
L^{\infty}(I^{h}_{T_{\alpha}},dt;e^{-\alpha N}L^{2}_{z}L^{2}_{y_{G}})\;,\\
u_2 & \in L^{2}_{\rmloc}(I^{h}_{T_{\alpha}},dt;e^{-\alpha N}L^{2}_{z}L^{2}_{y_{G}})\;,\\
 |t|^{-1/2} u_1 & \in L^{1}_{\rmloc}(I^{h}_{T_{\alpha}},dt;e^{-\alpha N}L^{2}_{z}L^{2}_{y_{G}})\;.
\end{align*}
and $M(u_\infty,u_2,u_1)<+\infty$ with 
\begin{align}
  \label{eq:defMinftyu}
  M(u_\infty,u_2,u_1)&=M_\infty(u_\infty)+M_2(u_2)+M_1(u_1)\,,\\
\label{eq:defMu0}
M_\infty(u_\infty)&=\sup_{\alpha_{0}\leq \alpha<\alpha_{1}}
\left\|\left(\frac{T_{\alpha}-|ht|}{|ht|}\right)^{1/2}e^{\alpha
  N}u_\infty \right\|_{L^{\infty}(I_{T_{\alpha}}^{h};L^{2}_{z}L^{2}_{y_{G}})}\,,\\
\label{eq:defM2u}
M_2(u_2)&=
\frac{1}{M_{\alpha01}C_{V}\gamma^{1/2}}\sup_{\substack{\alpha_{0}\leq\alpha<\alpha_{1}\\
\tau\in  [0,T_{\alpha}[ }}\sqrt{T_{\alpha}-\tau}
\left\| e^{\alpha N}u_2 \right\|_{L^{2}(I^{h}_{\tau};L^{2}_{z}L^{2}_{y_{G}})}\,,\\
\label{eq:defM1u}
M_1(u_1)&=
\frac{1}{M_{\alpha01}C_{V}\gamma^{1/2}}\sup_{\substack{\alpha_{0}\leq\alpha<\alpha_{1}\\
\tau\in  [0,T_{\alpha}[ }}\sqrt{T_{\alpha}-\tau}
\left\| \frac{e^{\alpha N}u_1}{\sqrt{|ht|}} \right\|_{L^{1}(I^{h}_{\tau};L^{2}_{z}L^{2}_{y_{G}})}\,.
\end{align}
\end{definition}
Endowed with the norm $M(u_\infty,u_2,u_1)$\,,
$\mathcal{E}^{h}_{\alpha_{0},\alpha_{1},\gamma}$ is a Banach space for all
$h\in ]0,h_{0}[$\,. The $\alpha$-dependent time domain
$I^{h}_{T_{\alpha}}$ where weighted $L^{\infty}_{t}$, $L^{2}_{t}$ and $L^{1}_{t}$ norms are
evaluated is illustrated in Figure~\ref{fig:Tad}.
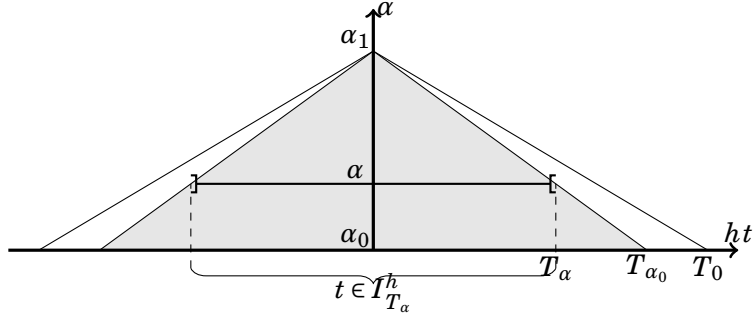
\begin{figure}[h!]
\centering
\begin{tikzpicture}[scale=0.8]
\fill[gray!20] (-4.5,0)--(4.5,0)--(0,3.3) -- cycle;
\draw[very thick,->] (0,-0)--(0,4);
\draw (0,3.3)--(5.5,0);
\draw(0,3.3)--(-5.5,0);
\draw (0,3.3)--(4.5,0);
\draw(0,3.3)--(-4.5,0);
\draw[very thick, ->] (-6,0)--(6,0);
\draw[thick,{]-[}] (-3,1.1)--(3,1.1);
\draw[dashed] (3,-0.3)--(3,1.1);
\draw[dashed] (-3,-0.3)--(-3,1.1);
\node[](T) at (5.5,-0.3) {$T_{0}$};
\node[](Td) at (4.5,-0.3){$T_{\alpha_{0}}$};
\node[](Tad) at (3,-0.3){$T_{\alpha}$};
\node[](a0) at (-0.3,0.2){$\alpha_{0}$};
\node[](a1) at (-0.3,3.5){$\alpha_{1}$};
\node[](aa) at (0.2,4){$\alpha$};
\node[](a) at (-0.3,1.3){$\alpha$};
\node[](t) at (6,0.3){$ht$};
\node[](It) at (0,-0.7) {$t\in I^{h}_{T_{\alpha}}$};
\draw [decorate,decoration={brace,amplitude=0.2cm,mirror}](-3,-0.3)--(3,-0.3);
\end{tikzpicture}
\caption{\label{fig:Tad} The time interval $I_{T_{\alpha}}^{h}=\left]-\frac{\gamma(\alpha_{1}-\alpha)}{h},\frac{\gamma(\alpha_{1}-\alpha)}{h}\right[$ according to
 $\alpha$\,.}
\end{figure}
\\
The constants $C_{V}>0$ and
$M_{\alpha01}=\max(e^{\alpha_{1}},e^{-\alpha_{0}})/2\geq 1/2$ were 
chosen so that Proposition~\ref{pr:expaLp} applied with
$q'=r'_{\sigma}$ and $p'=2$\,, gives
\begin{eqnarray*}
  &&
\|e^{\alpha
  N}a_{G}^{*}(V)e^{-\alpha'N}\varphi\|_{L^{2}_{z_{1}}L^{r'_{\sigma}}_{y_{G}}}\leq
\frac{C_{V}e^{\alpha'}}{2\sqrt{\alpha'-\alpha}}
\|\varphi\|_{L^{2}_{z}L^{2}_{y_{G}}}\leq \frac{M_{\alpha01}C_{V}}{\sqrt{\alpha'-\alpha}}\|\varphi\|_{L^{2}_{z}L^{2}_{y_{G}}}
\\
&&
\|e^{\alpha
  N}a_{G}(V)e^{-\alpha'N}\varphi\|_{L^{2}_{z}L^{2}_{y_{G}}}\leq
\frac{C_{V}e^{-\alpha}}{2\sqrt{\alpha'-\alpha}}\|\varphi\|_{L^{2}_{z_{1}}L^{r_{\sigma}}_{y_{G}}}\leq
   \frac{M_{\alpha01}C_{V}}{\sqrt{\alpha'-\alpha}}\|\varphi\|_{L^{2}_{z}L^{r_{\sigma}}_{y_{G}}}\,,
\end{eqnarray*}
for all $\alpha,\alpha'\in [\alpha_{0},\alpha_{1}[$\,,
$\alpha<\alpha'$\,.\\
Finally the normalization of~\eqref{eq:defM2u} and~\eqref{eq:defM1u}
was chosen in order to make the
contraction statement simple.
\begin{proposition}
\label{pr:contracStri}
Assume that the free dynamics
 $U_{1}(t,z_{1}):\mathfrak{h}_{in}\to
L^{2}(\rz^{d},dy_{G};\cz)$ satisfies
\eqref{eq:hypStri1}\eqref{eq:hypStri2} (uniformly w.r.t.~$z\in Z$)  with $\sigma>1$ and the
additional existence of the dense subset~$D$ assumed in Proposition~\ref{pr:StriDT}.\\
Let $h_{0}>0$\,, $\alpha_{0},\alpha_{1}\in\rz$\,, $\alpha_{0}<\alpha_{1}$ and
$V_{1},V_{2}\in L^{r'_{\sigma}}(\rz^{d},dy;\cz)$ be
fixed. The positive constants $M_{\alpha01},C_{V}$\,, the space
$\mathcal{E}_{\alpha_{0},\alpha_{1},\gamma}^{h}$ and its norm $M$ are the
ones of Definition~\ref{de:E0Talpha}. 
By choosing the parameter $\gamma>0$ small enough the linear
operator $L$ given by \eqref{eq:defL} is a contraction of the Banach
space $(\mathcal{E}_{\alpha_{0},\alpha_{1},\gamma}^{h},M)$ for all $h\in
]0,h_{0}[$ 
and the system
\eqref{eq:defL}, explicitely written
\eqref{eq:dynuinfty}\eqref{eq:dynu2}\eqref{eq:dynu1}, admits a unique solution for any
$(f^{h}_{\infty},f^{h}_{2},f^{h}_{1})\in \mathcal{E}_{\alpha_{0},\alpha_{1},\gamma}^{h}$\,.\\
More precisely there exists a constant $C_{d,U}$ determined by the
dimension $d$ and the free dynamics $U$\,, given by the pair $K_{0}$
and $U_{1}$\,,
such that 
$$
\forall h\in ]0,h_{0}[\,,\quad
\|L\|_{\mathcal{L}(\mathcal{E}_{\alpha_{0},\alpha_{1},\gamma}^{h})}\leq C_{d,U}M_{\alpha01}C_{V}\gamma^{1/2}\,.
$$
Taking e.g. $\gamma=\frac{1}{2C_{d,U}^{2}M_{\alpha01}^{2}C_{V}^{2}}$
ensures
$\|L\|_{\mathcal{L}(\mathcal{E}^{h}_{\alpha_{0,\alpha_{1},\gamma}})}\leq
\frac{1}{2}$ so that the solution to \eqref{eq:defL} satisfies
$$
M(u_{\infty}^{h},u_{2}^{h}, u_{1}^{h})\leq 2M(f_{\infty}^{h},f_{2}^{h},f_{1}^{h})\,.
$$
\end{proposition}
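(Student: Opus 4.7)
The plan is to estimate the operator norm of each of the six non-zero blocks $L_{qp}$ of the matrix $L$ on the Banach space $(\mathcal{E}_{\alpha_0,\alpha_1,\gamma}^h, M)$, proving that each is bounded by $C_{d,U} M_{\alpha01} C_V \gamma^{1/2}$ for a constant $C_{d,U}$ depending only on the dimension and the free dynamics; summing the six contributions and requiring $\gamma \leq 1/(2 C_{d,U}^2 M_{\alpha01}^2 C_V^2)$ then gives $\|L\|\leq 1/2$, so the contraction and the existence/uniqueness statement follow from the Banach fixed point theorem.

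For each block I would insert an intermediate number weight $e^{\alpha' N}$ via
\[
e^{\alpha N}\,a_G^{\sharp}(V_k)\,\cdot\;=\;\bigl[e^{\alpha N}a_G^{\sharp}(V_k)e^{-\alpha' N}\bigr]\,e^{\alpha' N}\,\cdot ,
\]
with target weight $\alpha\in[\alpha_0,\alpha_1)$ and auxiliary weight $\alpha'\in(\alpha,\alpha_1)$ to be chosen, then use $[U(t)U^{*}(s),N]=0$ to commute the weight freely across the free propagator. The bracketed operator is controlled by Proposition~\ref{pr:expaLp}, which supplies the factor $M_{\alpha01}C_V/\sqrt{\alpha'-\alpha}$. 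The remaining Duhamel integral is estimated by the appropriate endpoint Strichartz bound: the three blocks $L_{\infty\infty}, L_{\infty 2}, L_{\infty 1}$ producing an $L^\infty_t$ output are treated by the inhomogeneous endpoint estimate (equivalently by the $A_T^*$ half of Proposition~\ref{pr:StriDT}); the two blocks $L_{22}, L_{11}$ with an annihilation on the left are treated by the dual retarded estimate (the $A_T$ half); and the mixed block $L_{1\infty}$, which chains a creation, a free evolution and an annihilation across $U(t)U^{*}(s)$, fits directly into Proposition~\ref{pr:L2L1Linf}.

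The crucial bookkeeping is the choice of $\alpha'$, which will be $\alpha$- and sometimes $\tau$-dependent. Setting for instance $T_{\alpha'}=(T_\alpha+\tau)/2$ when estimating the output at time $\tau=|ht|$ produces $\alpha'-\alpha=(T_\alpha-\tau)/(2\gamma)$, so that the weight-gap factor $1/\sqrt{\alpha'-\alpha}\sim\sqrt{\gamma/(T_\alpha-\tau)}$ exactly reproduces the output time-weights $\sqrt{(T_\alpha-|ht|)/|ht|}$ in $M_\infty$ and $\sqrt{T_\alpha-\tau}$ in $M_2,M_1$, while the input is evaluated on the shorter interval $I_{T_{\alpha'}}^h$ where it is still controlled by $M_p(u_p)$. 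The small factor $\gamma^{1/2}$ then emerges as the product of $\sqrt{h}$ from the coupling, $\sqrt{T_\alpha/h}\sim\sqrt{\gamma(\alpha_1-\alpha)/h}$ produced by the Strichartz $L^2_t$-norm on the physical interval of length $T_\alpha/h$, and the weight-gap $1/\sqrt{\alpha_1-\alpha}$; the $(\alpha_1-\alpha)$ factors cancel to leave $\sqrt{\gamma}$.

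The main obstacle will be the careful tracking of the several singular time-weights of $M_\infty,M_2,M_1$ through the Duhamel integrals, most delicately for $L_{1\infty}$ (which feeds a weighted $L^\infty_t$ input into an operator combining two $a_G^{\sharp}$'s and must produce an $L^1_t$-weighted output) and for $L_{\infty 1}$ (which converts a $1/\sqrt{|ht|}$-weighted $L^1_t$ input into an $L^\infty_t$ output via inhomogeneous Strichartz). A systematic way to handle the weight matching is a Whitney-type dyadic partition of $I_{T_\alpha}^h$ into subintervals on which $\sqrt{(T_\alpha-|ht|)/|ht|}$ is essentially constant and on which Proposition~\ref{pr:L2L1Linf} or Proposition~\ref{pr:StriDT} applies with $\alpha'$ tuned to the subinterval; summing the resulting geometric series in the dyadic index gives the claimed block bound, and summing over the six blocks yields the announced contraction constant.
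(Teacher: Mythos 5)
Your plan is, block-by-block, essentially the paper's own proof: an intermediate number weight inserted via the Cauchy--Kowalevski device, with $T_{\alpha'}$ taken as the midpoint between $T_\alpha$ and the output (or input) time so that the weight-gap factor $1/\sqrt{\alpha'-\alpha}$ from Proposition~\ref{pr:expaLp} reproduces the singular time-weights of $M_\infty,M_2,M_1$; the $A_T^{*}$ (resp.\ $A_T$) half of Proposition~\ref{pr:StriDT} for the blocks carrying a single $a_G^{\sharp}$; Proposition~\ref{pr:L2L1Linf} for $L_{1\infty}$; and a dyadic Whitney-type decomposition of $I^h_{T_\alpha}$, with $\alpha'$ tuned per dyadic interval, to handle the blocks that mix inhomogeneous weights. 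Two small corrections in your sorting of the blocks, though. First, $L_{\infty 2}$ and $L_{\infty 1}$ carry no $a_G^{\sharp}(V_k)$ factor at all (see \eqref{eq:dynuinfty}), so there is nothing to hand to Proposition~\ref{pr:expaLp} nor an $L^{r'_\sigma}_{y_G}$-norm to feed into Strichartz; these two are handled by the $L^2$-unitarity of $U(t)U(s)^{*}$ and Cauchy--Schwarz alone, the factor $M_{\alpha01}C_V\gamma^{1/2}$ appearing because it has been built into the normalization of $M_2$ and $M_1$ in Definition~\ref{de:E0Talpha}. Second, $L_{\infty 1}$ is in fact one of the easy blocks (a one-line change of variables once the $M_1$ weight is expanded), while the genuinely delicate entries requiring the dyadic partition and the equivalence of the norms $N_{p,i}$ are $L_{22}$, $L_{11}$ and $L_{1\infty}$ (the last needing the partition around \emph{both} endpoints $0$ and $T_\alpha$).
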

\begin{proof}
 The non-vanishing entries of $L
 \begin{pmatrix}
   u_{\infty}\\u_{2}\\u_{1}
 \end{pmatrix}
$, namely
$$
L_{\infty\,\infty}(u_{\infty})\quad,\quad L_{\infty 2}(u_{2})\quad,\quad 
L_{\infty 1}(u_{1})\quad,\quad L_{22}(u_{2})\quad,\quad
L_{11}(u_{1})\quad\text{and}\quad
L_{1\infty}(u_{\infty})
$$
 will be
 considered separately in this order of increasing difficulty. Additionally the symmetry $t\mapsto -t$ allows  us to
 restrict the analysis to
 $t\geq 0$\,, that is $t\in [0,\frac{T_{\alpha}}{h}[$ for $\alpha\in
 [\alpha_{0},\alpha_{1}[$\,. Accordingly $I_{T}^{h}$ is, in this
 proof,  the restricted
 interval $[0,\frac{T}{h}[$\,.\\
We use like in Section~\ref{sec:StriCM}
 the symbol $\lesssim$ for
 inequalities with constants which depend only on the dimension $d$
 and the free dynamics $U$\,.\\
\noindent$\mathbf{L_{\infty\,\infty}(u_\infty)}:$ For this term and  up to the square root and the parameter $h\in
]0,h_{0}[$\,, we follow exactly the method of \cite{Nir} for Cauchy-Kowalevski theorem.
 Write for $t\in ]0,T_{\alpha}/h[$\,, $ht\in
   ]0,T_{\alpha}[$\,, $\alpha<\alpha_{1}-\frac{ht}{\gamma}$\,,
and
$$
\left(\frac{T_{\alpha}-ht}{ht}\right)^{1/2}e^{\alpha
  N}L_{\infty\,\infty}(u_\infty)(t)=-i\int_{0}^{T_{\alpha}/h}U(t)U(s)^{*}B^{*}_{t,s}
\left(\frac{T_{\alpha_{s}}-hs}{hs}\right)^{1/2}e^{\alpha_{s}N}u_\infty(s)~ds
$$ 
with 
\begin{equation}
     B^{*}_{t,s}=1_{s<t}\left(\frac{T_{\alpha}-ht}{ht}\right)^{1/2}
     e^{\alpha
     N}\sqrt{h}a_{G}^{*}(V)e^{-\alpha_{s}N}\left(\frac{hs}{T_{\alpha_{s}}-hs}\right)^{1/2}\,,\label{eq:defB*(t,s)}
\end{equation}
and $\alpha<\alpha_{s}<\alpha_{1}-\frac{hs}{\gamma}$.
Hence $hs<T_{\alpha_{s}}$ and
\begin{equation}
  \label{eq:borneIu}
\left(\frac{T_{\alpha_{s}}-hs}{hs}\right)^{1/2}
\|e^{\alpha_{s}N}u_\infty(s)\|_{L^{2}_{z}L^{2}y_{G}}
\leq M_\infty(u_{\infty})
\end{equation}
while $\alpha<\alpha_{s}$ implies  that
$\|B_{t,s}^{*}\|=\|B_{t,s}^{*}
\|_{L^{2}_{z}L^{r_{\sigma}'}_{y_{G}}\leftarrow
  L^{2}_{z}L^{2}_{y_{G}}}$ satisfies
$$
\|B_{t,s}^{*}\|^{2}\leq h1_{s<t}
\frac{M_{\alpha01}^{2}C_{V}^{2}}{(\alpha_{s}-\alpha)}\frac{(T_{\alpha}-ht)(hs)}{ht(T_{\alpha_{s}}-hs)}
=h 1_{s'<t'}\frac{M_{\alpha 01}^{2}C_{V}^{2}}{(\alpha_{s'/h}-\alpha)}\frac{(T_{\alpha}-t')s'}{t'(T_{\alpha_{s'/h}}-s')}\,,
$$
by setting $s'=hs$\,, $t'=ht$\,. By choosing
$$
\alpha_{s}=\frac{\alpha_{1}+\alpha-hs/\gamma}{2}=\frac{\alpha_{1}+\alpha-s'/\gamma}{2}\,,
$$
we obtain
\begin{eqnarray*}
  &&\gamma(\alpha_{s}-\alpha)=\frac{\gamma(\alpha_{1}-\alpha)-s'}{2}=\frac{T_{\alpha}-s'}{2} \,,\\
&&T_{\alpha_{s'/h}}=\gamma(\alpha_{1}-\alpha_{s'/h})=\frac{\gamma(\alpha_{1}-\alpha)+s'}{2}\quad,\quad
   T_{\alpha_{s'/h}}-s'=\frac{T_{\alpha}-s'}{2}\,,
\end{eqnarray*}
and
$$
\frac{(T_{\alpha}-t')s'}{(\alpha_{s'/h}-\alpha)t'(T_{\alpha_{s'/h}}-s')}=4\gamma\frac{(T_{\alpha}-t)s'}{t'(T_{\alpha}-s')^{2}} \,.
$$
This yields
\begin{equation}
  \label{eq:borneIB}
\int_{0}^{T_{\alpha}/h}\|B^{*}_{t,s}\|^{2}~ds\leq 4\gamma M_{\alpha01}^2 C_{V}^{2}
\frac{T_{\alpha}-t'}{t'}\int_{0}^{t'}\frac{s'}{(T_{\alpha}-s')^{2}}~ds'\leq
4\gamma M_{\alpha01}^{2}C_{V}^{2}\,.
\end{equation}
The inequalities \eqref{eq:borneIu} and \eqref{eq:borneIB} combined
with the
inequality \eqref{eq:A*stT} with $n=1$ of Proposition~\ref{pr:StriDT}
imply
\begin{equation}
  \label{eq:estimI}
\left\|
\left(\frac{T_{\alpha}-ht}{ht}\right)^{1/2}e^{\alpha N}L_{\infty\,\infty}(u_{\infty})\right\|_{L^{\infty}([0,T_{\alpha}/h];L^{2}_{z}L^{2}_{y_{G}})}\lesssim 2\gamma^{1/2}M_{\alpha01}C_{V}M_\infty(u_\infty)\,.
\end{equation}
\noindent$\mathbf{L_{\infty2}(u_2)}:$ 
The Cauchy-Schwarz inequality applied to
$$
\left(\frac{T_{\alpha}-ht}{ht}\right)^{1/2}e^{\alpha
  N}L_{\infty2}(u_2)(t)=-i\sqrt{T_{\alpha}-ht}
\frac{1}{\sqrt{t}}
\int_{0}^{t}
U(t)U(s)^{*}e^{\alpha
N}u_2(s)
~ds\,,
$$
imply
\begin{align*}
\left\|\left(\frac{T_{\alpha}-ht}{ht}\right)^{1/2}e^{\alpha N}L_{\infty2}(u_2)(t)
\right\|_{L^{2}_{z}L^{2}_{y_{G}}}
&\leq
\sqrt{T_{\alpha}-ht} \frac{1}{\sqrt{t}}\|e^{\alpha
  N} u_2(s) \|_{L^{2}_{z}L^{1}([0,t];L^{2}_{y_{G}})}
\\
&\leq
\sqrt{T_{\alpha}-ht}
\| e^{\alpha N} u_2(s) \|_{L^{2}([0,t];L^{2}_{z}L^{2}_{y_{G}})}\\
&\leq
\sup_{\tau \in ]0,T_{\alpha}[}
\sqrt{T_{\alpha}-\tau}
\| e^{\alpha N} u_2(s) \|_{L^{2}([0,\tau/h];L^{2}_{z}L^{2}_{y_{G}})}\,.
\end{align*}
Taking the supremum over $\alpha\in[\alpha_0,\alpha_1[$ yields
\begin{equation}
  \label{eq:estimII}
M_\infty(L_{\infty2}(u_2)) \lesssim
M_{\alpha01}C_{V}\gamma^{1/2}M_2(u_2)\,. 
\end{equation}
\noindent$\mathbf{L_{\infty1}(u_1)}:$ The expression
$$
\left(\frac{T_{\alpha}-ht}{ht}\right)^{1/2}e^{\alpha
  N}L_{\infty1}(u_1)(t)=-i\sqrt{T_{\alpha}-ht}
\int_{0}^{t}
\frac{\sqrt{hs}}{\sqrt{ht}}
\left[
U(t)U(s)^{*}e^{\alpha
N}\frac{1}{\sqrt{hs}}u_1(s)
\right]~ds\,,
$$
gives
\begin{align*}
\left\|\left(\frac{T_{\alpha}-ht}{ht}\right)^{1/2}e^{\alpha N}L_{\infty1}(u_1)(t)
\right\|_{L^{2}_{z}L^{2}_{y_{G}}}
&\leq
\sqrt{T_{\alpha}-ht}\|e^{\alpha
  N}\frac{u_1(s)}{\sqrt{hs}}\|_{L^{1}([0,t];L^{2}_{z}L^{2}_{y_{G}})}
\\
&\leq 
\sup_{\tau \in ]0,T_{\alpha}[}
\sqrt{T_{\alpha}-\tau}
\|\frac{u_1(s)}{\sqrt{hs}}\|_{L^{1}([0,\frac{\tau}{h}];L^{2}_{z}L^{2}_{y_{G}})}\\
&\leq M_{\alpha01}C_{V}\gamma^{1/2}M_1(u_1)
\end{align*}
and
\begin{equation}
  \label{eq:estimLInfty1}
\|\left(\frac{T_{\alpha}-ht}{ht}\right)^{1/2}e^{\alpha
  N}L_{\infty1}(u_1)\|_{L^{\infty}([0,\frac{T_{\alpha}}{h}];
L^{2}_{z}L^{2}_{y_{G}})}\leq
M_{\alpha01}C_{V}\gamma^{1/2}M_1(u_1)\,. 
\end{equation}

The entries $L_{22}(u_{2})$, $L_{11}(u_{1})$ and finally 
$L_{1\infty}(u_{\infty})$ require some
additional techniques. The proof,
done in several steps for each of them, relies on a dyadic partition 
of the interval
$[0,T_{\alpha}[$ around $T_{\alpha}$\,. 
In the two cases of $L_{22}(u_{2})$ and $L_{11}(u_{1})$\,, the norms $M_{2}(\varphi)$ and $M_{1}(\varphi)$ are
transformed into equivalent norms corresponding to this dyadic
partition, the proof being given
in Lemma~\ref{le:equivnorms2} below. Finally the entry 
$L_{1\infty}(u_{\infty})$ is treated via  dyadic partitions around $T_{\alpha}$ and
$0$ and happens to be a direct application of Proposition~\ref{pr:L2L1Linf}.\\
\medskip

\noindent\textbf{Splitting the interval $\mathbf{[0,T[}$.}
Fix $\alpha\in [\alpha_{0},\alpha_{1}[$ and therefore
$T=T_{\alpha}$\,. The intervals $J^{n}_{T}$ are defined for
$n\in\nz$ by
\begin{eqnarray*}
&& J^{n}_{T}=T+2^{-n}[-T,-T/2[=[(1-2^{-n})T, (1-2^{-n-1})T[\,,\\
  && J^{\leq n_{0}}_{T}=\ccup_{n\leq n_{0}}J^{n}_{T}\, \,
     \text{for}~n_0\in \nz ,
\end{eqnarray*}
so that $[0,T]=\ccup_{n\in\nz}J^{n}_{T}=J^{\leq n_{0}}_{T}\cup
(\ccup_{n>n_{0}}J^{n}_{T})$\,, see Figure \ref{fig:JnT}.
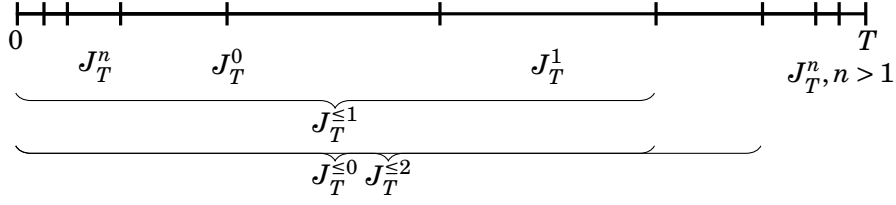
\begin{figure}[h!]
\centering
\begin{tikzpicture}[scale=0.7]
\draw[very thick,|-|] (0,0)--(16,0);
\draw[very thick,-|](0,0)--(8,0);
\draw[very thick,-|](2,0)--(4,0);
\draw[very thick,-|](1,0)--(2,0);
\draw[very thick,|-|](1/2,0)--(1,0);
\draw[very thick,|-](12,0)--(14,0);
\draw[very thick,|-](14,0)--(15,0);
\draw[very thick,|-|](15,0)--(15.5,0);
\node[](J0-) at (4,-1) {$J^{0}_{T}$};
\node[](T) at (16,-0.5) {$T$};
\node[](0) at (0,-0.5) {$0$};
\node[](J0+) at (10,-1) {$J^{1}_{T}$};
\node[](Jn>0) at (15.5,-1.2) {$J^{n}_{T},n>1$};
\node[](Jn<0) at (1.5,-1) {$J^{n}_{T}$};
\draw [decorate,decoration={brace,amplitude=0.2cm,mirror}](0,-1.5)--(12,-1.5);
\node[](J0) at (6,-2.1) {$J^{\leq 1}_{T}$};
\draw [decorate,decoration={brace,amplitude=0.2cm,mirror}](0,-2.5)--(12,-2.5);
\node[](Jleq0) at (6,-3.1) {$J^{\leq 0}_{T}$};
\draw [decorate,decoration={brace,amplitude=0.2cm,mirror}](0,-2.5)--(14,-2.5);
\node[](Jleq1) at (7,-3.1) {$J^{\leq 2}_{T}$};
\end{tikzpicture}
\caption{\label{fig:JnT} The time intervals $J^{n}_{T}$\,, 
 $n\in\nz$\,, with length $\frac{T}{2^{n+1}}$}
\end{figure}
\\
With the exponents
$$
\alpha'_{0}=\frac{\alpha_{1}+6\alpha}{7}\quad\text{and}\quad
\alpha'_{n}=\frac{\alpha_{1}+(2^{n+2}-1)\alpha}{2^{n+2}}~\text{for}~n\geq 1
$$
we note that
\begin{eqnarray*}
  && J^{\leq
     2}_{T_{\alpha'_{0}}}=\frac{7}{8}T_{\alpha'_{0}}=\frac{7}{8}\frac{6}{7}T_{\alpha}=\frac{3}{4}T_{\alpha}=J^{\leq
     1}_{T_{\alpha}}\,,\\
\text{and~for}~n\geq 1&&
T_{\alpha'_{n}}=T_{\alpha}-\frac{1}{2}\frac{T_{\alpha}}{2^{n+1}}=(1-2^{-n-2})T_{\alpha}\,.
\end{eqnarray*}
By taking $\delta_{n}=\frac{T_{\alpha}}{2^{n+2}}$ and
$2\delta_{n}=\frac{T_{\alpha}}{2^{n+1}}$ for $n> 1$\,,
 we obtain in particular
 \begin{eqnarray*}
   &&
      J^{n}_{T_{\alpha}}=[T_{\alpha}-4\delta_{n},T_{\alpha}-2\delta_{n}[=[T_{\alpha'_{n}}-3\delta_{n},T_{\alpha'_{n}}-\delta_{n}[\quad\text{with}~\delta_{n}\leq\frac{T_{\alpha'_{n}}}{12}~(n>1)
\end{eqnarray*}
as summarized in Figure~\ref{fig:triangle}.
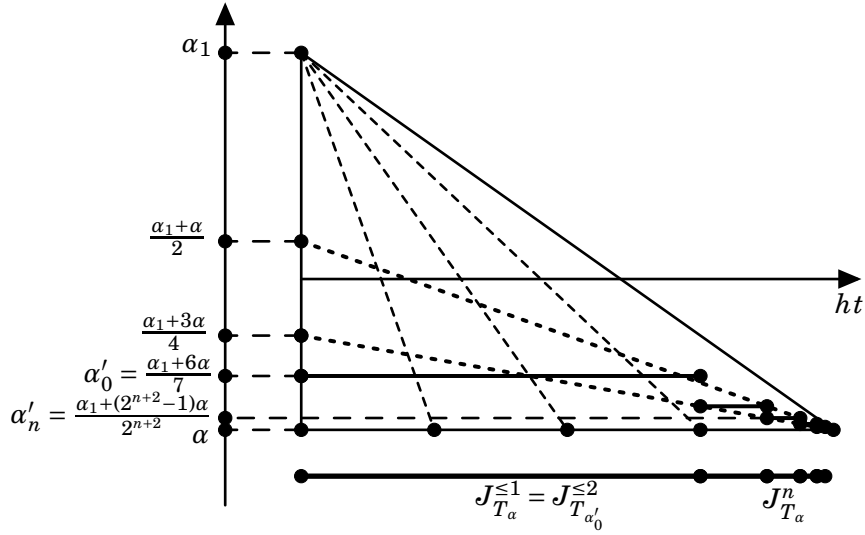
\begin{figure}[h!]
\centering
\begin{tikzpicture}[line cap=round,line join=round,>=triangle 45,x=0.5cm,y=0.5cm]
\clip(-6.1391229567038756,-7.00572963900362) rectangle (19.347979087556062,8.541057458848234);
\draw [line width=1.pt,black] (2.,6.)-- (16.,-4.);
\draw [line width=1.pt,black] (16.,-4.)-- (2.,-4.);
\draw [line width=1.pt,black] (2.,-4.)-- (2.,6.);
\draw [line width=1.pt,dashed] (2.,6.)-- (5.5,-4.);
\draw [line width=1.pt,dashed] (2.,6.)-- (9.,-4.);
\draw [line width=1.pt,dashed] (2.,6.)-- (12.5,-4.);
\draw [line width=1.5pt] (2.,-2.571428571428571)-- (12.5,-2.5714285714285716);
\draw [line width=1.5pt,black, loosely dotted] (2.,1.)-- (16.,-4.);
\draw [line width=1.5pt,black, loosely dotted] (2.,-1.5)-- (16.,-4.);
\draw [line width=1.5pt] (12.5,-3.375)-- (14.25,-3.375);
\draw [line width=1.5pt] (14.25,-3.6875)-- (15.125,-3.6875);
\draw [line width=1.5pt] (15.125,-3.84375)-- (15.5625,-3.84375);
\draw [line width=2.pt] (15.5625,-3.921875)-- (15.78125,-3.921875);
\draw [line width=2.pt] (2.,-5.22833)-- (12.5,-5.22833);
\draw [line width=2.pt] (12.5,-5.22833)-- (14.25,-5.22833);
\draw [line width=2.pt] (14.25,-5.22833)-- (15.125,-5.22833);
\draw [line width=2.pt] (15.125,-5.22833)-- (15.5625,-5.22833);
\draw [line width=2.pt] (15.5625,-5.22833)-- (15.78125,-5.22833);
\draw [line width=1.pt,dash pattern=on 6pt off 6pt] (2.,1.)-- (0.,1.);
\draw [line width=1.pt,dash pattern=on 6pt off 6pt] (2.,-4.)-- (0.,-4.);
\draw [line width=1.pt,dash pattern=on 6pt off 6pt] (2.,6.)-- (0.,6.);
\draw [line width=1.pt,dash pattern=on 6pt off 6pt] (2.,-1.5)-- (0.,-1.5);
\draw [line width=1.pt,dash pattern=on 6pt off 6pt] (2.,-2.571428571428571)-- (0.,-2.571428571428571);
\draw [line width=1.pt,dash pattern=on 6pt off 6pt] (14.25,-3.6875)-- (0.,-3.68529496840551);
\draw [->,line width=1.pt] (0.,-6.) -- (0.,7.3402362810479005);
\draw [->,line width=1.pt] (2.,0.) -- (16.735955709719306,0.);
\draw [fill,black] (16.,-4.) circle (2.5pt);
\draw [fill, black] (2.,-2.571428571428571) circle (2.5pt);
\draw [fill, black] (9.,-4.) circle (2.5pt);
\draw [fill, black] (5.5,-4.) circle (2.5pt);
\draw [fill, black] (12.5,-4.) circle (2.5pt);
\draw [fill, black] (12.5,-2.5714285714285716) circle (2.5pt);
\draw [fill, black] (2.,1.) circle (2.5pt);
\draw [fill, black] (2.,-1.5) circle (2.5pt);
\draw [fill, black] (12.5,-3.375) circle (2.5pt);
\draw [fill, black] (14.25,-3.375) circle (2.5pt);
\draw [fill, black] (14.25,-3.6875) circle (2.5pt);
\draw [fill, black] (15.125,-3.6875) circle (2.5pt);
\draw [fill, black] (15.125,-3.84375) circle (2.5pt);
\draw [fill, black] (15.5625,-3.84375) circle (2.5pt);
\draw [fill, black] (15.5625,-3.921875) circle (2.5pt);
\draw [fill, black] (15.78125,-3.921875) circle (2.5pt);
\draw [fill, black] (2.,-5.22833) circle (2.5pt);
\draw [fill, black] (12.5,-5.22833) circle (2.5pt);
\draw [fill, black] (12.5,-5.22833) circle (2.5pt);
\draw [fill, black] (14.25,-5.22833) circle (2.5pt);
\draw [fill, black] (14.25,-5.22833) circle (2.5pt);
\draw [fill, black] (15.125,-5.22833) circle (2.5pt);
\draw [fill, black] (15.125,-5.22833) circle (2.5pt);
\draw [fill, black] (15.5625,-5.22833) circle (2.5pt);
\draw [fill, black] (15.5625,-5.22833) circle (2.5pt);
\draw [fill, black] (15.78125,-5.22833) circle (2.5pt);
\draw [fill, black] (0.,1.) circle (2.5pt);
\draw [fill, black] (2.,-4.) circle (2.5pt);
\draw [fill, black] (0.,-4.) circle (2.5pt);
\draw [fill, black] (2.,6.) circle (2.5pt);
\draw [fill, black] (0.,6.) circle (2.5pt);
\draw [fill, black] (2.,-1.5) circle (2.5pt);
\draw [fill, black] (0.,-1.5) circle (2.5pt);
\draw [fill, black] (2.,-2.571428571428571) circle (2.5pt);
\draw [fill, black] (0.,-2.571428571428571) circle (2.5pt);
\draw [fill, black] (0.,-3.68529496840551) circle (2.5pt);
\draw[black] (-0.7523831134354492,6.135007074656876) node {$\alpha_{1}$};
\draw[black] (-1.2689116534694953,1.1008093477334178) node {$\frac{\alpha_{1}+\alpha}{2}$};
\draw[black] (-1.350258136056798,-1.3916120758904509) node {$\frac{\alpha_{1}+3\alpha}{4}$};
\draw[black] (-2.0258136056798,-2.5020968685941547) node {$\alpha'_{0}=\frac{\alpha_{1}+6\alpha}{7}$};
\draw[black] (-3.011399737418642,-3.5632267816221384) node {$\alpha'_{n}=\frac{\alpha_{1}+(2^{n+2}-1)\alpha}{2^{n+2}}$};
\draw[black] (-0.6515543536397259,-4.180658190278997) node {$\alpha$};
\draw[black] (14.719987878023154,-6.0) node {$J^{n}_{T_{\alpha}}$};
\draw[black] (16.405898961495858,-0.65386936205176753) node {$ht$};
\draw[black] (8.298720058616872,-6.) node
{$J^{\leq 1}_{T_{\alpha}}=J^{\leq 2}_{T_{\alpha'_{0}}}$};
\end{tikzpicture}
\caption{\label{fig:triangle} The exponent $\alpha_{0}'$ is
 determined by $\frac{7}{8}T_{\alpha'_{0}}=\frac{3}{4}T_{\alpha}$
 while for $n>1$\,, $\alpha_{n}'$ is determined by
 $T_{\alpha'_{n}}=(1-\frac{2^{-n-1}}{2})T_{\alpha}=(1-2^{-n-2})T_{\alpha}$\,.}
\end{figure}
\\
The equivalence of norms
\begin{equation}
  \label{eq:equivN21toN24}
  \kappa_{2}^{-1}N_{2,1}(\varphi)\leq  N_{2,i}(\varphi) \leq \kappa_{2}N_{2,1}(\varphi)\,, \quad
2\leq i\leq 4 \,,
\end{equation}
for some universal constant $\kappa_{2}>1$ 
is proved in Lemma~\ref{le:equivnorms2}
for
\begin{align}
\label{eq:defN1_2}
  N_{2,1}(\varphi)&=\sup_{\tau\in
                    [0,T[}\sqrt{T-\tau}\left\|\varphi\right\|_{L^{2}([0,\frac{\tau}{h}];L^{2}_{z}L^{2}_{y_{G}})} \,, \\
\label{eq:defN2_2}
N_{2,2}(\varphi)&=
\sqrt{T}\left\|\varphi\right\|_{L^{2}(h^{-1}J^{\leq 1}_{T};L^{2}_{z}L^{2}_{y_{G}})}+\sup_{\delta\in
                  ]0,T/8]}\sqrt{\delta}\left\|\varphi\right\|_{L^{2}(h^{-1}[T-2\delta,T-\delta];L^{2}_{z}L^{2}_{y_{G}})} \,, \\
\label{eq:defN3_2}
N_{2,3}(\varphi)&=
\sqrt{T}\sup_{n\in\nz}
2^{-n/2}\left\|\varphi\right\|_{L^{2}(h^{-1}J^{n}_{T};L^{2}_{z}L^{2}_{y_{G}})}\,,\\
\label{eq:defN4_2}
N_{2,4}(\varphi)&=
\sqrt{T}\left\|\varphi\right\|_{L^{2}(h^{-1}J^{\leq
  2}_{T};L^{2}_{z}L^{2}_{y_{G}})}+\sup_{\delta\in
                  ]0,T/12]}\sqrt{\delta}\left\|\varphi\right\|_{L^{2}(h^{-1}[T-3\delta,T-\delta];L^{2}_{z}L^{2}_{y_{G}})}\,.
\end{align}

\noindent$\mathbf{L_{22}(u_2)}:$ 
For $\alpha \in [\alpha_{0},\alpha_{1}[$\,, we seek an upper bound of
$N_{2,1}(\varphi)$ (with $T=T_\alpha$) for
$$
\varphi(t)=e^{\alpha N}L_{22}(u_2)(t)=
-i\int_{0}^{t}e^{\alpha N}\sqrt{h}a_{G}(V_{2})U(t)U(s)^{*}u_2(s)~ds\,.
$$
By the equivalence of norms $N_{2,1}$ and $N_{2,3}$ this is the same as
finding an upper bound for
$$
\sqrt{T_{\alpha}} \, 2^{-n/2}\|\varphi\|_{L^{2}(h^{-1}J^{n}_{T_{\alpha}};L^{2}_{z}L^{2}_{y_{G}})}
$$
 uniformly in both $\alpha \in [\alpha_0,\alpha_1[$ and $n\geq 0$\,,
 or equivalently for
$$
\sqrt{T_{\alpha}}
\|\varphi
\|_{L^{2}(h^{-1}J^{\leq 1}_{T_{\alpha}};L^{2}_{z}L^{2}_{y_{G}})}\quad\text{and}\quad
\sqrt{T_{\alpha}}2^{-n/2}\|\varphi\|_{L^{2}(h^{-1}J^{n}_{T_{\alpha}};L^{2}_{z}L^{2}_{y_{G}})}~(n>1)\,,
$$
with the same uniformity.\\
For $t\in h^{-1}J^{\leq 1}_{T_{\alpha}}$ we write
$$
\sqrt{T_{\alpha}}\varphi(t)=-i\sqrt{T_{\alpha}}e^{\alpha
  N}a_{G}(V)e^{-\alpha'_{0}N} \int_{s<t}U(t)U(s)^{*}\sqrt{h}\,w_{1}(s)~ds
$$
with 
$$
w_{1}(s)=e^{\alpha'_{0}N}1_{h^{-1}J^{\leq 1}_{T_{\alpha}}}(s)u_{2}(s)=
e^{\alpha'_{0}N}1_{h^{-1}J^{\leq 2}_{T_{\alpha'_{0}}}}(s)\,u_{2}(s)\,.
$$
Then Proposition~\ref{pr:expaLp}, the retarded Strichartz
estimate~\eqref{eq:retardStri} and the Cauchy-Schwarz inequality yield
\begin{align*}
  \sqrt{T_{\alpha}}\|\varphi\|_{L^{2}(h^{-1}J^{\leq
  1}_{T_{\alpha}};L^{2}_{z}L^{2}_{y_{G}})}
&\lesssim
  \sqrt{T_{\alpha}}\frac{C_{V}M_{\alpha01}}{\sqrt{\alpha'_{0}-\alpha}}\|\int_{s<t}U(t)U(s)^{*}\sqrt{h}w_{1}(s)~ds\|_{L^{2}_{z}L^{2}_{t}(h^{-1}J^{\leq
  1}_{T_{\alpha}};L^{r_{\sigma}}_{y_{G}})}
\\
&\lesssim
C_{V}M_{\alpha
  01}\sqrt{\gamma}\|\sqrt{h}w_{1}\|_{L^{2}_{z}L^{1}(h^{-1}J^{\leq
  2}_{T_{\alpha'_{0}}};L^{2}_{y_{G}})}\\
&\lesssim C_{V}M_{\alpha
  01}\sqrt{\gamma}\sqrt{T_{\alpha'_{0}}}\|w_{1}\|_{L^{2}_{z}L^{2}_{t}(h^{-1}J^{\leq
  2}_{T_{\alpha'_{0}}};L^{2}_{y_{G}})}\\
&\lesssim C_{V}M_{\alpha
  01}\sqrt{\gamma}\sqrt{T_{\alpha'_{0}}}\|e^{\alpha'_{0}}u_{2}\|_{L^{2}_{t}(h^{-1}J^{\leq
  2}_{T_{\alpha'_{0}}};L^{2}_{z}L^{2}_{y_{G}})}\,.
\end{align*}
The equivalence between the norms $N_{2,1}$ and $N_{2,4}$ implies
\begin{equation}
  \label{eq:M2L22_1}
\sqrt{T_{\alpha}}\|\varphi\|_{L^{2}(h^{-1}J^{\leq
    1}_{T_{\alpha}};L^{2}_{z}L^{2}_{y_{G}})}\lesssim
C_{V}^{2}M_{\alpha 01}^{2}\gamma M_{2}(u_{2})\,.
\end{equation}
For $t\in h^{-1}J^n_{T_\alpha}$\,, $n>1$\,, write
\begin{align*}
\sqrt{T_{\alpha}} \, 2^{-n/2} \varphi(t)
= &-i\sqrt{T_{\alpha}}2^{-n/2}e^{\alpha
  N}a_{G}(V)e^{-\alpha'_{0}N}\int_{s<t}U(t)U(s)^{*}\sqrt{h} \, w_{1}(s)~ds
\\
&-i\sqrt{T_{\alpha}} 2^{-n/2} \underbrace{\sum_{m=2}^n   e^{\alpha N} a_G(V_2) e^{-\alpha'_m N}
\int_{s<t} U(t)U(s)^*\sqrt{h} \, w_m(s)~ds}_{=\tilde{\varphi}}
\end{align*}
with for $m\geq 2$
\[w_m(s) 
= 1_{h^{-1}J^{m}_{T_{\alpha}}}(s)
e^{\alpha'_{m}N}u_2(s)
= 1_{h^{-1}\left[T_{\alpha'_{m}}-3\delta_{m},T_{\alpha'_{m}}-\delta_{m}\right]}(s)
             e^{\alpha'_{m}N}u_2(s)\,.\]
The first term is actually  estimate as we did for \eqref{eq:M2L22_1}
with the additional factor $2^{-n/2}\leq 1$\,. It suffices to consider
the application of  Proposition~\ref{pr:expaLp}, the retarded Strichartz
estimate~\eqref{eq:retardStri} and the Cauchy-Schwarz inequality to 
\begin{align}
\sqrt{T_{\alpha}} \,& 2^{-n/2}
\|\tilde{\varphi}\|_{L^{2}(h^{-1}J^{n}_{T_{\alpha}};L^{2}_{z}L^{2}_{y_{G})}} 
\nonumber \\ 
 &\lesssim \sqrt{T_{\alpha}} \, 2^{-n/2}  \sum_{m=2}^n  \frac{C_{V} \, M_{\alpha 01}}{\sqrt{\alpha'_m-\alpha}}
 \left\| \int_{s<t} U(t)U(s)^* \sqrt{h} w_m(s)~ds \right\|_{L^{2}_{z}L^2_t(h^{-1}J^{n}_{T_{\alpha}};L^{r_\sigma}_{y_{G}})}  \nonumber \\ 
 &\lesssim C_{V} \, M_{\alpha 01} \, \sqrt{\gamma} \, 2^{-n/2} \sum_{m=2}^n 2^{m/2} 
 \left\| \sqrt{h} w_m \right\|_{L^{2}_{z}L^1_t(h^{-1}J^{m}_{T_{\alpha}};L^{2}_{y_G})}  \nonumber \\ 
 &\lesssim C_{V} \, M_{\alpha 01} \sqrt{\gamma}  \, 2^{-n/2} \sum_{m=2}^n  \sqrt{T_\alpha}
 \left\| w_m \right\|_{L^{2}_{z}L^2_t(h^{-1}J^{m}_{T_{\alpha}};L^{2}_{y_{G}})} \,.\label{eq:UnifEstiDecoupage}
\end{align}
Thanks to the equivalence of the norms $N_{2,1}$ and $N_{2,4}$ (with
$T=T_{\alpha'_m}$), we obtain for $m\geq 2$
\begin{align}
\sqrt{T_\alpha} \left\| w_m \right\|_{L^{2}_{z}L^2_t(h^{-1}J^{m}_{T_{\alpha}};L^{2}_{y_G})} 
&= 2^{\frac{m+2}{2}}\sqrt{\delta_m} \left\| e^{\alpha'_{m}N}u_2(s)\right\|_{L^2_t(h^{-1}\left[T_{\alpha'_{m}}-3\delta_{m},T_{\alpha'_{m}}-\delta_{m}\right];L^{2}_{z}L^{2}_{y_G})} \nonumber \\
&\lesssim 2^{m/2} C_{V} \, M_{\alpha 01} \, \sqrt{\gamma} M_2(u_2) \,.\label{eq:M2L22_2}
\end{align}
Putting together \eqref{eq:UnifEstiDecoupage} and
\eqref{eq:M2L22_2} gives
\begin{align*}\sqrt{T_{\alpha}} \, 2^{-n/2}\|\varphi\|_{L^{2}(h^{-1}J^{n}_{T_{\alpha}};L^{2}_{z}L^{2}_{y_{G}})} 
&\lesssim 2^{-n/2}\sum_{m=0}^n 2^{m/2} C_{V}^2 \, M_{\alpha 01}^2 \, \gamma \, M_2(u_2) \\
&\lesssim  C_{V}^2 \, M_{\alpha 01}^2 \, \gamma \, M_2(u_2)
\end{align*}
which, combined with \eqref{eq:M2L22_1} and the normalization of
$M_{2}(L_{22}(u_{2}))$\,, yields
\begin{equation}\label{eq:EstimL22}
M_2(L_{22}(u_2)) \lesssim C_{V} \, M_{\alpha 01} \, \sqrt{\gamma} \, M_2(u_2) \,.
\end{equation}

\medskip

The estimate of $L_{11}(u_{1})$ starts with the same decomposition of
the interval $[0,T/h]$ with the norms
\begin{align}
\label{eq:defN11}
  N_{1,1}(\varphi)&=\sup_{\tau\in
  [0,T[}\sqrt{T-\tau}\left\|\frac{\varphi(t)}{\sqrt{ht}}\right\|_{L^{1}([0,\frac{\tau}{h}];L^{2}_{z}L^{2}_{y_{G}})} \,, \\
\label{eq:defN12}
N_{1,2}(\varphi)&=
\left\|\left(\frac{T}{ht}\right)^{1/2}\varphi\right\|_{L^{1}(h^{-1}J^{\leq
  1}_{T};L^{2}_{z}L^{2}_{y_{G}})}+\sup_{\delta\in
                  ]0,T/8]}\left(\frac{\delta}{T}\right)^{1/2}\left\|\varphi\right\|_{L^{1}(h^{-1}[T-2\delta,T-\delta];L^{2}_{z}L^{2}_{y_{G}})} \,, \\
\label{eq:defN13}
N_{1,3}(\varphi)&=
\left\|\left(\frac{T}{ht}\right)^{1/2}\varphi\right\|_{L^{1}(h^{-1}J^{\leq
  1}_{T};L^{2}_{z}L^{2}_{y_{G}})}+\sup_{n>1}
2^{-n/2}\left\|\varphi\right\|_{L^{1}(h^{-1}J^{n}_{T};L^{2}_{z}L^{2}_{y_{G}})}\,,\\
\label{eq:defN14}
N_{1,4}(\varphi)&=
\left\|\left(\frac{T}{ht}\right)^{1/2}\varphi\right\|_{L^{1}(h^{-1}J^{\leq
  2}_{T};L^{2}_{z}L^{2}_{y_{G}})}+\sup_{\delta\in
                  ]0,T/12]}\left(\frac{\delta}{T}\right)^{1/2}\left\|\varphi\right\|_{L^{1}(h^{-1}[T-3\delta,T-\delta];L^{2}_{z}L^{2}_{y_{G}})}\,.
\end{align}
Those norms are equivalent according to 
\begin{equation}
  \label{eq:equivN11toN15}
\kappa_{1}^{-1}N_{1,1}(\varphi)\leq  N_{1,i}(\varphi) \leq \kappa_{1}N_{1,1}(\varphi)\quad,
2\leq i\leq 4
\end{equation}
with a universal constant $\kappa_{1}> 1$\,. See
Lemma~\ref{le:equivnorms2} for the proof.

\noindent$\mathbf{L_{11}(u_1)}$\textbf{-Step~1, Decomposition of $L_{11}(u_1)$:}
For $\alpha \in [\alpha_{0},\alpha_{1}[$\,, we seek an upper bound of
$N_{1,1}(\varphi)$ for
$$
\varphi(t)=e^{\alpha N}L_{11}(u_1)(t)=
-i\int_{0}^{t}e^{\alpha N}\sqrt{h}a_{G}(V_{2})U(t)U(s)^{*}u_1(s)~ds\,.
$$
By the equivalence of norms $N_{1,1}$ and $N_{1,3}$ this is the same
as finding a uniform upper bound for
$$
\left\|\left(\frac{T_{\alpha}}{ht}\right)^{1/2}\varphi\right\|_{L^{1}(h^{-1}J_{T_{\alpha}}^{\leq
  1};L^{2}_{z}L^{2}_{y_{G}})}\quad \text{and}\quad
2^{-n/2}\|\varphi\|_{L^{1}(h^{-1}J^{n}_{T_{\alpha}};L^{2}_{z}L^{2}_{y_{G}})}\;\text{for}\; n>1\,.
$$
Setting $\psi_{1}(t)=\left(\frac{T_{\alpha}}{ht}\right)^{1/2}1_{h^{-1}J^{\leq 1}_{T_{\alpha}}}(t)\varphi(t)$ and, for $n>1$, 
$\psi_{n}(t)=2^{-n/2}1_{h^{-1}J^{n}_{T_{\alpha}}}(t)\varphi(t)$ gives
\begin{align*}
  \psi_{1}(t)&=-i\int_{0}^{t}\left(\frac{T_{\alpha}}{ht}\right)^{1/2}e^{\alpha
                 N}\sqrt{h}a_{G}(V_{2})U(t)U(s)^{*}1_{h^{-1}J^{\leq
                 1}_{T_{\alpha}}}(s)u_1(s)~ds\,, \quad t\in h^{-1}J^{\leq 1}_{T_{\alpha}}\,,
\end{align*}
and, for $n>1$,
\begin{align*}
\psi_{n}(t)&=
-i\int_{0}^{t}2^{-n/2}e^{\alpha
                 N}\sqrt{h}a_{G}(V_{2})U(t)U(s)^{*}1_{h^{-1}J^{\leq
                 1}_{T_{\alpha}}}(s)u_1(s)~ds\\
&\quad
-i\sum_{1< m \leq n}\int_{0}^{t}\sqrt{h}2^{-n/2}e^{\alpha
                 N}a_{G}(V_{2})U(t)U(s)^{*}1_{h^{-1}J^{m}_{T_{\alpha}}}(s)u_1(s)~ds\,,\quad
   t\in h^{-1}J^{n}_{T_{\alpha}}\,.
\end{align*}
This allows to rewrite the above decomposition as
\begin{align*}
 \psi_{1}(t) \stackrel{t\leq\frac{3T_{\alpha}}{4h}}{=} & -i\int_{0}^{\frac{3T_{\alpha}}{4h}}
\underbrace{1_{[0,t]}(s)\left(\frac{T_{\alpha}}{T_{\alpha'_{0}}}\right)^{1/2}\left(\frac{hs}{ht}\right)^{1/2}e^{\alpha
                N}\sqrt{h}a_{G}(V_{2})e^{-\alpha'_{0}N}}_{B_{11}(t,s)}U(t)U(s)^{*}w_{1}(s)~ds
\,,
\end{align*}
and, for $n>1$,
\begin{align*}
\psi_{n}(t)\stackrel{t\in h^{-1}J^{n}_{T_{\alpha}}}{=} &
-i \int_{0}^{\frac{T_{\alpha}}{h}}
\underbrace{1_{[0,\frac{3T_{\alpha}}{4h}]}(s)2^{-n/2}e^{\alpha
                 N}\sqrt{h}a_{G}(V_{2})e^{-\alpha'_{0}N}\left(\frac{hs}{T_{\alpha'_{0}}}\right)^{1/2}}_{B_{n1}(t,s)}
U(t)U(s)^{*}
w_{1}(s)~ds\\
&
-i\sum_{m=
   2}^{n}\int_{0}^{\frac{T_{\alpha}}{h}}
\underbrace{1_{[0,t]\cap h^{-1}J^{m}_{T_{\alpha}}}(s)2^{-(n-m)/2}e^{\alpha
                 N}\sqrt{h}a_{G}(V_{2})e^{-\alpha'_{m}N}}_{B_{nm}(t,s)}U(t)U(s)^{*}
w_{m}(s)~ds
\,,
\end{align*}
\begin{align*}
\text{with} \quad & w_{1}(s)=1_{h^{-1}J^{\leq
                 2}_{T_{\alpha'_{0}}}}(s)\left(\frac{T_{\alpha'_{0}}}{hs}\right)^{1/2}e^{\alpha'_{0}N}u_1(s)\\
\text{and} \quad &
             w_{m}(s)\stackrel{m> 1}{=}2^{-m/2}1_{h^{-1}J^{m}_{T_{\alpha}}}(s)
e^{\alpha'_{m}N}u_1(s)=2^{-m/2}1_{\left[\frac{T_{\alpha'_{m}}-3\delta_{m}}{h},\frac{T_{\alpha'_{m}}-\delta_{m}}{h}\right]}(s)
             e^{\alpha'_{m}N}u_1(s)\,.
\end{align*}
Proposition~\ref{pr:StriDT} tells us
\begin{align*}
  \|\psi_{1}\|_{L^{1}(h^{-1}J^{\leq
  1}_{T_{\alpha}};L^{2}_{z}L^{2}_{y_{G}})}
\lesssim &
  \left(\sup_{s\in[0,\frac{3T_\alpha}{4h}]}\int_{0}^{\frac{3T_\alpha}{4h}}
\|B_{11}(t,s)\|^{2}~dt \right)^{1/2}\|w_{1}
\|_{L^{1}(h^{-1}J^{\leq 2}_{T_{\alpha'_{0}}};L^{2}_{z}L^{2}_{y_{G}})} \,, 
\end{align*}
and, for $n>1$,
\begin{align*}
\|\psi_{n}\|_{L^{1}(h^{-1}J^{n}_{T_{\alpha}};L^{2}_{z}L^{2}_{y_{G}})}
\lesssim &
 \left(\sup_{s\in[0,\frac{3T_\alpha}{4h}]}\int_{h^{-1}J^{n}_{T_{\alpha}}}\|B_{n1}(t,s)\|^{2}~dt\right)^{1/2}
\|w_{1}
\|_{L^{1}(h^{-1}J^{\leq 2}_{T_{\alpha'_{0}}};L^{2}_{z}L^{2}_{y_{G}})}\\
&+\sum_{m=2}^{n}
\left(\sup_{s\in h^{-1}J^{m}_{T_{\alpha}}}\int_{h^{-1}J^{n}_{T_{\alpha}}}\|B_{nm}(t,s)\|^{2}~dt\right)^{1/2}\|w_{m}\|_{L^{1}([\frac{T_{\alpha'_{m}}-3\delta_{m}}{h},\frac{T_{\alpha'_{m}}-\delta_{m}}{h}];L^{2}_{z}L^{2}_{y_{G}})}\,.
\end{align*}
From the comparison between the norms $N_{1,1}$ and $N_{1,4}$ we know
\begin{align*}
 \|w_{1}\|_{L^{1}(h^{-1}J^{\leq
     2}_{T_{\alpha'_{0}}};L^{2}_{z}L^{2}_{y_{G}})}
&\lesssim
\sup_{\tau\in
     [0,T_{\alpha'_{0}}[}\sqrt{T_{\alpha'_{0}}-\tau}\left\|
     \frac{e^{\alpha'_{0}N}u_1}{\sqrt{ht}}\right\|_{L^{1}([0,\frac{\tau}{h}];L^{2}_{z}L^{2}_{y_{G}})}\\
&\lesssim
      M_{\alpha01}C_{V}\gamma^{1/2}M_1(u_1)\,,
\end{align*}
while for $m>1$\,,
\begin{align*}
\|w_{m}\|_{L^{1}([\frac{T_{\alpha'_{m}}-3\delta_{m}}{h},\frac{T_{\alpha'_{m}}-\delta_{m}}{h}];L^{2}_{z}L^{2}_{y_{G}})}&\lesssim
\left(\frac{T_{\alpha'_{m}}}{\delta_{m}}\right)^{1/2}2^{-m/2}
\sup_{\tau\in   [0,T_{\alpha'_{m}}[}\sqrt{T_{\alpha'_{m}}-\tau}\|\frac{e^{\alpha'_{m}N}u_1}{\sqrt{ht}}\|_{L^{1}([0,\frac{\tau}{h}];L^{2}_{z}L^{2}_{y_{G}})}\\
&\lesssim 
       \left(\frac{T_{\alpha}}{T_{\alpha}2^{-m-2}}\right)^{1/2}2^{-m/2}M_{\alpha01}C_{V}\gamma^{1/2}M_1(u_1)\\
&\lesssim M_{\alpha01}C_{V}\gamma^{1/2}M_1(u_1)\,.
\end{align*}
We have proved
\begin{multline}
\frac{\sup_{\tau\in
  [0,T_{\alpha}[}\sqrt{T_{\alpha}-\tau}\left\|\frac{e^{\alpha
  N}L_{11}(u_1)}{\sqrt{ht}}\right\|_{L^{1}([0,\frac{\tau}{h}];L^{2}_{z}L^{2}_{y_{G}})}}{M_{\alpha01}C_{V}\gamma^{1/2}M_1(u_1)} \\
\lesssim \left(\sup_{s\in[0,\frac{3T_\alpha}{4h}]}\int_{0}^{\frac{3T_\alpha}{4h}}
\|B_{11}(t,s)\|^{2}~dt \right)^{1/2}
+\sup_{n\geq
   1}\left(\sup_{s\in[0,\frac{3T_\alpha}{4h}]}\int_{h^{-1}J^{n}_{T_{\alpha}}}\|B_{n1}(t,s)\|^{2}~dt\right)^{1/2}
\\
\label{eq:Bnm}
\hspace{-2cm}+\sup_{n> 1}\sum_{m=2}^{n}\left(\sup_{s\in h^{-1}J^{m}_{T_{\alpha}}}\int_{h^{-1}J^{n}_{T_{\alpha}}}\|B_{nm}(t,s)\|^{2}~dt\right)^{1/2}\,.
\end{multline}
It remains to estimate every term of the above right-hand side.\\
\noindent$\mathbf{L_{11}(u_1)}$\textbf{-Step~2, Estimate for $B_{11}$:}
The expression
$$
B_{11}(t,s)=1_{[0,t]}(s)\left(\frac{T_{\alpha}}{T_{\alpha'_{0}}}\right)^{1/2}\left(\frac{hs}{ht}\right)^{1/2}e^{\alpha
  N}\sqrt{h}a_{G}(V_{2})e^{-\alpha'_{0}N}
$$
implies, with $T_{\alpha}=\gamma(\alpha_{1}-\alpha)=7\gamma(\alpha_{0}'-\alpha)$\,,
$$
\left\|B_{11}(t,s)\right\|^{2}\leq
\frac{T_{\alpha}}{T_{\alpha'_{0}}} \frac{7\gamma}{T_\alpha} M_{\alpha 01}^{2} C_{V}^{2}h1_{[0,t]}(s)\frac{hs}{ht}
\leq 7
\gamma M_{\alpha01}^{2}C_{V}^{2} \frac{4hs}{3T_\alpha}  \frac{1_{[0,t]}(s)}{t}\,.
$$
We obtain
\begin{equation*}
\int_{0}^{\frac{3T_{\alpha}}{4h}}\|B_{11}(t,s)\|^{2}~dt \leq 7  \gamma M_{\alpha01}^{2}C_{V}^{2} \frac{4hs}{3T_\alpha}  \ln(\frac{3T_{\alpha}}{4hs})\,.
\end{equation*}
and
\begin{equation}
  \label{eq:B00}
\left(\sup_{s\in[0,\frac{3T_\alpha}{4h}]}\int_{0}^{\frac{3T_\alpha}{4h}}
\|B_{11}(t,s)\|^{2}~dt \right)^{1/2}\lesssim
\gamma^{1/2}M_{\alpha01}C_{V}\,.
\end{equation}
\noindent$\mathbf{L_{11}(u_1)}$\textbf{-Step~3, Estimate for
  $B_{n1}$\,, $n>1$:}
From
$$
B_{n1}(t,s)=
1_{[0,\frac{3T_{\alpha}}{4h}]}(s)2^{-n/2}e^{\alpha
                 N}\sqrt{h}a_{G}(V_{2})e^{-\alpha'_{0}N}\left(\frac{hs}{T_{\alpha'_{0}}}\right)^{1/2}
$$
we deduce with
$\alpha'_{0}-\alpha=\frac{\alpha_{1}-\alpha}{7}=\frac{T_{\alpha}}{7\gamma}$
and $T_{\alpha_{0}'}=\frac{6 T_{\alpha}}{7}$\,,
$$
\|B_{n1}(t,s)\|^{2}\leq
1_{[0,\frac{3T_{\alpha}}{4h}]}(s)2^{-n}\frac{h
M_{\alpha01}^{2}C_{V}^{2}
}{(\alpha'_{0}-\alpha)}\left(\frac{3T_{\alpha}/4}{T_{\alpha'_{0}}}\right)
\leq 1_{[0,\frac{3T_{\alpha}}{4h}]}(s) \frac{2^{-n}7\gamma h}{T_{\alpha}}M_{\alpha01}^{2}C_{V}^{2}
\,.
$$
With
$J^{n}_{T_{\alpha}}=[(1-2^{-n})T_{\alpha},(1-2^{-n-1})T_{\alpha}[$
for $n> 1$ we obtain
$$
\int_{h^{-1}J^{n}_{T_{\alpha}}}\|B_{n0}(t,s)\|^{2}~dt\leq 
2^{-n-1}T_{\alpha}\times \frac{2^{-n}7\gamma }{T_{\alpha}}
M_{\alpha01}^{2}C_{V}^{2}
\leq
\frac{7\gamma}{4}
M_{\alpha01}^{2}C_{V}^{2}\,.
$$
and
\begin{equation}
  \label{eq:Bn0}
\sup_{n>
   1}\left(\sup_{s\in[0,\frac{3T_\alpha}{4h}]}\int_{h^{-1}J^{n}_{T_{\alpha}}}\|B_{n0}(t,s)\|^{2}~dt\right)^{1/2}\lesssim
 \gamma^{1/2}M_{\alpha01}C_{V}\,.
\end{equation}
\noindent$\mathbf{L_{11}(u_1)}$\textbf{-Step~4, Estimate for the
  $B_{nm}$'s, $n,m>1$:}
From
$$
B_{nm}(t,s)=1_{[0,t]\cap h^{-1}J^{m}_{T_{\alpha}}}(s)2^{-(n-m)/2}e^{\alpha
                 N}\sqrt{h}a_{G}(V_{2})e^{-\alpha'_{m}N}
$$
and $\alpha'_{m}-\alpha=2^{-(m+2)}(\alpha_{1}-\alpha)=\frac{2^{-(m+2)}T_{\alpha}}{\gamma}$\,,
we deduce

$$
\|B_{nm}(t,s)\|^{2}\leq 1_{[0,t]\cap
  h^{-1}J^{m}_{T_{\alpha}}}(s)2^{-(n-m)}\frac{h2^{m+2}\gamma}{T_{\alpha}}M_{\alpha01}^{2}C_{V}^{2}\,.
$$
Using again that the length of $J^{n}_{T_{\alpha}}$ is $2^{-(n+1)}T_{\alpha}$\,, we
get
$$
\sup_{s\in h^{-1}J^{m}_{T_{\alpha}}}
\int_{h^{-1}J^{n}_{T_{\alpha}}}\|B_{nm}(t,s)\|^{2}~dt
\leq 2\gamma 2^{-2(n-m)}M_{\alpha01}^{2}C_{V}^{2}
$$
and
\begin{equation}
  \label{eq:Bnm1}
\sup_{n\geq 1}\sum_{m=1}^{n}\left(\sup_{s\in
    h^{-1}J^{m}_{T_{\alpha}}}\int_{h^{-1}J^{n}_{T_{\alpha}}}\|B_{nm}(t,s)\|^{2}~dt\right)^{1/2}\lesssim
\gamma^{1/2}M_{\alpha01}C_{V}
\,.
\end{equation}
\noindent$\mathbf{L_{1\infty}(u_{\infty})}$\textbf{-Step~1,
  Decomposition of $\mathbf{L_{1\infty}(u_{\infty})}$:}\\ Compared
with the decomposition of $\mathbf{L_{22}(u_{2})}$ and $\mathbf{L_{11}(u_{1})}$\,, an additional
dyadic decomposition has to be done around $0$ in order to absorb the
weight $\frac{1}{\sqrt{ht}}$ properly and to use
Proposition~\ref{pr:L2L1Linf}. Decompose now
$[0,T]=\cup_{n\in\zz}J^{n}_{T}$ where $J^{0}_{T}$ is now the interval
$[T/4,T/2[$ and $
J^{n}_{T}=2^{n}J^{0}_{T}$ for $n<0$\,, according to figure
\ref{fig:JnTneg}. In particular, the interval previously
denoted by $J^{0}_{T}$ is now $J^{\leq 0}_{T}$ while $J^{\leq
  n_{0}}_{T}$ is not changed for $n_{0}>0$\,.
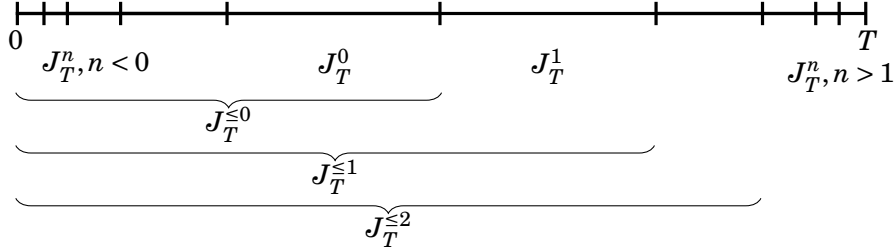
\begin{figure}[h!]
\centering
\begin{tikzpicture}[scale=0.7]
\draw[very thick,|-|] (0,0)--(16,0);
\draw[very thick,-|](0,0)--(8,0);
\draw[very thick,-|](2,0)--(4,0);
\draw[very thick,-|](1,0)--(2,0);
\draw[very thick,|-|](1/2,0)--(1,0);
\draw[very thick,|-](12,0)--(14,0);
\draw[very thick,|-](14,0)--(15,0);
\draw[very thick,|-|](15,0)--(15.5,0);
\node[](J0-) at (6,-1) {$J^{0}_{T}$};
\node[](T) at (16,-0.5) {$T$};
\node[](0) at (0,-0.5) {$0$};
\node[](J0+) at (10,-1) {$J^{1}_{T}$};
\node[](Jn>0) at (15.5,-1.2) {$J^{n}_{T},n>1$};
\node[](Jn<0) at (1.5,-1) {$J^{n}_{T}, n<0$};
\draw [decorate,decoration={brace,amplitude=0.2cm,mirror}](0,-2.5)--(12,-2.5);
\node[](J0) at (6,-3.1) {$J^{\leq 1}_{T}$};
\draw [decorate,decoration={brace,amplitude=0.2cm,mirror}](0,-1.5)--(8,-1.5);
\node[](Jleq0) at (4,-2.1) {$J^{\leq 0}_{T}$};
\draw [decorate,decoration={brace,amplitude=0.2cm,mirror}](0,-3.5)--(14,-3.5);
\node[](Jleq1) at (7,-4.1) {$J^{\leq 2}_{T}$};
\end{tikzpicture}
\caption{\label{fig:JnTneg} The time intervals $J^{n}_{T}$\,, 
 $n\in\zz$\,.}
\end{figure}

We seek an upper bound of
$N_{1,1}(\varphi)$ for
$$
\varphi(t)=e^{\alpha N}L_{1\infty}(u_{\infty})(t)=
-h\int_{0}^{t}e^{\alpha N}a_{G}(V_{2})U(t)U(s)^{*}a_{G}(V_{1})^{*}u_{\infty}(s)~ds\,.
$$
By the equivalence of norms $N_{1,1}$ and $N_{1,3}$ this is equivalent to
proving a uniform upper bound for
$$
\left\|\left(\frac{T_{\alpha}}{ht}\right)^{1/2}\varphi\right\|_{L^{1}(h^{-1}J_{T_{\alpha}}^{\leq
  1};L^{2}_{z}L^{2}_{y_{G}})}\quad \text{and}\quad
2^{-n/2}\|\varphi\|_{L^{1}(h^{-1}J^{n}_{T_{\alpha}};L^{2}_{z}L^{2}_{y_{G}})}\;\text{for}\; n>1\,.
$$
But the dyadic decomposition around $0$ says
$$
\left\|\left(\frac{T_{\alpha}}{ht}\right)^{1/2}\varphi\right\|_{L^{1}(h^{-1}J_{T_{\alpha}}^{\leq
  1};L^{2}_{z}L^{2}_{y_{G}})}
\leq 2\sum_{n\leq
  1}\|2^{-\frac{n+1}{2}}\varphi\|_{L^{1}(h^{-1}J^{n}_{T_{\alpha}};L^{2}_{z}L^{2}_{y_{G}})}
=2\|\sum_{n\leq
  1}2^{-\frac{n+1}{2}}1_{h^{-1}J^{n}_{T_{\alpha}}}(t)\varphi\|_{L^{1}(h^{-1}J^{\leq  1}_{T_{\alpha}};L^{2}_{z}L^{2}_{y_{G}})}\,.
$$
\noindent$\mathbf{L_{1\infty}(u_{\infty})}$\textbf{-Step~2,
Estimate on $\mathbf{h^{-1}J^{n\leq 1}_{T_{\alpha}}}$:}\\ 
We write $\varphi_{1}=\sum_{n\leq
  1}2^{-\frac{n+1}{2}}1_{h^{-1}J^{n}_{T_{\alpha}}}(t)e^{\alpha N}L_{1\infty}(u_{\infty})=\sum_{n\leq
  1}\varphi_{1,n}(t)$ where 
\begin{align*}  
\varphi_{1,n}(t)=&-h\sum_{m=-\infty}^{1}2^{-\frac{n+1}{2}}1_{h^{-1}J^{n}_{T_{\alpha}}}(t)
\times
\\
&\quad\int_{0}^{t}e^{\alpha
N}a_{G}(V_{2})e^{-\frac{\alpha+\alpha'_{0}}{2}N}U(t)U(s)^{*}
e^{\frac{\alpha+\alpha'_{0}}{2}N}a^{*}_{G}(V_{1})e^{-\alpha'_{0}N}
1_{h^{-1}J^{m}_{T_{\alpha}}}(s)e^{\alpha'_{0}N}u_{\infty}^{h}(s)~ds\\
=&-h\sum_{m=-\infty}^{1}1_{h^{-1}J^{n}_{T_{\alpha}}}
\int_{0}^{t}B_{1n}U(t)U(s)^{*}B_{2m}^{*}(s)\varphi_{\infty,m}(s)~ds
\end{align*}
with
\begin{eqnarray*}
  && B_{1n}=2^{-\frac{n+1}{2}}e^{\alpha
     N}a_{G}(V_{2})e^{-\frac{\alpha+\alpha'_{0}}{2}N}\quad,\\
&&
     \|B_{1n}\|_{L^{2}_{z}L^{2}_{y_{G}}\leftarrow L^{2}_{z}L^{r_{\sigma}}_{y_{G}}}\lesssim
     \frac{M_{\alpha 01}\|V_{2}\|_{L^{r'_{\sigma}}}}{\sqrt{\alpha_{0}'-\alpha}}2^{-n/2}\lesssim
     \frac{M_{\alpha 01}C_{V}\gamma^{1/2}}{T_{\alpha_{1}}^{1/2}}2^{-n/2}\,,\\
&&
 B_{2m}^{*}=e^{\frac{\alpha+\alpha'_{0}}{2}N}a_{G}^{*}(V_{1})e^{-\alpha'_{0}N}1_{h^{-1}J^{m}_{T_{\alpha}}}(s)
 \frac{\sqrt{hs}}{\sqrt{T_{\alpha}-hs}}\quad,\\
&&
 \|B_{2m}^{*}\|_{L^{2}_{z}L^{r'_{\sigma}}_{y_{G}}\leftarrow L^{2}_{z}L^{2}_{y_{G}}}
 \lesssim
 \frac{M_{\alpha 01}\|V_{1}\|_{L^{r'_{\sigma}}}}{
 \sqrt{\alpha'_{0}-\alpha}}2^{m/2}
 \lesssim
   \frac{M_{\alpha 01}C_{V}\gamma^{1/2}}{T_{\alpha}^{1/2}}2^{m/2}\,,\\
&&
   \varphi_{\infty,m}(s)=1_{h^{-1}J^{m}_{T_{\alpha}}}(s)\varphi_{\infty}(s)\quad,\quad
   \varphi_{\infty}(s)=e^{\alpha'_{0}N}\frac{\sqrt{T_{\alpha}-hs}}{\sqrt{hs}}u_{\infty}(s)\,.
\end{eqnarray*}
By noticing
$$
|h^{-1}J^{n}_{T_{\alpha}}|\leq T_{\alpha} h^{-1}2^{n}
$$
the upper bound of Proposition~\ref{pr:L2L1Linf} gives
\begin{eqnarray*}
\|\varphi_{1}\|_{L^{1}(h^{-1}J^{\leq 1}_{T_{\alpha}};L^{2}_{z}L^{2}_{y_{G}})}
&\lesssim &
\left[\sum_{-\infty\leq m\leq n\leq
  1}2^{n/2}(M_{\alpha 01}C_{V}\gamma^{1/2}2^{-n/2})(M_{\alpha 01}C_{V}\gamma^{1/2}2^{m/2})\right]
\left\|\varphi_{\infty}\right\|_{L^{\infty}(h^{-1}J^{\leq 1}_{T_{\alpha}};L^{2}_{z}L^{2}_{y_{G})}}
\\
&\lesssim& M_{\alpha 01}^{2}C_{V}^{2}\gamma M_{\infty}(u_{\infty})\,.
\end{eqnarray*}
We proved
\begin{equation}
  \label{eq:L1inf1}
\|\frac{1}{\sqrt{ht}}e^{\alpha
  N}L_{1\infty}(u_{\infty})\|_{L^{1}(h^{-1}J^{\leq 1}_{T_{\alpha}};L^{2}_{z}L^{2}_{y_{G}})}\lesssim
M_{\alpha 01}C_{V}^{2}\gamma M_{\infty}(u_{\infty})\,.
\end{equation}
\noindent$\mathbf{L_{1\infty}(u_{\infty})}$\textbf{-Step~3,
Estimate on $\mathbf{h^{-1}J^{n}_{T_{\alpha}}}$, $\mathbf{n>1}$:}\\ 
Write $\varphi_{1}(t)=2^{-n/2}1_{h^{-1}J^{n}_{T_{\alpha}}}(t)e^{\alpha
  N}L_{1\infty}(u_{\infty})$\,, where
\begin{align*}  
\varphi_{1}(t)=&-h\sum_{m=-\infty}^{1}2^{-\frac{n}{2}}1_{h^{-1}J^{n}_{T_{\alpha}}}(t)
\times
\\
&\quad\int_{0}^{t}e^{\alpha
N}a_{G}(V_{2})e^{-\frac{\alpha+\alpha'_{0}}{2}N}U(t)U(s)^{*}
e^{\frac{\alpha+\alpha'_{0}}{2}N}a_{G}^{*}(V_{1})e^{-\alpha'_{0}N}
1_{h^{-1}J^{m}_{T_{\alpha}}}(s)e^{\alpha'_{0}N}u_{\infty}^{h}(s)~ds
\\
-&h\sum_{m=2}^{n}2^{-\frac{n}{2}}1_{h^{-1}J^{n}_{T_{\alpha}}}(t)\times\\
&\quad\int_{0}^{t}e^{\alpha
N}a_{G}(V_{2})e^{-\frac{\alpha+\alpha'_{m}}{2}N}U(t)U(s)^{*}
e^{\frac{\alpha+\alpha'_{m}}{2}N}a_{G}^{*}(V_{1})e^{-\alpha'_{m}N}
1_{h^{-1}J^{m}_{T_{\alpha}}}(s)e^{\alpha'_{m}N}u_{\infty}^{h}(s)~ds\\
=&-h\sum_{m=-\infty}^{1}1_{h^{-1}J^{n}_{T_{\alpha}}}(t)\times
\int_{0}^{t}B_{1n}U(t)U(s)^{*}B_{2m}^{*}(s)\varphi_{\infty,m}(s)~ds\\
&-h\sum_{m=2}^{n}1_{h^{-1}J^{n}_{T_{\alpha}}}(t)\times
\int_{0}^{t}B_{1nm}U(t)U(s)^{*}B_{2nm}^{*}(s)\varphi_{\infty,m}(s)~ds\,.
\end{align*}
The family $\mathcal{I}$ of Proposition~\ref{pr:L2L1Linf} is made here
of the single interval $h^{-1}J^{n}_{T_{\alpha}}$ while the family 
$\mathcal{J}=\left\{h^{-1}J^{m}_{T_{\alpha}},m\leq n\right\}$ is
splitted in two parts $m\leq 1$ and $2\leq m\leq n$\,. In the last two
lines the notations correspond to
\begin{eqnarray*}
  && B_{1n}=2^{-\frac{n}{2}}e^{\alpha
     N}a_{G}(V_{2})e^{-\frac{\alpha+\alpha'_{0}}{2}N}\quad,\\
&&
     \|B_{1n}\|_{L^{2}_{z}L^{2}_{y_{G}}\leftarrow L^{2}_{z}L^{r_{\sigma}}_{y_{G}}}\lesssim
     \frac{M_{\alpha 01}\|V_{2}\|_{L^{r'_{\sigma}}}}{\sqrt{\alpha_{0}'-\alpha}}2^{-n/2}\lesssim
     \frac{M_{\alpha 01}C_{V}\gamma^{1/2}}{T_{\alpha_{1}}^{1/2}}2^{-n/2}\,,\\
m\leq 1&&
 B_{2m}^{*}=e^{\frac{\alpha+\alpha'_{0}}{2}N}a_{G}^{*}(V_{1})e^{-\alpha'_{0}N}1_{h^{-1}J^{m}_{T_{\alpha}}}(s)
 \frac{\sqrt{hs}}{\sqrt{T_{\alpha}-hs}}\quad,\\
m\leq 1&&
 \|B_{2m}^{*}\|_{L^{2}_{z}L^{r'_{\sigma}}_{y_{G}}\leftarrow L^{2}_{z}L^{2}_{y_{G}}}
 \lesssim
 \frac{M_{\alpha 01}\|V_{1}\|_{L^{r'_{\sigma}}}}{
 \sqrt{\alpha'_{0}-\alpha}}2^{m/2}
 \lesssim
   \frac{M_{\alpha 01}C_{V}\gamma^{1/2}}{T_{\alpha}^{1/2}}2^{m/2}\,,\\
m\geq 2 &&B_{1nm}=2^{-\frac{n}{2}}e^{\alpha
     N}a_{G}(V_{2})e^{-\frac{\alpha+\alpha'_{m}}{2}N}\quad,\\
m\geq 2&&
     \|B_{1nm}\|_{L^{2}_{z}L^{2}_{y_{G}}\leftarrow L^{2}_{z}L^{r_{\sigma}}_{y_{G}}}\lesssim
     \frac{M_{\alpha 01}\|V_{2}\|_{L^{r'_{\sigma}}}}{\sqrt{\alpha_{m}'-\alpha}}2^{-n/2}\lesssim
     \frac{M_{\alpha 01}C_{V}\gamma^{1/2}}{T_{\alpha_{1}}^{1/2}}2^{m/2-n/2}\,
\\
m\geq 2&&B_{2nm}^{*}=e^{\frac{\alpha+\alpha'_{m}}{2}N}a_{G}^{*}(V_{1})e^{-\alpha'_{m}N}1_{h^{-1}J^{m}_{T_{\alpha}}}(s)
 \frac{\sqrt{hs}}{\sqrt{T_{\alpha}-hs}}\quad,\\
\\
m\geq 2&&
 \|B_{2mn}^{*}\|_{L^{2}_{z}L^{r'_{\sigma}}_{y_{G}}\leftarrow L^{2}_{z}L^{2}_{y_{G}}}
 \lesssim
 \frac{M_{\alpha 01}\|V_{1}\|_{L^{r'_{\sigma}}}}{
 \sqrt{\alpha'_{m}-\alpha}}2^{m/2}
 \lesssim
   \frac{M_{\alpha 01}C_{V}\gamma^{1/2}}{T_{\alpha}^{1/2}}2^{m}\,,\\
\\
&&
   \varphi_{m}(s)=1_{h^{-1}J^{m}_{T_{\alpha}}}(s)\varphi_{\infty}(s)\quad,\\
&&
   \varphi_{\infty}(s)=1_{h^{-1}J^{\leq 1}_{T_{\alpha}}}(s)e^{\alpha'_{0}N}\frac{\sqrt{T_{\alpha}-hs}}{\sqrt{hs}}u_{\infty}(s)+\sum_{m=2}^{\infty}e^{\alpha'_{m}N}1_{h^{-1}J^{m}_{T_{\alpha}}}(s)\frac{\sqrt{T_{\alpha}-hs}}{\sqrt{hs}}u_{\infty}(s)\,.
\end{eqnarray*}
The size of the intervals are estimated respectively by
$|h^{-1}J^{n}_{T^{\alpha}}|\lesssim h^{-1}2^{-n}T_{\alpha}$ and
$$
|h^{-1}J^{m}_{T_{\alpha}}|\lesssim
h^{-1}2^{m}T_{\alpha}\quad\text{for}~m\leq 1\quad,\quad
|h^{-1}J^{m}_{T_{\alpha}}|\lesssim
h^{-1}2^{-m}T_{\alpha}\quad\text{for}\quad m\geq 2\,.
$$
Proposition~\ref{pr:L2L1Linf} gives
\begin{align*}
\frac{\|\varphi_{1}\|_{L^{1}(h^{-1}J^{n}_{T_{\alpha}};L^{2}_{z}L^{2}_{y_{G}})}}{
\|\varphi_{\infty}\|_{L^{\infty}(h^{-1}J^{\leq n}_{T_{\alpha}};L^{2}_{z}L^{2}_{y_{G}})}}
\lesssim&\left[ \sum_{-\infty\leq m\leq 1}2^{-n/2}(M_{\alpha
          01}C_{V}\gamma^{1/2}2^{-n/2})(M_{\alpha 01}C_{V}\gamma^{1/2}2^{m/2})2^{m/2}\right]
\\
&+\left[\sum_{m=2}^{n}2^{-n/2}(M_{\alpha
  01}C_{V}\gamma^{1/2}2^{m/2-n/2})(M_{\alpha 01}C_{V}\gamma^{1/2}2^{m})2^{-m/2}\right]
\\
&\lesssim M_{\alpha 01}C_{V}^{2}\gamma\,.
\end{align*}
With 
$$
\|\varphi_{\infty}\|_{L^{\infty}(h^{-1}J^{\leq  n}_{T_{\alpha}};L^{2}_{z}L^{2}_{y_{G}})}\leq 
\|\varphi_{\infty}
\|_{L^{\infty}([0,T_{\alpha}/h[;L^{2}_{z}L^{2}_{y_{G}})}\leq M_{\infty}(u_{\infty})\,,
$$
we have proved
\begin{equation}
  \label{eq:L1inf2}
\sup_{n\geq 1}2^{-n/2}\|e^{\alpha
  N}L_{1\infty}(u_{\infty})\|_{L^{1}(h^{-1}J^{n}_{T_{\alpha}};L^{2}_{z}L^{2}_{y_{G}})}\lesssim M_{\alpha 01}C_{V}^{2}\gamma M_{\infty}(u_{\infty})\,.
\end{equation}
\noindent\textbf{Conclusion.}
From \eqref{eq:estimI}, \eqref{eq:estimII} and \eqref{eq:estimLInfty1} we deduce
\begin{equation}
  \label{eq:estimMInfty}
M_\infty(L_{\infty\,\infty}(u_\infty)+L_{\infty2}(u_2)+L_{\infty1}(u_1))\lesssim \gamma^{1/2}
M_{\alpha01}C_{V}M(u_\infty,u_2,u_1)\,. 
\end{equation}
Combining~\eqref{eq:Bnm}, \eqref{eq:B00}, \eqref{eq:Bn0}, \eqref{eq:Bnm1}, and taking the supremum over $\alpha\in[\alpha_0,\alpha_1[$ yields 
\[M_1(L_{11}(u_1))\lesssim \gamma^{1/2}
M_{\alpha01}C_{V}M_1(u_1)\,,\]
while \eqref{eq:EstimL22} says
$$
M_{2}(L_{22}(u_{2}))\lesssim C_{V}M_{\alpha 01}\sqrt{\gamma}M_{2}(u_{2})\,.
$$
Finally the upper bounds \eqref{eq:L1inf1},\eqref{eq:L1inf2} combined,
firstly 
with the equivalence of norms $N_{11}$ and $N_{31}$\,, and secondly the
normalization of \eqref{eq:defM1u} of $M_{1}$ yields
\[M_1(L_{1\infty}(u_\infty))\lesssim \gamma^{1/2}
M_{\alpha01}C_{V}M(u_\infty,u_2,u_1)\,.\]
The sum of all those inequalities is
$$
M(L(u_\infty,u_2,u_1))\lesssim \gamma^{1/2}M_{\alpha01}C_{V}M(u_\infty,u_2,u_1)\,,
$$
which means that there exists a constant $C_{d,U}$ determined by the
dimension $d$ and the free dynamics $U$ such that
$$
\|L\|_{\mathcal{L}(\mathcal{E}_{\alpha_{0},\alpha_{1},\gamma}^{h})}\leq C_{d,U}M_{\alpha01}C_{V}\gamma^{1/2}\,.
$$ 
\end{proof}

\begin{lemma}
\label{le:equivnorms2}
The norms $N_{p,1},N_{p,2},N_{p,3},N_{p,4}$ defined in
\eqref{eq:defN1_2}\eqref{eq:defN2_2}\eqref{eq:defN3_2}\eqref{eq:defN4_2}
for $p=2$ (resp.
\eqref{eq:defN11}\eqref{eq:defN12}\eqref{eq:defN13}\eqref{eq:defN14}
for $p=1$)
are equivalent according to \eqref{eq:equivN21toN24}
(resp. \eqref{eq:equivN11toN15}). 
\end{lemma}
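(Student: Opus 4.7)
The plan is to pivot through the purely dyadic norm $N_{p,3}$: I will establish $N_{p,1} \asymp N_{p,3}$, and then show that $N_{p,2}$ and $N_{p,4}$ are squeezed between these two by the same mechanism.

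For $p=2$ and the direction $N_{2,3} \lesssim N_{2,1}$, the natural move is to test $N_{2,1}$ at $\tau = \sup J^n_T = (1-2^{-n-1})T$: since $[0,\tau/h] \supset h^{-1}J^n_T$ and $\sqrt{T-\tau} = \sqrt{T}\,2^{-(n+1)/2}$, the bound drops out with a universal constant. For the converse $N_{2,1} \lesssim N_{2,3}$, given $\tau \in [0,T[$ I pick $n_0\in\nz$ with $\tau \in J^{n_0}_T$, so that $T-\tau \leq T\,2^{-n_0}$ and $[0,\tau/h] \subset h^{-1}J^{\leq n_0}_T$. Pythagoras and the geometric series bound $\sum_{m=0}^{n_0}2^m \lesssim 2^{n_0}$ give
\[
\|\varphi\|_{L^2([0,\tau/h])}^2 \leq \sum_{m=0}^{n_0}\|\varphi\|_{L^2(h^{-1}J^m_T)}^2 \leq \frac{N_{2,3}^2}{T}\sum_{m=0}^{n_0}2^m \lesssim \frac{N_{2,3}^2\,2^{n_0}}{T},
\]
and multiplying by $\sqrt{T-\tau}$ absorbs the factor $2^{n_0/2}$.

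The comparisons $N_{2,1}\asymp N_{2,2}\asymp N_{2,4}$ follow the same pattern. The easy direction $N_{2,j}\lesssim N_{2,1}$ ($j=2,4$) tests $N_{2,1}$ at $\tau=3T/4$ (resp.\ $7T/8$) for the first term and at $\tau=T-\delta$ for the $\delta$-supremum. The converse decomposes $[0,\tau/h]$ as $h^{-1}J^{\leq 1}_T$ (resp.\ $h^{-1}J^{\leq 2}_T$) plus, when needed, a tail near $T$; this tail I cover by a geometric sequence of intervals $[T-2\delta_k,T-\delta_k]$ with $\delta_k = T/2^{k+1}$ (resp.\ $[T-3\delta_k,T-\delta_k]$ with $\delta_k = T/(3\cdot 2^k)$), all of which are admissible in the $\delta$-sup, and sum the partial norms via the same geometric series argument as above.

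The $p=1$ case runs in complete parallel. On $h^{-1}J^n_T$ with $n\geq 1$ one has $ht \asymp T$, so $1/\sqrt{ht}\asymp 1/\sqrt{T}$ and the dyadic comparison of $N_{1,3}$ and $N_{1,1}$ reduces to the $p=2$ argument, with Pythagoras replaced by the triangle inequality in $L^1$ (which only simplifies the summation). On the initial segment where $\sqrt{T-\tau}\asymp \sqrt{T}$, the combination $\sqrt{T-\tau}\cdot 1/\sqrt{ht}$ reproduces exactly the weighted expression $\sqrt{T/(ht)}$ which appears in $N_{1,2}, N_{1,3}, N_{1,4}$, so the comparison on $h^{-1}J^{\leq 1}_T$ (resp.\ $h^{-1}J^{\leq 2}_T$) is essentially tautological. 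The only genuinely fiddly step I foresee is pure bookkeeping: choosing the geometric sequences $\delta_k$ so that both the covering property and the admissibility constraints $\delta_k\leq T/8$ (resp.\ $\delta_k\leq T/12$) hold from some small $k$ onwards, and checking that none of the resulting constants depend on $h$ or on $T=T_\alpha$.
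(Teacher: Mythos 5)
Your proposal is correct and rests on exactly the same mechanism as the paper's proof: the easy direction $N_{p,j}\lesssim N_{p,1}$ comes from testing $N_{p,1}$ at dyadic endpoints $\tau=\sup J^n_T$ (or $\tau=3T/4$, $7T/8$, $T-\delta$), while the hard direction $N_{p,1}\lesssim N_{p,j}$ comes from splitting $[0,\tau/h]$ into the dyadic pieces $h^{-1}J^m_T$ and summing the geometric series $\sum_{m\le n_0}2^{mp/2}\lesssim 2^{n_0p/2}$ against the compensating decay $\sqrt{T-\tau}\lesssim\sqrt{T}\,2^{-n_0/2}$. The paper's write-up differs only in packaging — it scales to $T=h=1$, unifies $p=1,2$ via the substitution $\psi=\tilde\varphi/t^{1/p-1/2}$, closes a cyclic chain $A_3\le\sqrt{2}A_2\le\sqrt{2}A_1\lesssim A_3$ on the tail contributions, and then compares $N_{p,2}$ with $N_{p,4}$ directly by simple interval inclusions rather than routing through $N_{p,1}$ as you do.
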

\begin{proof}
We forget the notation $L^{2}_{z}L^{2}_{y_{G}}$ 
because it is a time integration issue and it can be done with any Banach
space valued functions.\\
With the  Definition \eqref{eq:defN3_2} of $N_{2,3}(\varphi)$\,, the equality
$$
\|\varphi\|_{L^{2}(h^{-1}J^{\leq 1}_{T})}
=\left(\|\varphi\|^{2}_{L^{2}(h^{-1}J^{0}_{T})}
+\|\varphi\|^{2}_{L^{2}(h^{-1}J^{1}_{T})}\right)^{1/2}
$$
allows to replace $N_{2,3}(\varphi)$ by the equivalent norm
$$
\sqrt{T}\|\varphi\|_{L^{2}(h^{-1}J^{\leq 1}_{T})}+\sqrt{T}\sup_{n>1}2^{-n/2}\|\varphi\|_{L^{2}(h^{-1}J^{n}_{T})}
$$
For $p=1$\,, the inequality
$$
\forall t\in [\frac{3T}{4h},\frac{T}{h}[,\quad
\frac{1}{T}\leq \frac{1}{ht}\leq \frac{4}{3T} 
$$
allows to replace the second term of the definitions \eqref{eq:defN12}
\eqref{eq:defN13}\eqref{eq:defN14} of $N_{1,2}$\,, $N_{1,3}$ and
$N_{1,4}$\,, respectively by
\begin{eqnarray*}
  &&\sup_{\delta\in
     ]0,T/8]}\sqrt{\delta}\left\|\frac{\varphi}{\sqrt{ht}}\right\|_{L^{1}(h^{-1}[T-2\delta,T-\delta[)}\,\\
&&
\sup_{n>1}\sqrt{T}2^{-n/2}\left\|\frac{\varphi}{\sqrt{ht}}\right\|_{L^{1}(h^{-1}J^{n}_{T})}\,,\\
&&
\sup_{\delta\in ]0,T/12]}\sqrt{\delta}\left\|\frac{\varphi}{\sqrt{ht}}\right\|_{L^{1}(h^{-1}[T-3\delta,T-\delta[)}\,.
\end{eqnarray*}
Additionally the $\sup_{\tau\in [0,T[}$ in the definitions
\eqref{eq:defN1_2}\eqref{eq:defN11} can be replaced by $\sup_{\tau\in
  [3T/4,T[}$\,.
We are thus led to compare the norms, for $p=1,2$\,,
\begin{align*}
 N_{p,1,T,h}(\varphi)&= \sup_{\tau\in
                         [3T/4,T[}\sqrt{T-\tau}\left\|\frac{\varphi}{(ht)^{1/p-1/2}}\right\|_{L^{p}([0,\frac{\tau}{h}])} \,, \\
N_{p,2,T,h}(\varphi)&=\sqrt{T}
\left\|\frac{\varphi}{(ht)^{1/p-1/2}}\right\|_{L^{p}(h^{-1}J^{\leq
                        1}_{T})}+\sup_{\delta\in]0,T/8]}\sqrt{\delta}
\left\|\frac{\varphi}{(ht)^{1/p-1/2}}\right\|_{L^{p}(h^{-1}[T-2\delta,T-\delta])}\,,\\
N_{p,3,T,h}(\varphi)&=
\sqrt{T}\left\|\frac{\varphi}{(ht)^{1/p-1/2}}\right\|_{L^{p}(h^{-1}J^{\leq
                        1}_{T})}
+\sqrt{T}\sup_{n>1}2^{-n/2}\left\|\frac{\varphi}{(ht)^{1/p-1/2}}\right\|_{L^{p}(h^{-1}J^{n}_{T})}\,,\\
N_{p,4,T,h}(\varphi)&=
\sqrt{T}
\left\|\frac{\varphi}{(ht)^{1/p-1/2}}\right\|_{L^{p}(h^{-1}J^{\leq
                        2}_{T})}+\sup_{\delta\in]0,T/12]}\sqrt{\delta}\left\|\frac{\varphi}{(ht)^{1/p-1/2}}\right\|_{L^{p}(h^{-1}[T-3\delta,T-\delta])}\,.\\
\end{align*}
The elementary homogeneity of those expressions gives
$$
N_{p,i,T,h}(\varphi)=\frac{T}{h^{1/p}}N_{p,i,1,1}(\tilde{\varphi})\quad\text{with}~\tilde{\varphi}(t)=\varphi(ht)\quad\text{for}~p=1,2\quad\text{and}~1\leq
i\leq4\,,
$$
and it suffices to consider the case $T=h=1$ while setting $\psi=\frac{\tilde{\varphi}}{t^{1/p-1/2}}$\,.\\
For $\tau\in [3/4,1[$ the identity
$$
\left\|\psi\right\|_{L^{p}([0,\tau])}
=\left(\left\|\psi\right\|^{p}_{L^{p}(J^{\leq
      1}_{1})}
+\left\|\psi\right\|^{p}_{L^{p}([3/4,\tau])}
\right)^{1/p}
$$
reduces the comparison of $N_{p,1,1,1}(\tilde{\varphi})$\,, $N_{p,2,1,1}(\tilde{\varphi})$ and
$N_{p,3,1,1}(\tilde{\varphi})$ to the comparison of
\begin{align*}
A_{1}(\psi)&=\sup_{\tau\in [3/4,1[}\sqrt{1-\tau}\left\|\psi\right\|_{L^{p}([3/4,\tau])}\,,\\
A_{2}(\psi)&=\sup_{\delta\in]0,1/8]}\sqrt{\delta}
\left\|\psi\right\|_{L^{p}[1-2\delta,1-\delta])}\,,\\
A_{3}(\psi)&=\sup_{n>1}2^{-n/2}\left\|\psi\right\|_{L^{p}(J^{n}_{1})}\,.
\end{align*}
Taking $\tau=1-\delta$\,, $\delta\leq 1/8$\,, in $A_{1}(\psi)$ and $\delta=2^{-n-1}$\,, $n>1$\,, in $A_{2}(\psi)$ gives
$$
A_{3}(\psi)\leq \sqrt{2}A_{2}(\psi)\leq\sqrt{2} A_{1}(\psi)\,.
$$
For $\tau\in[3/4,1[$ there exists $n_{\tau}>1$ such that $\tau\in
[1-2^{-n_{\tau}},1-2^{-n_{\tau}-1}[=J^{n_{\tau}}_{1}$ and
$$
\|\psi\|^{p}_{L^{p}([3/4,\tau])}=\sum_{n=2}^{n_{\tau}}\|\psi\|^{p}_{L^{p}(J^{n}_{1})}\leq
\sum_{n=2}^{n_{\tau}}2^{np/2}(2^{-n/2}\|\psi\|_{L^{p}(J^{n}_{1})})^{p}
\leq\frac{2^{p(n_{\tau}+1)/2}}{2^{p/2}-1}A_{3}(\psi)^{p}\,.
$$
The inequality
$$
(1-2^{-n_{\tau}})\leq\tau\quad\text{or}\quad  \sqrt{1-\tau}\leq 2^{-n_{\tau}/2}\,,
$$
while taking the supremum over $\tau\in [3/4,1[$\,, implies
$$
A_{1}(\psi)\leq \frac{\sqrt{2}}{(2^{p/2}-1)^{1/p}} A_{3}(\psi)\,.
$$
We have proved the equivalence 
$$
\kappa_{p,1}^{-1}N_{p,1}(\varphi)\leq N_{p,i}(\varphi)\leq \kappa_{p,1}N_{p,1}(\varphi)\quad\text{for}~p=1,2,~i=2,3\,,
$$
with a universal constant $\kappa_{p,1}>1$\,.\\
It now suffices to compare $N_{p,2}$ and $N_{p,4}$ or equivalently
$N_{p,2,1,1}(\tilde{\varphi})$ and $N_{p,4,1,1}(\tilde{\varphi})$
written with $\psi=\frac{\tilde{\varphi}}{t^{1/p-1/2}}$
\begin{align*}
 N_{p,2,1,1}(\tilde{\varphi}) &=\|\psi\|_{L^{p}(J^{\leq
                                  1}_{1})}+\sup_{\delta\in
                                  ]0,1/8]}\sqrt{\delta}\|\psi\|_{L^{p}([1-2\delta,1-\delta])}=:B_{2}(\psi) \,, \\
N_{p,4,1,1}(\tilde{\varphi})&=
\|\psi\|_{L^{p}(J^{\leq 2}_{1})}+\sup_{\delta\in ]0,1/12]}\sqrt{\delta}\|\psi\|_{L^{p}([1-3\delta,1-\delta])}=:B_{4}(\psi)\,.
\end{align*}
For the first terms of $B_{2}(\psi)$ and $B_{4}(\psi)$\,,
$$
\|\psi\|^{p}_{L^{p}(J^{\leq
                                  1}_{1})}\leq
\|\psi\|^{p}_{L^{p}(J^{\leq
                                  2}_{1})}
=
\|\psi\|^{p}_{L^{p}(J^{\leq 1}_{1})}+
\|\psi\|^{p}_{L^{p}(J^{2}_{1})}
$$
gives
$$
\|\psi\|_{L^{p}(J^{\leq
                                  1}_{1})}\leq
\|\psi\|_{L^{p}(J^{\leq
                                  2}_{1})}
\leq 
\|\psi\|_{L^{p}(J^{\leq 1}_{1})}+\sup_{\delta\in]0,1/8]}\|\psi\|_{L^{p}([1-2\delta,1-\delta])}\,.
$$
For the second terms of $B_{2}(\psi)$ and $B_{4}(\psi)$\,,
$$
\sqrt{\delta}\|\psi\|_{L^{p}([1-3\delta,1-3\delta/2])}\leq
\sqrt{\delta}\|\psi\|_{L^{p}([1-3\delta,1-\delta])}\leq
\sqrt{\delta}\|\psi\|_{L^{p}([1-3\delta,1-3\delta/2])}+
\sqrt{\delta}\|\psi\|_{L^{p}([1-2\delta,1-\delta])}
$$
leads to 
$$
(2/3)^{1/2}
\sup_{\delta\in
  ]0,1/8]}\sqrt{\delta}\|\psi\|_{L^{p}([1-3\delta,1-\delta])}\leq
\sup_{\delta\in
  ]0,1/12]}\sqrt{\delta}\|\psi\|_{L^{p}([1-2\delta,1-\delta])}
\leq 2\sup_{\delta\in ]0,1/8]}\sqrt{\delta}\|\psi\|_{L^{p}([1-3\delta,1-\delta])}\,.
$$
Adding the two terms yields the equivalences
\begin{eqnarray*}
  && \kappa_{p,2}^{-1}B_{2}(\psi)\leq B_{4}(\psi)\leq \kappa_{p,2}
B_{2}(\psi)\\
\text{and}
&&
\kappa_{p,2}^{-1}N_{2,2}(\varphi)\leq N_{2,4}(\varphi)\leq \kappa_{p,2}N_{2,2}(\varphi)
\end{eqnarray*}
for a universal constant $\kappa_{p,2}>1$\,. 
The proof ends by taking $\kappa_{p}=\kappa_{p,1}\kappa_{p,2}>1$\,.
\end{proof}

\section{Consequences of Strichartz estimates for our model problem }
\label{sec:conseqStri}
The general results of
Section~\ref{sec:StriCM} are applied to our model problem presented in
Subsection~\ref{sec:ourproblem}.

\subsection{Validity of the general hypotheses and main result}
\label{sec:valid}
Let us consider \eqref{eq:dynaleaFou} 
\begin{equation}
  \label{eq:dynaleaFou2}
\begin{cases}
i\partial_{t}\hat{f}^{h}=(\xi-d\Gamma(D_{y}))^{2}\hat{f}^{h}+\sqrt{h}[a(V)+a^{*}(V)]\hat{f}\,,\\
\hat{f}^{h}(t=0)=\hat{f}^{h}_{0}\,,
\end{cases}
\end{equation}
where $\hat f^{h}(t)\in L^{2}(\rz^{d}\times Z'', \frac{d\xi}{(2\pi)^{d}}\otimes \mathbf{dz''};
\Gamma(L^{2}(\rz^{d},dy;\cz)))$\,, $\xi$ is the Fourier variable of
$x\in \rz^{d}$ and $z''\in Z''$ is a parameter,
e.g. $L^{2}(Z'',\mathbf{dz''})=L^{2}(\rz^{d},\frac{d\xi}{(2\pi)^{d}};\cz)\otimes
\Gamma(L^{2}(\rz^{d},dy;\cz))$ when we want to
handle the evolution of Hilbert-Schmidt operators on
$L^{2}(\rz^{d},\frac{d\xi}{(2\pi)^{d}};\cz)\otimes
\Gamma(L^{2}(\rz^{d},dy;\cz))$ as described in the end of
Section~\ref{sec:ourproblem}. 
Our complete parameter is thus
$$
z'=(\xi,z')\in \rz^{d}\times Z''=Z'
$$
and remember the writing introduced in Definition~\ref{de:LpxLqy} and
specified in 
\eqref{eq:L2tzL2z} and \eqref{eq:F2UtUs}
\begin{eqnarray*}
  &&
L^{2}(\rz^{d}\times
Z'',\frac{d\xi}{(2\pi)^{d}}\otimes\mathbf{dz};\Gamma(L^{2}(\rz^{d}, dy;\cz)))
=L^{2}_{z,\text{sym}}L^{2}_{y_{G}}=\underbrace{L^{2}_{z_{0}}}_{\text{vacuum}}\oplus L^{2}_{z_{1},\text{sym}}L^{2}_{y_{G}}.
\\
\text{with}&& Z_{0}=Z'\quad,\quad Z_{1}=\mathcal{R}\otimes Z'\,,
\end{eqnarray*}
where the subscript $_{\text{sym}}$ refers to the symmetry for the
relative coordinate variable $Y'\in \mathcal{R}$\,.\\
Using the center of mass variable (see Section~\ref{sec:centermass}) by setting $t\mapsto u_{G}^{h}(t)=U_{G}^{-1}\hat{f}^{h}(t)$, \eqref{eq:dynaleaFou2} becomes
\begin{equation}
   \label{eq:dynaleauG}
 \begin{cases}
       i\partial_{t}u_{G}^{h} = (\xi-D_{y_{G}})^{2}u_{G}^{h}+\sqrt{h}[a^{*}_{G}(V)+a_{G}(V)]u_{G}^{h}\,,\\
 u_{G}^{h}(t=0) = u_{G,0}^{h}\,.
\end{cases}
\end{equation}
In this context, the free dynamics $U(t)$ involved in \eqref{eq:UtUs} acts simply on $L^{2}_{z}L^{2}_{y_{G}}$ and equals
$$
U(t)=K_{0}(t,z')\oplus U_{1}(t,Y',z')=
(e^{-it|\xi|^{2}}\times_{z'})\oplus (e^{-it(\xi-D_{y_{G}})^{2}}\times_{(Y',z')})
$$
where we recall  $z_{0}=z'\in Z'$ and $z_{1}=(Y',z')\in \mathcal{R}\times Z'$\,.
Because $\|e^{\pm
  i\xi\cdot y}\varphi\|_{L^{p}_{y}}=\|\varphi\|_{L^{p}_{y}}$ for all $1\leq
p\leq +\infty$ and $e^{-itD_{y}^{2}}=e^{it\Delta_{y}}$ satisfies 
\begin{eqnarray*}
  && \|e^{it\Delta_{y}}f\|_{L^{2}_{y}}\leq \|f\|_{L^{2}_{y}}\,,\\
&& \|e^{it\Delta_{y}}(e^{is\Delta_{y}})^{*}g\|_{L^{\infty}_{y}}
=\|e^{i(t-s)\Delta_{y}}g\|_{L^{\infty}_{y}}\leq
   \frac{\|g\|_{L^{1}_{y}}}{(4\pi)^{d/2}|t-s|^{d/2}}\quad t\neq s\,,
\end{eqnarray*}
the assumption~\eqref{eq:hypStri1}\eqref{eq:hypStri2} are satisfied for
$U_{1}(t,z_{1})$\,, $z_{1}=(Y',z')$\,, as soon as $d\geq 3$ with $\sigma=\frac{d}{2}>1$\,,
uniformly with respect to $z_{1}\in \mathcal{R}\times Z'$\,.\\
The dense  subset $D$ in $L^{2}_{z_{1}}L^{2}_{y_{G}}$ such that
$D\subset
L^{2}(Z_{1},\mathbf{dz_{1}};L^{r_{\sigma}}(\rz^{d},dy_{G};\cz))$\,, with
$r^{\sigma}=\frac{2d}{d-2}$ and $d\geq 3$ here,  is simply 
$D=L^{2}(Z_1,dz_1;H^{\mu}(\rz^{d}))$ with $\mu>d/2$\,. Remember that
the dense subset $D$ was introduced in Proposition~\ref{pr:StriDT} for
the dense \emph{a priori} definition of the operator $A_{T}$ on
$L^{1}([0,T];L^{2}_{z,x})$ (see Remark~\ref{re:DL1} and the proof of Proposition~\ref{pr:StriDT}).\\
Below are reviewed assumptions on $V$:
\begin{enumerate}
\item If $V\in L^{\frac{2d}{d+2}}(\rz^{d},dy;\rz)$\,, the
  assumptions of Proposition~\ref{pr:contracStri} are satisfied with
  $C_{V}=1+\|V\|_{L^{\frac{2d}{d+2}}}>0$ and $r'_{\sigma}=\frac{2\sigma}{\sigma+1}=\frac{2d}{d+2}$\,.
\item If $V\in H^{2}(\rz^{d};\rz)$ then \eqref{eq:dynaleaFou2} (or~\eqref{eq:dynaleauG}) defines a  unitary dynamics with a rather well understood domain of its generator in $L^{2}(\rz^{d}\times Z'', \frac{d\xi}{(2\pi)^{d}}\otimes \mathbf{dz''};
\Gamma(L^{2}(\rz^{d},dy;\cz)))\simeq L^{2}_{z}L^{2}_{y_{G}}$\,.
\end{enumerate}
We will always assume~$V\in L^{r_\sigma^\prime}$ in the sequel, and depending on the statement we might assume that~$V\in H^2$ or not.

If $V\in H^{2}(\rz^{d};\rz)$, the unique solution $t\mapsto u_{G}^{h}(t)=U_{G}^{-1}\hat{f}^{h}(t)\in
\mathcal{C}^{0}(\rz;L^{2}_{z}L^{2}_{y_{G}})$ to~\eqref{eq:dynaleauG} satisfies
\begin{equation}
  \label{eq:uGmild}
u^{h}_{G}(t)=U(t)u^{h}_{G,0}-i\int_{0}^{t}U(t)U(s)^{*}\sqrt{h}[a_{G}^{*}(V)+a_{G}(V)]u^{h}_{G}(s)~ds\,,
\end{equation}
and we will now seek for a solution of this equation using the fixed point method developed in Section~\ref{sec:fixweight}, for~$V\in L^{r_\sigma^\prime}$ but not necessarily~$V\in H^{2}(\rz^{d};\rz)$.

If $(u^h_{\infty},u^h_{2},u^h_{1})$  solves
\begin{alignat}{2}
  \label{eq:dynuinfty_expl}
  u^{h}_\infty(t)&=
-i\int_{0}^{t}U(t)U^{*}(s) \left(\sqrt{h}a_{G}^{*}(V)u^{h}_\infty(s)+\sqrt{h} u^{h}_2(s)+u^h_1(s)\right)\,ds &+f^{h}_\infty(t)\,, \\
u_2^{h}(t)&= 
-i\int_{0}^{t} a_{G}(V)U(t)U(s)^{*} \sqrt{h} u_2^{h}(s)~ds 
&+f_2^{h}(t) 
\label{eq:dynu2_expl} \,,\\
u_1^{h}(t)&=
 -i\int_{0}^{t} a_{G}(V)U(t)U(s)^{*} \left( h a_{G}^{*}(V)
 u_\infty^{h}(s) + \sqrt{h} u_1^{h}(s) \right) ~ds 
 \,,& 
\label{eq:dynu1_expl}
\end{alignat}
with
\begin{align}
\label{eq:rhsfh}
f^{h}_\infty(t) =-i&\int_{0}^{t}U(t)U(s)^{*}
a_{G}^{*}(V) \sqrt{h}U(s)u_{G,0}^h~ds \,,\\
\label{eq:rhsgh}
f^{h}_2(t)=-i\,a_{G}(V)&\int_{0}^{t}U(t)U(s)^{*}a_{G}^{*}(V) \sqrt{h}U(s)u_{G,0}^h~ds + a_G(V) U(t) u^h_{G,0}\,,
\end{align}
written shortly as
\begin{equation}
            \begin{pmatrix}
              u_\infty^{h}\\u_2^{h}\\u_1^h
            \end{pmatrix}
=L
            \begin{pmatrix}
              u_\infty^{h}\\u_2^{h}\\u_1^h
            \end{pmatrix}
+
            \begin{pmatrix}
              f_\infty^{h}\\f_2^{h}\\0
            \end{pmatrix}
\,,\quad L=
  \begin{pmatrix}
    L_{\infty\,\infty}&L_{\infty2}&L_{\infty1}\\
    0&L_{22}&0\\
    L_{1\infty}&0&L_{11}
  \end{pmatrix}
\,, \label{eq:dynutvtexp}
\end{equation}
then $u^h_G(t) = u_\infty(t) + U(t) u^h_{G,0}$ will yield a solution to~\eqref{eq:uGmild}. 

Actually, with $u^h_G(t) = u^h_\infty(t) + U(t) u^h_{G,0}$, applying $a_G(V)$ to~\eqref{eq:dynuinfty_expl} on the one hand, and summing~$\sqrt{h}\times$\eqref{eq:dynu2_expl} and~\eqref{eq:dynu1_expl} on the other hand yields $\sqrt{h} a_G(V) u_G^h = u_1^{h}+ \sqrt{h}u_2^{h}$, which inserted in~\eqref{eq:dynuinfty_expl} provides~\eqref{eq:uGmild}.

\begin{theorem}
\label{th:applStri}
Assume $d\geq 3$ and $V\in L^{\frac{2d}{d+2}}(\rz^{d},dy;\rz)$ with 
$$
\|V\|_{L^{\frac{2d}{d+2}}}<C_{V}\,.$$
Assume that there exists $\alpha_{1}>0$ and $C_{\alpha_{1}}>0$ such that 
$$
\forall h\in ]0,h_{0}[\,,\quad \|e^{\alpha_{1}N}u^{h}_{G,0}\|_{L^{2}_{z}L^{2}_{y_{G}}}\leq C_{\alpha_{1}}\,.
$$
There exists a constant $C_{d}>0$ depending on the dimension $d\geq
3$\,, such that when $\gamma>0$ is chosen such that
$$
2\|L\|_{\mathcal{L}(\mathcal{E}^h_{-\alpha_{1},\alpha_{1},\gamma})}\leq C_{d}e^{\alpha_{1}}C_{V}\gamma^{1/2}\leq 1\,,
$$
the function $u^h_G(t) = u^h_\infty(t) + U(t) u^h_{G,0}$ with $(u^h_\infty,u^h_2,u^h_1)$ the unique solution to \eqref{eq:dynutvtexp} in $(\mathcal{E}_{-\alpha_{1},\alpha_{1},\gamma}^{h},M)$ satisfies
\begin{align}
 \label{eq:estLinf}
\forall t\in I_{T_{\alpha}}^{h}\,,\quad
\left\|e^{\alpha N}[u_{G}^{h}(t)-U(t)u_{G,0}^{h}]\right\|_{L^{2}_{z}L^{2}_{y_{G}}}
\leq C_{d}C_{V}e^{\alpha_{1}}C_{\alpha_{1}}\gamma^{1/2}\frac{\sqrt{|ht|}}{\sqrt{T_{\alpha}-|ht|}}\,,
\end{align}
with
\begin{equation}
T_{\alpha}=\gamma(\alpha_{1}-\alpha)
\end{equation}
for all $\alpha\in [0,\alpha_{1}[$ and all $h\in]0,h_{0}[$\,.

If, moreover, $V\in H^{2}(\rz^{d};\rz)$, then
$u_{G}^{h}$ is the only solution to $\eqref{eq:dynaleauG}$ in 
$\mathcal{C}^{0}(I_{T_{0}}^{h};L^{2}_{z}L^{2}_{y_{G}})$\,.
\end{theorem}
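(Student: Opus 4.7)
The plan is to apply the fixed-point statement of Proposition~\ref{pr:contracStri} to the linear system \eqref{eq:dynutvtexp} with $V_{1}=V_{2}=V$, $\alpha_{0}=-\alpha_{1}$ (so that $M_{\alpha01}=e^{\alpha_{1}}/2$) and $C_{V}$ as in the hypothesis, then to transfer the resulting estimate on $u_{\infty}^{h}$ back to $u_{G}^{h}$ via the relation $u_{G}^{h}(t)=u_{\infty}^{h}(t)+U(t)u_{G,0}^{h}$.

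First I would record that the hypotheses of Proposition~\ref{pr:contracStri} are met: by Subsection~\ref{sec:valid} the free dynamics splits as $U(t)=K_{0}(t,z')\oplus U_{1}(t,Y',z')$ with $U_{1}(t,z_{1})=e^{-it(\xi-D_{y_{G}})^{2}}$ satisfying \eqref{eq:hypStri1}--\eqref{eq:hypStri2} uniformly in $z_{1}$ with $\sigma=d/2>1$, and the dense subspace $D=L^{2}(Z_{1},\mathbf{dz_{1}};H^{\mu}(\rz^{d}))$ with $\mu>d/2$ is admissible. Hence for every $\gamma>0$ the operator $L$ of \eqref{eq:dynutvtexp} satisfies $\|L\|_{\mathcal{L}(\mathcal{E}^{h}_{-\alpha_{1},\alpha_{1},\gamma})}\leq C_{d,U}M_{\alpha01}C_{V}\gamma^{1/2}=\tfrac{1}{2}C_{d}e^{\alpha_{1}}C_{V}\gamma^{1/2}$ with $C_{d}:=C_{d,U}$; under the smallness assumption of the theorem this bound is $\leq 1/2$, so $\mathrm{Id}-L$ is invertible with $\|(\mathrm{Id}-L)^{-1}\|\leq 2$.

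The key remaining step is to show that the source $(f_{\infty}^{h},f_{2}^{h},0)$ lies in $\mathcal{E}^{h}_{-\alpha_{1},\alpha_{1},\gamma}$ with $M(f_{\infty}^{h},f_{2}^{h},0)\lesssim C_{V}e^{\alpha_{1}}\gamma^{1/2}C_{\alpha_{1}}$. For $M_{\infty}(f_{\infty}^{h})$ this is a rerun of the argument bounding $L_{\infty\,\infty}$ in the proof of Proposition~\ref{pr:contracStri}: one writes $(\frac{T_{\alpha}-ht}{ht})^{1/2}e^{\alpha N}f_{\infty}^{h}(t)=-i\int_{0}^{T_{\alpha}/h}U(t)U(s)^{*}\tilde{B}_{t,s}^{*}[e^{\alpha_{s}N}U(s)u_{G,0}^{h}]\,ds$ where $\tilde{B}_{t,s}^{*}$ differs from \eqref{eq:defB*(t,s)} only by the absence of the $(hs/(T_{\alpha_{s}}-hs))^{1/2}$ factor (since $U(s)u_{G,0}^{h}$ carries no vanishing weight at $s=0$). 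With the same choice $\alpha_{s}=(\alpha_{1}+\alpha-hs/\gamma)/2$ one obtains $\int_{0}^{T_{\alpha}/h}\|\tilde{B}_{t,s}^{*}\|^{2}\,ds\lesssim\gamma M_{\alpha01}^{2}C_{V}^{2}\cdot\frac{T_{\alpha}-ht}{ht}\ln\frac{T_{\alpha}}{T_{\alpha}-ht}$, and the last factor is uniformly bounded on $(0,T_{\alpha})$; combined with $\|e^{\alpha_{s}N}U(s)u_{G,0}^{h}\|_{L^{2}_{z}L^{2}_{y_{G}}}\leq C_{\alpha_{1}}$ (from $\alpha_{s}<\alpha_{1}$ and the commutation $[U,N]=0$) this gives the desired bound through Proposition~\ref{pr:StriDT} with $n=1$. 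For $M_{2}(f_{2}^{h})$, the boundary term $a_{G}(V)U(t)u_{G,0}^{h}$ is controlled pointwise in $t$ by Proposition~\ref{pr:expaLp} with $q=r_{\sigma}$, $p=p'=2$, and the resulting $L^{2}(I_{\tau}^{h};L^{2}_{z_{1}}L^{r_{\sigma}}_{y_{G}})$ norm of $e^{\alpha_{1}N}U(t)u_{G,0}^{h}$ is bounded by the homogeneous endpoint Strichartz estimate \eqref{eq:homStri}; the integral term of $f_{2}^{h}$ is treated exactly as $L_{1\infty}$ was in the proof of Proposition~\ref{pr:contracStri}, the input $u_{\infty}(s)$ being replaced by $U(s)u_{G,0}^{h}$ which is estimated in $L^{\infty}$ in time by $C_{\alpha_{1}}$.

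The contraction principle then yields a unique $(u_{\infty}^{h},u_{2}^{h},u_{1}^{h})\in\mathcal{E}^{h}_{-\alpha_{1},\alpha_{1},\gamma}$ with $M(u_{\infty}^{h},u_{2}^{h},u_{1}^{h})\leq 2M(f_{\infty}^{h},f_{2}^{h},0)\leq C_{d}C_{V}e^{\alpha_{1}}\gamma^{1/2}C_{\alpha_{1}}$. Setting $u_{G}^{h}=u_{\infty}^{h}+U(\cdot)u_{G,0}^{h}$ and retracing the elementary manipulation sketched between \eqref{eq:rhsgh} and \eqref{eq:dynutvtexp} shows that $u_{G}^{h}$ solves the mild equation \eqref{eq:uGmild}; reading off the $M_{\infty}$ component of the bound above and rearranging the weight $((T_{\alpha}-|ht|)/|ht|)^{1/2}$ gives precisely \eqref{eq:estLinf} for every $\alpha\in[0,\alpha_{1}[$. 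Finally, when $V\in H^{2}(\rz^{d};\rz)$, Lemma~\ref{le:dyn} provides a self-adjoint realization of $-\Delta_{x}+\sqrt{h}\mathcal{V}$, hence a unique $\mathcal{C}^{0}(\rz;L^{2}_{z}L^{2}_{y_{G}})$ solution to \eqref{eq:dynaleauG}; by the uniqueness of the corresponding mild formulation this solution must coincide with $u_{\infty}^{h}+U(\cdot)u_{G,0}^{h}$ on $I_{T_{0}}^{h}$. The main technical hurdle is the bookkeeping of the time weights in the estimate of $(f_{\infty}^{h},f_{2}^{h},0)$, which parallels but does not reduce to the contraction estimates already performed in the proof of Proposition~\ref{pr:contracStri}.
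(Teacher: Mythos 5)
Your setup is right, and the bounds you sketch for $f^{h}_{\infty}$ and for the integral part $f^{h}_{2,1}$ of $f^{h}_{2}$ do produce the factor $C_{V}e^{\alpha_{1}}\gamma^{1/2}C_{\alpha_{1}}$. But the claimed inequality $M(f^{h}_{\infty},f^{h}_{2},0)\lesssim C_{V}e^{\alpha_{1}}\gamma^{1/2}C_{\alpha_{1}}$ is false for the boundary term $f^{h}_{2,2}=a_{G}(V)U(\cdot)u^{h}_{G,0}$, and this is the one place where the argument actually breaks. Chasing the normalization in~\eqref{eq:defM2u}: Proposition~\ref{pr:expaLp} gives $\|e^{\alpha N}a_{G}(V)U(t)u^{h}_{G,0}\|_{L^{2}_{z}L^{2}_{y_{G}}}\leq \frac{M_{\alpha01}C_{V}}{\sqrt{\alpha_{1}-\alpha}}\|e^{\alpha_{1}N}U(t)u^{h}_{G,0}\|_{L^{2}_{z_{1}}L^{r_{\sigma}}_{y_{G}}}$; the homogeneous endpoint Strichartz estimate~\eqref{eq:homStri} then bounds the $L^{2}_{t}$-norm by $C_{\alpha_{1}}$; multiplying by $\sqrt{T_{\alpha}-\tau}\leq\sqrt{\gamma(\alpha_{1}-\alpha)}$ cancels the $1/\sqrt{\alpha_{1}-\alpha}$ and leaves $\sqrt{\gamma}\,M_{\alpha01}C_{V}C_{\alpha_{1}}$; finally dividing by the normalizing factor $M_{\alpha01}C_{V}\gamma^{1/2}$ yields only $M_{2}(f^{h}_{2,2})\lesssim C_{\alpha_{1}}$, with no $\gamma^{1/2}$. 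Feeding this directly into $M(u^{h}_{\infty},u^{h}_{2},u^{h}_{1})\leq 2M(f^{h}_{\infty},f^{h}_{2},0)$ therefore gives $M_{\infty}(u^{h}_{\infty})\lesssim C_{\alpha_{1}}$ rather than the bound $\lesssim C_{V}e^{\alpha_{1}}\gamma^{1/2}C_{\alpha_{1}}$ that~\eqref{eq:estLinf} requires.

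The fix, which is the one nontrivial idea in the proof, is to split off $f^{h}_{2,2}$ \emph{before} inverting $\mathrm{Id}-L$. Writing
\begin{equation*}
\begin{pmatrix} u^{h}_{\infty}\\ u^{h}_{2}\\ u^{h}_{1}\end{pmatrix}
=\begin{pmatrix}0\\ f^{h}_{2,2}\\ 0\end{pmatrix}
+(\mathrm{Id}-L)^{-1}\begin{pmatrix}f^{h}_{\infty}\\ f^{h}_{2,1}\\ 0\end{pmatrix}
+(\mathrm{Id}-L)^{-1}L\begin{pmatrix}0\\ f^{h}_{2,2}\\ 0\end{pmatrix}
\end{equation*}
isolates the offending piece $f^{h}_{2,2}$ in the $u_{2}$-slot only, so that $u^{h}_{\infty}$ is produced entirely by the last two terms. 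The second term inherits the $\gamma^{1/2}$ factor from $M(f^{h}_{\infty},f^{h}_{2,1},0)$, and the third inherits it from the extra factor $\|L\|\lesssim C_{V}e^{\alpha_{1}}\gamma^{1/2}$ multiplying $M(0,f^{h}_{2,2},0)\lesssim C_{\alpha_{1}}$. This gives $M(u^{h}_{\infty},u^{h}_{2}-f^{h}_{2,2},u^{h}_{1})\lesssim C_{V}e^{\alpha_{1}}\gamma^{1/2}C_{\alpha_{1}}$, whose $M_{\infty}$ component is exactly~\eqref{eq:estLinf}. Without this splitting, the statement of Theorem~\ref{th:applStri} simply cannot be read off from the $M$-bound on the full source vector. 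A secondary, smaller point: your treatment of $f^{h}_{2,1}$ "exactly as $L_{1\infty}$" is not quite what is needed, since $L_{1\infty}$ lands in the $M_{1}$-slot ($L^{1}$ in time with the $1/\sqrt{|ht|}$ weight) whereas $f^{h}_{2,1}$ must be estimated in the $M_{2}$-slot; the two arguments are cousins but involve different time norms and a different use of the retarded versus homogeneous Strichartz estimate.
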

\begin{proof}
  We take $\alpha_{0}=-\alpha_{1}$ where $\alpha_{1}>0$ is fixed. The
  constant $M_{\alpha01}$ of Definition~\ref{de:E0Talpha} is nothing
  but
$$
M_{\alpha01}=\frac{e^{\alpha_{1}}}{2}\,.
$$
Accordingly to Definition~\ref{de:E0Talpha}, for a fixed $\gamma>0$ the time scale $T_{\alpha}$ is given by
$T_{\alpha}=\gamma(\alpha_{1}-\alpha)$ for all $\alpha\in
[-\alpha_{1},\alpha_{1}[$\,.
Proposition~\ref{pr:contracStri} tells us that the condition
$$
C_{d,U}\frac{e^{\alpha_{1}}}{2}C_{V}\gamma^{1/2}\leq \frac{1}{2}
$$
where $C_{d,U}=C_{d}$ is determined by the dimension $d\geq 3$ here,
ensures that the operator $L$ is a contraction in
$\mathcal{E}_{-\alpha_{1},\alpha_{1},\gamma}^{h}$ for all $h\in ]0,h_{0}[$:
$$
\|L\|_{\mathcal{L}(\mathcal{E}_{-\alpha_{1},\alpha_{1},{\gamma}}^{h})}\leq \frac{1}{2}\,.
$$
If $M(f_\infty^{h},f_2^{h},0)<\infty$, then the system \eqref{eq:dynutvtexp} admits a unique
solution in $\mathcal{E}_{-\alpha_{1},\alpha_{1},\gamma}^{h}$ for all $h\in
]0,h_{0}[$ with 
$$
M(u_\infty^{h},u_2^{h},u_1^{h})\leq 2M(f_\infty^{h},f_2^{h},0) \,.
$$
It remains to check two things:
\begin{itemize}
\item the right-hand side $(f_\infty^{h},f_2^{h},0)$ given by
\eqref{eq:rhsfh}\eqref{eq:rhsgh} belongs to
$\mathcal{E}_{-\alpha_{1},\alpha_{1},\gamma}^{h}$ and to estimate
$M(f_\infty^{h},f_2^{h},0)$\,;
\item the unique solution $(u_\infty^{h},u_2^{h},u_1^{h})$ to \eqref{eq:dynutvtexp}
  yields after setting $u_{G}^{h}(t)=u_\infty^{h}(t)+U(t)u_{G,0}^{h}$
  the unique solution to \eqref{eq:dynalea} in $\mathcal{C}^{0}(]-\frac{T_{0}}{h},\frac{T_{0}}{h}[;L^{2}_{z,y_{G}})$\,.
\end{itemize}
The first step is simpler than what we did for
Proposition~\ref{pr:contracStri}. Let us start with
\begin{align*}
e^{\alpha
  N}f^{h}_{\infty}(t)=&-i\int_{0}^{t}U(t)U(s)^{*}e^{\alpha
  N}\sqrt{h}a_{G}^{*}(V)e^{-\alpha_{1}N}U(s)e^{\alpha_{1}N}u_{G,0}^{h}~ds\,\\
=&\int_{0}^{t}U(t)U(s)^{*}F(s)~ds
\end{align*}
with $F(s)=-i1_{[0,t]}(s)e^{\alpha
  N}\sqrt{h}a_{G}^{*}(V)e^{-\alpha_{1}N}U(s)e^{\alpha_{1}N}u_{G,0}^{h}$\,.
By Proposition~\ref{pr:expaLp} we know that
$$
\|F\|_{L^{2}_{s}L^{2}_{z_{1}}L^{r'_{\sigma}}_{y_{G}}}\leq \frac{C_{V}e^{\alpha_{1}}}{2\sqrt{\alpha_{1}-\alpha}}|ht|^{1/2}\|e^{\alpha_{1}N}u_{G,0}^{h}\|_{L^{2}_{z}L^{2}_{y_{G}}}
\leq \frac{C_{V}e^{\alpha_{1}}\gamma^{1/2}}{\sqrt{T_{\alpha}}}C_{\alpha_{1}}|ht|^{1/2}\,.
$$
A direct application of the retarded 
endpoint Strichartz estimate \eqref{eq:retardStri} 
yields
$$
\left(\frac{T_{\alpha}-|ht|}{|ht|}\right)^{1/2}\|e^{\alpha
    N}f^{h}_{\infty}(t)\|_{L^{2}_{z}L^{2}_{y_{G}}}
\lesssim C_{V}e^{\alpha_{1}}C_{\alpha_{1}}\gamma^{1/2}\,.
$$
and by taking the supremum over $|ht|<T_{\alpha}$\,,
\begin{equation}
  \label{eq:estimfh}
M(f_{\infty}^{h},0)\lesssim C_{V}e^{\alpha_{1}}C_{\alpha_{1}}\gamma^{1/2}\,.
\end{equation}
For 
\[
f^{h}_{2,1}(t)=-i\,a_{G}(V)\int_{0}^{t}U(t)U(s)^{*}a_{G}^{*}(V) \sqrt{h}U(s)u_{G,0}^h~ds\,,\]
the Proposition~\ref{pr:expaLp} and the retarded Strichartz
estimate~\eqref{eq:homStri} give
\begin{align*}
\sqrt{T_\alpha-\tau} &\| e^{\alpha N}f^{h}_{2,1}\|_{L^2_t(I^h_\tau;L^2_{z}L^{2}_{y_G})}\\
&\lesssim
\sqrt{T_\alpha-\tau} \frac{C_{V}e^{\alpha_1}}{\sqrt{\alpha_1 - \alpha}} 
\left\|\int_{0}^{t} U(t)U(s)^{*}e^{\frac{\alpha+\alpha_{1}}{2}N}a_{G}^{*}(V)
\sqrt{h} U(s) u_{G,0}^h~ds \right\|_{L^2_{z_{1}}L^2_t(I^h_\tau;L^{r_\sigma}_{y_G})}\\
&\lesssim
\sqrt{T_\alpha-\tau} \frac{C_{V}e^{\alpha_1}}{\sqrt{\alpha_1 - \alpha}} 
\left\|e^{\frac{\alpha+\alpha_{1}}{2}N}a_{G}^{*}(V)
\sqrt{h} U(s) u_{G,0}^h \right\|_{L^2_{z_{1}}L^2_s(I^h_\tau;L^{r'_\sigma}_{y_G})}\,,
\end{align*}
where here $r'_{\sigma}=\frac{2d}{d+2}$ and $r_{\sigma}=\frac{2d}{d-2}$\,.\\
Then using Proposition~\ref{pr:expaLp} again, the square integrability
of 1 on $I^h_\tau$ and 
the boundedness of $U(s)$ in the $L^2$ norm,
\begin{align*}
\sqrt{T_\alpha-\tau} \| e^{\alpha N}f^{h}_{2,1}\|_{L^2_t(I^h_\tau;L^2_{z}L^{2}_{y_G})}
&\lesssim
\sqrt{T_\alpha-\tau} \frac{C_{V}^2 e^{2\alpha_1}}{\alpha_1 - \alpha} 
\left\|e^{\alpha_{1}N}
\sqrt{h} U(s) u_{G,0}^h \right\|_{L^2_{z}L^2_s(I^h_\tau;L^{2}_{y_G})}\\
&\lesssim
C_{V}^2 e^{2\alpha_1}\gamma \frac{\sqrt{T_\alpha-\tau}\sqrt{\tau}}{T_\alpha} 
\left\|e^{\alpha_{1}N} U(s) u_{G,0}^h \right\|_{L^\infty_s(I^h_\tau;L^{2}_{z}L^{2}_{y_G})}\\
&\lesssim
C_{V}^2 e^{2\alpha_1}\gamma  
\left\|e^{\alpha_{1}N} u_{G,0}^h \right\|_{L^{2}_{z}L^{2}_{y_G}}
\end{align*}
By taking the supremum w.r.t.~$\alpha\in [-\alpha_{1},\alpha_{1}[$ and
dividing by $C_{V}e^{\alpha_{1}}\gamma^{1/2}/2$ we obtain
\begin{equation}
  \label{eq:estimgh}
M_2(f^{h}_{2,1})\lesssim C_{V}e^{\alpha_{1}}C_{\alpha_{1}}\gamma^{1/2}\,.
\end{equation}

It remains to control 
$$
f_{2,2}^h (t) = a_G(V)U(t)u^h_{G,0}.
$$
 For
$-\alpha_1 \leq \alpha < \alpha_1$ and $0\leq \tau < T_\alpha$,
 Proposition~\ref{pr:expaLp} and the homogeneous Strichartz estimate~\eqref{eq:homStri} yield
\begin{align}
\sqrt{T_\alpha-\tau} &\|e^{\alpha N}a_G(V) U(t)u^h_{G,0}\|_{L^2(I^h_\tau;L^2_{z}L^{2}_{y_G})} \nonumber \\
&\lesssim 
\sqrt{T_\alpha-\tau} \frac{C_{V} \, e^{\alpha_1}}{\sqrt{\alpha_1-\alpha}}\| U(t)e^{\alpha_1 N}u^h_{G,0}\|_{L^2_{z_{1}}L^2(I^h_\tau;L^{r_\sigma}_{y_G})} \nonumber \\
&\lesssim C_{V} \, e^{\alpha_1} \, \sqrt{\gamma}
  \|e^{\alpha_1 N} u^h_{G,0}\|_{L^2_{z}L^{2}_{y_G}} \label{eq:aG(v)U(t)u0}\,.
\end{align}
Taking the supremum over $\tau\in [0,T_{\alpha}[$\,, $\alpha\in
[-\alpha,\alpha_{1}[$ and dividing by
$C_{V}e^{\alpha_{1}}\gamma^{1/2}/2$ gives
$$
M(0,f_{2,2}^{h},0)\lesssim C_{\alpha_{1}}
$$
It can be improved by rewriting the system
$$
\begin{pmatrix}
  u^{h}_{\infty}\\
u^{h}_{2}\\
u^{h}_{1}
\end{pmatrix}
=(\mathrm{Id}-L)^{-1}
\begin{pmatrix}
  f^{h}_{\infty}\\
f^{h}_{2,1}+f^{h}_{2,2}\\
0
\end{pmatrix}
=
\begin{pmatrix}
 0\\
f_{2,2}^{h}\\
0
\end{pmatrix}
+(\mathrm{Id}-L)^{-1}
\begin{pmatrix}
  f^{h}_{\infty}\\
f^{h}_{2,1}\\
0
\end{pmatrix}
+(\mathrm{Id}-L)^{-1}L
\begin{pmatrix}
  0\\f^{h}_{2,2}\\
0
\end{pmatrix}
$$
from which we deduce
$$
M(u^{h}_{\infty},u_{2}^{h}-f^{h}_{2,2},u_{1}^{h})\lesssim
C_{V}e^{\alpha_{1}}\sqrt{\gamma}\left[M(f_{\infty}^{h},f_{2,1}^{h},0)+M(0,f_{2,2}^{h},0)\right] \,.
$$
The inequalities \eqref{eq:estimfh}, \eqref{eq:estimgh} and \eqref{eq:aG(v)U(t)u0} prove that
$(f_\infty^{h},f_2^{h},0)\in \mathcal{E}_{-\alpha_{1},\alpha_{1},\gamma}^{h}$ and thus
$$
M(u_\infty^{h},u_2^{h}-f_{2,2}^{h},u_1^{h})\leq 2M(f_\infty^{h},f_2^{h},0)\lesssim C_{V}e^{\alpha_{1}}C_{\alpha_{1}}\gamma^{1/2}\,.
$$
By possibly enlarging the constant $C_{d}>0$\,, the above inequality
becomes
$$
M(u_\infty^{h},u_2^{h}-f_{2,2}^{h},u_1^{h})\leq C_{d}C_{V}C_{\alpha_{1}}e^{\alpha_{1}}\gamma^{1/2}\,.
$$  
We have finished the proof as soon as we can identify 
$$
u^{h}_\infty(t)=u_{G}^{h}(t)-U(t)u_{G,0}^{h}
$$
for $t\in I_{T_{\alpha}}^{h}$ and $\alpha\in [0,\alpha_{1}[$\,.
For $t\in I_{T_{0}}^{h}$\,, the function $u_G^h(t) = u^{h}_\infty(t)+U(t)u_{G,0}^{h}$
belongs to $\mathcal{C}^{0}(I_{T_{0}}^{h};L^{2}_{z}L^{2}_{y_{G}})$
and satisfies \eqref{eq:uGmild} which is equivalent to
\eqref{eq:dynaleauG}. 
By the existence and uniqueness for \eqref{eq:uGmild} or
\eqref{eq:dynaleauG} in $\mathcal{C}^{0}(I_{T_{0}}^{h};L^{2}_{z}L^{2}_{y_{G}})$ when $V\in  H^{2}(\rz^{d};\rz)$\,,
$u_{G}^{h}$ is the unique solution to \eqref{eq:uGmild} or
\eqref{eq:dynaleauG} in $\mathcal{C}^{0}(I_{T_{0}}^{h};L^{2}_{z}L^{2}_{y_{G}})$\,.
\end{proof}

\subsection{Consequences of Theorem~\ref{th:applStri}}
\label{sec:corollaries}

Let us work now with a general initial time $t_{0}$\,, specified later, and consider \eqref{eq:dynaleauG}
\begin{equation}
   \label{eq:dynaleauGt0}
 \begin{cases}
       i\partial_{t}u_{G}^{h} = (\xi-D_{y_{G}})^{2}u_{G}^{h}+\sqrt{h}[a^{*}_{G}(V)+a_{G}(V)]u_{G}^{h}\,,\\
 u_{G}^{h}(t=t_{0})=u_{G,t_{0}}^{h}\,,
\end{cases}
 \end{equation}
with the solution 
$u_{G}^{h}(t)=u^{h}_{G}(t'+t_{0})=U(t')u^{h}_{G,t_{0}}+u^{h}_{\infty}$ in the framework of
Theorem~\ref{th:applStri}. For simplicity and because we work
definitely in the framework of \eqref{eq:dynaleauGt0} we use here
$U(t)U(s)^{*}=U(t-s)$\,.
 Remember that
$(u_{\infty}^{h},u_{2}^{h},u_{1}^{h})$ solves 
\begin{equation}
\label{eq:systt0}
\begin{pmatrix}
  u_{\infty}^{h}\\
u_{2}^{h}\\
u_{1}^{h}
\end{pmatrix}
=\underbrace{
\begin{pmatrix}
  L_{\infty\,\infty}&L_{\infty 2}&L_{\infty 1}\\
0&L_{22}&0\\
L_{1\infty}&0&L_{11}
\end{pmatrix}}_{=L}
\begin{pmatrix}
  u_{\infty}^{h}\\
u_{2}^{h}\\
u_{1}^{h}
\end{pmatrix}
+
\begin{pmatrix}
  f_{\infty}^{h}\\
f_{2}^{h}\\
0
\end{pmatrix}
\end{equation}
with
\begin{align*}
L_{\infty\,\infty}(\varphi)(t') &=-i\int_{0}^{t'}U(t'-s)\sqrt{h}a_{G}^{*}(V)\varphi(s)~ds\,,\\
L_{\infty
   1}(\varphi)(t') &= -i\int_{0}^{t'}U(t'-s)\varphi(s)~ds\,, \quad
   L_{\infty 2}(\varphi)(t')=\sqrt{h}L_{\infty 1}(\varphi)(t')\,,\\
L_{qq}(\varphi)(t') &= -i\int_{0}^{t'}a_{G}(V)U(t'-s)\sqrt{h}\varphi(s)~ds\,,\quad q\in\{2,1\}\,, \\
L_{1\infty}(\varphi)(t') &= -ih\int_{0}^{t'}a_{G}(V)U(t'-s)a_{G}^{*}(V)\varphi(s)~ds\,,
\end{align*}
and
\begin{align*}
   f_{\infty}^{h}(t')&= -i\int_{0}^{t'}U(t'-s)a_{G}^{*}(V)\sqrt{h}U(s)u_{G,t_{0}}^{h}~ds\,,\\
f_{2}^{h}(t')&=
\underbrace{-ia_{G}(V)\int_{0}^{t'}U(t'-s)a_{G}^{*}(V)\sqrt{h}U(s)u_{G,t_{0}}^{h}~ds}_{f^{h}_{2,1}(t')}+
\underbrace{a_{G}(V)U(t')u^{h}_{G,t_{0}}}_{f^{h}_{2,2}(t')}\,.
\end{align*}
Theorem~\ref{th:applStri} provides a framework in which $L$ is a
contraction and we will use it twice while inverting
$$
\begin{pmatrix}
  u_{\infty}^{h}\\
u_{2}^{h}\\
u_{1}^{h}
\end{pmatrix}
=(\mathrm{Id}-L)^{-1}
\begin{pmatrix}
  f_{\infty}^{h}\\
f_{2}^{h}\\
0
\end{pmatrix}
=\begin{pmatrix}
  0\\
f_{2,2}^{h}\\
0
\end{pmatrix}
+
(\mathrm{Id}-L)^{-1}
\begin{pmatrix}
  f_{\infty}^{h}\\
f_{2,1}^{h}\\
0
\end{pmatrix}
+
(\mathrm{Id}-L)^{-1}L
\begin{pmatrix}
  0\\
f_{2,2}^{h}\\
0
\end{pmatrix}
$$
and then using the Neumann expansion
$(\mathrm{Id}-L)^{-1}=\sum_{k=0}^{\infty}L^{k}$ for different values
of $t_{0}$ and of the parameter $\gamma$ in Theorem~\ref{th:applStri}.
The following result is an easy consequence of
Theorem~\ref{th:applStri}.
\begin{proposition}
\label{pr:applStri} Assume that the initial datum $u_{G,0}^{h}$ for
$t_{0}=0$ in \eqref{eq:dynaleauGt0} satisfies the uniform bound
$\|e^{2\alpha_{1}N}u^{h}_{G,0}\|_{L^{2}_{z}L^{2}_{y_{G}}}\leq
C_{\alpha_{1}}$ for all $h\in ]0,h_{0}[$\,. Then there exists
$\hat{T}_{\alpha_{1}}>0$ and $\tilde{C}_{\alpha_{1}}>0$\,, $\delta_{\alpha_{1}}>0$\,, such that
\begin{description}
\item[a)] The following weighted estimate
$$
\|e^{\alpha_{1}N}u_{G}^{h}(t)\|_{L^{2}_{z}L^{2}_{y_{G}}}\leq
\tilde{C}_{\alpha_{1}}
$$
holds true for all $t\in
I_{\hat{T}_{\alpha_{1}}}^{h}=]-\frac{\hat{T}_{\alpha_{1}}}{h},\frac{\hat{T}_{\alpha_{1}}}{h}[$
and all $h\in ]0,h_{0}[$\,.
\item[b)] For $t_{0}\in I_{\hat{T}_{\alpha_{1}}}^{h}$\,,
  $u_{G}^{h}(t_{0}+\delta/h)$
  admits in $e^{-\frac{\alpha_{1}}{2}N}L^{2}_{z}L^{2}_{y_{G}}$ 
the following asymptotic expansion in terms of $\delta\in [-\delta_{1},\delta_{1}]$\,,
  \begin{multline*}
u_{G}^{h}(t_{0}+\delta/h)=
\underbrace{U(\delta/h)u^{h}_{G}(t_{0})}_{\mathcal{O}(1)}
-i\underbrace{\sqrt{h}\int_{0}^{\delta/h}U(\delta/h-s)[a_{G}^{*}(V)+a_{G}(V)]U(s)u_{G}^{h}(t_{0})~ds}_{\mathcal{O}(|\delta|^{1/2})}\\
-\underbrace{h\int_{0}^{\delta/h}\int_{0}^{s}U(\delta/h-s)[a_{G}^{*}(V)+a_{G}(V)]U(s-s')[a_{G}^{*}(V)+a_{G}(V)]U(s')u_{G}^{h}(t_{0})~ds'~ds}_{\mathcal{O}(|\delta|)}
\\
\hspace{8cm}
+\mathcal{O}(|\delta|^{3/2})
  \end{multline*}
where  $v(h,\delta)=\mathcal{O}(|\delta|^{k/2})$\,, $k=0,1,2,3$\,, means
$\|e^{\frac{\alpha_{1}}{2}N}v(h,\delta)\|_{L^{2}_{z}L^{2}_{y_{G}}}\leq \tilde{C}_{\alpha_{1}}|\delta|^{k/2}$ uniformly with respect to
$h\in ]0,h_{0}[$ and $t_{0}\in I_{\hat{T}_{\alpha_{1}}}^{h}$\,.
\end{description}
\end{proposition}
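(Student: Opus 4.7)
For part~(a), I would apply Theorem~\ref{th:applStri} with the theorem's parameter $\alpha_1$ replaced by our $2\alpha_1$, which the hypothesis $\|e^{2\alpha_1 N}u^h_{G,0}\|\le C_{\alpha_1}$ allows. Choosing $\gamma_0>0$ so that the contraction condition $C_d e^{2\alpha_1}C_V\gamma_0^{1/2}\le 1$ holds, the theorem yields, for $\alpha=\alpha_1$ and $|ht|<T_{\alpha_1}=\gamma_0\alpha_1$,
\[
\|e^{\alpha_1 N}[u_G^h(t)-U(t)u^h_{G,0}]\|_{L^2_zL^2_{y_G}}\le C_dC_Ve^{2\alpha_1}C_{\alpha_1}\gamma_0^{1/2}\frac{|ht|^{1/2}}{(\gamma_0\alpha_1-|ht|)^{1/2}}.
\]
Restricting to $|ht|\le\hat T_{\alpha_1}:=\gamma_0\alpha_1/2$ bounds the right-hand side uniformly in $(t,h)$, and combining with $\|e^{\alpha_1 N}U(t)u^h_{G,0}\|=\|e^{\alpha_1 N}u^h_{G,0}\|\le C_{\alpha_1}$ (since $U(t)$ preserves~$N$) produces the announced uniform bound~$\tilde C_{\alpha_1}$.

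For part~(b), I would iterate the previous step. Fixing $t_0\in I^h_{\hat T_{\alpha_1}}$, part~(a) gives $\|e^{\alpha_1 N}u_G^h(t_0)\|\le\tilde C_{\alpha_1}$, so Theorem~\ref{th:applStri} applies again with initial time $t_0$, datum $u_G^h(t_0)$, and weight~$\alpha_1$, producing a parameter $\gamma_1>0$ for which the system~\eqref{eq:systt0} is well posed in $\mathcal E^h_{-\alpha_1,\alpha_1,\gamma_1}$ with $u_G^h(t_0+t')=U(t')u_G^h(t_0)+u_\infty^h(t')$. Selecting $\alpha=\alpha_1/2$ gives $T_{\alpha_1/2}=\gamma_1\alpha_1/2$, and the theorem specializes to
\[
\|e^{(\alpha_1/2)N}u_\infty^h(t')\|_{L^2_zL^2_{y_G}}\lesssim|ht'|^{1/2}\qquad\text{for }|ht'|\le\delta_{\alpha_1}:=\gamma_1\alpha_1/4.
\]

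The expansion of (b) arises by iterating Duhamel three times: substituting $u_G^h(t_0+s)=U(s)u_G^h(t_0)-i\sqrt h\int_0^s U(s-\sigma)[a_G^*(V)+a_G(V)]u_G^h(t_0+\sigma)\,d\sigma$ recursively into the Duhamel representation of $u_G^h(t_0+t')$ produces precisely the zeroth, first and second order terms of~(b), together with the cubic remainder
\[
\Phi_3(t')=(-i\sqrt h)^3\!\!\!\int\limits_{0<s_1<s_2<s_3<t'}\!\!\!U(t'-s_3)[a_G^*+a_G]U(s_3-s_2)[a_G^*+a_G]U(s_2-s_1)[a_G^*+a_G]\,u_G^h(t_0+s_1)\,ds_1\,ds_2\,ds_3.
\]
The term $U(t')u_G^h(t_0)$ is $O(1)$ by the $N$-commutation; the first order term, which is $f_\infty^h$ plus its $a_G(V)$ analogue, is bounded by $O(|\delta|^{1/2})$ using Proposition~\ref{pr:expaLp} and the retarded endpoint Strichartz estimate~\eqref{eq:retardStri}, exactly as in the bounds~\eqref{eq:estimfh} and~\eqref{eq:aG(v)U(t)u0}; and the second order term is $O(|\delta|)$ by the same tools applied twice, in the spirit of the bound~\eqref{eq:estimgh} on $f_{2,1}^h$.

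The final and main technical step is $\|e^{(\alpha_1/2)N}\Phi_3(t')\|_{L^2_zL^2_{y_G}}=O(|\delta|^{3/2})$. I would split $u_G^h(t_0+s_1)=U(s_1)u_G^h(t_0)+u_\infty^h(s_1)$: the contribution with $u_G^h(t_0)$ is a triple Dyson term handled by three iterations of the Proposition~\ref{pr:expaLp}--Strichartz combination, each supplying a gain $|\delta|^{1/2}$; the contribution with $u_\infty^h(s_1)$ uses the weight--$\alpha_1/2$ bound $\|e^{(\alpha_1/2)N}u_\infty^h(s_1)\|\lesssim|hs_1|^{1/2}$ just established, together with two further Strichartz/creation--annihilation estimates. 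The chief obstacle lies in the bookkeeping of weights through the three $[a_G^*+a_G]$ insertions: each factor must be sandwiched between compatible weights $e^{\alpha'N}$, $e^{-\alpha''N}$ so that Proposition~\ref{pr:expaLp} delivers a finite constant, while the outermost weight remains $e^{(\alpha_1/2)N}$; this mirrors the weight bookkeeping of $L_{1\infty}$ in the proof of Proposition~\ref{pr:contracStri}, adapted to the fixed time interval~$[-\delta_{\alpha_1},\delta_{\alpha_1}]$.
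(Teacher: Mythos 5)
Part (a) matches the paper's proof word for word: replace $\alpha_1$ by $2\alpha_1$ in Theorem~\ref{th:applStri}, take $\alpha = \alpha_1$, set $\hat{T}_{\alpha_1} = T_{\alpha_1}/2$, and add the free-flow term.

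Part (b) is where your route diverges, and the divergence hides a genuine gap. The paper never looks at the cubic Dyson remainder $\Phi_3$; it re-invokes Theorem~\ref{th:applStri} at time $t_0$ with initial datum $u^h_G(t_0)$, and inside the contraction space $\mathcal{E}^h_{-\alpha_1,\alpha_1,\gamma}$ it simply expands $(\mathrm{Id}-L)^{-1} = \mathrm{Id} + L + (\mathrm{Id}-L)^{-1}L^2$. The identity line picks out $U(\delta/h)u^h_G(t_0)$, the first order in $L$ reproduces the single Duhamel integral, the second order in $L$ reproduces the double Duhamel integral, and since $\|L\| \lesssim \gamma^{1/2}$ and the forcing terms $(f^h_\infty, f^h_{2,1}, 0)$, $L(0, f^h_{2,2}, 0)$ are $\mathcal{O}(\gamma^{1/2})$ in the $M$-norm, the Neumann tail is $\mathcal{O}(\gamma^{3/2})$ with $\gamma \sim \delta$. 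Every term kept in the expansion ends on the \emph{free} flow $U(s')u^h_G(t_0)$, which is exactly what the homogeneous Strichartz estimate can digest.

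Your $\Phi_3$, by contrast, ends on the \emph{full} solution $u^h_G(t_0+s_1)$ under a factor $[a^*_G(V)+a_G(V)]$. The $a^*_G$ branch is harmless since the creation operator maps $L^2_{y_G}$ into $L^{r'_\sigma}_{y_G}$ by Propositions~\ref{pr:Lpagstag}--\ref{pr:expaLp}. The $a_G$ branch is not. Those propositions bound $a_G(V)$ from $L^q_{y_G}$ with $q \geq p \geq 2$ into $L^p_{y_G}$ with $\frac{1}{r'} = \frac{1}{2}+\frac{1}{q'}-\frac{1}{p'}$, and the only way to have input in $L^2_{y_G}$ is $q=p=2$, which forces $r'=2$, i.e.\ $V\in L^2$, not $V\in L^{r'_\sigma}=L^{2d/(d+2)}$. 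But $u^h_G(t_0+s_1)$, and its perturbative piece $u^h_\infty(s_1)$ after the splitting you propose, are controlled only in (weighted) $L^\infty_t L^2_{y_G}$; neither sits in a Strichartz space $L^2_t L^{r_\sigma}_{y_G}$ that $a_G(V)$ could accept, nor in an $L^1_t$ or $L^2_t$ space which the retarded estimate could then upgrade, and Remark~\ref{re:Brasc} explicitly notes that one cannot lower the output exponent of $a_G(V)$ below $2$. This is precisely the obstruction that the triple $(u_\infty,u_2,u_1)$ of Definition~\ref{de:E0Talpha} exists to remove: it carries $u^h_1 + \sqrt{h}\,u^h_2 = \sqrt{h}\,a_G(V)u^h_G$ as an unknown controlled in weighted $L^1_t$ and $L^2_t$, so $a_G(V)$ is never applied to a bare $L^\infty_t L^2$ object. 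Repairing your estimate of $\Phi_3$ honestly would require re-introducing exactly these auxiliary unknowns and their $M_1$, $M_2$ norms, which amounts to re-running the paper's Neumann-series argument.
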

\begin{proof}
\textbf{a)} Fix $\alpha_{1}>0$ and apply Theorem~\ref{th:applStri} with
$\alpha_{1}$ replaced by $2\alpha_{1}$\,. There exists
$\gamma=\gamma_{1}>0$\,, determined by $\alpha_{1}$\,, $C_{12}(V)$ and the dimension
$d\geq 3$\,, such that the operator $L$ is a contraction in
$\mathcal{E}^h_{-2\alpha_{1},2\alpha_{1},\gamma_{1}}$\,. The
system~\eqref{eq:systt0} for $t_{0}=0$ admits a unique solution with the norm $M$ in
$\mathcal{E}^h_{-2\alpha_{1},2\alpha_{1},\gamma_{1}}$ estimated by
\begin{equation}
  \label{eq:estimsyst}
M(u^{h}_{\infty},u^{h}_{2},u^{h}_{1})\lesssim C_{\alpha_{1}}
\end{equation}
and the solution $u_{G}^{h}$ to \eqref{eq:dynalea} equals
$$
u_{G}^{h}(t)=U(t)u^{h}_{G,0}+u_{\infty}^{h}(t)\,.
$$
With
$T_{\alpha_{1}}=\gamma_{1}(2\alpha_{1}-\alpha_{1})=\gamma_{1}\alpha_{1}$\,,
the estimate \eqref{eq:estimsyst} says in particular
$$
\forall t\in I^{h}_{T_{\alpha_{1}}}\,,\quad
\|e^{\alpha_{1}N}u_{\infty}^{h}(t)\|_{L^{2}_{z}L^{2}_{y_{G}}}\lesssim C_{\alpha_{1}}\frac{\sqrt{|ht|}}{\sqrt{T_{\alpha_{1}}-|ht|}}\,.
$$
By taking $\hat{T}_{\alpha_{1}}=\frac{T_{\alpha_{1}}}{2}$ with
$|ht|\leq \frac{T_{\alpha_{1}}}{2}$ when $t\in
I_{\hat{T}_{\alpha_{1}}^{h}}$ and with 
$$
\|e^{\alpha_{1}N}U(t)u^{h}_{G,0}\|_{L^{2}_{z}L^{2}_{y_{G}}}\leq
\|e^{\alpha_{1}}u^{h}_{G,0}\|_{L^{2}_{z}L^{2}_{y_{G}}}\leq C_{\alpha_{1}}\,,
$$
we finally obtain
$$
\forall t\in I_{\hat{T}_{\alpha_{1}}}^{h}\,,\quad
\|e^{\alpha_{1}N}u^{h}_{G}(t)\|_{L^{2}_{z}L^{2}_{y_{G}}}\lesssim \tilde{C}_{\alpha_{1}}\,,
$$
for $\tilde{C}_{\alpha_{1}}$ large enough.\\
\noindent\textbf{b)} With \textbf{a)} the initial datum
$u^{h}_{G,t_{0}}=u^{h}_{G}(t_{0})$ of \eqref{eq:dynaleauGt0} fulfils
the assumptions of Theorem~4.1 after time translation
$t'=t-t_{0}$ and where $t'\in I_{T}^{h}$ means $t\in t_{0}+I_{T}^{h}$ \,. For any  $\gamma>0$ small enough and by setting
$T_{\alpha}=\gamma(\alpha_{1}-\alpha)$ for $\alpha\in [0,\alpha_{1}]$
we know that the system \eqref{eq:systt0} satisfies
$$
\|L\|_{\mathcal{L}(\mathcal{E}^h_{-\alpha_{1},\alpha_{1},\gamma})}\lesssim
\gamma^{1/2}\quad,\quad M(f^{h}_{\infty},f^{h}_{2,1},0)
\lesssim C_{\alpha_{1}}\gamma^{1/2}\quad,\quad
M(0,f^{h}_{2,2},0)\lesssim C_{\alpha_{1}}\,,
$$
while $u_{G}^{h}(t'+t_{0})=U(t')u^{h}_{G}(t_{0})+u^{h}_{\infty}(t')$ for
$t'\in I^{h}_{T_{\alpha}}$\,.\\
In particular 
$$
\begin{pmatrix}
  u_{\infty}^{h}\\
u_{2}^{h}\\
u_{1}^{h}
\end{pmatrix}
=
\begin{pmatrix}
  0\\
f^{h}_{2,2}\\
0
\end{pmatrix}
+
(\mathrm{Id}-L)^{-1}
\begin{pmatrix}
  f^{h}_{\infty}\\
f^{h}_{2,1}\\
0
\end{pmatrix}
+(\mathrm{Id}-L)^{-1}
L
\begin{pmatrix}
  0\\
f^{h}_{2,2}\\
0
\end{pmatrix}
$$
leads to
$$
\begin{pmatrix}
  u_{\infty}^{h}\\
u_{2}^{h}\\
u_{1}^{h}
\end{pmatrix}
=
\begin{pmatrix}
  f^{h}_{\infty}+L_{\infty
    2}(f^{h}_{2,2})+L_{\infty\,\infty}(f^{h}_{\infty}+L_{\infty
    2}(f^{h}_{2,2}))+L_{\infty 2}(f^{h}_{2,1}+L_{22}(f^{h}_{2,2}))\\
f^{h}_{2,2}+f^{h}_{2,1}+L_{22}(f^{h}_{2,2})+
L_{22}(f^{h}_{2,1}+L_{22}(f^{h}_{2,2}))\\
L_{1\infty}(f^{h}_{\infty}+L_{\infty 2}(f^{h}_{2,2}))
\end{pmatrix}
+\mathcal{O}(\gamma^{3/2})
$$
in $\mathcal{E}^h_{-\alpha_{1},\alpha_{1},\gamma}$\,.
By using the first line with $\alpha=\frac{\alpha_{1}}{2}$\,, and by setting
\begin{multline*}
v^{h}(t')=
U(t')u^{h}_{G}(t_{0})
+[f^{h}_{\infty}(t')+L_{\infty2}(f^{h}_{2,2})](t')\\
+L_{\infty\,\infty}[f^{h}_{\infty}+L_{\infty2}(f^{h}_{2,2})](t')
+L_{\infty
  2}[f^{h}_{2,1}+L_{22}(f^{h}_{2,2})](t')
\end{multline*}
the difference $u_{G}^{h}(t_{0}+t')-v^{h}(t')$ satisfies
satisfies
$$
\forall t'\in I^{h}_{T_{\frac{\alpha_{1}}{2}}}\,,\quad \|e^{\frac{\alpha_{1}}{2}N}[u^{h}_{G}(t_{0}+t')-v^{h}(t')]\|_{L^{2}_{z}L^{2}_{y_{G}}}\lesssim \gamma^{3/2}\frac{\sqrt{|ht'|}}{\sqrt{T_{0}-|ht'|}}\,,
$$
where $T_{\frac{\alpha_{1}}{2}}=\frac{\gamma\alpha_{1}}{2}$\,. 
For 
$\delta=\pm \frac{T_{\frac{\alpha_{1}}{2}}}{2}=\pm\frac{\gamma\alpha_{1}}{4}$ we obtain
$$
\|e^{\frac{\alpha_{1}}{2}N}[u^{h}_{G}(t_{0}+\delta/h)-v^{h}(\delta/h)]\|_{L^{2}_{z}L^{2}_{y_{G}}}
=\mathcal{O}(|\delta|^{3/2})\,.
$$
It now suffices to specify all the terms of $v^{h}(\delta/h)$:
\begin{itemize}
\item The first one is nothing but $U(\delta/h)u^{h}_{G}(t_{0})$ with an
  $\mathcal{O}(1)$-norm.
\item The second term
$$
f_{\infty}^{h}(\delta/h)+L_{\infty 2}(f^{h}_{2,2})(\delta/h)
=-i\int_{0}^{\delta/h}U(\delta/h-s)\sqrt{h}[a_{G}^{*}(V)+a_{G}(V)]U(s)u^{h}_{G}(t_{0})~ds
$$
has an $\mathcal{O}(\delta^{1/2})$-norm.
\item All the other terms have an $\mathcal{O}(\delta)$-norm and equal
  \begin{align*}
L_{\infty\,\infty}(f^{h}_{\infty})(\delta/h)&=-h\int_{0}^{\delta/h}\int_{0}^{s}U(\delta/h-s)a_{G}^{*}(V)U(s-s')a_{G}^{*}(V)U(s')u^{h}_{G}(t_{0})~ds'ds \,, \\
L_{\infty\,\infty}(L_{\infty 2}(f^{h}_{2,2}))(\delta/h)
&=
-h\int_{0}^{\delta/h}\int_{0}^{s}U(\delta/h-s)a_{G}^{*}(V)U(s-s')a_{G}(V)U(s')u^{h}_{G}(t_{0})~ds'ds \,, \\
L_{\infty 2}(f^{h}_{2,1})(\delta/h)&=
-h\int_{0}^{\delta/h}\int_{0}^{s}U(\delta/h-s)a_{G}(V)U(s-s')a_{G}^{*}(V)U(s')u^{h}_{G}(t_{0})~ds'ds \,, \\
L_{\infty 2}(L_{22}(f^{h}_{2,2}))(\delta/h)&=
-h\int_{0}^{\delta/h}\int_{0}^{s}U(\delta/h-s)a_{G}(V)U(s-s')a_{G}(V)U(s')u^{h}_{G}(t_{0})~ds'ds\,.
  \end{align*}
\end{itemize}
This ends the proof.
\end{proof}

\section{Semiclassical measures}
\label{sec:evolsemi}
We will check here that semiclassical (or Wigner) measures for our
model problem presented in Section~\ref{sec:ourproblem}
can be
defined simultaneously for all  macroscopic times $t\in
]-\hat{T}_{\alpha_{1}},\hat{T}_{\alpha_{1}}[$\,.\\

\subsection{Framework}
\label{sec:framsemi}
Below are reviewed a few properties of semiclassical measures or
Wigner measures. We
refer the reader e.g.  to
\cite{CdV}\cite{Ger}\cite{GMMP}\cite{HMR}\cite{LiPa}\cite{Sch} for
various presentations of  those now well known
objects.
\begin{description}
\item[a)] The Anti-Wick quantization on $\rz^{d}$ is defined by
$$
a^{\AWick}(hx,D_{x})=\int_{T^{*}\rz^{d}}a(X)~|\varphi_{X}^h\rangle\langle
\varphi_{X}^{h}|~\frac{dX}{(2\pi h)^{d}}
$$
is defined for any $a\in L^{\infty}(T^{*}\rz^{d},dx;\cz)$ with 
$$
\varphi_{X_{0}}^{h}(x)=\frac{h^{d/4}}{\pi^{d/4}}e^{i\xi_{0} \cdot (x-\frac{x_{0}}{2h})}e^{-\frac{h(x-\frac{x_{0}}{h})^{2}}{2}}\,, \quad
X_{0}=(x_{0},\xi_{0})\in T^{*}\rz^{d}\,.
$$
It is a non negative quantization for which
$$
(a\geq 0)\Rightarrow (a^{\AWick}(hx,D_{x})\geq 0)\quad
\text{and}\quad
\|a^{\AWick}(hx,D_{x})\|_{\mathcal{L}(L^{2}(\rz^{d},dx;\cz))}\leq \|a\|_{L^{\infty}}\,.
$$
A natural separable subspace  of $L^{\infty}(T^{*}\rz^{d};\cz)$ is
\begin{eqnarray*}
  &&
\mathcal{C}^{0}_{0}(T^{*}\rz^{d};\cz)=\left\{a\in
  \mathcal{C}^{0}(T^{*}\rz^{d};\cz)\,,
     \lim_{X\to\infty}a(X)=0\right\}\\
\text{resp.}&&
\mathcal{C}^{0}(T^{*}\rz^{d}\sqcup
\left\{\infty\right\};\cz)=\mathcal{C}^{0}_{0}(T^{*}\rz^{d};\cz)\oplus
\cz~1=\left\{a\in \mathcal{C}^{0}(T^{*}\rz^{d};\cz)\,, 
              \lim_{X\to \infty}a(X)\in \cz\right\}\,,
\end{eqnarray*}
endowed with the $\mathcal{C}^{0}$ norm, of which the dual is
the  space $\mathcal{M}_{b}(T^{*}\rz^{d};\cz)$
(resp. $\mathcal{M}_{b}(T^{*}\rz^{d}\sqcup
\left\{\infty\right\};\cz)$) of bounded
 Radon measures on $T^{*}\rz^{d}$
(resp. $T^{*}\rz^{d}\sqcup \left\{\infty\right\}$)\,.
\item[b)] For a bounded family $(\varrho_{h})_{h\in ]0,h_{0}[}$ of
  normal states  $\varrho_{h}\in \mathcal{L}^{1}(L^{2}(\rz^{d},dx;\cz))$\,,
  $\varrho_{h}\geq 0$\,, $\mathrm{Tr}[\varrho_{h}]=1$\,, the 
set of semiclassical measures on $T^{*}\rz^{d}$ (resp. $T^{*}\rz^{d}\sqcup
\left\{\infty\right\}$) is defined as the weak$^{*}$ limit point in
$\mathcal{M}_{b}(T^{*}\rz^{d};\rz_{+})$ (resp. $\mathcal{M}_{b}(T^{*}\rz^{d}\sqcup
\left\{\infty\right\};\rz_{+})$) of
$\frac{\sigma^{\Wick}(\varrho_{h})}{(2\pi h)^{d}}$ with
$$
\sigma^{\Wick}(\varrho_{h})(X)=\langle \varphi^{h}_{X}\,,\,
\varrho_{h}\varphi_{X}^{h}\rangle_{L^{2}(\rz^{d})}=\mathrm{Tr}\left[\varrho_{h}|\varphi_{X}^{h}\rangle
  \langle \varphi_{X}^{h}|\right]\,.
$$
This is extended by linearity for any bounded family 
$(\varrho_{h})_{h\in ]0,h_{0}[}$ in
$\mathcal{L}^{1}(L^{2}(\rz^{d},dx;\cz))$\,.\\
The set of semiclassical measures is denoted by
$$
\mathcal{M}(\varrho_{h},h\in ]0,h_{0}[)\,,
$$
and when $h$ is restricted to a set $\mathcal{E}\subset ]0,h_{0}[$\,,
$0\in \overline{\mathcal{E}}$\,, we use
$$
\mathcal{M}(\varrho_{h}, h\in \mathcal{E})\,.
$$
After recalling
$$
\int_{T^{*}\rz^{d}}a(X) \, \sigma^{\Wick}(\varrho_{h})(X) \, \frac{dX}{(2\pi
   h)^{d}}=\mathrm{Tr}\left[a^{\AWick}(hx,D_{x}) \, \varrho_{h}\right] \,,
$$
semiclassical measures
 $\mu\in \mathcal{M}(\varrho_{h}, h\in
]0,h_{0}[)$  are characterized by the existence of
 a sequence $(h_{k})_{k\in \nz^{*}}$\,,
$h_{k}\in \mathcal{E}$ such that
\begin{eqnarray*}
  && \lim_{k\to \infty}h_{k}=0 \,,  \\
&&
   \lim_{k\to\infty}\mathrm{Tr}\left[a^{\AWick}(h_{k}x,D_{x}) \, \varrho_{h_{k}}\right]=\int_{T^{*}\rz^{d}}a(X)~d\mu(X)\,, \quad
  \forall a\in \mathcal{D} \,, \\
&&\lim_{k\to\infty}\mathrm{Tr}\left[\varrho_{h_{k}}\right]=\mu(T^{*}\rz^{d}\sqcup
\left\{\infty\right\})=\mu(T^{*}\rz^{d})+\mu(\infty)\,,
\end{eqnarray*}
where $\mathcal{D}$ is any dense set of
$\mathcal{C}^{0}_{0}(T^{*}\rz^{d};\cz)$\,.
\item[d)] After choosing
  $\mathcal{D}=\mathcal{C}^{\infty}_{0}(T^{*}\rz^{d};\cz)$ and by
  recalling
  $\|a^{\AWick}(hx,D_{x})-a^{\Weyl}(hx,D_{x})\|=\mathcal{O}(h)$\,, for
  any $a\in S(1,dx^{2}+d\xi^{2})\supset \mathcal{C}^{\infty}_{0}(T^{*}\rz^{d};\cz)$\,,
  semiclassical measures are characterized by
$$
\forall a\in \mathcal{C}^{\infty}_{0}(T^{*}\rz^{d};\cz)\,,\quad
\lim_{k\to \infty}\mathrm{Tr}\left[a^{\Weyl}(h_{k}x,D_{x}) \, \varrho_{h_{k}}\right]=\int_{T^{*}\rz^{d}}a(X)~d\mu(X)\,,
$$
or 
$$
\forall P\in T^{*}\rz^{d}\,,\quad \lim_{k\to \infty}\mathrm{Tr}\left[\tau_{P}^{h_{k}}\varrho_{h_{k}}\right]=\int_{T^{*}\rz^{d}}e^{i(p_{\xi} \cdot x-p_{x} \cdot \xi)}~d\mu(x,\xi)\,,
$$
with 
$$
\tau_{P}^{h}=(e^{i(p_{\xi} \cdot x-p_{x} \cdot \xi)})^{\Weyl}(hx,D_{x})=
e^{i(p_{\xi} \cdot (hx)-p_{x} \cdot D_{x})} \,, \quad P=(p_{x},p_{\xi})\,.
$$
The compactification $T^{*}\rz^{d}\sqcup \left\{\infty\right\}$ is
just a way to count the mass of $(\varrho_{h_{k}})_{k\in\nz^{*}}$
which is not caught by the compactly supported obervables $a\in
\mathcal{C}^{\infty}_{0}(T^{*}\rz^{d};\cz)$\,.
\item[e)] Semiclassical measures are transformed by the dual action of
  semiclassical Fourier integral operator on $a^{\Weyl}(hx,D_{x})$\,,
  $a\in \mathcal{C}^{\infty}_{0}(T^{*}\rz^{d};\cz)$\,.
\item[f)] When $\mathcal{M}(\varrho_{h,1},h\in
  \mathcal{E})=\left\{\mu_{1}\right\}$ and
  $\mathcal{M}(\varrho_{h,2}, h\in
  \mathcal{E})=\left\{\mu_{2}\right\}$ the total variation between
  $\mu_{1}$ and $\mu_{2}$ is estimated by 
$$
|\mu_{2}-\mu_{1}|(\underbrace{T^{*}\rz^{d}}_{\text{or}~T^{*}\rz^{d}\sqcup
  \left\{\infty\right\}})\leq 4\liminf_{h\to
  0}\|\varrho_{h,1}-\varrho_{h,2}\|_{\mathcal{L}^{1}}\,.
$$
\item[g)] When $(\Lambda, d_{\Lambda})$ is a metric space
  and $(\varrho_{h}(\lambda))_{h\in ]0,h_{0}[, \lambda\in \Lambda}$ is
  a bounded family in $\mathcal{L}^{1}(L^{2}(\rz^{d},dx;\cz))$\,,
  semiclassical measures can be defined simultaneously for all $\lambda\in
  \Lambda$\,, if for any sequence $(h_{n})_{n\in\nz}$\,, $\lim_{n\to
    \infty}h_{n}=0^{+}$\,, there exists a subsequence
  $(h_{n_{k}})_{k\in \nz}$ such that
\begin{multline*}
    \forall \lambda\in \Lambda\,, \exists \mu_{\lambda}\in
\mathcal{M}_{b}(T^{*}\rz^{d}\sqcup\{\infty\})\,,\\
\lim_{k\to \infty}\mathrm{Tr}\left[ a^{\AWick}(h_{n_{k}}x,D_{x}) \,
  \varrho_{h_{n_{k}}}(\lambda)\right]=\int_{T^{*}\rz^{d}\sqcup \left\{\infty\right\}}a(X) \, d\mu_{\lambda}(X)\,.
\end{multline*}
By assuming $(\Lambda,d_{\Lambda})$ separable, sufficient conditions
for this property are either
\begin{itemize}
\item For all given $a\in
  \mathcal{C}^{\infty}_{0}(T^{*}\rz^{d};\cz)$\,,
  $\mathrm{Tr}\left[a^{\Weyl}(hx,D_{x}) \, \varrho_{h}(\lambda)\right]$
  is an equicontinuous family of continuous functions from $\Lambda$
  to $\cz$\,, or
\item The map $(P,\lambda)\mapsto \mathrm{Tr}\left[\tau_{P}^{h} \, \varrho_{h}(\lambda)\right]$ is
  an equicontinuous family of continuous functions from
  $T^{*}\rz^{d}\times \Lambda$ to $\cz$\,.
\end{itemize}
For the first characterization, apply a diagonal extraction process for a dense countable subset of
$(\Lambda,d_{\Lambda})$ (and a dense countable subset of
$\mathcal{C}^{0}_{0}(T^{*}\rz^{d})$ lying in
$\mathcal{C}^{\infty}_{0}(T^{*}\rz^{d};\cz)$) and then apply the
various characterisations of elements of
$\mathcal{M}(\varrho_{h}(\lambda)\,, h\in \mathcal{E})$\,.
\end{description}
Like in our problem, semiclassical measures can be defined for
bounded families $\varrho_{h}\in \mathcal{L}^{1}(L^{2}(\rz^{d}\times
Z', dx\otimes \mathbf{dz'};\cz))$ after using observables
$a^{\Weyl}(hx,D_{x})\otimes
\mathrm{Id}_{L^{2}_{z}}$\,.\\
When $(\varrho_{h})_{h\in ]0,h_{0}[}$ is a family of states,
$\varrho_{h}\geq 0$ and $\mathrm{Tr}[\varrho_{h}]=1$\,, the
relationship with the study of pure states can be done in two ways:
\begin{itemize}
\item Firstly by writing a general state as a convex combination of
  pure states, provided that this convex decomposition is explicit
  enough to follow the behaviour as $h\to 0^{+}$\,.
\item Secondly by writing
  $\varrho_{h}=\varrho_{h}^{1/2}\varrho_{h}^{1/2}$ and taking
  $\Psi_{h}=\varrho_{h}^{1/2}\in \mathcal{L}^{2}(L^{2}(\rz^{d}\times
  Z', dx\otimes \mathbf{dz'};\cz))\sim L^{2}(\rz^d \times Z'\times \hat{Z},
  dx\otimes \mathbf{dz'}\otimes \mathbf{d\hat{z}};\cz)$ where $\hat{Z}$
  is another copy of $\rz^{d}\times Z'$ with
  $\mathbf{d\hat{z}}=dx\otimes \mathbf{dz'}$\,. Then
$$
\mathrm{Tr}\left[(a^{\Weyl}(hx,D_{x})\otimes
  \mathrm{Id}_{L^{2}_{z'}})\varrho_{h}\right]=\langle \Psi_{h}\,,
(a^{\Weyl}(hx,D_{x})\otimes \mathrm{Id}_{L^{2}_{z',\hat{z}}})\Psi_{h}\rangle\,.
$$
\end{itemize}
\subsection{Equicontinuity} 
\label{sec:equicont}
The following result, which is the first useful information about
semiclassical measures, before computing them, comes from the
equicontinuity directly deduced from
Proposition~\ref{pr:applStri}. The unitary transforms introduced in
Section~\ref{sec:ourproblem} and Section~\ref{sec:centermass} in order
to transform \eqref{eq:dynalea} into \eqref{eq:dynaleauG} and
$a^{\Weyl}(hx,D_{x})\otimes \mathrm{Id}$ into $a^{\Weyl}(-hD_\xi,
\xi-D_{y_{G}})$ are not recalled here and the results are directly
formulated for the initial problem \eqref{eq:dynalea} and the
semiclassical observables $a^{\Weyl}(hx,D_{x})\otimes \mathrm{Id}$\,.
\begin{proposition}
\label{pr:equicont}
  Assume
$$
V\in L^{r'_{\sigma}}(\rz^{d},dx;\rz)\cap H^{2}(\rz^{d};\rz)\,,\quad
r'_{\sigma}=\frac{2d}{d+2}\,,\quad d\geq 3\,,
$$
and let $U_{\mathcal{V}}(t)=e^{-it(-\Delta_{x}+\sqrt{h}\mathcal{V})}$
like in Subsection~\ref{sec:ourproblem}.\\
Assume that there exists $\alpha_{1}>0$ such that $\varrho_{h}(0)\in
\mathcal{L}^{1}(L^{2}(\rz^{d}\times \Omega ,dx\otimes
\mathcal{G};\cz))$\,, $\varrho_{h}(0)\geq 0$\,,
$\mathrm{Tr}[\varrho_{h}(0)]=1$ satisfies
$$
\exists C_{\alpha_{1}}>0\,,\, 
\forall h\in ]0,h_{0}[\,, \quad
\mathrm{Tr}\left[e^{\alpha_{1}N}\varrho_{h}(0)e^{\alpha_{1}N}\right]\leq C_{\alpha_{1}}\,.
$$
Then there exists $\hat{T}_{\alpha_{1}}>0$ such that elements of
$\mathcal{M}(\varrho_{h}(t), h\in ]0,h_{0}[)$ can be defined
simultaneously for all macroscopic times  $t\in]-\hat{T}_{\alpha_{1}},\hat{T}_{\alpha_{1}}[$
when
$\varrho_{h}(t)=U_{\mathcal{V}}(\frac{t}{h})\, \varrho_{h}(0)\, U_{\mathcal{V}}^{*}(\frac{t}{h})$\,.
\end{proposition}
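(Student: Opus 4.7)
The strategy is to verify the first equicontinuity criterion of part~g) of Subsection~\ref{sec:framsemi}. For each fixed $a\in\mathcal{C}_{0}^{\infty}(T^{*}\rz^{d};\cz)$, it suffices to prove that the family $t\mapsto\mathrm{Tr}[a^{\Weyl}(hx,D_{x})\varrho_{h}(t)]$ is equicontinuous on $]-\hat{T}_{\alpha_{1}},\hat{T}_{\alpha_{1}}[$ uniformly in $h\in\,]0,h_{0}[$. A diagonal extraction over a countable dense set of times and of observables, combined with such equicontinuity, then produces a subsequence $(h_{n_{k}})$ for which the limit measures $\mu_{t}$ exist for \emph{all} $t$ in the interval. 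Since the dynamics is unitary, only the continuity in $t$ uniformly in $h$ needs to be established.

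Following the Hilbert--Schmidt device of Subsections~\ref{sec:ourproblem} and \ref{sec:framsemi}, write $\varrho_{h}(0)=\Psi_{h}\Psi_{h}^{*}$ with $\Psi_{h}=\varrho_{h}(0)^{1/2}$, viewed as an element of $L^{2}(\rz^{d}\times\Omega\times\hat{Z},dx\otimes\mathcal{G}\otimes\mathbf{d\hat{z}};\cz)$ where $\hat{Z}=\rz^{d}\times\Omega$. Setting $F_{h}(t)=U_{\mathcal{V}}(t/h)\Psi_{h}$, acting only on the $(x,\omega)$ variables, one has $\varrho_{h}(t)=F_{h}(t)F_{h}(t)^{*}$ and
\[
\mathrm{Tr}[a^{\Weyl}(hx,D_{x})\varrho_{h}(t)]=\langle F_{h}(t)\,,\,(a^{\Weyl}(hx,D_{x})\otimes\mathrm{Id})F_{h}(t)\rangle.
\]
Applying successively \eqref{eq:unitFockFou1}--\eqref{eq:WeyldGFou} and the center-of-mass unitary $U_{G}$ together with \eqref{eq:dGammaDy}, $F_{h}(t)$ is identified with the solution $u_{G}^{h}$ of \eqref{eq:dynaleauG} in $L^{2}_{z}L^{2}_{y_{G}}$ (with $z=(\xi,Y',\hat{z})$), while the observable becomes $A:=a^{\Weyl}(-hD_{\xi},\xi-D_{y_{G}})\otimes\mathrm{Id}$, bounded on $L^{2}_{z}L^{2}_{y_{G}}$ uniformly in $h$ by Calder\'on--Vaillancourt for the semiclassical Weyl calculus associated to the pair $(-hD_{\xi},\xi-D_{y_{G}})$. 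The hypothesis $\mathrm{Tr}[e^{\alpha_{1}N}\varrho_{h}(0)e^{\alpha_{1}N}]\leq C_{\alpha_{1}}$ translates into $\|e^{\alpha_{1}N}u_{G}^{h}(0)\|_{L^{2}_{z}L^{2}_{y_{G}}}\leq C_{\alpha_{1}}^{1/2}$ uniformly in $h$, so after halving $\alpha_{1}$ to match the ``$2\alpha_{1}$'' hypothesis of Proposition~\ref{pr:applStri}, we are exactly in its framework.

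Proposition~\ref{pr:applStri}~a) yields $\hat{T}_{\alpha_{1}}>0$ and the uniform bound $\|e^{(\alpha_{1}/2)N}u_{G}^{h}(t/h)\|_{L^{2}_{z}L^{2}_{y_{G}}}\leq\tilde{C}$ for $t\in\,]-\hat{T}_{\alpha_{1}},\hat{T}_{\alpha_{1}}[$, while~b) gives the short-time expansion
\[
u_{G}^{h}\!\Bigl(\tfrac{t+\delta}{h}\Bigr)=U(\delta/h)\,u_{G}^{h}\!\Bigl(\tfrac{t}{h}\Bigr)+R_{h}(t,\delta),\qquad \|R_{h}(t,\delta)\|_{L^{2}_{z}L^{2}_{y_{G}}}\leq\tilde{C}\,|\delta|^{1/2},
\]
uniformly in $h$ and $t$ in the above range, for $|\delta|\leq\delta_{1}$. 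Expanding $\langle u_{G}^{h}\,,\,Au_{G}^{h}\rangle$ at time $(t+\delta)/h$, the cross terms involving $R_{h}$ are controlled by $\|A\|\cdot\tilde{C}\cdot|\delta|^{1/2}$, leaving
\[
\mathrm{Tr}[a^{\Weyl}(\varrho_{h}(t+\delta)-\varrho_{h}(t))]=\langle u_{G}^{h}(\tfrac{t}{h})\,,\,[U^{*}(\delta/h)AU(\delta/h)-A]\,u_{G}^{h}(\tfrac{t}{h})\rangle+\mathcal{O}(|\delta|^{1/2}).
\]

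To evaluate $U^{*}(\delta/h)AU(\delta/h)$, an Egorov-type computation uses that $\xi-D_{y_{G}}$ commutes with the generator $(\xi-D_{y_{G}})^{2}$ while $[-hD_{\xi},\xi-D_{y_{G}}]=ih$; hence $U^{*}(\tau)(-hD_{\xi})U(\tau)=-hD_{\xi}+2h\tau(\xi-D_{y_{G}})$, and
\[
U^{*}(\delta/h)AU(\delta/h)=\tilde{a}_{\delta}^{\Weyl}(-hD_{\xi},\xi-D_{y_{G}})\otimes\mathrm{Id},\qquad \tilde{a}_{\delta}(X,\Xi):=a(X+2\delta\Xi,\Xi).
\]
For $a\in\mathcal{C}_{0}^{\infty}(T^{*}\rz^{d};\cz)$ the family $(\tilde{a}_{\delta})_{|\delta|\leq\delta_{1}}$ is uniformly bounded in $\mathcal{C}_{0}^{\infty}$ with $\tilde{a}_{\delta}-a=\mathcal{O}(|\delta|)$ in every Schwartz seminorm, so Calder\'on--Vaillancourt gives $\|U^{*}(\delta/h)AU(\delta/h)-A\|_{\mathcal{L}(L^{2}_{z}L^{2}_{y_{G}})}\leq C_{a}|\delta|$ uniformly in $h$. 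Combining with the previous step,
\[
|\mathrm{Tr}[a^{\Weyl}(hx,D_{x})(\varrho_{h}(t+\delta)-\varrho_{h}(t))]|\leq C_{a}\,|\delta|^{1/2},
\]
which is the desired equicontinuity. The only non-routine ingredient is the $\mathcal{O}(|\delta|^{1/2})$ remainder of Proposition~\ref{pr:applStri}~b)---in particular, the $|\delta|^{1/2}$ rate reflects the Strichartz/Cauchy--Kowalevski fixed point of Section~\ref{sec:StriCM} in which the $\sqrt{h}$ coupling combines with $\sqrt{|\delta|/h}$; everything else (the unitary identifications, the Egorov identity for the free propagator, and the diagonal extraction argument) is routine.
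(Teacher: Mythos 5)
Your proof is correct and follows essentially the same path as the paper: both arguments combine the Hilbert--Schmidt factorization $\varrho_h=\varrho_h^{1/2}\varrho_h^{1/2}$, the short-time expansion $u_G^h((t+\delta)/h)=U(\delta/h)u_G^h(t/h)+\mathcal{O}(|\delta|^{1/2})$ from Proposition~\ref{pr:applStri}~b), and the exact Egorov identity for the free propagator, then conclude by the equicontinuity criterion of Subsection~\ref{sec:framsemi}~g). The only difference is organizational: the paper conjugates the observable first (reducing to a uniform $\mathcal{L}^1$-estimate on $U^*(\delta/h)U_{\mathcal{V}}(\delta/h)\varrho_h(t)U_{\mathcal{V}}^*(\delta/h)U(\delta/h)-\varrho_h(t)$), while you insert the short-time expansion of $u_G^h$ first and then apply the Egorov identity, which is an equivalent reordering of the same steps.
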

\begin{proof}
When $U(s)=e^{-is(-\Delta_{x})}$ denotes the free unitary transform\,, the time evolved observable
$U^{*}(\frac{s}{h})[a^{\Weyl}(hx,D_{x})\otimes
\mathrm{Id}_{L^{2}_{\omega}}]U(\frac{s}{h})$ equals
exactly $a^{\Weyl}(hx,D_{x},s)\otimes
\mathrm{Id}_{L^{2}_{\omega}}$ with
$$
a(x,\xi,s)=a(x+2\xi s,\xi)\,.
$$
It is clearly equicontinuous in $h\in ]0,h_{0}[$
 with respect to $s\in
[-\hat{T}_{\alpha_{1}},\hat{T}_{\alpha_{1}}]$ in $\mathcal{L}(L^{2}_{x,\omega})$
for any given  $a\in \mathcal{C}^{\infty}_{0}(T^{*}\rz^{d};\cz)$\,:
$$
\|a^{\Weyl}(hx,D_{x},s)-a^{\Weyl}(hx,D_{x},0)\|_{\mathcal{L}(L^{2}_{x,\omega})}\leq C_{a}|s|\,.
$$
We drop the tensorization with $\mathrm{Id}_{L^{2}}$\,.
With 
\begin{multline*}
  \mathrm{Tr}\left[
  a^{\Weyl}(hx,D_{x})\varrho_{h}(t+\delta)\right]-
 \mathrm{Tr}\left[
  a^{\Weyl}(hx,D_{x})\varrho_{h}(t)\right]\\
= \mathrm{Tr}\left[a^{\Weyl}(hx,D_{x},\delta)U^{*}(\frac{\delta}{h})U_{\mathcal{V}}(\frac{\delta}{h})\varrho_{h}(t)U^{*}_{\mathcal{V}}(\frac{\delta}{h})U(\frac{\delta}{h})\right]
-\mathrm{Tr}\left[a^{\Weyl}(hx,D_{x},0)\varrho_{h}(t)\right]
\end{multline*}
it thus suffices to check, uniformly with respect to  $(h,t)\in
]0,h_{0}[\times ]-\hat{T}_{\alpha_{1}},\hat{T}_{\alpha_{1}}[$\,, the
 estimate
 \begin{equation}
   \label{eq:estimcalL1}
\|U^{*}(\frac{\delta}{h})U_{\mathcal{V}}(\frac{\delta}{h})\varrho_{h}(t)U^{*}_{\mathcal{V}}(\frac{\delta}{h})U(\frac{\delta}{h})-\varrho_{h}(t)\|_{\mathcal{L}^{1}}=o_{\delta\to
  0}(1)\,.
\end{equation}
We now use the decomposition
$\varrho_{h}(0)=\varrho_{h}(0)^{1/2}\varrho_{h}(0)^{1/2}$ and consider
the evolution
$$
U_{\mathcal{V}}(\frac{t}{h})\varrho_{h}(0)^{1/2}\in
\mathcal{L}^{2}(L^{2}(\rz^{d}\times \Omega, dx\otimes
\mathcal{G};\cz))\sim
L^{2}(\rz^{d}\times \Omega\times \hat{Z}, dx\otimes
\mathcal{G}\otimes \mathbf{d\hat{z}};\cz)
$$
with $\hat{Z}=\rz^{d}\times \Omega$\,, $\mathbf{d\hat{z}}=dx\otimes
\mathcal{G}$\,.\\
The estimate \eqref{eq:estimcalL1} is done as soon as
$$
\|U^{*}(\frac{\delta}{h})U_{\mathcal{V}}(\frac{\delta}{h})[U_{\mathcal{V}}(\frac{t}{h})\varrho_{h}(0)^{1/2}]-[U_{\mathcal{V}}(\frac{t}{h})\varrho_{h}(0)^{1/2}]\|_{L^{2}_{x,\omega,\hat{z}}}=o_{\delta\to
0}(1)
$$
uniformly with respect to $(h,t)\in ]0,h_{0}[\times]-\hat{T}_{\alpha_{1}},\hat{T}_{\alpha_{1}}[$\,.\\
This problem is now translated in a problem in
$$\underbrace{L^{2}(\rz^{d}\times \hat{Z}, \frac{d\xi}{(2\pi)^{d}}\otimes
  \mathbf{d\hat{z}};\cz)}_{\text{vacuum}}\oplus
L^{2}_{\text{sym}}(\rz^{d}_{y_{G}}\times Z_{1}; dy_{G}\otimes \mathbf{dz_{1}};\cz)
$$
by the unitary transform
$U_{G}$ associated with the center of mass $y_{G}$ of
Section~\ref{sec:centermass}\,, the translation invariance and its
Fourier variable $\xi\in \rz^{d}$ and the relative coordinates $Y'
\in
\mathcal{R}$\,. The variable $ z_{1}\in Z_{1}$ is nothing but
$z_{1}=(\xi,Y',\hat{z})\in \rz^{d}\times \mathcal{R}\times \hat{Z}$
with $\mathbf{dz_{1}}=\frac{d\xi}{(2\pi)^{d}}\otimes \mu\otimes
\mathbf{d\hat{z}}$\,. The subscript $_{\text{sym}}$ refers to the symmetry in
the variable $Y'\in \mathcal{R}$\,. All the assumptions of
Theorem~\ref{th:applStri} have been checked in
Section~\ref{sec:conseqStri}.
In particular we can use Proposition~\ref{pr:applStri}-b) with 
$$
u_{G}^{h}(\frac{t}{h})=U_{\mathcal{V}}(\frac{t}{h})\varrho_{h}(0)^{1/2}\quad\text{and}\quad
\frac{t}{h}\in I^{h}_{\hat{T}_{\alpha_{1}}} \,.
$$
It says in particular
$$
u_{G}^{h}(\frac{t}{h}+\frac{\delta}{h})=U(\frac{\delta}{h})u_{G}^{h}(\frac{t}{h})+\mathcal{O}(|\delta|^{1/2})\,,
$$
uniformly with respect to $(h,\frac{t}{h})\in ]0,h_{0}[\times I^{h}_{\hat{T}_{\alpha_{1}}}$\,,
and therefore
$$
\|U^{*}(\frac{\delta}{h})U_{\mathcal{V}}(\frac{\delta}{h})
[U_{\mathcal{V}}(\frac{t}{h})\varrho_{h}(0)^{1/2}]
-[U_{\mathcal{V}}(\frac{t}{h})\varrho_{h}(0)^{1/2}]\|_{L^{2}_{x,\omega,\hat{z}}}=\mathcal{O}_{\delta\to
0}(|\delta|^{1/2})
$$
uniformly with respect to $(h,t)\in ]0,h_{0}[\times
]-\hat{T}_{\alpha_{1}},\hat{T}_{\alpha_{1}}[$\,.\\
This ends the proof.
\end{proof}

\section{Approximations}
\label{sec:approx}

With our number estimates stated in Section~\ref{sec:conseqStri},
various approximations can be considered for the general class of
initial data $(\varrho_{h}(0))_{h\in ]0,h_{0}[}$\,, $\varrho_{h}(0)\in
\mathcal{L}^{1}(L^{2}(\rz^{d}\times\Omega, dx\otimes
\mathcal{G};\cz))$\,, $\varrho_{h}(0)\geq 0$\,,
$\mathrm{Tr}[\varrho_{h}(0)]=1$ under the sole additional assumption
$\mathrm{Tr}[e^{\alpha_{1}N}\varrho_{h}(0)e^{\alpha_{1}N}]\leq
C_{\alpha_{1}}$\,. Before computing the evolution of the semiclassical
measures $(\mu_{t})_{t\in ]-\hat{T}_{\alpha_{1}},\hat{T}_{\alpha_{1}}[}$ given by
Proposition~\ref{pr:equicont} (this will be done in a subsequent
article), it provides useful a priori information for them.

\subsection{Truncation with respect to the number operator $N$}
\label{sec:truncN}
For $\varepsilon>0$\,, let 
$\chi_{\varepsilon}:[0,+\infty)\to [0,1]$ be  a decaying function 
such that
\begin{eqnarray}
  \label{eq:moment}
&&\forall k\in \nz\,, \forall \varepsilon\in
   ]0,1[\,,\exists C_{k,\varepsilon}>0 \,,\quad  \sup_{s\in [0,+\infty)}s^{k}\chi_{\varepsilon}(s)\leq
  C_{k,\varepsilon} \,\,, \\
\label{eq:expeps}
&& \forall \alpha_{1}>0\,,\exists C_{\alpha_{1}}>0\,, \forall
   \varepsilon\in ]0,1[\,,\quad \sup_{s\in
  [0,+\infty)}e^{-\alpha_{1}s}(1-\chi_{\varepsilon}(s))\leq
   C_{\alpha_{1}}\times \varepsilon\,.
\end{eqnarray}
Examples
are 
$$
\chi_{\varepsilon}(s)=1_{[0,\varepsilon^{-1}]}(s)\quad\text{and}\quad
\chi_{\varepsilon}(s)=e^{-\varepsilon s}\,.
$$
 Then the operators
$$
a_{G,\varepsilon}(V)=\chi_{\varepsilon}(N)a_{G}(V)\chi_{\varepsilon}(N)\quad,\quad a_{G,\varepsilon}^{*}(V)=\chi_{\varepsilon}(N)a_{G}^{*}(V)\chi_{\varepsilon}(N)
$$
are bounded operators on 
$$
F^{2}=L^{2}(Z',\mathbf{dz'};\Gamma
(L^{2}(\rz^{d},dy;\cz)))=L^{2}_{z,\text{sym}}=L^{2}_{z_{0}}\oplus L^{2}_{z_{1},\text{sym}}L^{2}_{y_{G}}
$$
according to \eqref{eq:F2L2L2}
and $\sqrt{h}(a_{G,\varepsilon}(V)+a_{G,\varepsilon}^{*}(V))$ is an
$\mathcal{O}_{\varepsilon}(\sqrt{h})$ bounded self-adjoint
perturbation of $(\xi-D_{y_{G}})^{2}$\,.
Additionally for $\varepsilon>0$ the estimates of
Proposition~\ref{pr:Lpagstag} hold true when $a_{G}(V)$ and
$a_{G}^{*}(V)$ are replaced by $a_{G,\varepsilon}(V)$ and
$a_{G,\varepsilon}^{*}(V)$\,. Actually, 
\eqref{eq:borne-a(V)} with $n>1$ and
\eqref{eq:borne-a-star(V)} with $n>0$ become
\begin{eqnarray}
  \label{eq:borne-aVeps}
\|a_{G,\varepsilon}(V)f_{G,n}\|_{L^{2}_{z',Y'_{n-1}}L^{p}_{y_{G}}}
\leq
   \|V\|_{L^{r'}}\chi_{\varepsilon}(n-1)^{2}\sqrt{n}\|f_{G,n}\|_{L^{2}_{z',Y'_{n}}L^{q}_{y_{G}}}
\leq C_{\varepsilon}\|V\|_{L^{r'}}\|f_{G,n}\|_{L^{2}_{z',Y'_{n}}L^{q}_{y_{G}}}&&
\\
  \label{eq:borne-aVstareps}
\|a_{G,\varepsilon}^{*}(V)f_{G,n}\|_{L^{2}_{z',Y'_{n+1}}L^{q'}_{y_{G}}}
\leq
   \|V\|_{L^{r'}}\chi_{\varepsilon}(n)^{2}\sqrt{n+1}\|f_{G,n}\|_{L^{2}_{z',Y'_{n}}L^{p'}_{y_{G}}}\leq C_{\varepsilon} \|V\|_{L^{r'}}\|f_{G,n}\|_{L^{2}_{z',Y'_{n}}L^{p'}_{y_{G}}}
&&
\end{eqnarray}
when $V\in L^{q'}(\rz^{d};\cz)\cap L^{r'}(\rz^{d};\cz)$\,,
$\frac{1}{r'}=\frac{1}{2}+\frac{1}{q'}-\frac{1}{p'}$\,, $p',q'\in
[1,2]$\,. All the analysis can thus be carried out with $a_{G}(V)$ and
$a_{G}^{*}(V)$ replaced by $a_{G,\varepsilon}(V)$ and
$a_{G,\varepsilon}^{*}(V)$\,, either with estimates which are uniform
in $\varepsilon\in ]0,1[$\,, or by replacing the $N$-dependent
estimates by constants $C_{\varepsilon}$ depending on $\varepsilon\in
]0,1[$\,.\\
In particular the solution $v_{G,\varepsilon}^{h}$ to 
\begin{equation}
   \label{eq:dynaleavGeps}
     \begin{cases}
       i\partial_{t}v_{G,\varepsilon}^{h} = (\xi-D_{y_{G}})^{2}v_{G,\varepsilon}^{h}+\sqrt{h}[a^{*}_{G,\varepsilon}(V)+a_{G,\varepsilon}(V)]v_{G,\varepsilon}^{h}\,,\\
 v_{G,\varepsilon}^{h}(t=0) = v_{G,\varepsilon,0}^{h}=u_{G,0}^{h}\,,
\end{cases}
 \end{equation} 
satisfies the same properties as the solution $u_{G}^{h}$ to
\eqref{eq:dynaleauG} stated in Theorem~\ref{th:applStri} and
Proposition~\ref{pr:applStri}, uniformly with respect to
$\varepsilon\in ]0,1[$\,. 
\begin{proposition}
\label{pr:comp}
Assume
$\|e^{2\alpha_{1}N}u_{G,0}^{h}\|_{L^{2}_{z}L^{2}_{y_{G}}}\leq
C_{\alpha_{1}}$ for all $h\in ]0,h_{0}[$ like in
Proposition~\ref{pr:applStri}. There exists $\hat{C}_{\alpha_{1}}>0$ and $\hat{T}_{\alpha_{1}}>0$ 
such that the solutions $u_{G}^{h}$ to \eqref{eq:dynaleauG} and
$v_{G,\varepsilon}^{h}$ to \eqref{eq:dynaleavGeps} for $\varepsilon\in
]0,1[$\,, satisfy
$$
\|u_{G}^{h}(t)-v_{G,\varepsilon}^{h}(t)\|_{L^{2}_{z}L^{2}_{y_{G}}}\leq \hat{C}_{\alpha_{1}}\varepsilon
$$
for all $t\in
I^{h}_{\hat{T}_{\alpha}}=]-\frac{\hat{T}_{\alpha_{1}}}{h},\frac{\hat{T}_{\alpha_{1}}}{h}[$\,.\\
Additionally the statement \textbf{b)} of
Proposition~\ref{pr:applStri} holds true when $u_{G}^{h}\,,$ $a_{G}(V)\,,$ 
$a_{G}^{*}(V)$ are replaced by $v_{G,\varepsilon}^{h}\,,$ 
$a_{G,\varepsilon}(V)\,,$ $a_{G,\varepsilon}^{*}(V)$\,. 
\end{proposition}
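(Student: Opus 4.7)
The key observation is that with $V\in L^{r'_{\sigma}}(\rz^{d};\rz)$ and the uniform bounds \eqref{eq:borne-aVeps}\eqref{eq:borne-aVstareps}, the entire fixed-point machinery of Proposition~\ref{pr:contracStri} applies verbatim to the $\varepsilon$-truncated operators $a_{G,\varepsilon}^{\#}(V)$ with the \emph{same} constants. In particular, when the assumption $\|e^{2\alpha_{1}N}u^{h}_{G,0}\|\leq C_{\alpha_{1}}$ holds, Proposition~\ref{pr:applStri}-a) applies both to $u^h_G$ (as stated) and, uniformly in $\varepsilon\in]0,1[$, to $v^h_{G,\varepsilon}$, yielding a common time $\hat{T}_{\alpha_{1}}>0$ and constant $\tilde{C}_{\alpha_{1}}$ such that
\[
\|e^{\alpha_1 N}u_G^h(t)\|_{L^2_zL^2_{y_G}}+\|e^{\alpha_1 N}v_{G,\varepsilon}^h(t)\|_{L^2_zL^2_{y_G}}\leq 2\tilde{C}_{\alpha_{1}}
\]
for all $t\in I^{h}_{\hat{T}_{\alpha_{1}}}$ and all $h\in]0,h_{0}[$, $\varepsilon\in]0,1[$.

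Setting $w^h=u^h_G-v^h_{G,\varepsilon}$, subtracting the two Duhamel formulations and writing
\[
[a_G^{\#}(V)+a_G(V)]u_G^h-[a_{G,\varepsilon}^{\#}(V)+a_{G,\varepsilon}(V)]v_{G,\varepsilon}^h
=[a_{G,\varepsilon}^{\#}(V)+a_{G,\varepsilon}(V)]w^h+\mathcal{R}_\varepsilon^h,
\]
with
\[
\mathcal{R}_\varepsilon^h(s)=\bigl\{[a_G^{*}(V)+a_G(V)]-[a_{G,\varepsilon}^{*}(V)+a_{G,\varepsilon}(V)]\bigr\}u_G^h(s),
\]
I obtain the Duhamel equation
\[
w^h(t)=-i\int_0^t U(t-s)\sqrt{h}[a_{G,\varepsilon}^{*}(V)+a_{G,\varepsilon}(V)]w^h(s)\,ds+\mathcal{S}_\varepsilon^h(t),
\]
where $\mathcal{S}_\varepsilon^h(t)=-i\int_0^t U(t-s)\sqrt{h}\,\mathcal{R}_\varepsilon^h(s)\,ds$. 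I then follow the strategy of Theorem~\ref{th:applStri}, setting up the analogous three-component fixed-point system for $(w^h_\infty,w^h_2,w^h_1)$ using the $\varepsilon$-truncated operators, with $w^h_\infty=w^h$ and an inhomogeneous data triple $(\mathcal{S}_\varepsilon^h,\mathcal{T}_\varepsilon^h,0)$ where $\mathcal{T}_\varepsilon^h=a_{G,\varepsilon}(V)\mathcal{S}_\varepsilon^h/\sqrt{h}+a_{G,\varepsilon}(V)$ applied to the corresponding piece (mimicking the decomposition $f_2^h=f_{2,1}^h+f_{2,2}^h$).

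The crucial ingredient is the $O(\varepsilon)$ size of $\mathcal{R}_\varepsilon^h$ in the appropriate weighted norm. Using the algebraic decomposition
\[
a_G^{\#}(V)-a_{G,\varepsilon}^{\#}(V)=[1-\chi_\varepsilon(N)]\,a_G^{\#}(V)+\chi_\varepsilon(N)\,a_G^{\#}(V)[1-\chi_\varepsilon(N)]\quad(\#\in\{*,\emptyset\}),
\]
together with the commutations $e^{\alpha N}a_G^{*}(V)e^{-\alpha N}=e^{\alpha}a_G^{*}(V)$ and $e^{\alpha N}a_G(V)e^{-\alpha N}=e^{-\alpha}a_G(V)$, the $L^{r'_{\sigma}}_{y_G}$-estimates of Proposition~\ref{pr:Lpagstag} for $a_G^{\#}(V)$, and the defining inequality \eqref{eq:expeps} rewritten as $\|[1-\chi_\varepsilon(N)]e^{-\alpha_1 N}\|\leq C_{\alpha_{1}}\varepsilon$, one checks that for $s\in I^h_{\hat{T}_{\alpha_1}}$
\[
\|e^{\alpha_{1}N/2}\,\mathcal{R}_\varepsilon^h(s)\|_{L^2_z L^{r'_{\sigma}}_{y_G}}\lesssim C_V\varepsilon\,\|e^{2\alpha_1 N}u_G^h(s)\|_{L^2_zL^2_{y_G}}\lesssim C_{\alpha_{1}}C_V\varepsilon.
\]
The retarded endpoint Strichartz estimate \eqref{eq:retardStri}, applied exactly as for the bound \eqref{eq:estimfh}\eqref{eq:estimgh} on $(f_\infty^h,f_2^h)$, then yields $M(\mathcal{S}_\varepsilon^h,\mathcal{T}_\varepsilon^h,0)\lesssim C_{\alpha_{1}}C_V\varepsilon$ in $\mathcal{E}^h_{-\alpha_{1}/2,\alpha_{1}/2,\gamma}$ (after adjusting the exponent pair so that one factor $e^{\alpha_1 N}$ remains available for~$\mathcal{R}^h_\varepsilon$). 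Inverting $(\mathrm{Id}-L_\varepsilon)$ by Neumann series, possible because the contraction constant is $\leq 1/2$ uniformly in $\varepsilon$, gives $M(w^h_\infty,w^h_2,w^h_1)\leq \hat{C}_{\alpha_{1}}\varepsilon$ and in particular the claimed $\|w^h(t)\|_{L^2_zL^2_{y_G}}\leq\hat{C}_{\alpha_{1}}\varepsilon$ on $I^h_{\hat{T}_{\alpha_{1}}}$.

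For the second statement, Proposition~\ref{pr:applStri}-b) applied with $(a_G^{*}(V),a_G(V))$ replaced by $(a_{G,\varepsilon}^{*}(V),a_{G,\varepsilon}(V))$ follows identically, since every estimate used in its proof (the contraction, the weighted number bound of part a), and the bounds on $f_\infty^h$, $f_{2,1}^h$, $f_{2,2}^h$) holds with the same constants uniformly in $\varepsilon\in]0,1[$. The main obstacle in the whole argument is the bookkeeping of the number-weights: one must arrange that the factor $e^{\alpha_1 N}$ used to convert \eqref{eq:expeps} into an $O(\varepsilon)$ operator bound is independent of the factor $e^{\pm\alpha}$ produced by the commutation with $a_G^{\#}(V)$, which is precisely why the hypothesis requires $e^{2\alpha_1 N}$ on the initial datum to deliver an $e^{\alpha_1 N}$-weighted estimate on~$w^h$.
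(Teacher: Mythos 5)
Your overall strategy (compare the two solutions via the fixed-point structure and use the $O(\varepsilon)$ smallness of $[1-\chi_\varepsilon(N)]e^{-\alpha_1 N}$) is sound in spirit, but the proof has a genuine gap at the step where you claim
\[
\|e^{\alpha_{1}N/2}\,\mathcal{R}_\varepsilon^h(s)\|_{L^2_z L^{r'_{\sigma}}_{y_G}}\lesssim C_V\varepsilon\,\|e^{2\alpha_1 N}u_G^h(s)\|_{L^2_zL^2_{y_G}}\,.
\]
This is correct for the creation part $[a_G^*(V)-a_{G,\varepsilon}^*(V)]u_G^h(s)$, since $a_G^*(V)$ maps $L^2_{y_G}$ into $L^{r'_\sigma}_{y_G}$ by Proposition~\ref{pr:Lpagstag} (take $p'=2$, $q'=r'_\sigma$). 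But it fails for the annihilation part $[a_G(V)-a_{G,\varepsilon}(V)]u_G^h(s)$: by the same Proposition~\ref{pr:Lpagstag} and Remark~\ref{re:Brasc}, $a_G(V)$ with $V\in L^{r'_\sigma}$ maps $L^q_{y_G}\to L^p_{y_G}$ only for $2\leq p\leq q$, never into $L^{r'_\sigma}_{y_G}$ (and $a_G(V)$ applied to something only in $L^2_{y_G}$ would need $V\in L^2$, which we do not assume). This is exactly the difficulty that forces the three-component system $(u_\infty^h,u_2^h,u_1^h)$ in the original fixed-point construction; $a_G(V)u_G^h(s)$ is never estimable pointwise in $s$, only in the averaged sense encoded in $u_1^h,u_2^h$. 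Your source $\mathcal{R}_\varepsilon^h$ contains this uncontrollable quantity, so the fixed-point you set up for $w^h=u_G^h-v_{G,\varepsilon}^h$ does not close.

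The paper avoids this by comparing $\chi_\varepsilon(N)u_G^h$ (not $u_G^h$) with $v_{G,\varepsilon}^h$: the difference $u_G^h-\chi_\varepsilon(N)u_G^h$ is trivially $O(\varepsilon)$ by \eqref{eq:expeps}, while $w_{G,\varepsilon}^h=\chi_\varepsilon(N)u_G^h$ satisfies the $\varepsilon$-truncated equation up to a source $g_{\infty,\varepsilon}^h$. The creation part of this source is treated like $f_\infty^h$, but the annihilation part is handled via the algebraic identity $a_G(V)\phi(N)=\phi(N+1)a_G(V)$, which commutes the number cutoff past $a_G(V)$ and allows factoring
\[
g_{\infty,2,\varepsilon}^h=\chi_\varepsilon(N)(1-\chi_\varepsilon^2(N+1))e^{-\frac{\alpha_1}{2}(N+1)}\,e^{\frac{\alpha_1}{2}(N+1)}\bigl[L_{\infty 1}(u_1^h)+L_{\infty 2}(u_2^h)\bigr]\,,
\]
thus rerouting $a_G(V)u_G^h$ through the already-controlled auxiliary components $u_1^h,u_2^h$ of the \emph{original} system (via $\sqrt{h}a_G(V)u_G^h=u_1^h+\sqrt{h}u_2^h$). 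Without this commutation trick and the reuse of $u_1^h,u_2^h$, the annihilation source is not accessible, and no amount of bookkeeping with number weights repairs your $\mathcal{R}_\varepsilon^h$ estimate.
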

\begin{proof}
  The statements \textbf{a)} and \textbf{b)} of
  Proposition~\ref{pr:applStri} hold true uniformly with respect 
  to $\varepsilon\in ]0,1[$ for
  $v_{G,\varepsilon}^{h}$ as a consequence of the previous
  arguments.\\
In particular
$v_{G,\varepsilon}^{h}(t)=U(\frac{t}{h})u_{G,0}^{h}+v_{\infty,\varepsilon}^{h}$
where $(v_{\infty,\varepsilon}^{h},v_{2,\varepsilon}^{h},v_{1,\varepsilon}^{h})$ solves the system
\begin{equation}
            \begin{pmatrix}
              v_{\infty,\varepsilon}^{h}\\v_{2,\varepsilon}^{h}\\v_{1,\varepsilon}^h
            \end{pmatrix}
=L_{\varepsilon}
            \begin{pmatrix}
              v_{\infty,\varepsilon}^{h}\\v_{2,\varepsilon}^{h}\\v_{1,\varepsilon}^h
            \end{pmatrix}
+
            \begin{pmatrix}
              f_{\infty,\varepsilon}^{h}\\f_{2,\varepsilon}^{h}\\0
            \end{pmatrix}
\,,\quad L_{\varepsilon}=
  \begin{pmatrix}
    L_{\infty\,\infty\,,\varepsilon}&L_{\infty 2,\varepsilon}&L_{\infty 1,\varepsilon}\\
    0&L_{22,\varepsilon}&0\\
    L_{1\infty,\varepsilon}&0&L_{11,\varepsilon}
  \end{pmatrix}
\,, \label{eq:dynutvtexpeps}
\end{equation}
with
\begin{align}
\label{eq:rhsfheps}
f^{h}_{\varepsilon}(t) =-i&\int_{0}^{t}U(t)U(s)^{*}
a_{G,\varepsilon}^{*}(V) \sqrt{h}U(s)u_{G,0}^h~ds \,,\\
\label{eq:rhsgheps}
f^{h}_{2,\varepsilon}(t)=-i\,a_{G,\varepsilon}(V)&\int_{0}^{t}U(t)U(s)^{*}a_{G,\varepsilon}^{*}(V)
                                     \sqrt{h}U(s)u_{G,0}^h~ds + a_{G,\varepsilon}(V) U(t) u^h_{G,0}\,,
\end{align}
and where the entries $L_{\varepsilon}$ are the same as the ones of $L$
with $a_{G}(V)$ and $a_{G}^{*}(V)$ replaced by $a_{G,\varepsilon}(V)$
and $a_{G,\varepsilon}^{*}(V)$\,. When
$\chi_{\varepsilon}(s)=e^{-\varepsilon s}$\,, one recovers the system for
$u_{G}^{h}$ by taking $\varepsilon=0$\,.\\
We start now with the equation for $u_{G}^{h}$
$$
u_{G}^{h}(t)=U(\frac{t}{h})u_{G,0}^{h}-i\sqrt{h}\int_{0}^{\frac{t}{h}}U(t-s)[a_{G}(V)+a_{G}^{*}(V)]u_{G}^{h}(s)~ds\,,
$$
which implies
\begin{multline*}
\chi_{\varepsilon}(N)u_{G}^{h}(t)=U(\frac{t}{h})\chi_{\varepsilon}(N)u_{G,0}^{h}-i\sqrt{h}\int_{0}^{\frac{t}{h}}U(\frac{t}{h}-s)\chi_{\varepsilon}(N)[a_{G}(V)+a_{G}^{*}(V)]\chi_{\varepsilon}(N)^{2}u_{G}^{h}(s)~ds\\
-i\sqrt{h}\chi_{\varepsilon}(N)\int_{0}^{\frac{t}{h}}U(\frac{t}{h}-s)
[a_{G}(V)+a_{G}^{*}(V)](1-\chi_{\varepsilon}^{2}(N))u_{G}^{h}(s)~ds\,.
\end{multline*}
The function
$w_{G,\varepsilon}^{h}(t)=\chi_{\varepsilon}(N)u_{G}^{h}(t)$ solves
\begin{eqnarray}
\label{eq:weps}
w_{G,\varepsilon}^{h}(t)=U(\frac{t}{h})\chi_{\varepsilon}(N)u_{G,0}^{h}-i\sqrt{h}\int_{0}^{\frac{t}{h}}U(t-s)[a_{G,\varepsilon}(V)+a_{G,\varepsilon}^{*}(V)]w_{G,\varepsilon}^{h}(s)~ds+g_{\infty,\varepsilon}^{h}
&&
\\
\label{eq:defginfeps}
\text{with}\quad
g_{\infty,\varepsilon}^{h}=-i\sqrt{h}\chi_{\varepsilon}(N)\int_{0}^{\frac{t}{h}}U(t-s)[a_{G}(V)+a_{G}^{*}(V)](1-\chi_{\varepsilon}^{2}(N))u_{G}^{h}(s)~ds\,.&&
\end{eqnarray}
The system for
$(w^{h}_{\infty,\varepsilon},w^{h}_{2,\varepsilon},w^{h}_{1,\varepsilon})$ after
decomposing
$w_{G,\varepsilon}^{h}(t)=U(\frac{t}{h})\chi_{\varepsilon}(N)u_{G,0}^{h}+w_{\infty,\varepsilon}^{h}(t)$
is 
$$
\begin{pmatrix}
  w_{\infty,\varepsilon}^{h}\\
w_{2,\varepsilon}^{h}\\
w_{1,\varepsilon}^{h}
\end{pmatrix}
=L_{\varepsilon}
\begin{pmatrix}
  w_{\infty,\varepsilon}^{h}\\
w_{2,\varepsilon}^{h}\\
w_{1,\varepsilon}^{h}
\end{pmatrix}
+
\begin{pmatrix}
  \tilde{f}_{\infty,\varepsilon}^{h}\\
\tilde{f}_{2,\varepsilon}^{h}\\
0
\end{pmatrix}
+
\begin{pmatrix}
  g_{\infty,\varepsilon}^{h}\\
0\\0
\end{pmatrix}\,,
$$
where $\tilde{f}_{\infty,\varepsilon}^{h}$ and
$\tilde{f}_{2,\varepsilon}^{h}$ have the same expressions as
\eqref{eq:rhsfheps}\eqref{eq:rhsgheps} with $u_{G,0}^{h}$ replaced by
$\chi_{\varepsilon}(N)u_{G,0}^{h}$\,.
By taking the difference with \eqref{eq:dynutvtexpeps}, and because
$\|L_{\varepsilon}\|_{\mathcal{L}(\mathcal{E}_{0,\alpha_{1},\gamma})}\leq
1/2$ for $\gamma>0$ small enough, the proof is
done as soon as  the three norms
\begin{eqnarray}
\label{eq:normuG} &&\|u_{G}^{h}(t)-\chi_{\varepsilon}(N)u_{G}^{h}(t)\|_{L^{2}_{z}L^{2}_{y_{G}}}\\
&&
\label{eq:normMftf}
M(\tilde{f}_{\infty,\varepsilon}^{h}-f_{\infty,\varepsilon}^{h},\tilde{f}_{2,\varepsilon}^{h}-f_{2,\varepsilon}^{h},0)\\
\label{eq:normMg}&&
M(g_{\infty,\varepsilon}^{h},0,0)\,,
\end{eqnarray}
are bounded by $\hat{C}_{\alpha_{1}}\varepsilon$\,.\\
Because the time interval is restricted to
$I_{\hat{T}_{\alpha_{1}}}^{h}$ with
$\hat{T}_{\alpha_{1}}<T_{\alpha_{1}}$\,, the weight
$\sqrt{T_{\alpha_{1}}-|ht|}$ or
$\sqrt{T_{\alpha_{1}}-\tau}$ used in
Definition~\ref{de:E0Talpha} or in Proposition~\ref{pr:contracStri}
can be forgotten now (simply multiply $f_{q,\varepsilon}^{h}$, $\tilde{f}_{q,\varepsilon}^{h}$, $q\in\{\infty,2\}$ and $g_{\infty,\varepsilon}^{h}$ by $1_{I_{\frac{\hat{T}_{\alpha_{1}}}{h}}}(t)$).\\
The estimate of \eqref{eq:normuG} is obvious since
$$
\|(1-\chi_{\varepsilon}(N))u_{G}^{h}(t)\|_{L^{2}_{z}L^{2}_{y_{G}}}\leq
\underbrace{\sup_{s\geq
    0}|(1-\chi_{\varepsilon}(s))e^{-\alpha_{1}s}|}_{\mathcal{O}(\varepsilon)}\times
\underbrace{\|e^{\alpha_{1}N}u_{G}^{h}(t)\|_{L^{2}_{z}L^{2}_{y_{G}}}}_{\leq
\tilde{C}_{\alpha_{1}}}\,.
$$
The estimate of \eqref{eq:normMftf} is very similar. Actually in the
proof of Theorem~\ref{th:applStri} we checked 
$M(f_{\infty}^{h},f_{2}^{h},0)\lesssim
\|e^{\alpha_{1}N}u_{G,0}^{h}\|_{L^{2}_{z}L^{2}_{y_{G}}}$\,. It
gives now
$$
M(\tilde{f}_{\infty,\varepsilon}^{h}-f_{\infty,\varepsilon}^{h},\tilde{f}_{2,\varepsilon}^{h}-f_{2,\varepsilon}^{h},0)\lesssim
\|e^{\alpha_{1}N}(\chi_{\varepsilon}(N)-1)u_{G,0}^{h}\|_{L^{2}_{z}L^{2}_{y_{G}}}\leq \hat{C}_{\alpha_{1}}\varepsilon\,.
$$
For \eqref{eq:normMg} let us first decompose $g_{\infty,
  \varepsilon}^{h}$ as
\begin{eqnarray*}
  &&
     g_{\infty,\varepsilon}^{h}=g_{\infty,1,\varepsilon}^{h}+g_{\infty,2,\varepsilon}^{h}\\
\text{with}&&
g_{\infty,1,\varepsilon}^{h}=-i\sqrt{h}\chi_{\varepsilon}(N)\int_{0}^{\frac{t}{h}}U(t-s)\,a_{G}^{*}(V)\,(1-\chi_{\varepsilon}^{2}(N))\,u_{G}^{h}(s)~ds\,\\
\text{and}&&
g_{\infty,2,\varepsilon}^{h}=-i\sqrt{h}\chi_{\varepsilon}(N)\int_{0}^{\frac{t}{h}}U(t-s)\,a_{G}(V)\,(1-\chi_{\varepsilon}^{2}(N))\,u_{G}^{h}(s)~ds\,.
\end{eqnarray*}
The estimate of $g_{\infty,1,\varepsilon}^{h}$ follows the method for
the bound of $M(f_{\infty}^{h},0,0)$ in the proof of Theorem~\ref{th:applStri}, where we simply used the
uniform bound in time for
$\|U(s)e^{\alpha_{1}N}u_{G,0}^{h}\|_{L^{2}_{z}L^{2}_{y_{G}}}$\,. With 
$$
\sup_{t}\|(1-\chi_{\varepsilon}^{2}(N))u_{G}^{h}(t)\|_{L^{2}_{z}L^{2}_{y_{G}}}\leq 
\underbrace{\sup_{s\geq
    0}|(1-\chi^{2}_{\varepsilon}(s))e^{-\alpha_{1}s}|}_{\mathcal{O}(\varepsilon)}\times
\underbrace{\|e^{\alpha_{1}N}u_{G}^{h}(t)\|_{L^{2}_{z}L^{2}_{y_{G}}}}_{\leq
\tilde{C}_{\alpha_{1}}}\,,
$$
this gives
$$
M(g_{\infty,1,\varepsilon}^{h},0,0)\leq \hat{C}_{\alpha_{1}}\varepsilon\,.
$$
For $g_{\infty,2,\varepsilon}^{h}$\,, remember firstly that the
assumption  is
$\|e^{2\alpha_{1}N}u_{G,0}^{h}\|_{L^{2}_{z}L^{2}_{y_{G}}}\leq C_{\alpha_{1}}$ and by possibly
reducing $\hat{T}_{\alpha_{1}}$\,, we may assume $\|e^{\frac{3\alpha_{1}}{2}
  N}u_{G,h}(t)\|_{L^{2}_{z}L^{2}_{y_{G}}}\leq \tilde{C}_{\alpha_{1}}$\,. We now use the obvious relation
$a_{G}(V)\phi(N)=\phi(N+1)a_{G}(V)$ and write
$$
  g_{\infty,2,\varepsilon}^{h}=-i\chi_{\varepsilon}(N)e^{-\frac{\alpha_{1}}{2}(N+1)}(1-\chi_{\varepsilon}^{2}(N+1))e^{\frac{\alpha_{1}}{2}(N+1)}
\int_{0}^{\frac{t}{h}}U(t-s)\sqrt{h}a_{G}(V)u_{G}^{h}(s)~ds\,.
$$
Remember that the equivalent system \eqref{eq:dynaleauG} says
$\sqrt{h}a_G(V)u_{G}^{h}(t)=u_{1}^{h}(t)+\sqrt{h}u_{2}^{h}(t)$ with
$M(0,u_{2}^{h},u_{1}^{h})\lesssim C_{\alpha_{1}}$\,. The above equality
becomes
$$
g_{\infty,2,\varepsilon}^{h}(t)=
\chi_{\varepsilon}(N)(1-\chi_{\varepsilon}^{2}(N+1))e^{-\frac{\alpha_{1}}{2}(N+1)}e^{\frac{\alpha_{1}}{2}(N+1)}[L_{\infty\,1}(u_{1}^{h})+L_{\infty\,2}(u_{2}^{h})]\,.
$$
The bounds for $L_{\infty\,1}$ and $L_{\infty\,2}$ in the
Theorem~\ref{th:applStri}, 
lead to
$$
\||ht|^{-1/2}e^{\frac{\alpha_{1}}{2}(N+1)}[L_{\infty\,1,\varepsilon}(u_{1}^{h})+L_{\infty\,2,\varepsilon}(u_{2}^{h})](t)\|_{L^{\infty}(I_{\hat{T}_{\alpha_{1}}}^{h};L^{2}_{z}L^{2}_{y_{G}})}\lesssim C_{\alpha_{1}}\,.
$$
With 
$$
\|\chi_{\varepsilon}(N)(1-\chi_{\varepsilon}^{2}(N+1))e^{-\frac{\alpha_{1}}{2}(N+1)}\|_{\mathcal{L}(L^{2}_{z}L^{2}_{y_{G}})}\leq
\sup_{s\geq 0}|(1-\chi_{\varepsilon}^{2}(s))e^{-\frac{\alpha_{1}}{2}s}|=\mathcal{O}(\varepsilon)\,,
$$
this proves 
$$
M(g_{\infty,2,\varepsilon}^{h},0,0)\leq \hat{C}_{\alpha_{1}}\varepsilon\,.
$$
\end{proof}
Let us go back to our initial problem and let us compare the evolution
of states for the dynamics
$U(\frac{t}{h})=e^{-it(-\Delta_{x}+\sqrt{h}\mathcal{V})}$ for
$\varepsilon=0$ and the case $\varepsilon>0$ where
$\chi_{\varepsilon}(N)\mathcal{V}\chi_{\varepsilon}(N)$ is a bounded
self-adjoint perturbation of $-\Delta_{x}$\,. Set in particular
\begin{equation}
  \label{eq:UVUVeps}
  U_{\mathcal{V},\varepsilon}=e^{-it(-\Delta_{x}+\sqrt{h}\mathcal{V}_{\varepsilon})}\quad\text{with}\quad \mathcal{V}_{\varepsilon}=\chi_{\varepsilon}(N)\mathcal{V}\chi_{\varepsilon}(N)\,.
\end{equation}
\begin{proposition}
\label{pr:compmes}
  Assume like in Proposition~\ref{pr:equicont}
$$
V\in L^{r'_{\sigma}}(\rz^{d},dx;\rz)\cap H^{2}(\rz^{d};\rz)\quad,\quad
r'_{\sigma}=\frac{2d}{d+2}\quad,\quad d\geq 3\,,
$$
and assume that there exists $\alpha_{1}>0$ such that $\varrho_{h}(0)\in
\mathcal{L}^{1}(L^{2}(\rz^{d}\times \Omega ,dx\otimes
\mathcal{G};\cz))$\,, $\varrho_{h}(0)\geq 0$\,,
$\mathrm{Tr}[\varrho_{h}(0)]=1$ satisfies
$$
\exists C_{\alpha_{1}}>0\,,\, 
\forall h\in ]0,h_{0}[\,, \quad
\mathrm{Tr}\left[e^{\alpha_{1}N}\varrho_{h}(0)e^{\alpha_{1}N}\right]\leq C_{\alpha_{1}}\,.
$$
Call
$\varrho_{h}(t)=U_{\mathcal{V}}(\frac{t}{h})\varrho_{h}(0)U_{\mathcal{V}}^{*}(\frac{t}{h})$
and
$\varrho_{h,\varepsilon}(t)=U_{\mathcal{V},\varepsilon}(\frac{t}{h})\varrho_{h}(0)U_{\mathcal{V},\varepsilon}^{*}(\frac{t}{h})$\,.
When the subset $\mathcal{E}\subset ]0,h_{0}[$\,, $0\in \overline{\mathcal{E}}$\,,
is chosen such that
$$
\forall t\in ]-\hat{T}_{\alpha_{1}},\hat{T}_{\alpha_{1}}[\,,\quad
\mathcal{M}(\varrho_{h}(t),\, h\in
\mathcal{E})=\left\{\mu_{t}\right\}\quad\text{and}\quad 
\mathcal{M}(\varrho_{h,\varepsilon}(t),\,
h\in \mathcal{E})=\left\{\mu_{t,\varepsilon}\right\}
$$
Then  the total variation of $\mu_{t}-\mu_{t,\varepsilon}$ is
estimated by 
$$
\forall t\in ]-\hat{T}_{\alpha_{1}},\hat{T}_{\alpha_{1}} [\,,\quad
|\mu_{t}-\mu_{t,\varepsilon}|(\underbrace{T^{*}\rz^{d}}_{\text{or}~T^{*}\rz^{d}\sqcup
  \left\{\infty\right\}})\leq C'_{\alpha_{1}}\varepsilon\,,
$$
for some constant $C'_{\alpha_{1}}>0$ determined by $\alpha_{1}>0$\,.
\end{proposition}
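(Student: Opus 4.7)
The plan is to combine property~\textbf{f)} of Subsection~\ref{sec:framsemi} with the norm estimate of Proposition~\ref{pr:comp}. Since $\mathcal{M}(\varrho_h(t), h \in \mathcal{E})$ and $\mathcal{M}(\varrho_{h,\varepsilon}(t), h \in \mathcal{E})$ each contain a single measure, property~\textbf{f)} gives
$$
|\mu_t - \mu_{t,\varepsilon}|(\underbrace{T^*\mathbb{R}^d}_{\text{or}~T^*\mathbb{R}^d \sqcup \{\infty\}}) \leq 4 \liminf_{\substack{h \to 0 \\ h \in \mathcal{E}}} \|\varrho_h(t) - \varrho_{h,\varepsilon}(t)\|_{\mathcal{L}^1}\,,
$$
so it will suffice to produce a constant $C'_{\alpha_1}$, independent of $h \in ]0,h_0[$ and of $t \in ]-\hat{T}_{\alpha_1},\hat{T}_{\alpha_1}[$, with $\|\varrho_h(t) - \varrho_{h,\varepsilon}(t)\|_{\mathcal{L}^1} \leq C'_{\alpha_1}\varepsilon$.

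Following the strategy used in the proof of Proposition~\ref{pr:equicont}, I set $\Psi_h := \varrho_h(0)^{1/2}$, viewed as a Hilbert--Schmidt operator identified with an $L^2$ function on $\mathbb{R}^d \times \Omega \times \hat{Z}$ (where $\hat{Z} = \mathbb{R}^d \times \Omega$, $\mathbf{d\hat{z}} = dx \otimes \mathcal{G}$), so that $\|\Psi_h\|_{\mathcal{L}^2} = 1$ and the trace assumption becomes $\|e^{\alpha_1 N}\Psi_h\|_{\mathcal{L}^2}^2 = \mathrm{Tr}[e^{\alpha_1 N}\varrho_h(0)e^{\alpha_1 N}] \leq C_{\alpha_1}$. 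Writing
\begin{align*}
\varrho_h(t) - \varrho_{h,\varepsilon}(t) =
& \bigl[(U_{\mathcal{V}}(t/h)-U_{\mathcal{V},\varepsilon}(t/h))\Psi_h\bigr] \,\bigl[U_{\mathcal{V}}(t/h)\Psi_h\bigr]^* \\
& + \bigl[U_{\mathcal{V},\varepsilon}(t/h)\Psi_h\bigr]\, \bigl[(U_{\mathcal{V}}(t/h)-U_{\mathcal{V},\varepsilon}(t/h))\Psi_h\bigr]^*\,,
\end{align*}
the inequality $\|AB^*\|_{\mathcal{L}^1} \leq \|A\|_{\mathcal{L}^2}\|B\|_{\mathcal{L}^2}$ combined with the unitarity of $U_{\mathcal{V}}$ and $U_{\mathcal{V},\varepsilon}$ yields
$$
\|\varrho_h(t) - \varrho_{h,\varepsilon}(t)\|_{\mathcal{L}^1} \leq 2 \,\|[U_{\mathcal{V}}(t/h) - U_{\mathcal{V},\varepsilon}(t/h)]\Psi_h\|_{\mathcal{L}^2}\,.
$$

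The final step identifies the right-hand side with a comparison of solutions in the Fock/center-of-mass framework. The unitary transforms \eqref{eq:unitFock}--\eqref{eq:unitFockFou2} and $U_G$ from Section~\ref{sec:centermass} send $\Psi_h$ to an initial datum $u^h_{G,0}$, and transform $U_{\mathcal{V}}(t/h)\Psi_h$, resp.~$U_{\mathcal{V},\varepsilon}(t/h)\Psi_h$, into the solutions $u^h_G(t)$ to \eqref{eq:dynaleauG} and $v^h_{G,\varepsilon}(t)$ to \eqref{eq:dynaleavGeps} with the same initial datum $u^h_{G,0}$. Because $N$ commutes with these unitary transforms, $\|e^{\alpha_1 N}u^h_{G,0}\|_{L^2_z L^2_{y_G}} \leq C_{\alpha_1}^{1/2}$ for all $h \in ]0,h_0[$. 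Applying Proposition~\ref{pr:comp} with the weight $\alpha_1$ replaced by $\alpha_1/2$ (which only amounts to shrinking $\hat{T}_{\alpha_1}$ and adjusting constants, since the hypothesis of Proposition~\ref{pr:comp} requires a weight twice the target one), one obtains
$$
\|u^h_G(t) - v^h_{G,\varepsilon}(t)\|_{L^2_z L^2_{y_G}} \leq \hat{C}_{\alpha_1}\varepsilon
$$
uniformly in $h \in ]0,h_0[$ and $t \in ]-\hat{T}_{\alpha_1},\hat{T}_{\alpha_1}[$, and setting $C'_{\alpha_1} = 8\hat{C}_{\alpha_1}$ closes the argument. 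No genuine obstacle arises: the entire propagation estimate has been packaged in Proposition~\ref{pr:comp}, and the only care needed is in adjusting the weight constants so that the assumption $\mathrm{Tr}[e^{\alpha_1 N}\varrho_h(0)e^{\alpha_1 N}] \leq C_{\alpha_1}$ matches the $e^{2\alpha_1 N}$ hypothesis of Proposition~\ref{pr:comp} after the square-root decomposition of $\varrho_h(0)$.
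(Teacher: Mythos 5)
Your proof follows essentially the same route as the paper's: the same decomposition of $\varrho_h(t)-\varrho_{h,\varepsilon}(t)$ into two rank-one-type products, the same Cauchy--Schwarz/unitarity step giving $\|\varrho_h(t)-\varrho_{h,\varepsilon}(t)\|_{\mathcal{L}^1}\leq 2\|[U_{\mathcal{V}}(t/h)-U_{\mathcal{V},\varepsilon}(t/h)]\Psi_h\|_{\mathcal{L}^2}$, and the same invocation of property~\textbf{f)} and Proposition~\ref{pr:comp}. You are also correct, and slightly more careful than the paper's own text, in noting that the hypothesis $\mathrm{Tr}[e^{\alpha_1 N}\varrho_h(0)e^{\alpha_1 N}]\leq C_{\alpha_1}$ translates into $\|e^{\alpha_1 N}u^h_{G,0}\|\leq C_{\alpha_1}^{1/2}$ whereas Proposition~\ref{pr:comp} nominally asks for $e^{2\alpha_1 N}$, so one must halve the weight exponent and shrink $\hat{T}_{\alpha_1}$ accordingly — a harmless relabelling that the paper handles silently.
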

\begin{proof}
From
\begin{align*}
\varrho_{h}(t)-\varrho_{h,\varepsilon}(t)=&
\left[U_{\mathcal{V}}(\frac{t}{h})\varrho_{h}(0)^{1/2}-U_{\mathcal{V},\varepsilon}(\frac{t}{h})\varrho_{h}(0)^{1/2}\right]\varrho_{h}(0)^{1/2}U_{\mathcal{V}}^{*}(\frac{t}{h})
\\
&
+U_{\mathcal{V},\varepsilon}(\frac{t}{h})\varrho_{h}(0)^{1/2}[\varrho_{h}(0)^{1/2}U_{\mathcal{V}}^{*}(\frac{t}{h})-\varrho_{h}(0)^{1/2}U_{\mathcal{V},\varepsilon}^{*}(\frac{t}{h})]
\end{align*}
we deduce
$$
|\mu(t)-\mu_{\varepsilon}(t)|(T^{*}\rz^{d}\cup \left\{\infty\right\})\leq
4\liminf_{h\in \mathcal{E}, h\to 0}
\|\varrho_{h}(t)-\varrho_{h,\varepsilon}(t)\|_{\mathcal{L}^{1}}
\leq 8\liminf_{h\in \mathcal{E}, h\to 0}\|\Psi^{h}(t)-\Psi^{h}_{\varepsilon}(t)\|_{L^{2}_{x,\omega,\hat{z}}}
$$
with
$\Psi^{h}_{\varepsilon}(t)=U_{\mathcal{V},\varepsilon}(\frac{t}{h})\varrho_{h}(0)^{1/2}\in
\mathcal{L}^{2}(L^{2}(\rz^{d}\times \Omega, dx\otimes
\mathcal{G};\cz))\sim
L^{2}(\rz^{d}\times \Omega\times \hat{Z}, dx\otimes
\mathcal{G}\otimes \mathbf{d\hat{z}};\cz)$ with $\hat{Z}=\rz^{d}\times \Omega$\,, $\mathbf{d\hat{z}}=dx\otimes
\mathcal{G}$\,.\\
But Proposition~\ref{pr:comp} implies
$$
\forall t\in ]-\hat{T}_{\alpha_{1}},\hat{T}_{\alpha_{1}}[\,,\quad 
\|\Psi^{h}(t)-\Psi^{h}_{\varepsilon}(t)\|_{L^{2}_{x,\omega,\hat{z}}}
\leq \hat{C}_{\alpha_{1}}\varepsilon\,.
$$
\end{proof}
\subsection{Asymptotic conservation of energy}
\label{sec:asconsEn}
The result of this paragraph is a consequence of the approximation
of the $U_{\mathcal{V}}$ dynamics by the one of
$U_{\mathcal{V}_{\varepsilon}}$ in terms of wave functions in Proposition~\ref{pr:comp}, states and
semiclassical measures in Proposition~\ref{pr:compmes}
\begin{proposition}
\label{pr:consen}
  Assume like in Proposition~\ref{pr:equicont}
$$
V\in L^{r'_{\sigma}}(\rz^{d},dx;\rz)\cap H^{2}(\rz^{d};\rz)\quad,\quad
r'_{\sigma}=\frac{2d}{d+2}\quad,\quad d\geq 3\,,
$$
and assume that there exists $\alpha_{1}>0$ such that $\varrho_{h}(0)\in
\mathcal{L}^{1}(L^{2}(\rz^{d}\times \Omega ,dx\otimes
\mathcal{G};\cz))$\,, $\varrho_{h}(0)\geq 0$\,,
$\mathrm{Tr}[\varrho_{h}(0)]=1$ satisfies
$$
\exists C_{\alpha_{1}}>0\,,\, 
\forall h\in ]0,h_{0}[\,, \quad
\mathrm{Tr}\left[e^{\alpha_{1}N}\varrho_{h}(0)e^{\alpha_{1}N}\right]\leq C_{\alpha_{1}}\,.
$$
Call
$\varrho_{h}(t)=U_{\mathcal{V}}(\frac{t}{h})\varrho_{h}(0)U_{\mathcal{V}}^{*}(\frac{t}{h})$
 and let the subset $\mathcal{E}\subset ]0,h_{0}[$\,, $0\in \overline{\mathcal{E}}$\,,
be  such that
$$
\forall t\in ]-\hat{T}_{\alpha_{1}},\hat{T}_{\alpha_{1}}[\,,\quad
\mathcal{M}(\varrho_{h}(t),\, h\in
\mathcal{E})=\left\{\mu_{t}\right\}
$$
with the additional assumption at time $t=0$\,,
\begin{equation}
  \label{eq:hypenmu0}
\mathrm{supp}\,\mu_{0}\subset \left\{(x,\xi)\in T^{*}\rz^{d}\,,
 \; |\xi|^{2}\in F\right\}
\end{equation}
where $F$ is a closed subset of $\rz$\,. Then for all $t\in
]-\hat{T}_{\alpha_{1}},\hat{T}_{\alpha_{1}}[$\,, the support of $\mu_{t}$ restricted to
$T^{*}\rz^{d}$ satisfies
$$
\mathrm{supp}\,\mu_{t}\big|_{T^{*}\rz^{d}}\subset \left\{(x,\xi)\in
  T^{*}\rz^{d}\,,\; |\xi|^{2}\in F\right\}\,.
$$
\end{proposition}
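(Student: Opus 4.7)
The plan is to combine Proposition~\ref{pr:compmes} with a functional calculus estimate on the regularized Hamiltonian $H_{\varepsilon}=-\Delta_{x}+\sqrt{h}\mathcal{V}_{\varepsilon}$. Proposition~\ref{pr:compmes} reduces the problem to the truncated dynamics $U_{\mathcal{V},\varepsilon}$, for which $\sqrt{h}\mathcal{V}_{\varepsilon}$ is a bounded self-adjoint operator of norm $\mathcal{O}_{\varepsilon}(\sqrt{h})$: once the support statement is established for every $\mu_{t,\varepsilon}|_{T^{*}\rz^{d}}$, sending $\varepsilon\to 0$ transfers the conclusion to $\mu_{t}|_{T^{*}\rz^{d}}$ up to the $\mathcal{O}(\varepsilon)$ total variation error. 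For fixed $\varepsilon>0$, the Helffer--Sj\"ostrand formula together with the second resolvent identity $(z-H_{\varepsilon})^{-1}-(z+\Delta_{x})^{-1}=\sqrt{h}\,(z-H_{\varepsilon})^{-1}\mathcal{V}_{\varepsilon}(z+\Delta_{x})^{-1}$, the bounds $\|(z-H_{\varepsilon})^{-1}\|,\|(z+\Delta_{x})^{-1}\|\leq|\Im z|^{-1}$ and the rapid decay of an almost-analytic extension give
\[
\|\phi(H_{\varepsilon})-\phi(-\Delta_{x})\|_{\mathcal{L}(L^{2})}\leq C_{\phi,\varepsilon}\sqrt{h}\,,\qquad \phi\in\mathcal{C}^{\infty}_{0}(\rz)\,.
\]
Since $\phi(H_{\varepsilon})$ commutes exactly with $U_{\mathcal{V},\varepsilon}(t/h)$, it is an $\mathcal{O}_{\varepsilon}(\sqrt{h})$-approximation of the ``true'' energy observable $\phi(-\Delta_{x})=\phi(|\xi|^{2})^{\Weyl}(hx,D_{x})$.

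I would then test $\mu_{t,\varepsilon}$ with non-negative symbols $a(x,\xi)=\chi(x)^{2}\phi(|\xi|^{2})$, $\chi\in\mathcal{C}_{0}^{\infty}(\rz^{d})$ and $\phi=\psi^{2}\geq 0$ with $\psi\in\mathcal{C}_{0}^{\infty}(\rz\setminus F)$, so that $\int a\,d\mu_{t,\varepsilon}=0$ for every such pair $(\chi,\phi)$ yields the desired support property. Weyl composition gives $a^{\Weyl}(hx,D_{x})=\chi(hx)\phi(-\Delta_{x})\chi(hx)+\mathcal{O}(h)$. Interpreting the support hypothesis $\mathrm{supp}\,\mu_{0}\subset\{|\xi|^{2}\in F\}$ as a statement on the compactification $T^{*}\rz^{d}\sqcup\{\infty\}$ (so that $\mu_{0}(\{\infty\})=0$) yields
\[
\lim_{h\in\mathcal{E},\,h\to 0}\mathrm{Tr}[a^{\Weyl}(hx,D_{x})\varrho_{h}(0)]=\int a\,d\mu_{0}=0\,.
\]
Using cyclicity of the trace, the commutation $[\phi(H_{\varepsilon}),U_{\mathcal{V},\varepsilon}(t/h)]=0$ applied twice, and the functional calculus estimate, one transfers this to time $t$ in the form
\[
\mathrm{Tr}[\chi(hx)\phi(-\Delta_{x})\chi(hx)\varrho_{h,\varepsilon}(t)]=\mathrm{Tr}[\tilde\chi_{\varepsilon}(t)\phi(-\Delta_{x})\tilde\chi_{\varepsilon}(t)\varrho_{h}(0)]+\mathcal{O}_{\varepsilon}(\sqrt{h})\,,
\]
with $\tilde\chi_{\varepsilon}(t)=U_{\mathcal{V},\varepsilon}^{*}(t/h)\chi(hx)U_{\mathcal{V},\varepsilon}(t/h)$.

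The hard part is that $\tilde\chi_{\varepsilon}(t)$ is \emph{not} a semiclassical observable at time $0$: a na\"\i ve Duhamel estimate gives $\|\tilde\chi_{\varepsilon}(t)-U^{*}(t/h)\chi(hx)U(t/h)\|_{\mathcal{L}(L^{2})}=\mathcal{O}_{\varepsilon}(t/\sqrt{h})$, which diverges on the macroscopic time scale. The refined argument is a semiclassical Egorov theorem for the bounded perturbation: since the Poisson bracket
\[
\{\mathcal{V}_{\varepsilon}(x),\chi(X-2s\xi)\}=-2s\,\nabla\mathcal{V}_{\varepsilon}(x)\cdot\nabla\chi(hx-2s\xi)
\]
vanishes at $s=0$, one obtains the improved commutator estimate $\|[\mathcal{V}_{\varepsilon},\chi(hx-2sD_{x})]\|_{\mathcal{L}(L^{2})}=\mathcal{O}_{\varepsilon}(s)$, and a bootstrap in time yields $\tilde\chi_{\varepsilon}(t)=\chi(hx-2tD_{x})+\mathcal{O}_{\varepsilon}(\sqrt{h})$ uniformly on $I_{\hat T_{\alpha_{1}}}^{h}$. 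Since the free flow $(x,\xi)\mapsto(x+2t\xi,\xi)$ preserves $|\xi|^{2}$, the time-$t$ trace reduces to $\int\chi(x+2t\xi)^{2}\phi(|\xi|^{2})\,d\mu_{0}=0$, and sending $\varepsilon\to 0$ completes the proof.
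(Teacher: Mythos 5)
The first half of your proposal (regularize to $\mathcal{V}_{\varepsilon}$, compare semiclassical measures via Proposition~\ref{pr:compmes}, Helffer--Sj{\"o}strand estimate $\|\phi(-\Delta_{x}+\sqrt{h}\mathcal{V}_{\varepsilon})-\phi(-\Delta_{x})\|_{\mathcal{L}(L^{2})}=\mathcal{O}_{\varepsilon}(\sqrt{h})$, exact commutation of $\phi(H_{\varepsilon})$ with $U_{\mathcal{V},\varepsilon}(t/h)$) follows the paper. But then you depart: after cyclicity you arrive at $\mathrm{Tr}\left[\tilde\chi_{\varepsilon}(t)\phi(-\Delta_{x})\tilde\chi_{\varepsilon}(t)\varrho_{h}(0)\right]$ and try to identify $\tilde\chi_{\varepsilon}(t)=U_{\mathcal{V},\varepsilon}^{*}(t/h)\chi(hx)U_{\mathcal{V},\varepsilon}(t/h)$ with $\chi(hx-2tD_{x})$ via an Egorov argument. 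That step has a genuine gap. Your ``improved'' commutator bound $\|[\mathcal{V}_{\varepsilon},\chi(hx-2sD_{x})]\|_{\mathcal{L}(L^{2})}=\mathcal{O}_{\varepsilon}(s)$ (which one can indeed obtain from translations of $V$ using $V\in H^{1}$) does not salvage Duhamel on the macroscopic time scale: the Duhamel integral gives
\[
\|\tilde\chi_{\varepsilon}(t)-\chi(hx-2tD_{x})\|_{\mathcal{L}(L^{2})}\lesssim \sqrt{h}\int_{0}^{t/h}\|[\mathcal{V}_{\varepsilon},\chi(hx-2(t-hs)D_{x})]\|\,ds\lesssim \sqrt{h}\,C_{\varepsilon}\int_{0}^{t/h}(t-hs)\,ds=\frac{C_{\varepsilon}t^{2}}{2\sqrt{h}}\,,
\]
which diverges as $h\to 0$. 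The structural reason is that $\mathcal{V}_{\varepsilon}(x)$ lives at the \emph{microscopic} spatial scale while $\chi(hx)$ lives at the macroscopic scale; the $h$-gain from the Weyl calculus in the macroscopic variable is cancelled by the $h^{-1}$ coming from differentiating $\mathcal{V}_{\varepsilon}(x)$ in the rescaled variable. The ``bootstrap in time'' you invoke is not described and cannot close this gap: the point of the whole paper is precisely that accumulated recollisions on the $1/h$ time scale are non-trivial, and a crude operator-norm Egorov estimate ignoring the random structure is not expected to control them.

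The paper's own proof sidesteps this entirely: it never propagates the position cutoff. Instead, with $H_{\varepsilon}=-\Delta_{x}+\sqrt{h}\mathcal{V}_{\varepsilon}$ and $\chi\in\mathcal{C}^{\infty}_{0}(\rz\setminus F;[0,1])$, the \emph{trace norm} of $\chi(H_{\varepsilon})\varrho_{h,\varepsilon}(t)\chi(H_{\varepsilon})$ is exactly conserved in time because $\chi(H_{\varepsilon})$ commutes with $U_{\mathcal{V},\varepsilon}(t/h)$ and the trace norm is unitarily invariant; from \eqref{eq:hypenmu0} and Helffer--Sj{\"o}strand one shows this trace norm vanishes at $t=0$ in the limit $h\to 0$, hence at all $t$. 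Then for an arbitrary $a\in\mathcal{C}^{\infty}_{0}(T^{*}\rz^{d};\cz)$ one simply bounds $|\mathrm{Tr}[\chi(H_{\varepsilon})a^{\Weyl}(hx,D_{x})\chi(H_{\varepsilon})\varrho_{h,\varepsilon}(t)]|\leq\|a^{\Weyl}\|_{\mathcal{L}(L^{2})}\,\|\chi(H_{\varepsilon})\varrho_{h,\varepsilon}(t)\chi(H_{\varepsilon})\|_{\mathcal{L}^{1}}$, and identifies the left side in the limit with $\int a\,\chi^{2}(|\xi|^{2})\,d\mu_{t,\varepsilon}$ via the semiclassical calculus. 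That way, no Egorov theorem and no identification of $\tilde\chi_{\varepsilon}(t)$ are needed. You should replace your Egorov step with this trace-norm conservation argument.
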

\begin{proof}
 For $\varepsilon>0$ and $z\in \cz\setminus\rz$ the resolvent estimate
$$
\|[z+\Delta_{x}]^{-1}-[z-(-\Delta_{x}+\sqrt{h}\mathcal{V}_{\varepsilon})]^{-1}\|_{\mathcal{L}(L^{2}_{x,\omega})}\leq
\frac{C_{\varepsilon}\sqrt{h}}{|\Imag z|^{2}}
$$
with
$\mathcal{V}_{\varepsilon}=\chi_{\varepsilon}(N)\mathcal{V}\chi_{\varepsilon}(N)\in
\mathcal{L}(L^{2}_{x,\omega})$ as in \eqref{eq:UVUVeps}
combined with Helffer-Sj{\"o}strand formula \cite{HeSj} gives
$$
\forall \varepsilon>0\,,\forall \chi\in
\mathcal{C}^{\infty}_{0}(\rz;\cz)\,,
\exists C_{\chi,\varepsilon}>0\,,\quad
\|\chi(-\Delta_{x})-\chi(-\Delta_{x}+\sqrt{h}\mathcal{V}_{\varepsilon})\|_{\mathcal{L}(L^{2}_{x,\omega})}\leq C_{\chi,\varepsilon}\sqrt{h}\,.
$$
The semiclassical calculus then implies
$$
\left\|\chi(-\Delta_{x}+\sqrt{h}\mathcal{V}_{\varepsilon}) \, a^{\Weyl}(hx,D_{x}) \, \chi(-\Delta_{x}+\sqrt{h}\mathcal{V}_{\varepsilon})-[\chi^{2}(|\xi|^{2})a]^{\Weyl}(hx,D_{x})
\right\|_{\mathcal{L}(L^{2}_{x,\omega})}=\mathcal{O}_{a,\chi,\varepsilon}(\sqrt{h})
$$
for all $a\in \mathcal{C}^{\infty}_{0}(T^{*}\rz^{d};\cz)$ and all
$\chi\in \mathcal{C}^{\infty}_{0}(\rz;\cz)$\,.\\
Hence, the assumption \eqref{eq:hypenmu0} implies
$$
\forall \chi\in \mathcal{C}^{\infty}_{0}(\rz\setminus F;[0,1])\,,\quad
\lim_{h\in \mathcal{E}\,, h\to
  0}\|\chi(-\Delta_{x}+\sqrt{h}\mathcal{V}_{\varepsilon}) \, \varrho_{h}(0) \, \chi(-\Delta_{x}+\sqrt{h}\mathcal{V}_{\varepsilon})\|_{\mathcal{L}^{1}(L^{2}_{x,\omega})}=0\,,
$$
and therefore 
$$
\forall \chi\in \mathcal{C}^{\infty}_{0}(\rz\setminus F;[0,1])\,,\,
\forall t\in ]-\hat{T}_{\alpha_{1}},\hat{T}_{\alpha_{1}}[\,,\quad 
\lim_{h\in \mathcal{E}\,, h\to 0}\|\chi(-\Delta_{x}+\sqrt{h}\mathcal{V}_{\varepsilon}) \, \varrho_{h,\varepsilon}(t) \, \chi(-\Delta_{x}+\sqrt{h}\mathcal{V}_{\varepsilon})\|_{\mathcal{L}^{1}(L^{2}_{x,\omega})}=0\,,
$$
with
$\varrho_{h,\varepsilon}(t)=U_{\mathcal{V}_{\varepsilon}}(\frac{t}{h})\varrho_{h}(0)U_{\mathcal{V}_{\varepsilon}}^{*}(\frac{t}{h})$
and
$U_{\mathcal{V}_{\varepsilon}}(t)=e^{-it(-\Delta_{x}+\sqrt{h}\mathcal{V}_{\varepsilon})}$\,.\\
When $\mathcal{E}'\subset \mathcal{E}$\,, $0\in \overline{\mathcal{E}'}$\,, is
such that
$$
\mathcal{M}(\varrho_{h,\varepsilon}(t)\,, h\in \mathcal{E}')=\left\{\mu_{t,\varepsilon}\right\}\,,
$$
Proposition~\ref{pr:compmes} tells us
$$
|\mu_{t}-\mu_{t,\varepsilon}|(T^{*}\rz^{d})\leq C'_{\alpha_{1}}\varepsilon\,.
$$
while 
\begin{multline*}
\int_{T^{*}\rz^{d}}a(x,\xi) \, |\chi|^{2}(|\xi|^{2})~d\mu_{t,\varepsilon}(x,\xi)\\
=\lim_{h\in
\mathcal{E}',h\to 0}\mathrm{Tr}\,\left[\chi(-\Delta_{x}+\sqrt{h}\mathcal{V}_{\varepsilon}) \, a^{\Weyl}(hx,D_{x}) \, \chi(-\Delta_{x}+\sqrt{h}\mathcal{V}_{\varepsilon}) \, \varrho_{h,\varepsilon}(t)\right]=0\,,
\end{multline*}
for $a\in \mathcal{C}^{\infty}_{0}(T^{*}\rz^{d};\cz)$ and $\chi\in
\mathcal{C}^{\infty}_{0}(\rz\setminus F;[0,1])$\,.
We deduce
$$
\forall a\in \mathcal{C}^{\infty}_{0}(T^{*}\rz^{d};\cz)\,, \forall
\chi\in \mathcal{C}^{\infty}_{0}(\rz\setminus F;[0,1])\,, \forall t\in
]-\hat{T}_{\alpha_{1}},\hat{T}_{\alpha_{1}}[\,,\quad
\int_{T^{*}\rz^{d}}  a(x,\xi) \, \chi^{2}(|\xi|^{2})~d\mu_{t}(x,\xi)=0\,,
$$
which yields the result.
\end{proof}
\subsection{Changing $V$}
\label{sec:changV}
The formulation of Theorem~\ref{th:applStri}
$u_{G}^{h}(t)=U_{\mathcal{V}}(t)u_{G,0}^{h}=U(\frac{t}{h})u_{G,0}^{h}+u_{\infty}^{h}(t)$
where $\big(u^h_q\big)_{q\in{\{\infty,2,1\} }}$
is a solution of a fixed point problem, solved in Proposition~\ref{pr:contracStri}, where only
$\|V\|_{L^{r'_{\sigma}}}$\,, $r'_{\sigma}=\frac{2d}{d+2}$\,, is used,
allows to consider perturbations of $V$\,, which can be done
separately in the the terms $a_{G}(V)$ and $a_{G}^{*}(V)$ and with
complex valued perturbations.\\
Remember that our state
$\varrho_{h}(t)=U_{\mathcal{V}}(\frac{t}{h})\varrho_{h}(0)U_{\mathcal{V}}^{*}(\frac{t}{h})$
is written 
$$
\varrho_{h}(t)=[U_{\mathcal{V}}(\frac{t}{h})\varrho_{h}(0)^{1/2}][\varrho_{h}(0)^{1/2}U^*_{\mathcal{V}}(\frac{t}{h})]\,,
$$
and the link with the fixed point problem is done after setting
$$
U(t)u_{G,0}^{h}+u^{h}_{\infty}(t)=u_{G}^{h}(t)=U_{\mathcal{V}}(\frac{t}{h})\varrho_{h}(0)^{1/2}\quad\text{in}~\mathcal{L}^{2}(L^{2}_{x,\omega})\sim
L^{2}_{z,y_{G}}\,,
$$
where the last identification is done via the unitary transform
$U_{G}$ of Section~\ref{sec:centermass}\,, omitted here and explained in the proof of
Proposition~\ref{pr:equicont}.\\
A generalization is done by writing for a pair $\tilde{\mathcal{V}}=(V_{1},V_{2})\in
L^{r'_{\sigma}}(\rz^{d},dy;\cz)^{2}$\,,
\begin{equation}
  \label{eq:generCV}
  \varrho_{h,\tilde{\mathcal{V}}}(t)=u_{G,\tilde{\mathcal{V}}}^{h}(\frac{t}{h})[u_{G,\tilde{\mathcal{V}}}^{h}(\frac{t}{h})]^{*}\in \mathcal{L}^{1}(L^{2}_{x,\omega})\,,
\end{equation}
where
$u_{G,\tilde{\mathcal{V}}}^{h}(t)=U(t)\varrho_{h}(0)^{1/2}+u_{\infty,\tilde{\mathcal{V}}}^{h}(t)$
and  $\big(u^h_{q,\tilde{\mathcal{V}}}\big)_{q\in{\{\infty,2,1\} }}$
solves the fixed point problem
\eqref{eq:dynuinfty}\eqref{eq:dynu2}\eqref{eq:dynu1} with 
$f_{1}^{h}(t)=0$ and $f_{\infty}^{h}$ and $f_{2}^{h}$ given by 
\begin{align}
\label{eq:rhsfh2}
f^{h}_\infty(t) =f^{h}_{\infty,\tilde{\mathcal{V}}}(t)=-i&\int_{0}^{t}U(t)U(s)^{*}
a_{G}^{*}(V_{1}) \sqrt{h}U(s)u_{G,0}^h~ds \,,\\
\label{eq:rhsgh2}
f^{h}_2(t)=f_{2,\tilde{\mathcal{V}}}(t)=-i\,a_{G}(V_{2})&\int_{0}^{t}U(t)U(s)^{*}a_{G}^{*}(V_{1}) \sqrt{h}U(s)u_{G,0}^h~ds + a_G(V_{2}) U(t) u^h_{G,0}\,.
\end{align}
This fixed point problem will be written
\begin{equation}
  \label{eq:LtV}
  \begin{pmatrix}
    u^{h}_{\infty,\tilde{\mathcal{V}}}\\
u^{h}_{2,\tilde{\mathcal{V}}}\\
u^{h}_{1,\tilde{\mathcal{V}}}
  \end{pmatrix}
=L_{\tilde{\mathcal{V}}}\begin{pmatrix}
    u^{h}_{\infty,\tilde{\mathcal{V}}}\\
u^{h}_{2,\tilde{\mathcal{V}}}\\
u^{h}_{1,\tilde{\mathcal{V}}}
  \end{pmatrix}+
  \begin{pmatrix}
    f_{\infty,\tilde{\mathcal{V}}}^{h}\\
f_{2,\tilde{\mathcal{V}}}^{h}\\
0
  \end{pmatrix}\,.
\end{equation}
\begin{proposition}
\label{pr:comptV} For two pairs
$\tilde{\mathcal{V}}_{k}=(V_{1,k},V_{2,k})\in
L^{r'_{\sigma}}(\rz^{d},dy;\cz)^{2}$\,, for
$\|e^{\alpha_{1}N}u_{G,0}^{h}\|\leq C_{\alpha_{1}}$ and by choosing
$\hat{T}_{\alpha_{1}}>0$ small enough, the two solutions to
\eqref{eq:LtV} with the right-hand sides given by
\eqref{eq:rhsfh2}\eqref{eq:rhsgh2} satisfy
$$
\forall t\in ]-\hat{T}_{\alpha_{1}},\hat{T}_{\alpha_{1}}[
\,,\quad
\|u_{\infty,\tilde{\mathcal{V}}_{2}}^{h}(\frac{t}{h})-u_{\infty,\tilde{\mathcal{V}}_{1}}^{h}(\frac{t}{h})\|_{L^{2}_{z,y_{G}}}\leq
C\left[\|V_{1,2}-V_{1,1}\|_{L^{r'_{\sigma}}}+\|V_{2,2}-V_{2,1}\|_{L^{r'_{\sigma}}}\right]
$$
for some constant $C>0$ given by $\alpha_{1}>0$\,, $C_{\alpha_{1}}$\,, the dimension
$d$\,, and $\max_{i,j}\|V_{i,j}\|_{L^{r'_{\sigma}}}$\,.
\end{proposition}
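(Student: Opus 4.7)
\textbf{Proof plan for Proposition~\ref{pr:comptV}.} The plan is to subtract the two fixed point equations and solve the resulting linear equation for the difference by means of a Neumann series, exploiting the fact that the operator $L_{\tilde{\mathcal{V}}}$ and the data $(f^{h}_{\infty,\tilde{\mathcal{V}}},f^{h}_{2,\tilde{\mathcal{V}}})$ depend \emph{linearly} on $V_{1}$ (through $a_{G}^{*}(V_{1})$) and linearly on $V_{2}$ (through $a_{G}(V_{2})$), so their differences take exactly the same form with $V_{1}$ and $V_{2}$ replaced by the differences $W_{1}=V_{1,2}-V_{1,1}$ and $W_{2}=V_{2,2}-V_{2,1}$. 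Writing $U_{k}=(u^{h}_{\infty,\tilde{\mathcal{V}}_{k}},u^{h}_{2,\tilde{\mathcal{V}}_{k}},u^{h}_{1,\tilde{\mathcal{V}}_{k}})$ for $k=1,2$\,, the difference $U_{2}-U_{1}$ satisfies
\begin{equation*}
U_{2}-U_{1}
=L_{\tilde{\mathcal{V}}_{2}}(U_{2}-U_{1})+(L_{\tilde{\mathcal{V}}_{2}}-L_{\tilde{\mathcal{V}}_{1}})U_{1}+\left(F_{\tilde{\mathcal{V}}_{2}}-F_{\tilde{\mathcal{V}}_{1}}\right)
\end{equation*}
where $F_{\tilde{\mathcal{V}}_{k}}=(f^{h}_{\infty,\tilde{\mathcal{V}}_{k}},f^{h}_{2,\tilde{\mathcal{V}}_{k}},0)$\,. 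Choosing $\gamma>0$ small enough so that Theorem~\ref{th:applStri} applies uniformly for all pairs involved (which only requires an upper bound on $\|V_{i,j}\|_{L^{r'_{\sigma}}}$), the operator $L_{\tilde{\mathcal{V}}_{2}}$ is a contraction on $\mathcal{E}^{h}_{-\alpha_{1},\alpha_{1},\gamma}$ with $\|L_{\tilde{\mathcal{V}}_{2}}\|\leq 1/2$\,, so $(\mathrm{Id}-L_{\tilde{\mathcal{V}}_{2}})^{-1}$ exists with norm $\leq 2$ and
\begin{equation*}
M(U_{2}-U_{1})\leq 2\,M\left((L_{\tilde{\mathcal{V}}_{2}}-L_{\tilde{\mathcal{V}}_{1}})U_{1}\right)+2\,M\left(F_{\tilde{\mathcal{V}}_{2}}-F_{\tilde{\mathcal{V}}_{1}}\right)\,.
\end{equation*}

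The next step is to estimate the two terms on the right. For the linear operator part, the entries $L_{\infty\infty}$, $L_{22}$, $L_{11}$ each depend on a single potential, while $L_{1\infty}$ is bilinear and is split by the telescoping identity $a_{G}(V_{2,2})a_{G}^{*}(V_{1,2})-a_{G}(V_{2,1})a_{G}^{*}(V_{1,1})=a_{G}(W_{2})a_{G}^{*}(V_{1,2})+a_{G}(V_{2,1})a_{G}^{*}(W_{1})$\,. Each resulting contribution is of the same type as an entry of $L$ with one occurrence of $V$ replaced by $W_{1}$ or $W_{2}$\,, so the estimates of Proposition~\ref{pr:contracStri} directly yield
\begin{equation*}
M\left((L_{\tilde{\mathcal{V}}_{2}}-L_{\tilde{\mathcal{V}}_{1}})U_{1}\right)\lesssim M_{\alpha 01}\left(\|W_{1}\|_{L^{r'_{\sigma}}}+\|W_{2}\|_{L^{r'_{\sigma}}}\right)\gamma^{1/2}M(U_{1})\,.
\end{equation*}
Similarly, replacing $V_{1}$ by $W_{1}$ in \eqref{eq:rhsfh2} and splitting \eqref{eq:rhsgh2} with the same telescoping trick, the estimates \eqref{eq:estimfh}, \eqref{eq:estimgh} and \eqref{eq:aG(v)U(t)u0} from the proof of Theorem~\ref{th:applStri} give
\begin{equation*}
M\left(F_{\tilde{\mathcal{V}}_{2}}-F_{\tilde{\mathcal{V}}_{1}}\right)\lesssim C_{\alpha_{1}}\left(\|W_{1}\|_{L^{r'_{\sigma}}}+\|W_{2}\|_{L^{r'_{\sigma}}}\right)\,,
\end{equation*}
while Theorem~\ref{th:applStri} applied to the single pair $\tilde{\mathcal{V}}_{1}$ provides $M(U_{1})\lesssim C_{\alpha_{1}}$\,.

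Combining these three bounds gives $M(U_{2}-U_{1})\lesssim C(\|W_{1}\|_{L^{r'_{\sigma}}}+\|W_{2}\|_{L^{r'_{\sigma}}})$ where the constant depends only on $\alpha_{1}$\,, $C_{\alpha_{1}}$\,, $d$ and $\max_{i,j}\|V_{i,j}\|_{L^{r'_{\sigma}}}$\,. The conclusion then follows from the definition \eqref{eq:defMu0} of $M_{\infty}$ by taking $\alpha=0$\,: for $t\in\,]-\hat{T}_{\alpha_{1}},\hat{T}_{\alpha_{1}}[$ with $\hat{T}_{\alpha_{1}}<T_{0}/2=\gamma\alpha_{1}/2$\,, the prefactor $((T_{0}-|t|)/|t|)^{1/2}$ is bounded below by a positive constant, hence
\begin{equation*}
\left\|u^{h}_{\infty,\tilde{\mathcal{V}}_{2}}(t/h)-u^{h}_{\infty,\tilde{\mathcal{V}}_{1}}(t/h)\right\|_{L^{2}_{z,y_{G}}}\lesssim M_{\infty}(u^{h}_{\infty,\tilde{\mathcal{V}}_{2}}-u^{h}_{\infty,\tilde{\mathcal{V}}_{1}})\leq M(U_{2}-U_{1})
\end{equation*}
which yields the announced estimate. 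No step is particularly delicate: the main work is organizational, namely unwinding the bilinear occurrence of $(V_{1},V_{2})$ in $L_{1\infty}$ and $f^{h}_{2,1}$ and observing that the heavy lifting (the contraction estimates and bounds on the forcing terms) is already done in Proposition~\ref{pr:contracStri} and in the proof of Theorem~\ref{th:applStri}; the only place where one must be a bit cautious is the selection of $\gamma$\,, which must be small enough for the contraction to hold uniformly with $C_{V}$ replaced by $\max_{i,j}\|V_{i,j}\|_{L^{r'_{\sigma}}}+\|W_{1}\|_{L^{r'_{\sigma}}}+\|W_{2}\|_{L^{r'_{\sigma}}}$\,, which is itself bounded by a constant of the stated type.
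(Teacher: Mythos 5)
Your proof is correct and follows essentially the same route as the paper: subtract the two fixed-point equations, invert $\mathrm{Id}-L$ by the contraction estimate from Proposition~\ref{pr:contracStri}, and bound the inhomogeneous terms (linear or telescoped bilinear in the potentials) by the $L^{r'_{\sigma}}$-norms of the differences. The only cosmetic deviation is that you apply $L_{\tilde{\mathcal{V}}_{2}}$ to the difference and $(L_{\tilde{\mathcal{V}}_{2}}-L_{\tilde{\mathcal{V}}_{1}})$ to $U_{1}$, whereas the paper uses $L_{\tilde{\mathcal{V}}_{1}}$ on the difference and applies $(L_{\tilde{\mathcal{V}}_{2}}-L_{\tilde{\mathcal{V}}_{1}})$ to $u^{h}_{\tilde{\mathcal{V}}_{2}}$; you also spell out the final passage from the weighted $M_{\infty}$-norm to the pointwise $L^{2}_{z,y_{G}}$-bound, which the paper leaves implicit.
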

\begin{proof}
  It suffices to notice that  the difference
  $v^{h}=u_{\tilde{\mathcal{V}}_{2}}^{h}-u_{\tilde{\mathcal{V}}_{1}}^{h}$
  with $u_{\tilde{\mathcal{V}_{k}}}^{h}=\big(u^h_{q,\tilde{\mathcal{V}_k}}\big)_{q\in{\{\infty,2,1\} }}
$\,, $k=1,2$\,, solves
$$
v^{h}-L_{\tilde{\mathcal{V}_{1}}}(v^{h})=(L_{\tilde{\mathcal{V}}_{2}}-L_{\tilde{\mathcal{V}}_{1}})(u_{\tilde{\mathcal{V}}_{2}}^{h})+
\begin{pmatrix}
  f_{\infty,\tilde{\mathcal{V}}_{2}}^{h}-f_{\infty,\tilde{\mathcal{V}}_{1}}^{h}\\
f_{2,\tilde{\mathcal{V}}_{2}}^{h}-f_{2,\tilde{\mathcal{V}_{1}}}^{h}\\
0
\end{pmatrix}\,.
$$
Estimates for all the terms of the right-hand side have essentially
been proved for Proposition~\ref{pr:contracStri} and for
Theorem~\ref{th:applStri}. Although they are written for $V_{1}=V_{2}$
real-valued in Theorem~\ref{th:applStri} the generalization is
straightforward (like in Proposition~\ref{pr:contracStri}) and upper
bounds are proportional  the $L^{r'_{\sigma}}$ of the potential which
is either $(V_{1,2}-V_{1,1})$ or $(V_{2,2}-V_{2,1})$\,.\\
The time interval
$]-T_{\alpha_{1}},T_{\alpha_{1}}[=]-2\hat{T}_{\alpha_{1}},2\hat{T}_{\alpha_{1}}[$
is actually chosen like in Proposition~\ref{pr:contracStri} such that
$\|L_{\tilde{\mathcal{V}}_{1}}\|_{\mathcal{L}(\mathcal{E}_{\alpha_{1},-\alpha_{1}},\gamma)}\leq
\frac{1}{2}$ and this ends the proof.
\end{proof}
For a general pair $\tilde{\mathcal{V}}=(V_{1},V_{2})\in
L^{r'_{\sigma}}(\rz^{d},dy;\cz)^{2}$\,, the trace-class operator
$\varrho_{\tilde{\mathcal{V}}}^{h}(t)$ is no more a state and neither
self-adjoint. However it remains uniformly bounded in
$\mathcal{L}^{1}(L^{2}_{x,\omega})$ and complex-valued semiclassical
measures $\mu_{\tilde{\mathcal{V}}}(t)$ make sense for $t\in
]-\hat{T}_{\alpha_{1}},\hat{T}_{\alpha_{1}}[$\,. Moreover the results
of Proposition~\ref{pr:applStri} and Proposition~\ref{pr:equicont} can
be adapted mutatis mutandis for such a general pair, so that
semiclassical measures (extraction process) can be defined
simultaneously for all $t\in
]\hat{T}_{\alpha_{1}},\hat{T}_{\alpha_{1}}[$\,.\\
The above comparison result can be translated in terms of trace-class
operators and asymptotically for semiclassical measures.
\begin{proposition}
\label{pr:tVsemi}
  Assume
$$
V\in L^{r'_{\sigma}}(\rz^{d},dx;\rz)\cap H^{2}(\rz^{d};\rz)\quad,\quad
V_{1},V_{2}\in L^{r'_{\sigma}}(\rz^{d},dx;\cz)\quad,\quad
r'_{\sigma}=\frac{2d}{d+2}\quad,\quad d\geq 3\,,
$$
and assume that there exists $\alpha_{1}>0$ such that $\varrho_{h}(0)\in
\mathcal{L}^{1}(L^{2}(\rz^{d}\times \Omega ,dx\otimes
\mathcal{G};\cz))$\,, $\varrho_{h}(0)\geq 0$\,,
$\mathrm{Tr}[\varrho_{h}(0)]=1$ satisfies
$$
\exists C_{\alpha_{1}}>0\,,\, 
\forall h\in ]0,h_{0}[\,, \quad
\mathrm{Tr}\left[e^{\alpha_{1}N}\varrho_{h}(0)e^{\alpha_{1}N}\right]\leq C_{\alpha_{1}}\,.
$$
Let
$\varrho_{h}(t)=U_{\mathcal{V}}(\frac{t}{h})\varrho(0)U_{\mathcal{V}}^{*}(\frac{t}{h})$
and let $\varrho_{h,\tilde{\mathcal{V}}}(t)$ be defined by
\eqref{eq:generCV}. Then
$$
\exists C>0\,,\,
\forall t\in ]-\hat{T}_{\alpha_{1}},\hat{T}_{\alpha_{1}}[\,,\quad
\|\varrho_{h}(t)-\varrho_{h,\tilde{\mathcal{V}}}(t)\|_{\mathcal{L}^{1}(L^{2}_{x,\omega})}\leq C\left[\|V_{1}-V\|_{L^{r'_{\sigma}}}+\|V_{2}-V\|_{L^{r'_{\sigma}}}\right]\,.
$$
When the subset $\mathcal{E}\subset ]0,h_{0}[$\,, $0\in \overline{\mathcal{E}}$\,,
is chosen such that
$$
\forall t\in ]-\hat{T}_{\alpha_{1}},\hat{T}_{\alpha_{1}}[\,,\quad
\mathcal{M}(\varrho_{h}(t),\, h\in
\mathcal{E})=\left\{\mu_{t}\right\}\quad\text{and}\quad 
\mathcal{M}(\varrho_{h,\tilde{\mathcal{V}}}(t),\,
h\in \mathcal{E})=\big\{\mu_{t,\tilde{\mathcal{V}}}\big\}
$$
Then  the total variation of $\mu_{t}-\mu_{t,\tilde{\mathcal{V}}}$ is
estimated by 
$$
\exists C'>0\,,\,
\forall t\in ]-\hat{T}_{\alpha_{1}},\hat{T}_{\alpha_{1}} [\,,\quad
|\mu_{t}-\mu_{t,\tilde{\mathcal{V}}}|(\underbrace{T^{*}\rz^{d}}_{\text{or}~T^{*}\rz^{d}\sqcup
  \left\{\infty\right\}})\leq C'\left[\|V_{1}-V\|_{L^{r'_{\sigma}}}+\|V_{2}-V\|_{L^{r'_{\sigma}}}\right]\,.
$$
\end{proposition}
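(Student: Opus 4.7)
The plan is to reduce both assertions to Proposition~\ref{pr:comptV} by choosing $\tilde{\mathcal{V}}_1=(V,V)$ and $\tilde{\mathcal{V}}_2=(V_1,V_2)$. With this choice, $L_{\tilde{\mathcal{V}}_1}$ is exactly the operator $L$ appearing in Theorem~\ref{th:applStri} and $f^h_{\infty,\tilde{\mathcal{V}}_1}$, $f^h_{2,\tilde{\mathcal{V}}_1}$ coincide with the right-hand sides \eqref{eq:rhsfh}\eqref{eq:rhsgh} applied to the initial datum $u^h_{G,0}=\varrho_h(0)^{1/2}$ (identified as a Hilbert-Schmidt element $L^2_{x,\omega,\hat{z}}\simeq \mathcal{L}^2(L^2_{x,\omega})$, as in the proof of Proposition~\ref{pr:equicont}). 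Consequently $u^h_{G,\tilde{\mathcal{V}}_1}(\tfrac{t}{h})=U_{\mathcal{V}}(\tfrac{t}{h})\varrho_h(0)^{1/2}$, and the construction \eqref{eq:generCV} applied to $\tilde{\mathcal{V}}_1$ reproduces $\varrho_h(t)$.

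First I would verify the hypothesis $\|e^{\alpha_1 N}u^h_{G,0}\|_{L^2_z L^2_{y_G}}\leq C_{\alpha_1}^{1/2}$ (inherited from the assumption $\mathrm{Tr}[e^{\alpha_1N}\varrho_h(0)e^{\alpha_1N}]\leq C_{\alpha_1}$ via the Hilbert--Schmidt identification) and, possibly shrinking $\hat{T}_{\alpha_1}$ so that Proposition~\ref{pr:comptV} applies uniformly on $]-\hat{T}_{\alpha_1},\hat{T}_{\alpha_1}[$. Proposition~\ref{pr:comptV} then gives, for all $t$ in that interval,
\[
\|u^h_{G,\tilde{\mathcal{V}}}(\tfrac{t}{h})-U_{\mathcal{V}}(\tfrac{t}{h})\varrho_h(0)^{1/2}\|_{L^2_{z,y_G}}\leq C\bigl(\|V_1-V\|_{L^{r'_\sigma}}+\|V_2-V\|_{L^{r'_\sigma}}\bigr),
\]
with $C$ depending on $\alpha_1$, $C_{\alpha_1}$, $d$ and $\max(\|V\|_{L^{r'_\sigma}},\|V_1\|_{L^{r'_\sigma}},\|V_2\|_{L^{r'_\sigma}})$.

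Next I would pass from an $L^2_{z,y_G}$ estimate (equivalently an $\mathcal{L}^2(L^2_{x,\omega})$ estimate after undoing the unitary transform $U_G$ and the Fourier conjugation of Section~\ref{sec:ourproblem}) to an $\mathcal{L}^1$ estimate on the states. Writing, with $\Psi^h=U_{\mathcal{V}}(\tfrac{t}{h})\varrho_h(0)^{1/2}$ and $\Psi^h_{\tilde{\mathcal{V}}}=u^h_{G,\tilde{\mathcal{V}}}(\tfrac{t}{h})$,
\[
\varrho_h(t)-\varrho_{h,\tilde{\mathcal{V}}}(t)=(\Psi^h-\Psi^h_{\tilde{\mathcal{V}}})(\Psi^h)^{*}+\Psi^h_{\tilde{\mathcal{V}}}(\Psi^h-\Psi^h_{\tilde{\mathcal{V}}})^{*},
\]
and using $\|AB^{*}\|_{\mathcal{L}^1}\leq \|A\|_{\mathcal{L}^2}\|B\|_{\mathcal{L}^2}$, the desired $\mathcal{L}^1$ bound follows once one notes that $\|\Psi^h\|_{\mathcal{L}^2}=\|\varrho_h(0)^{1/2}\|_{\mathcal{L}^2}=1$ and that Theorem~\ref{th:applStri} gives $\|\Psi^h_{\tilde{\mathcal{V}}}\|_{L^2_{z,y_G}}$ uniformly bounded (by $1+CC_{\alpha_1}^{1/2}$) on $]-\hat{T}_{\alpha_1},\hat{T}_{\alpha_1}[$.

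Finally, for the semiclassical measures, I would simply invoke the general total variation inequality recalled in item \textbf{f)} of Subsection~\ref{sec:framsemi}:
\[
|\mu_t-\mu_{t,\tilde{\mathcal{V}}}|(T^{*}\rz^d\text{ or }T^{*}\rz^d\sqcup\{\infty\})\leq 4\liminf_{h\in\mathcal{E},\, h\to0}\|\varrho_h(t)-\varrho_{h,\tilde{\mathcal{V}}}(t)\|_{\mathcal{L}^1},
\]
and conclude with $C'=4C$. The only mild subtlety, which I would check explicitly, is that $\varrho_{h,\tilde{\mathcal{V}}}(t)$ is generally not self-adjoint nor non-negative so $\mu_{t,\tilde{\mathcal{V}}}$ is complex-valued; however, the total variation inequality of \textbf{f)} extends verbatim to bounded families of trace-class operators (by linearity and since the Anti-Wick quantization contracts the operator norm by $\|a\|_{L^\infty}$), so no genuine obstacle arises. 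The main technical point is thus the application of Proposition~\ref{pr:comptV}, which was already established: the present proof is essentially a packaging step.
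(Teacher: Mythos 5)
Your proof is correct and follows the same route as the paper's: you take $\tilde{\mathcal{V}}_1=(V,V)$, $\tilde{\mathcal{V}}_2=\tilde{\mathcal{V}}$ in Proposition~\ref{pr:comptV}, decompose $\varrho_h(t)-\varrho_{h,\tilde{\mathcal{V}}}(t)$ as a telescoping sum of two rank-factored terms, pass from $\mathcal{L}^2$ to $\mathcal{L}^1$ by Cauchy--Schwarz, and invoke item~\textbf{f)} of Subsection~\ref{sec:framsemi} for the total-variation bound. The extra observations you make (the Hilbert--Schmidt identification of $u^h_{G,0}=\varrho_h(0)^{1/2}$, the uniform bound on $\|\Psi^h_{\tilde{\mathcal{V}}}\|$, and the validity of the total-variation inequality for non-self-adjoint trace-class families by linearity and the contraction property of the Anti-Wick quantization) are exactly the points the paper leaves implicit, so the proposal is a faithful, slightly more detailed version of the paper's own argument.
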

\begin{proof}
  It suffices to write
$$
\varrho_{h,\tilde{\mathcal{V}}}(t)-\varrho_{h}(t)=\left[u_{G,\tilde{\mathcal{V}}}^{h}(\frac{t}{h})-u_{G}^{h}(\frac{t}{h})\right][u_{G,\tilde{\mathcal{V}}}^{h}(\frac{t}{h})]^{*}+[u_{G}^{h}(\frac{t}{h})]\left[u_{G,\tilde{\mathcal{V}}}^{h}(\frac{t}{h})-u_{G}^{h}(\frac{t}{h})\right]^{*}
$$
and to remember that Hilbert-Schmidt norms correspond to
$L^{2}_{z,y_{G}}$-norms estimated in Proposition~\ref{pr:comptV}.
\end{proof}

\subsection{Quantum dynamics with low regularity}
\label{sec:quantlow}
We conclude with an easy application of Proposition~\ref{pr:comptV}
which says that the dynamics $(U_{\mathcal{V}}(t))_{t\in\rz}$ is
actually well defined under the sole assumption
\begin{equation}
  \label{eq:VLrsig}
  V\in L^{r^{\prime}_{\sigma}}(\rz^{d};\rz)\quad,\quad
  r^{\prime}_{\sigma}=\frac{2d}{d+2}\quad d\geq 3\,,
\end{equation}
with good approximations when $V_{n}\in
L^{r^{\prime}_{\sigma}}(\rz^{d};\rz)\cap H^{2}(\rz^{d};\rz)$ satisfies
$\lim_{n\to \infty}\|V_{n}-V\|_{L^{r^{\prime}_{\sigma}}}=0$\,. 
\begin{proposition}
Let~$V$ belong to  $L^{r_{\sigma}^{\prime}}(\rz^{d};\rz)$ and let 
$(V_{n})_{n\in\nz}$ be a sequence in
$L^{r_{\sigma}^{\prime}}(\rz^{d};\rz)\cap H^{2}(\rz^{d};\rz)$ such
that~$\lim_{n\to\infty}\|V-V_{n}\|_{L^{r^{\prime}_{\sigma}}}=0$\,. 
Then for any $t\in\rz$ the unitary operator $U_{\mathcal{V}_{n}}(t)$
converges strongly to a unitary operator $U_{\mathcal{V}}(t)$\,.\\
Therefore $(U_{\mathcal{V}}(t))_{t\in\rz}$ is a strongly continous
unitary group in $L^{2}(\rz^{d}\times
Z'',dx\otimes\mathbf{dz''};\Gamma(L^{2}(\rz^{d},dy;\cz)))=L^{2}_{z,\rmsym}L^{2}_{y_{G}}$
with a self-adjoint generator denoted
$(-\Delta_{x}+\sqrt{h}\mathcal{V},D(-\Delta_{x}+\sqrt{h}\mathcal{V}))$\,.\\
The convergence
$(-\Delta_{x}+\sqrt{h}\mathcal{V}_{n},D(-\Delta_{x}+\sqrt{h}\mathcal{V}_{n}))$
to
$(-\Delta_{x}+\sqrt{h}\mathcal{V},D(-\Delta_{x}+\sqrt{h}\mathcal{V}))$
holds in the strong resolvent sense.
\end{proposition}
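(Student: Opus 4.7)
The plan is to leverage Proposition~\ref{pr:comptV} to identify $\big(U_{\mathcal{V}_n}(s)\big)_{n\in\nz}$ as a Cauchy sequence on a dense subspace, then extend the limit by density, isometry, and the group property.

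Fix $h\in]0,h_{0}[$ and pick $\alpha_{1}>0$. Since $V_{n}\to V$ in $L^{r'_{\sigma}}$, for $n\geq n_{0}$ large enough one has $\sup_{n\geq n_{0}}\|V_{n}\|_{L^{r'_{\sigma}}}\leq C_{V}$, so the macroscopic time $\hat T_{\alpha_{1}}$ and the constant $C$ produced by Proposition~\ref{pr:comptV} (applied with $\tilde{\mathcal V}_{n}=(V_{n},V_{n})$) are uniform in $n,m\geq n_{0}$. After transporting the statement through the unitary isomorphisms of Sections~\ref{sec:ourproblem} and~\ref{sec:centermass}, I would read Proposition~\ref{pr:comptV} in the original variables as: for every $\Psi_{0}$ with $\|e^{\alpha_{1}N}\Psi_{0}\|_{L^{2}_{x,\omega}}\leq C_{\alpha_{1}}$,
$$
\sup_{s\in I_{h}}\|U_{\mathcal{V}_{n}}(s)\Psi_{0}-U_{\mathcal{V}_{m}}(s)\Psi_{0}\|_{L^{2}_{x,\omega}}\leq C\|V_{n}-V_{m}\|_{L^{r'_{\sigma}}}\xrightarrow[n,m\to\infty]{}0\,,
$$
on the microscopic interval $I_{h}=\,]-\hat T_{\alpha_{1}}/h,\hat T_{\alpha_{1}}/h[$. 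Thus $(U_{\mathcal{V}_{n}}(s)\Psi_{0})_{n}$ is Cauchy in $L^{2}_{x,\omega}$, uniformly in $s\in I_{h}$.

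The subspace $\mathcal{D}_{\alpha_{1}}=\{\Psi_{0}\in L^{2}_{x,\omega}\,,\ \|e^{\alpha_{1}N}\Psi_{0}\|<\infty\}$ is dense in $L^{2}_{x,\omega}$; combined with the uniform bound $\|U_{\mathcal{V}_{n}}(s)\|=1$, an $\varepsilon/3$ argument yields a strong limit $U_{\mathcal{V}}(s)$ on the whole of $L^{2}_{x,\omega}$, uniformly in $s\in I_{h}$. Passing to the limit in $\|U_{\mathcal{V}_{n}}(s)\Psi\|=\|\Psi\|$ shows $U_{\mathcal{V}}(s)$ is isometric. I would then extend $U_{\mathcal{V}}$ to all $s\in\rz$ by observing that the set $\mathcal{G}\subset\rz$ on which $U_{\mathcal{V}_{n}}(s)$ converges strongly is a subgroup of $(\rz,+)$ (the group property of $U_{\mathcal{V}_{n}}$ together with uniform boundedness passes to the limit via composition of strongly convergent sequences), and $\mathcal{G}$ contains the neighbourhood $I_{h}$ of~$0$, so $\mathcal{G}=\rz$. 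Passing to the limit in $U_{\mathcal{V}_{n}}(-s)U_{\mathcal{V}_{n}}(s)=\mathrm{Id}$ shows $U_{\mathcal{V}}(s)$ is surjective, hence unitary, and a three-term splitting $\|U_{\mathcal{V}}(s)\Psi-U_{\mathcal{V}}(s')\Psi\|\leq\|U_{\mathcal{V}}(s)\Psi-U_{\mathcal{V}_{n}}(s)\Psi\|+\|U_{\mathcal{V}_{n}}(s)\Psi-U_{\mathcal{V}_{n}}(s')\Psi\|+\|U_{\mathcal{V}_{n}}(s')\Psi-U_{\mathcal{V}}(s')\Psi\|$ delivers strong continuity, first on $\mathcal{D}_{\alpha_{1}}$ (using the uniform-in-$s$ convergence on $I_{h}$ and strong continuity of each $U_{\mathcal{V}_{n}}$), and then on $L^{2}_{x,\omega}$ by density.

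By Stone's theorem, $(U_{\mathcal{V}}(s))_{s\in\rz}$ has a unique self-adjoint generator, which I denote $(-\Delta_{x}+\sqrt{h}\mathcal{V},D(-\Delta_{x}+\sqrt{h}\mathcal{V}))$. Strong convergence of $e^{-is(-\Delta_{x}+\sqrt{h}\mathcal{V}_{n})}$ to $e^{-is(-\Delta_{x}+\sqrt{h}\mathcal{V})}$ for every $s\in\rz$ is, by the Trotter--Kato theorem, equivalent to strong resolvent convergence of the generators, which concludes the proof. The main obstacle lies in the time-extension step: the exponential moment bound $\|e^{\alpha_{1}N}\cdot\|$ is propagated only on $I_{h}$ and only with a loss (Proposition~\ref{pr:applStri}\textbf{a)} gives $e^{\alpha_{1}N}$ control from $e^{2\alpha_{1}N}$ control), so one cannot directly iterate the Cauchy estimate on $\mathcal{D}_{\alpha_{1}}$ beyond $I_{h}$. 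The remedy is the order followed above: obtain Cauchyness on $\mathcal{D}_{\alpha_{1}}$ on $I_{h}$ only, pass to the strong limit on the whole $L^{2}_{x,\omega}$ (where no exponential moment is required), and only then exploit the group property to reach all $s\in\rz$.
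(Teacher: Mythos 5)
Your proof is correct and follows the same route as the paper: the Cauchy estimate in $n$ from Proposition~\ref{pr:comptV} on the dense set of exponential-moment vectors, extension by density and the uniform bound $\|U_{\mathcal V_n}(t)\|=1$, and the Trotter--Kato theorem for strong resolvent convergence of the generators. The one genuinely added step is your explicit group-property argument to pass from the local interval $I_h$ to all $t\in\rz$; the paper's proof is silent on this, and you correctly observe why the naive fix fails (iterating the Cauchy estimate on $\mathcal D_{\alpha_1}$ is blocked because the exponential weight is lost, $e^{2\alpha_1 N}\rightsquigarrow e^{\alpha_1 N}$, over a single time step, and the resulting time intervals do not add up to $\rz$). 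Your remedy — first take the strong limit on all of $L^2_{x,\omega}$ for $s\in I_h$, where no moment hypothesis is needed, and only then compose using the group law of the $U_{\mathcal V_n}$ together with uniform boundedness — is exactly the right order, and it cleanly yields a strongly continuous unitary group to which Stone and Trotter--Kato apply.
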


\begin{remark}
Although the dynamics $(U_{\mathcal{V}}(t))_{t\in\rz}$  and its
self-adjoint generator
$(-\Delta_{x}+\sqrt{h}\mathcal{V},D(-\Delta_{x}+\sqrt{h}\mathcal{V}))$
is well defined for $V\in L^{r^{\prime}_{\sigma}}(\rz^{d};\rz)$\,, 
we have no information on the domain
$D(-\Delta_{x}+\sqrt{h}\mathcal{V})$\,. The approximation process by
$V_{n}\in L^{r^{\prime}_{\sigma}}(\rz^{d};\rz)\cap H^{2}(\rz^{d};\rz)$
for which a core of $\Delta_{x}+\sqrt{h}\mathcal{V}_{n}$ is given by
Proposition~4.4 in \cite{Bre} recalled in Lemma~\ref{le:dyn}, provides
a substitute for the analysis. 

It could be interesting to see if this
Schr{\"o}dinger type approach relying on endpoint Strichartz estimates
could be applied to other quantum field theoretic problem and whether
it would bring additional information of tools as compared with the
euclidean approach (see \cite{Sim} and refences therein).
\end{remark}

\begin{proof}
Actually we can work here with $h=1$\,.
The convergence of 
$$
U_{\mathcal{V}_{n}}(t)u_{G,0}=U(t)u_{G,0}+u_{\infty,\mathcal{V}_{n}}(t)
$$
is deduced from the convergence  (see Proposition~\ref{pr:comptV}) of $u_{\infty,\mathcal{V}_{n}}(t)$ to
$u_{\infty,\mathcal{V}}(t)$ when 
$e^{\alpha_{1}N}u_{G,0}\in L^{2}_{z,\text{sym}}L^{2}_{y_{G}}$ for some
$\alpha_{1}>0$\,.\\
From
$\|U_{\mathcal{V}_{n}}(t)u_{G,0}
\|_{L^{2}_{z,\text{sym}}L^{2}_{y_{G}}}=\|u_{G,0}\|_{L^{2}_{z,\text{sym}}L^{2}_{y_{G}}}$
we deduce
$\|U_{\mathcal{V}}(t)u_{G,0}\|_{L^{2}_{z,\text{sym}}L^{2}_{y_{G}}}=\|u_{G,0}\|_{L^{2}_{z,\text{sym}}L^{2}_{y_{G}}}$\,. This
finally provides the extension of $U_{\mathcal{V}}(t)u_{G,0}$ for any
$u_{G,0}\in L^{2}_{z,\text{sym}}L^{2}_{y_{G}}$ with the convergence of
$U_{\mathcal{V}_{n}}(t)u_{G,0}$ to $U_{\mathcal{V}}(t)u_{G,0}$\,, because $e^{-\alpha_{1}N}L^{2}_{z,\text{sym}}L^{2}_{y_{G}}$ is
dense in $L^{2}_{z,\text{sym}}L^{2}_{y_{G}}$\,.
Passing from the strong convergence of unitary groups to the strong
resolvent convergence of generators is standard.
\end{proof}

\section*{Acknowledgement}
The authors thank Zied Ammari for beneficial first discussions about this project. The research of S.B.~was partly done during a CNRS sabbatical semester.

\pagebreak[4]
\bibliographystyle{plain}

\end{document}